\documentclass[10pt]{amsart}
\usepackage{latexsym,exscale,enumerate}
\usepackage{amsmath,amsthm,amsfonts,amssymb}
\usepackage{amscd, stmaryrd}
\usepackage{ulem}
% xypic
\usepackage[all]{xy}
\SelectTips{cm}{}
% This may speed up compilation of complex documents with many xymatrices.
%\CompileMatrices
% including eps files
\usepackage{graphicx}
%%% tikz
\usepackage{tikz}
\usetikzlibrary{decorations.markings}
\usetikzlibrary{decorations.pathreplacing}
\usetikzlibrary{arrows,shapes,positioning}
%%multiple arrow options
\tikzstyle directed=[postaction={decorate,decoration={markings,
    mark=at position #1 with {\arrow{>}}}}]
\tikzstyle rdirected=[postaction={decorate,decoration={markings,
    mark=at position #1 with {\arrow{<}}}}]
\usepackage{bbm}
\usepackage[bookmarks=false]{hyperref}

% =========================================
%
% File:   macros-foam.tex
% Date:   2012/07/05
% Author: A. Lauda, M. Khovanov
% Extension: H. Queffelec and D. Rose
%
% Macros for the paper on foams and skew howe duality
%
% =========================================

\addtolength{\hoffset}{-1.6cm}
\addtolength{\textwidth}{3cm}

%\makeindex

%%%%%%%%%%%%%%%%% DIAGRAM PACKAGES %%%%%%%%%%%%%%%%%%%%%%%%%%%%%%%%%%%%%%%%%%%%%%

%%
%%% pstricks
%\usepackage{pstricks}
%
%\psset{linewidth=0.3pt,dimen=middle}
%\psset{xunit=.70cm,yunit=0.70cm}
%\psset{arrowsize=1pt 5,arrowlength=0.6,arrowinset=0.7}

% xypic
\usepackage[all]{xy}
\SelectTips{cm}{}
% This may speed up compilation of complex documents with many xymatrices.
%\CompileMatrices

% including eps files
\usepackage{graphicx}

%%
%%% tikz
\usepackage{tikz}
\usetikzlibrary{decorations.markings}
\usetikzlibrary{decorations.pathreplacing}
\usetikzlibrary{arrows,shapes,positioning}
%%multiple arrow options
\tikzstyle directed=[postaction={decorate,decoration={markings,
    mark=at position #1 with {\arrow{>}}}}]
\tikzstyle rdirected=[postaction={decorate,decoration={markings,
    mark=at position #1 with {\arrow{<}}}}]

%%%%%%%%%%%%%%%%%%%%%%%%%%%%%%%%%

%%%%%%%%%%%%%%%%% PROJECT SPECIFIC MACROS %%%%%%

\newcommand{\onenn}[1]{{\mathbf 1}_{#1}}

\newcommand{\onel}{{\mathbf 1}_{\lambda}}

\newcommand{\onea}{{\mathbf 1}_{{\bf a}}}

\newcommand\sE{{\cal{E}}}
\newcommand\sF{{\cal{F}}}
\def\cal#1{\mathcal{#1}}%

\newcommand{\Sym}{\mathrm{Sym}}
\newcommand{\MF}{\mathrm{MF}}
\newcommand{\stab}{\mathrm{stab}}

\newcommand{\HMF}{\textbf{HMF}}

\newcommand{\Foam}[1]{#1\textbf{Foam}}

\newcommand{\bV}{\textstyle{\bigwedge}}

\def\1{\mathbbm{1}}%

\newcommand\T{{\sf{T}}}
\def\l{\lambda}

% new macros
%\newcommand{\END}{{\rm END}}
%\newcommand{\Gr}{\cat{Flag}_{N}}
%\newcommand{\Grn}[1]{\cat{Flag}_{#1}}
%\newtheorem{theorem}{Theorem}
%\newtheorem{acknowledgement}[theorem]{Acknowledgement}
%\newtheorem{algorithm}[theorem]{Algorithm}
%\newtheorem{axiom}[theorem]{Axiom}
%\newtheorem{case}[theorem]{Case}
%\newtheorem{claim}[theorem]{Claim}
%\newtheorem{conclusion}[theorem]{Conclusion}
%\newtheorem{condition}[theorem]{Condition}
%\newtheorem{conjecture}[theorem]{Conjecture}
%\newtheorem{corollary}[theorem]{Corollary}
%\newtheorem{criterion}[theorem]{Criterion}
%\newtheorem{definition}[theorem]{Definition}
%\newtheorem{example}[theorem]{Example}
%\newtheorem{exercise}[theorem]{Exercise}
%\newtheorem{lemma}[theorem]{Lemma}
%\newtheorem{notation}[theorem]{Notation}
%\newtheorem{problem}[theorem]{Problem}
%\newtheorem{proposition}[theorem]{Proposition}
%\newtheorem{remark}[theorem]{Remark}
%\newtheorem{solution}[theorem]{Solution}
%\newtheorem{summary}[theorem]{Summary}

% sl2 macros

\newcommand{\U}{\dot{{\bf U}}}
\newcommand{\Ucat}{\cal{U}}
\newcommand{\Ucatc}{\check{\cal{U}}}

\newcommand{\UcatD}{\dot{\cal{U}}}

\newcommand{\sln}{\mf{sl}_n}
\newcommand{\slm}{\mf{sl}_m}
\newcommand{\slnn}[1]{\mf{sl}_{#1}}

\newcommand{\glm}{\mf{gl}_m}
\newcommand{\glnn}[1]{\mf{gl}_{#1}}

\newcommand{\foam}[1]{#1\cat{Foam}}

% Quantum binomials

% Custom summation

% Equal sign with a reference on top
\newcommand{\refequal}[1]{\xy {\ar@{=}^{#1}
(-1,0)*{};(1,0)*{}};
\endxy}
%%%%%%%%%%%%%%%%%%%%%%%%%%%%%%%%%%%%%%%%%%%%%%%%%%%%%%%%%%%%%%%%%%%%%%%%%%%%%%%

%%%%%%%%%%%%%%%%% PAGE FORMATTING %%%%%%%%%%%%%%%%%%%%%%%%%%%%%%%%%%%%%%%%%%%%%
%\usepackage{fancyheadings}
%\pagestyle{fancyplain}
%\renewcommand{\sectionmark}[1]{\markboth{#1}{}}
%\renewcommand{\subsectionmark}[1]{\markright{#1}}
%%\renewcommand{\sectionmark}[1]{\markright{\uppercase{#1}}}
%%\renewcommand{\subsectionmark}[1]{\markright{\uppercase{#1}}}
%\lhead[\fancyplain{}{\bfseries\thepage}]{\fancyplain{}{\sl\bfseries\rightmark}}
%\rhead[\fancyplain{}{\sl\bfseries\leftmark}]{\fancyplain{}{\bfseries\thepage}}

\hfuzz=6pc

%\rhead[\fancyplain{}{\bfseries\leftmark}]{\fancyplain{}{\bfseries\thepage}}
\newcommand{\cat}[1]{\ensuremath{\mbox{\bfseries {\upshape {#1}}}}}

%%%%%%%%%%%%%%%%% GENERAL MACROS %%%%%%%%%%%%%%%%%%%%%%%%%%%%%%%%%%%%%%%%%%%%%%

\newcommand{\Hom}{{\rm Hom}}

\renewcommand{\to}{\rightarrow}

\newcommand{\id}{{\rm id}}
\newcommand{\bigb}[1]{
\begin{tikzpicture}[scale=.8]
%\filldraw[fill=white, draw=black, thick] (-1,-1) rectangle (1,1);
\node[draw, thick, rectangle, fill=white]  at (0,0) {\small $#1$} ;
\end{tikzpicture}}

\newcommand{\End}{{\rm End}}

\newcommand{\scs}{\scriptstyle}
%%%%%%%%%%%%%%%%%%%%%%%%%%%%%%%%%%%%%%%%%%%%%%%%%%%%%%%%%%%%%%%%%%%%%%%%%%%%%%%

 %% Mike's macros

   %% finitely-generated modules
   %% finite-dimensional modules
  %% fin-gen projective modules

\def\mf{\mathfrak}
    %% integral lattice in f
    %%  Lusztig's algebra
\def\shuffle{\,\raise 1pt\hbox{$\scriptscriptstyle\cup{\mskip
               -4mu}\cup$}\,}

%%%%%%%%%%%%%%%%%%%%%%%%%%%%%%%%%%%%%%%%%%%%%%%%%%%%%%%%%%%%%%%%%%%%%%%%%%%%%%%
%\theoremstyle{definition}
%\newtheorem{thm}{Theorem}[section]
%\newtheorem{cor}[thm]{Corollary}
%\newtheorem{conj}[thm]{Conjecture}
%\newtheorem{lem}[thm]{Lemma}
%\newtheorem{rem}[thm]{Remark}
%\newtheorem{prop}[thm]{Proposition}
%\newtheorem{defn}[thm]{Definition}
%\newtheorem{example}[thm]{Example}
%\newtheorem{question}[thm]{Question}

% Equation numbering:

\numberwithin{equation}{section}

% maintenance

%
%
%
%
%

%

% abbreviations in italics

\def\emph#1{{\sl #1\/}}

%
%

% hat and tilde

\let\hat=\widehat
\let\tilde=\widetilde

% some Greek letters

\let\phi=\varphi
\let\theta=\vartheta
\let\epsilon=\varepsilon

% some sets

\usepackage{bbm}
\def\C{{\mathbbm C}}
\def\N{{\mathbbm N}}

\def\Z{{\mathbbm Z}}

\def\P{{\mathbbm P}}

\def\P{{\mathbbm P}}

\def\V{{\mathbbm V}}
\def\W{{\mathbbm W}}
\def\X{{\mathbbm X}}
\def\Y{{\mathbbm Y}}

\def\L{{\mathbbm L}}
\def\M{{\mathbbm M}}
\def\A{{\mathbbm A}}
\def\S{{\mathbbm S}}
\def\T{{\mathbbm T}}

\newcommand{\Bb}{\mathbb{B}}

% maths macros

\def\cal#1{\mathcal{#1}}%
\def\1{\mathbbm{1}}%
\def\nn{\notag}

\def\la{\langle}
\def\ra{\rangle}

% ===================================
%
% end macros-foam.tex
%
% ===================================
\input{diagrams-foam.tex}
%\input{macros-diag-calc.tex}
%\usepackage{pstricks}
%\psset{linewidth=0.3pt,dimen=middle}
%\psset{xunit=.70cm,yunit=0.70cm}
%\usepackage{pictexwd,dcpic}
\usepackage{wrapfig}

\theoremstyle{plain}
\newtheorem{thm}{Theorem}[]
\newtheorem{prop}[thm]{Proposition}
\newtheorem{cor}[thm]{Corollary}
\newtheorem{lem}[thm]{Lemma}

\newtheorem{conv}[thm]{Convention}
\theoremstyle{definition}
\newtheorem{defi}[thm]{Definition}
\theoremstyle{remark}
\newtheorem{rem}[thm]{Remark}

\newtheorem{exa}[thm]{Example}
\newtheorem*{exa-nono}{Example}

% maintenance
%
%

%\usepackage{showkeys}%%%%%                                                                         i've enabled showkeys...we'll turn it off before the arxiv!                                 

\begin{document}
\input{diagrams-deformedfoam.tex}

%%%%% more macros
\newcommand{\LH}[2]{\mathrm{KhR}^{#2}(#1)} 
\newcommand{\LC}[3]{\llbracket #1_{#3} \rrbracket ^{#2}} 
\newcommand{\taut}{\mathrm{taut}} 
\newcommand{\Homcat}{\Hom} 
\newcommand{\idem}{\mathbbm{1}}
\newcommand{\extp}{\bV^a \C[\xi]}

\author{David E.~V.~Rose}
\address{Department of Mathematics, University of Southern California, Los Angeles, CA 90089, USA}
\email{davidero@usc.edu}

\author{Paul Wedrich}
\address{Centre for Mathematical Sciences, University of Cambridge, CB3 0WB, England}
\email{p.wedrich@gmail.com}

\title{Deformations of colored $\protect\slnn{N}$ link homologies via foams}
\begin{abstract}
We generalize results of Lee, Gornik and Wu on the structure of deformed colored $\slnn{N}$ link homologies to the case of non-generic deformations. 
To this end, we use foam technology to give a completely combinatorial construction of Wu's deformed colored $\slnn{N}$ link homologies. 
By studying the underlying deformed higher representation theoretic structures and generalizing the Karoubi envelope approach of Bar-Natan and Morrison 
we explicitly compute the deformed invariants in terms of undeformed type A link homologies of lower rank and color.
\end{abstract}
\maketitle
\tableofcontents 

\section{Introduction}
\subsection{Statement of Results}

In \cite{Kh1}, Khovanov introduced a homology theory categorifying the Jones polynomial. 
This homology theory for links in $S^3$ has proven to be a powerful topological invariant, leading e.g. to Rasmussen's combinatorial proof of the Milnor conjecture on the slice genus of torus knots \cite{Ras2}.
Rasmussen's work built on earlier results of Lee, \cite{Lee}, who studied a deformed version of Khovanov's link invariant. 
Khovanov's theory is controlled by the Frobenius algebra $\C[X]/\langle X^2 \rangle$, which appears as the invariant of the unknot, and Lee showed that deforming this algebra to $\C[X]/\langle X^2-1 \rangle$ leads to a trivial link homology theory which assigns two copies of the vector space $\C$ to any knot.

Khovanov and Rozansky \cite{KR} used the theory of matrix factorizations to generalize Khovanov homology to a link homology theory 
(now called Khovanov-Rozansky homology) which categorifies the $\slnn{N}$ link polynomial.
This was later extended by Wu~\cite{Wu1} and Yonezawa~\cite{Yon} to a categorified invariant of links whose components are colored by fundamental representations $\bV^k\C^N$ of $\slnn{N}$ for $0\leq k \leq N$. 
In these theories, the underlying Frobenius algebra is isomorphic to $\C[X]/\langle X^N \rangle$.
Following work of Gornik \cite{Gor}, Rasmussen \cite{Ras2}, and Krasner \cite{Kra}, Wu defined deformed versions of $\slnn{N}$ link homology~\cite{Wu2}, in which this algebra is deformed to 
$\C[X]/\langle P(X)\rangle$, where $P(X)$ is an arbitrary degree-$N$ polynomial. 
Gornik \cite{Gor} and Wu \cite{Wu3} showed that if $P(X)$ has simple roots, this invariant assigns the direct sum of $N$ copies of the vector space $\C$ to any $1$-colored knot.
This result as well as Lee's, and their generalizations to the case of links, can be interpreted as saying that when $P(X)$ has simple roots, the 1-colored deformed homology of a link decomposes into the direct sum of $\slnn{1}$ homologies of various sub-links, which are always 1-dimensional. 
Other deformations have been studied for $N=2$ by Khovanov \cite{Kh2} and $N=3$ by Mackaay-Vaz \cite{MV}.

In this paper, we prove a vast generalization of these results, showing that the deformation of colored $\slnn{N}$ link homology corresponding to a general degree-$N$ monic polynomial $P(X)$ with root multiset $\Sigma$ decomposes into type A link homologies of lower rank and color. To this end, we use foam technology to define deformed colored $\slnn{N}$ link homologies $\mathrm{KhR}^\Sigma(-)$ and compare them to the undeformed colored $\slnn{M}$ link homologies $\mathrm{KhR}^{\slnn{M}}(-)$ constructed in \cite{QR}. Precisely, we show:
\begin{thm}
\label{mainthm}

Let $\cal{L}(a_1,\dots,a_k)$ be a $k$-component oriented, framed link with the $i^{\mathrm{th}}$ component colored by the fundamental $\slnn{N}$ representations $\bV^{a_i}\C^N$. Let $\Sigma$ be an $N$-element multiset of complex numbers consisting of $l$ distinct numbers occurring with multiplicities $N_1,\dots,N_l$. There is an isomorphism of vector spaces:
\begin{equation}
\LH{\cal{L}(a_1,\dots,a_k)}{\Sigma}\cong \bigoplus_{\substack{\sum_{j=1}^l b_{i,j}= a_i \\ 0\leq b_{i,j} \leq N_j}} \bigotimes_{j=1}^l  \LH{\cal{L}(b_{1,j},\dots, b_{k,j})}{\slnn{N_{j}}}
\end{equation}
which preserves the homological grading.
\end{thm}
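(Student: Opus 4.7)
The plan is to prove Theorem~\ref{mainthm} by combining foam-theoretic constructions of the deformed homology $\mathrm{KhR}^\Sigma$ with a careful idempotent decomposition in the Karoubi envelope, in the spirit of Bar-Natan--Morrison. I would set things up so that the deformation data $\Sigma$ is encoded through the polynomial $P(X)=\prod_{j=1}^{l}(X-\lambda_j)^{N_j}$ controlling a deformed foam $2$-category $\mathbf{Foam}^\Sigma$, whose state space on the unknot is the Frobenius algebra $\C[X]/\langle P(X)\rangle$. The undeformed colored $\slnn{N}$ foam $2$-category then corresponds to the case $\Sigma=\{0,\dots,0\}$.

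First I would set up the deformed theory and verify that the combinatorial foam construction recovers Wu's deformed colored $\slnn{N}$ homology $\mathrm{KhR}^\Sigma$, matching the complex of \cite{Wu2}. The key algebraic input is the Chinese Remainder decomposition
\begin{equation}
\C[X]/\langle P(X)\rangle \;\cong\; \prod_{j=1}^{l} \C[X]/\langle (X-\lambda_j)^{N_j}\rangle,
\end{equation}
which produces a complete set of orthogonal primitive idempotents $\{e_j\}_{j=1}^{l}$. The next step is to lift these to $2$-morphisms $\idem_j\in \End(\mathbf{1}_1)$ on an $a=1$ labeled facet, expressible as explicit polynomials in a dot on the facet, using the dot-migration/deformation relations already available via \eqref{eqnAP}. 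After that, I would introduce idempotents $\idem_{(b_1,\dots,b_l)}$ on an $a$-labeled strand indexed by tuples with $\sum_j b_j=a$ and $0\leq b_j\leq N_j$, built from the $\idem_j$ by thickening/splitting; an analogue of the decomposition $\idem_a = \sum \idem_{(b_1,\dots,b_l)}$ should follow from mutually orthogonal and complete properties of the $\idem_j$ together with the categorified splitter/merge calculus already present in the foam $2$-category.

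The core categorical step is then to exhibit, for each distinct root $\lambda_j$ and integer $0\leq b\leq N_j$, an equivalence between the summand of $\mathbf{Kar}(\mathbf{Foam}^\Sigma)$ cut out by $\idem_j$ on a $b$-labeled strand and the undeformed $\slnn{N_j}$ foam $2$-category at color $b$. Concretely, one shifts the dot on the $\idem_j$-colored facet by $\lambda_j$ and checks that the induced relations on foams carrying only the $j^{\text{th}}$ idempotent are exactly the undeformed rank-$N_j$ relations (neck-cutting, bubble evaluations, dot-to-the-$N_j$ reduction, etc.). Combining components then yields, on an $a$-labeled strand, an orthogonal decomposition into summands labeled by $(b_1,\dots,b_l)$, each of which is a tensor product of undeformed $\slnn{N_j}$ pieces on $b_j$-labeled strands.

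Finally, applying these decompositions simultaneously to each component of the link diagram $\cal{L}(a_1,\dots,a_k)$ and invoking that crossings, caps, and cups preserve the colored idempotent decomposition (this uses that the deformed Rickard complex intertwines the $\idem_j$'s, which in turn follows from the fact that the idempotents are central with respect to the relevant $2$-morphisms by the dot-migration relations) produces a direct sum decomposition of the chain complex, and hence of its homology, of the stated form. The main obstacle I expect is verifying this last compatibility at crossings: one must show that the deformed crossing complex splits along the idempotents on each strand and, when restricted to the $(b_{1,j},\dots,b_{k,j})$-summand, reproduces the undeformed $\slnn{N_j}$ Rickard complex up to a grading-preserving isomorphism. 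This should be handled by a local computation on the Rickard complex $\llbracket\cdot\rrbracket^\Sigma$ using the idempotent-twisted foams, analogous to \eqref{eqnAI}--\eqref{eqnAK}, together with naturality of the Karoubi equivalences constructed above. Once this is established, the claimed isomorphism follows and is homologically graded since the idempotents are grading-preserving.
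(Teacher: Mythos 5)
Your overall strategy is the same as the paper's: deform the foam $2$-category by $P(X)=\prod_j(X-\lambda_j)^{N_j}$, use the Chinese Remainder Theorem to produce orthogonal idempotents on facets, extend them to thick strands indexed by multisubsets of $\Sigma$ (your tuples $(b_1,\dots,b_l)$), identify the unicolored pieces with undeformed $\slnn{N_j}$ foams by shifting the dot by $\lambda_j$, and observe that idempotents slide through Rickard complexes so that inconsistent colorings of link components are killed. All of this matches Sections 3--5 of the paper, and the direct sum over colorings $f$ is exactly Lemma \ref{sumdecomp}.

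There is, however, a genuine gap at the tensor-product step, which is where most of the technical work in the paper lives. A strand colored by the idempotent $\idem_{(b_1,\dots,b_l)}$ is \emph{not} locally a disjoint union of $b_j$-labeled, $\lambda_j$-colored strands: only its endomorphism algebra is a tensor product (Theorem \ref{unknotdecomp}). To convert the direct summand $\LC{\cal{L}}{\Sigma}{f}$ of the complex into a tensor product $\bigotimes_j\LC{\cal{L}}{\Sigma}{\lambda_j\in f}$ one must physically separate the colored components: replace every mixed-colored web $W$ by its split web $\bigsqcup_\lambda W_\lambda$ via a \emph{weak} equivalence (conjugation by splitter/merger webs $L$, $R$ with $L\otimes R\cong 1$), choose isomorphism foams $T_W$ coherently for all resolutions of the diagram, and verify that these commute with the cap/cup differentials of the Rickard complexes --- which they only do up to non-trivial unit decorations and signs that must be fixed globally (Lemma \ref{commutation}, Theorem \ref{splitdifferential}, and the sign-assignment argument). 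Your appeal to ``naturality of the Karoubi equivalences'' does not supply this: the relevant splittings are not natural transformations of a functor but hand-built $2$-isomorphisms, and indeed the paper explicitly leaves the full splitting of $\Hom$-spaces between split webs as a conjecture (Remark \ref{rem-conj}), working around it by splitting only the specific complexes arising from tangle diagrams. Relatedly, the restriction of the crossing complex to a fixed coloring reproduces the \emph{tensor product over $j$} of the $\slnn{N_j}$ Rickard complexes (via the lattice of re-routing multisets in Remark \ref{tensorcxrem}), not a single one, and establishing that identification of differentials is precisely the missing content.
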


\begin{exa-nono} Let $K$ be a 1-colored knot. Then the $\Sigma$-deformed $\slnn{N}$ homology of $K$ splits into the direct sum of undeformed $\slnn{M}$ homologies of $K$, and there is one $\slnn{M}$ summand for every root of multiplicity $M$ in $\Sigma$:
\begin{equation*}
\LH{K}{\Sigma} \cong \bigoplus_{j=1}^l \LH{K}{\slnn{N_{j}}}.
\end{equation*}
This seems to be a folk theorem in the link homology community, see e.g. \cite{GW}. The authors, however, could not find a proof in the literature.
\end{exa-nono}

\begin{exa-nono} Let $K$ be a knot we write $K^0$, $K^1$ and $K^2$ for its 0-, 1- and 2-colored variants respectively. Let $\Sigma=\{\lambda_1,\lambda_1, \lambda_2,\lambda_2,\lambda_2\}$ for complex numbers $\lambda_1 \neq \lambda_2$. Then the $\Sigma$-deformed $\slnn{5}$ homology of $K^2$ is:
\begin{align*}
\LH{K^2}{\Sigma} \cong& \Big( \LH{K^2}{\slnn{2}} \otimes \LH{K^0}{\slnn{3}} \Big) \oplus \Big( \LH{K^1}{\slnn{2}} \otimes \LH{K^1}{\slnn{3}} \Big) \\
& \quad \quad \quad \oplus \Big(  \LH{K^0}{\slnn{2}} \otimes \LH{K^2}{\slnn{3}} \Big) \\
\cong& \; \C \oplus \Big( \LH{K^1}{\slnn{2}} \otimes \LH{K^1}{\slnn{3}} \Big) \oplus \LH{K^2}{\slnn{3}}
\end{align*}
up to shifts in homological degree on the first and third direct summand.
\end{exa-nono}

Our main tool is the $\slnn{N}$ foam 2-category $\Foam{N}$ constructed in~\cite{QR}, as well as its relation to the Khovanov-Lauda~\cite{KhL3} 
diagrammatic categorification of quantum $\slm$ (see also work of Rouquier~\cite{Rou}).
The former can be viewed as the universal framework for the definition of categorified 
$\slnn{N}$ Reshetikhin-Turaev invariants of tangles $\tau$ colored by the fundamental representations of $\slnn{N}$.
More specifically, given any colored tangle $\tau$, there exists an invariant $\LC{\tau}{}{}$ taking values in the homotopy category of chain complexes over $\Hom$-categories 
in $\Foam{N}$, which consist of trivalent graphs called webs and decorated, singular cobordisms between them called foams. 
Passing to a quotient 2-category $\foam{N}{}^\bullet$ obtained by introducing an additional foam relation on decorated 1-labeled foam facets:
\begin{equation}
\label{eqn-undef}
\eqnAY
\end{equation}
it is shown in~\cite{QR} that the resulting bi-graded link invariant  $\mathrm{KhR}^{\slnn{N}}(-)$ essentially agrees with 
Wu's and Yonezawa's colored generalization of $\slnn{N}$ Khovanov-Rozansky link homology.
Equation \eqref{eqn-undef} corresponds to the fact that $X^N$ is the derivative of the polynomial used to give the potentials for the matrix factorizations in their construction.

In this paper, we analogously define the deformed colored $\slnn{N}$ link invariants $\LH{-}{\Sigma}$ for an $N$-element multiset $\Sigma$ of complex deformation parameters by working in a deformed foam 2-category $\Foam{N}^\Sigma$. It is defined  as the quotient 2-category of $\Foam{N}$ by the additional relation:
\begin{equation}
\label{eqn-def}
\eqnAX
\end{equation}
where $e_i(\Sigma)$ denotes the $i^{th}$ elementary symmetric polynomial in $N$ variables, evaluated at the multiset $\Sigma$. This is motivated by the relation between $\Foam{N}$ and categorified quantum groups and by Wu's construction of deformed $\slnn{N}$ link homology, which utilizes matrix factorizations whose potential is built from a polynomial with derivative $P(X)=\sum_{i=0}^N (-1)^{N-i}e_{N-i}(\Sigma)X^i$ with root multiset $\Sigma$. 

We prove Theorem \ref{mainthm} for the invariants constructed via the $\Sigma$-deformed $\slnn{N}$ foam 2-categories $\Foam{N}^\Sigma$ and undeformed $\slnn{N_j}$ foam 2-categories $\Foam{N_j}^\bullet$. To this end, we adapt Bar-Natan and Morrison's Karoubi envelope technology, originally used to give a ``local'' proof of Lee's deformation result \cite{BNM}, to the setting of foams, see Section \ref{subsubs_BNM}. The relation to Wu's deformed Khovanov-Rozansky link homology is then provided by the following generalization of Theorem 4.11 in \cite{QR}:

\begin{thm}
\label{mainthm2}
The invariant $\LH{\mathcal{L}}{\Sigma}$ constructed from $\foam{N}{}^\Sigma$ is (up to grading shifts) isomorphic to Wu's colored, deformed Khovanov-Rozansky homology of the mirror link $\cal{L}'$ with respect to deformation parameters $\Sigma$.
\end{thm}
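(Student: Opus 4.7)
The plan is to adapt the proof of Theorem 4.11 in \cite{QR}, which established the analogous statement in the undeformed case, to accommodate the deformation parameters $\Sigma$. Recall that in the undeformed setting, one constructs a 2-functor from the foam 2-category $\foam{N}{}^\bullet$ to a 2-category of matrix factorizations with potential determined by $X^{N+1}/(N+1)$, and shows that it sends the foam-based complex $\LC{\tau}{}{}$ associated to a colored tangle $\tau$ to (a shifted version of) the matrix factorization complex used by Wu and Yonezawa. The key observation enabling the generalization is that Wu's deformed theory is obtained by replacing the potential $X^{N+1}/(N+1)$ (whose derivative is $X^N$) by an antiderivative of $P(X) = \sum_{i=0}^N (-1)^{N-i}e_{N-i}(\Sigma)X^i$, which is the polynomial with root multiset $\Sigma$, and the deformed foam relation \eqref{eqn-def} is precisely the foam-theoretic shadow of this change of potential.

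First, I would construct a 2-functor $\Phi^\Sigma \colon \foam{N}{}^\Sigma \to \mathrm{MF}^\Sigma$, where $\mathrm{MF}^\Sigma$ denotes the 2-category of matrix factorizations underlying Wu's construction with deformation parameters $\Sigma$. On objects (sequences of colored points) and 1-morphisms (webs), the assignment should be identical to the one used in \cite{QR}: webs map to the standard Koszul-type matrix factorizations defined by Wu and Yonezawa. On 2-morphisms (foams), the elementary foam generators map to the same matrix factorization maps as in the undeformed case, since these maps are defined formally by the web data and do not depend on the choice of potential.

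Next I would verify that $\Phi^\Sigma$ is well-defined, i.e.\ that it descends to the quotient by the deformed relations. The undeformed foam relations of $\Foam{N}$ are all known to be respected by the undeformed evaluation, and their verification carries over verbatim since those relations are insensitive to the potential. The crucial new check is that the deformed facet relation \eqref{eqn-def} is sent to a valid identity of matrix factorization maps. But this is exactly the translation of the fact that, on the matrix factorization side, multiplication by $X^N$ on a 1-labeled strand is chain-homotopic to $\sum_{i=0}^{N-1}(-1)^{N-i-1}e_{N-i}(\Sigma)X^i$ via a null-homotopy built from the deformed potential whose derivative is $P(X)$. This is the computation that underlies Wu's definition and can be imported directly; essentially, both sides encode the same statement about the Jacobi algebra of the deformed potential.

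With $\Phi^\Sigma$ constructed and shown to be well-defined, I would then verify that it intertwines the Rickard-style complexes assigned to crossings on both sides. Since these complexes are built from the same elementary web morphisms (unzip/zip or their colored generalizations) in both frameworks, and $\Phi^\Sigma$ matches these generators, the compatibility follows as in \cite{QR}. Applying $\Phi^\Sigma$ to the closure of a colored tangle and passing to homology therefore identifies $\LH{\mathcal{L}}{\Sigma}$ with Wu's deformed Khovanov-Rozansky homology of $\mathcal{L}'$, up to the conventional overall grading shifts arising from the mirror/orientation and normalization conventions (these shifts are the same as in \cite{QR}, since the deformation does not alter the quantum grading bookkeeping). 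The main obstacle I anticipate is the bookkeeping in the deformed facet relation check: one must carefully compare the explicit null-homotopies used by Wu in the matrix factorization setting with the foam-theoretic relation \eqref{eqn-def}, making sure that elementary symmetric polynomials $e_i(\Sigma)$ match on both sides with consistent signs, and that the matching is compatible with the categorified quantum group structure inherited from $\Foam{N}$. Once this compatibility is pinned down, the rest of the argument is a direct adaptation of the undeformed proof.
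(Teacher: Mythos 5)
Your high-level strategy --- construct a 2-representation of $\foam{N}{}^\Sigma$ on a 2-category of deformed matrix factorizations, verify that the deformed dot relation holds under the 2-functor, and transport the Rickard complexes --- is indeed the one the paper follows (the 2-representation is exactly the content of Theorem~\ref{thm-nondegen}). But there are two concrete gaps in your execution.

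First, you claim that the undeformed foam relations carry over ``verbatim'' and that the 2-morphism assignments are ``defined formally by the web data and do not depend on the choice of potential.'' This underestimates the check. The paper does \emph{not} recycle the explicit Mackaay--Yonezawa morphisms from \cite{QR}; it instead constructs the 2-representation via the stabilization theory of Carqueville--Murfet and Dyckerhoff--Murfet, where each generating foam is assigned the unique (up to homotopy) lift along the stabilization map $\pi\colon M_L \to L$ of a morphism of quotient rings. Whether these lifts genuinely satisfy the foam relations for the \emph{deformed} potential is the content of the proof of Theorem~\ref{thm-nondegen}; the homotopy-uniqueness of stabilization lifts is precisely what reduces the verifications to elementary symmetric-function computations. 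The statement that the maps ``do not depend on the potential'' begs the question of whether they intertwine the now potential-dependent differentials, which is what requires the argument.

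The more serious gap is at the end: you write that the Rickard-complex compatibility ``follows as in \cite{QR}.'' The paper explicitly flags that one \emph{cannot} apply the methods of \cite{QR} directly here, because the deformed dot relation \eqref{eqn-def} is inhomogeneous, so the 2-functor $\Ucatc_Q(\glm) \to \Foam{N}^\Sigma$ does not preserve the quantum grading, and the grading-based rigidity powering the \cite{QR} identification breaks. The paper's workaround is to pass to the quotient 2-category $\Ucatc_Q^{0\leq N}(\glm)^\Sigma$ where relation \eqref{eq-doteq} is imposed in every weight, compute the invariant entirely inside this quotient, and invoke the universality (following Cautis \cite{Cau2}) of link homologies obtained from Rickard complexes in skew Howe 2-representations factoring through a common quotient. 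Both the foam theory and deformed matrix factorizations give such 2-representations, so the invariants agree. Without this universality argument (or an explicit substitute that avoids relying on the grading), the ``follows as in \cite{QR}'' step is unsupported. Finally, the discrepancies between the construction obtained from $\Gamma_m$ and Wu's own conventions --- mirror crossing assignment, extra Reidemeister~I homological shifts, and closure by $N$-labeled edges versus Wu's closure --- require the explicit bookkeeping the paper carries out; in your proposal this comparison is compressed into ``the conventional overall grading shifts,'' which elides real content.
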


In \cite{QR}, the identification of the link invariants defined via foams and and matrix factorizations is proven using results of Mackaay and Yonezawa \cite{MY} 
which imply the existence of a 2-representation of $\Foam{N}$ on $\slnn{N}$ matrix factorizations. 
Rather than adapt their results to the deformed case, we instead give a new, streamlined proof utilizing the theory of stabilization of matrix factorizations to 
give a 2-representation of $\foam{N}{}^\Sigma$ on a 2-category of deformed matrix factorizations. 
We believe this result might be of independent interest, see Section \ref{section-defmf}.

\subsection{Outlook}
There are several possible applications of the results in this paper. 

The first concerns the definition and study of concordance invariants in the spirit of Rasmussen's s-invariant \cite{Ras2}. 
Lobb \cite{Lo1, Lo2} has used Gornik's generic deformation of $\slnn{N}$ link homology to define concordance invariants that are analogous to Rasmussen's invariant. 
Lewark has recently proved independence results for these concordance invariants \cite{Lew}. 
It would be interesting to see whether deformations of colored $\slnn{N}$ link homologies also give rise to concordance invariants and 
whether foam technology can be used to prove (in)dependence properties between them. 

The next application concerns relations between type A link homology theories of different rank and color. 
Both experimental computations and physical reasons suggest that the type A link homology package carries a very rigid structure, 
which is only partially visible on the decategorified level of Reshetikhin-Turaev $\slnn{N}$ invariants, see \cite{DGR}, \cite{GW}, \cite{GS}, \cite{GGS} and \cite{GNSS}. One feature of this structure is the stabilization of $\slnn{N}$ link homologies as $N \to \infty$ to a triply-graded link homology theory that categorifies the HOMFLY-PT polynomial. 
The flip-side of this feature provides specialization spectral sequences from the triply-graded homology to $\slnn{N}$ homology for every $N$. 
Both of these features have been proven for the 1-colored case by Rasmussen \cite{Ras1}. 

Many other aspects of the conjectured structure have not been rigorously proven yet. One, however, that seems to be in reach is the existence of spectral sequences, or ``differentials'', between 
$\slnn{N}$ and $\slnn{M}$ link homologies for $N>M$. 
In analogy to the Lee-Rasmussen spectral sequence that links Khovanov homology to Lee's deformation, 
Wu has defined spectral sequences connecting the ordinary $\slnn{N}$ link homology to its deformations \cite{Wu2}. 
Together with Theorems \ref{mainthm} and \ref{mainthm2} that identify Wu's deformed invariants in terms of undeformed invariants, 
it should be possible to construct the desired spectral sequences.

Wu \cite{Wu2} has further proved that the deformed $\slnn{N}$ link homologies inherit a quantum filtration from the bi-graded undeformed invariant. 
We have ignored this filtration in this paper, but tracking it through the computation of the deformed invariants should significantly improve our understanding of the 
Rasmussen-type concordance invariants and the deformation spectral sequences. 

We also note that there are bi-graded equivariant versions of $\slnn{N}$ link homology, in which the deformation parameters are not specialized to complex numbers but kept as graded variables. 
The $\slnn{2}$ and $\slnn{3}$ equivariant theories have been studied in \cite{Kh1} and \cite{MV}. Krasner \cite{Kra} has introduced a version for 1-colored $\slnn{N}$ link homology for general $N$, 
which has been subsequently generalized by Wu \cite{Wu2} to arbitrary colorings by fundamental representations.  
It is an interesting question whether these equivariant theories also admit a definition via foam technology. This in turn would help to understand the quantum filtration on the deformed invariants.

Daniel Tubbenhauer has informed us that the deformations studied in this paper could be useful for writing down explicit isomorphisms between the centers of $\slnn{N}$ web algebras \cite{Ma} 
and cohomology rings of certain generalizations of Springer fibers, whose existence is guaranteed by \cite[Corollary 7.10]{Ma}. This would generalize work of Mackaay-Pan-Tubbenhauer \cite{MPT} on the case of $\slnn{3}$. \\

\textbf{Structure of this paper:}
We begin by introducing the necessary technology and graphical calculi in Section \ref{section-tech}. 
In particular, we discuss foams, categorified quantum groups, and the Karoubi envelope technology of Bar-Natan and Morrison. 
In Section \ref{section-defunknot} we study deformations of the higher representation theoretic structures that control deformed link invariants and prove a version of 
Theorem \ref{mainthm} for the unknot. 
Armed with this tool, we prove splitting relations in the deformed foam 2-category $\foam{N}{}^\Sigma$ and introduce a suitable idempotent completion 
$\hat{\foam{N}{}^\Sigma}$ in Section \ref{section-foamcat}. 
This section also establishes a 2-representation of the deformed foam 2-category on matrix factorizations, which is necessary for the proof of Theorem \ref{mainthm2}. 
Finally, Section \ref{section-decomp} contains the definition of the deformed link invariants $\LH{\mathcal{L}}{\Sigma}$ and the proofs of Theorems \ref{mainthm} and \ref{mainthm2}.\\

\textbf{Acknowledgements:}
The authors would like to thank Hanno Becker, Sabin Cautis, Eugene Gorsky, Aaron Lauda, Daniel Murfet, Jake Rasmussen and Ben Webster for helpful conversations and email exchanges during the course of this work. 
Special thanks go to Marko Sto\v{s}i\'c and Daniel Tubbenhauer for numerous useful comments on a draft of this paper.
D. R. would like to especially thank Sabin Cautis and Aaron Lauda, as preliminary work on this topic began in collaboration with them. 
P. W. would like to especially thank Jake Rasmussen for his support and guidance\footnote{P. W.'s PhD studies at the Department of Pure Mathematics and Mathematical Statistics, 
University of Cambridge, have been supported by the ERC grant ERC-2007-StG-205349 held by Ivan Smith and an EPSRC department doctoral training grant. The proof of Theorem \ref{mainthm} in this paper is part of P. W.'s thesis.}.

\section{Technology review}
\label{section-tech}
In this section, we recall the relevant machinery needed to prove Theorems \ref{mainthm} and \ref{mainthm2}. Explicitly, we discuss $\slnn{N}$ foams, categorified quantum groups, and their deformations, as well as the Karoubi envelope technology used in \cite{BNM}.

\subsection{Foams}
\label{subsubsec_foams}
Recall from \cite{QR} that a natural setting for a combinatorial formulation for Khovanov-Rozansky's $\slnn{N}$ link homology is the 2-category $\foam{N}$. 
In this 2-category, objects are given by sequences $\mathbf{a} = (a_1,\ldots, a_m)$ for $m > 0$ with $a_i \in \{1,\ldots,N\}$, 1-morphisms are formal direct sums of enhanced $\slnn{N}$ webs -- 
leftward oriented, labeled\footnote{These labels correspond to the ``colorings'' of tangle components by fundamental representations of $\slnn{N}$. 
We reserve the word ``color'' for certain idempotent decorations on foams and webs, see below.} trivalent graphs generated by
\[
\xy
(0,0)*{
\begin{tikzpicture}[scale=.4]
	\draw [very thick,directed=.55] (2.25,0) to (.75,0);
	\draw [very thick,directed=.55] (.75,0) to [out=135,in=0] (-1,.75);
	\draw [very thick,directed=.55] (.75,0) to [out=225,in=0] (-1,-.75);
	\node at (3.25,0) {\tiny $a+b$};
	\node at (-1.5,.75) {\tiny $a$};
	\node at (-1.5,-.75) {\tiny $b$};
\end{tikzpicture}
};
\endxy
\quad , \quad
\xy
(0,0)*{
\begin{tikzpicture}[scale=.4]
	\draw [very thick,rdirected=.55] (-2.25,0) to (-.75,0);
	\draw [very thick,rdirected=.55] (-.75,0) to [out=45,in=180] (1,.75);
	\draw [very thick,rdirected=.55] (-.75,0) to [out=315,in=180] (1,-.75);
	\node at (-3.25,0) {\tiny $a+b$};
	\node at (1.5,.75) {\tiny $a$};
	\node at (1.5,-.75) {\tiny $b$};
\end{tikzpicture}
};
\endxy
\]
which we view as mapping from the sequence determined by the labeled points on the right boundary to the one determined by the left. 
The 2-morphisms are matrices of enhanced $\slnn{N}$ foams, singular cobordisms between such webs generated by
\[
\xy
(0,0)*{
\begin{tikzpicture} [scale=.5,fill opacity=0.2]
	%shading
	\path[fill=blue] (2.25,3) to (.75,3) to (.75,0) to (2.25,0);
	\path[fill=red] (.75,3) to [out=225,in=0] (-.5,2.5) to (-.5,-.5) to [out=0,in=225] (.75,0);
	\path[fill=red] (.75,3) to [out=135,in=0] (-1,3.5) to (-1,.5) to [out=0,in=135] (.75,0);	
	%bottom web
	\draw [very thick,directed=.55] (2.25,0) to (.75,0);
	\draw [very thick,directed=.55] (.75,0) to [out=135,in=0] (-1,.5);
	\draw [very thick,directed=.55] (.75,0) to [out=225,in=0] (-.5,-.5);
	%seam
	\draw[very thick, red, directed=.55] (.75,0) to (.75,3);
	%vertical edges
	\draw [very thick] (2.25,3) to (2.25,0);
	\draw [very thick] (-1,3.5) to (-1,.5);
	\draw [very thick] (-.5,2.5) to (-.5,-.5);
	%top web
	\draw [very thick,directed=.55] (2.25,3) to (.75,3);
	\draw [very thick,directed=.55] (.75,3) to [out=135,in=0] (-1,3.5);
	\draw [very thick,directed=.55] (.75,3) to [out=225,in=0] (-.5,2.5);
	%labels
	\node [blue, opacity=1]  at (1.5,2.5) {\tiny{$_{a+b}$}};
	\node[red, opacity=1] at (-.75,3.25) {\tiny{$b$}};
	\node[red, opacity=1] at (-.25,2.25) {\tiny{$a$}};		
\end{tikzpicture}
};
\endxy
\quad , \quad 
\xy
(0,0)*{
\begin{tikzpicture} [scale=.5,fill opacity=0.2]
	%shading
	\path[fill=blue] (-2.25,3) to (-.75,3) to (-.75,0) to (-2.25,0);
	\path[fill=red] (-.75,3) to [out=45,in=180] (.5,3.5) to (.5,.5) to [out=180,in=45] (-.75,0);
	\path[fill=red] (-.75,3) to [out=315,in=180] (1,2.5) to (1,-.5) to [out=180,in=315] (-.75,0);	
	%bottom web
	\draw [very thick,rdirected=.55] (-2.25,0) to (-.75,0);
	\draw [very thick,rdirected=.55] (-.75,0) to [out=315,in=180] (1,-.5);
	\draw [very thick,rdirected=.55] (-.75,0) to [out=45,in=180] (.5,.5);
	%seam
	\draw[very thick, red, rdirected=.55] (-.75,0) to (-.75,3);
	%vertical edges
	\draw [very thick] (-2.25,3) to (-2.25,0);
	\draw [very thick] (1,2.5) to (1,-.5);
	\draw [very thick] (.5,3.5) to (.5,.5);
	%top web
	\draw [very thick,rdirected=.55] (-2.25,3) to (-.75,3);
	\draw [very thick,rdirected=.55] (-.75,3) to [out=315,in=180] (1,2.5);
	\draw [very thick,rdirected=.55] (-.75,3) to [out=45,in=180] (.5,3.5);
	%labels
	\node [blue, opacity=1]  at (-1.5,2.5) {\tiny{$_{a+b}$}};
	\node[red, opacity=1] at (.25,3.25) {\tiny{$b$}};
	\node[red, opacity=1] at (.75,2.25) {\tiny{$a$}};		
\end{tikzpicture}
};
\endxy
\quad , \quad
\xy
(0,0)*{
\begin{tikzpicture} [scale=.5,fill opacity=0.2]
	%shading
	\path [fill=red] (4.25,-.5) to (4.25,2) to [out=165,in=15] (-.5,2) to (-.5,-.5) to 
		[out=0,in=225] (.75,0) to [out=90,in=180] (1.625,1.25) to [out=0,in=90] 
			(2.5,0) to [out=315,in=180] (4.25,-.5);
	\path [fill=red] (3.75,.5) to (3.75,3) to [out=195,in=345] (-1,3) to (-1,.5) to 
		[out=0,in=135] (.75,0) to [out=90,in=180] (1.625,1.25) to [out=0,in=90] 
			(2.5,0) to [out=45,in=180] (3.75,.5);
	\path[fill=blue] (.75,0) to [out=90,in=180] (1.625,1.25) to [out=0,in=90] (2.5,0);
	%bottom web
	\draw [very thick,directed=.55] (2.5,0) to (.75,0);
	\draw [very thick,directed=.55] (.75,0) to [out=135,in=0] (-1,.5);
	\draw [very thick,directed=.55] (.75,0) to [out=225,in=0] (-.5,-.5);
	\draw [very thick,directed=.55] (3.75,.5) to [out=180,in=45] (2.5,0);
	\draw [very thick,directed=.55] (4.25,-.5) to [out=180,in=315] (2.5,0);
	%seam
	\draw [very thick, red, directed=.75] (.75,0) to [out=90,in=180] (1.625,1.25);
	\draw [very thick, red] (1.625,1.25) to [out=0,in=90] (2.5,0);
	%vertical edges
	\draw [very thick] (3.75,3) to (3.75,.5);
	\draw [very thick] (4.25,2) to (4.25,-.5);
	\draw [very thick] (-1,3) to (-1,.5);
	\draw [very thick] (-.5,2) to (-.5,-.5);
	%top web
	\draw [very thick,directed=.55] (4.25,2) to [out=165,in=15] (-.5,2);
	\draw [very thick, directed=.55] (3.75,3) to [out=195,in=345] (-1,3);
	%labels
	\node [blue, opacity=1]  at (1.625,.5) {\tiny{$_{a+b}$}};
	\node[red, opacity=1] at (3.5,2.65) {\tiny{$b$}};
	\node[red, opacity=1] at (4,1.85) {\tiny{$a$}};		
\end{tikzpicture}
};
\endxy
\quad , \quad 
\xy
(0,0)*{
\begin{tikzpicture} [scale=.5,fill opacity=0.2]
	%shading
	\path [fill=red] (4.25,2) to (4.25,-.5) to [out=165,in=15] (-.5,-.5) to (-.5,2) to
		[out=0,in=225] (.75,2.5) to [out=270,in=180] (1.625,1.25) to [out=0,in=270] 
			(2.5,2.5) to [out=315,in=180] (4.25,2);
	\path [fill=red] (3.75,3) to (3.75,.5) to [out=195,in=345] (-1,.5) to (-1,3) to [out=0,in=135]
		(.75,2.5) to [out=270,in=180] (1.625,1.25) to [out=0,in=270] 
			(2.5,2.5) to [out=45,in=180] (3.75,3);
	\path[fill=blue] (2.5,2.5) to [out=270,in=0] (1.625,1.25) to [out=180,in=270] (.75,2.5);
	%bottom web
	\draw [very thick,directed=.55] (4.25,-.5) to [out=165,in=15] (-.5,-.5);
	\draw [very thick, directed=.55] (3.75,.5) to [out=195,in=345] (-1,.5);
	%seam
	\draw [very thick, red, directed=.75] (2.5,2.5) to [out=270,in=0] (1.625,1.25);
	\draw [very thick, red] (1.625,1.25) to [out=180,in=270] (.75,2.5);
	%vertical edges
	\draw [very thick] (3.75,3) to (3.75,.5);
	\draw [very thick] (4.25,2) to (4.25,-.5);
	\draw [very thick] (-1,3) to (-1,.5);
	\draw [very thick] (-.5,2) to (-.5,-.5);
	%top web
	\draw [very thick,directed=.55] (2.5,2.5) to (.75,2.5);
	\draw [very thick,directed=.55] (.75,2.5) to [out=135,in=0] (-1,3);
	\draw [very thick,directed=.55] (.75,2.5) to [out=225,in=0] (-.5,2);
	\draw [very thick,directed=.55] (3.75,3) to [out=180,in=45] (2.5,2.5);
	\draw [very thick,directed=.55] (4.25,2) to [out=180,in=315] (2.5,2.5);
	%labels
	\node [blue, opacity=1]  at (1.625,2) {\tiny{$_{a+b}$}};
	\node[red, opacity=1] at (3.5,2.65) {\tiny{$b$}};
	\node[red, opacity=1] at (4,1.85) {\tiny{$a$}};		
\end{tikzpicture}
};
\endxy
\]
\[
\xy
(0,0)*{
\begin{tikzpicture} [scale=.5,fill opacity=0.2]
	%back cup 
	\path[fill=blue] (-.75,4) to [out=270,in=180] (0,2.5) to [out=0,in=270] (.75,4) .. controls (.5,4.5) and (-.5,4.5) .. (-.75,4);
	%sheet
	\path[fill=red] (-.75,4) to [out=270,in=180] (0,2.5) to [out=0,in=270] (.75,4) -- (2,4) -- (2,1) -- (-2,1) -- (-2,4) -- (-.75,4);
	%front cup
	\path[fill=blue] (-.75,4) to [out=270,in=180] (0,2.5) to [out=0,in=270] (.75,4) .. controls (.5,3.5) and (-.5,3.5) .. (-.75,4);
	%bottom web
	\draw[very thick, directed=.55] (2,1) -- (-2,1);
	\path (.75,1) .. controls (.5,.5) and (-.5,.5) .. (-.75,1); %for spacing symmetry
	%seam
	\draw [very thick, red, directed=.7] (-.75,4) to [out=270,in=180] (0,2.5) to [out=0,in=270] (.75,4);
	%vertical edges
	\draw[very thick] (2,4) -- (2,1);
	\draw[very thick] (-2,4) -- (-2,1);
	%top web
	\draw[very thick,directed=.55] (2,4) -- (.75,4);
	\draw[very thick,directed=.55] (-.75,4) -- (-2,4);
	\draw[very thick,directed=.55] (.75,4) .. controls (.5,3.5) and (-.5,3.5) .. (-.75,4);
	\draw[very thick,directed=.55] (.75,4) .. controls (.5,4.5) and (-.5,4.5) .. (-.75,4);
	%labels
	\node [red, opacity=1]  at (1.5,3.5) {\tiny{$_{a+b}$}};
	\node[blue, opacity=1] at (-.25,3.375) {\tiny{$a$}};
	\node[blue, opacity=1] at (-.25,4.1) {\tiny{$b$}};	
\end{tikzpicture}
};
\endxy
\quad , \quad
\xy
(0,0)*{
\begin{tikzpicture} [scale=.5,fill opacity=0.2]
	%back cup 
	\path[fill=blue] (-.75,-4) to [out=90,in=180] (0,-2.5) to [out=0,in=90] (.75,-4) .. controls (.5,-4.5) and (-.5,-4.5) .. (-.75,-4);
	%sheet
	\path[fill=red] (-.75,-4) to [out=90,in=180] (0,-2.5) to [out=0,in=90] (.75,-4) -- (2,-4) -- (2,-1) -- (-2,-1) -- (-2,-4) -- (-.75,-4);
	%front cup
	\path[fill=blue] (-.75,-4) to [out=90,in=180] (0,-2.5) to [out=0,in=90] (.75,-4) .. controls (.5,-3.5) and (-.5,-3.5) .. (-.75,-4);
	%top web
	\draw[very thick, directed=.55] (2,-1) -- (-2,-1);
	\path (.75,-1) .. controls (.5,-.5) and (-.5,-.5) .. (-.75,-1); %for spacing symmetry
	%seam
	\draw [very thick, red, directed=.7] (.75,-4) to [out=90,in=0] (0,-2.5) to [out=180,in=90] (-.75,-4);
	%vertical edges
	\draw[very thick] (2,-4) -- (2,-1);
	\draw[very thick] (-2,-4) -- (-2,-1);
	%bottom web
	\draw[very thick,directed=.55] (2,-4) -- (.75,-4);
	\draw[very thick,directed=.55] (-.75,-4) -- (-2,-4);
	\draw[very thick,directed=.55] (.75,-4) .. controls (.5,-3.5) and (-.5,-3.5) .. (-.75,-4);
	\draw[very thick,directed=.55] (.75,-4) .. controls (.5,-4.5) and (-.5,-4.5) .. (-.75,-4);
	%labels
	\node [red, opacity=1]  at (1.25,-1.25) {\tiny{$_{a+b}$}};
	\node[blue, opacity=1] at (-.25,-3.4) {\tiny{$b$}};
	\node[blue, opacity=1] at (-.25,-4.1) {\tiny{$a$}};
\end{tikzpicture}
};
\endxy
\quad , \quad
\xy
(0,0)*{
\begin{tikzpicture} [scale=.5,fill opacity=0.2]
	%shading
%	\node[opacity=1] at (.29,1.5) {$+$};
	\path[fill=red] (-2.5,4) to [out=0,in=135] (-.75,3.5) to [out=270,in=90] (.75,.25)
		to [out=135,in=0] (-2.5,1);
	\path[fill=blue] (-.75,3.5) to [out=270,in=125] (.29,1.5) to [out=55,in=270] (.75,2.75) 
		to [out=135,in=0] (-.75,3.5);
	\path[fill=blue] (-.75,-.5) to [out=90,in=235] (.29,1.5) to [out=315,in=90] (.75,.25) 
		to [out=225,in=0] (-.75,-.5);
	\path[fill=red] (-2,3) to [out=0,in=225] (-.75,3.5) to [out=270,in=125] (.29,1.5)
		to [out=235,in=90] (-.75,-.5) to [out=135,in=0] (-2,0);
	\path[fill=red] (-1.5,2) to [out=0,in=225] (.75,2.75) to [out=270,in=90] (-.75,-.5)
		to [out=225,in=0] (-1.5,-1);
	\path[fill=red] (2,3) to [out=180,in=0] (.75,2.75) to [out=270,in=55] (.29,1.5)
		to [out=305,in=90] (.75,.25) to [out=0,in=180] (2,0);
	%bottom web
	\draw[very thick, directed=.55] (2,0) to [out=180,in=0] (.75,.25);
	\draw[very thick, directed=.55] (.75,.25) to [out=225,in=0] (-.75,-.5);
	\draw[very thick, directed=.55] (.75,.25) to [out=135,in=0] (-2.5,1);
	\draw[very thick, directed=.55] (-.75,-.5) to [out=135,in=0] (-2,0);
	\draw[very thick, directed=.55] (-.75,-.5) to [out=225,in=0] (-1.5,-1);
	%seams
	\draw[very thick, red, rdirected=.85] (-.75,3.5) to [out=270,in=90] (.75,.25);
	\draw[very thick, red, rdirected=.75] (.75,2.75) to [out=270,in=90] (-.75,-.5);	
	%vertical edges
	\draw[very thick] (-1.5,-1) -- (-1.5,2);	
	\draw[very thick] (-2,0) -- (-2,3);
	\draw[very thick] (-2.5,1) -- (-2.5,4);	
	\draw[very thick] (2,3) -- (2,0);
	% top web
	\draw[very thick, directed=.55] (2,3) to [out=180,in=0] (.75,2.75);
	\draw[very thick, directed=.55] (.75,2.75) to [out=135,in=0] (-.75,3.5);
	\draw[very thick, directed=.65] (.75,2.75) to [out=225,in=0] (-1.5,2);
	\draw[very thick, directed=.55]  (-.75,3.5) to [out=225,in=0] (-2,3);
	\draw[very thick, directed=.55]  (-.75,3.5) to [out=135,in=0] (-2.5,4);
	%labels
	\node[red, opacity=1] at (-2.25,3.375) {\tiny$c$};
	\node[red, opacity=1] at (-1.75,2.75) {\tiny$b$};	
	\node[red, opacity=1] at (-1.25,1.75) {\tiny$a$};
	\node[blue, opacity=1] at (0,2.75) {\tiny$_{b+c}$};
	\node[blue, opacity=1] at (0,.25) {\tiny$_{a+b}$};
	\node[red, opacity=1] at (1.35,2.5) {\tiny$_{a+b}$};	
	\node[red, opacity=1] at (1.35,2) {\tiny$_{+c}$};	
\end{tikzpicture}
};
\endxy
\quad , \quad
\xy
(0,0)*{
\begin{tikzpicture} [scale=.5,fill opacity=0.2]
	%shading
%	\node[opacity=1] at (-.35,1.5) {$+$};
	\path[fill=red] (-2.5,4) to [out=0,in=135] (.75,3.25) to [out=270,in=90] (-.75,.5)
		 to [out=135,in=0] (-2.5,1);
	\path[fill=blue] (-.75,2.5) to [out=270,in=125] (-.35,1.5) to [out=45,in=270] (.75,3.25) 
		to [out=225,in=0] (-.75,2.5);
	\path[fill=blue] (-.75,.5) to [out=90,in=235] (-.35,1.5) to [out=315,in=90] (.75,-.25) 
		to [out=135,in=0] (-.75,.5);	
	\path[fill=red] (-2,3) to [out=0,in=135] (-.75,2.5) to [out=270,in=125] (-.35,1.5) 
		to [out=235,in=90] (-.75,.5) to [out=225,in=0] (-2,0);
	\path[fill=red] (-1.5,2) to [out=0,in=225] (-.75,2.5) to [out=270,in=90] (.75,-.25)
		to [out=225,in=0] (-1.5,-1);
	\path[fill=red] (2,3) to [out=180,in=0] (.75,3.25) to [out=270,in=45] (-.35,1.5) 
		to [out=315,in=90] (.75,-.25) to [out=0,in=180] (2,0);				
	%bottom web
	\draw[very thick, directed=.55] (2,0) to [out=180,in=0] (.75,-.25);
	\draw[very thick, directed=.55] (.75,-.25) to [out=135,in=0] (-.75,.5);
	\draw[very thick, directed=.55] (.75,-.25) to [out=225,in=0] (-1.5,-1);
	\draw[very thick, directed=.45]  (-.75,.5) to [out=225,in=0] (-2,0);
	\draw[very thick, directed=.35]  (-.75,.5) to [out=135,in=0] (-2.5,1);	
	%seams
	\draw[very thick, red, rdirected=.75] (-.75,2.5) to [out=270,in=90] (.75,-.25);
	\draw[very thick, red, rdirected=.85] (.75,3.25) to [out=270,in=90] (-.75,.5);
	%vertical edges
	\draw[very thick] (-1.5,-1) -- (-1.5,2);	
	\draw[very thick] (-2,0) -- (-2,3);
	\draw[very thick] (-2.5,1) -- (-2.5,4);	
	\draw[very thick] (2,3) -- (2,0);
	% top web
	\draw[very thick, directed=.55] (2,3) to [out=180,in=0] (.75,3.25);
	\draw[very thick, directed=.55] (.75,3.25) to [out=225,in=0] (-.75,2.5);
	\draw[very thick, directed=.55] (.75,3.25) to [out=135,in=0] (-2.5,4);
	\draw[very thick, directed=.55] (-.75,2.5) to [out=135,in=0] (-2,3);
	\draw[very thick, directed=.55] (-.75,2.5) to [out=225,in=0] (-1.5,2);
	%labels
	\node[red, opacity=1] at (-2.25,3.75) {\tiny$c$};
	\node[red, opacity=1] at (-1.75,2.75) {\tiny$b$};	
	\node[red, opacity=1] at (-1.25,1.75) {\tiny$a$};
	\node[blue, opacity=1] at (-.125,2.25) {\tiny$_{a+b}$};
	\node[blue, opacity=1] at (-.125,.75) {\tiny$_{b+c}$};
	\node[red, opacity=1] at (1.35,2.75) {\tiny$_{a+b}$};
	\node[red, opacity=1] at (1.35,2.25) {\tiny$_{+c}$};	
\end{tikzpicture}
};
\endxy
\]
modulo isotopy and local relations\footnote{The colors red and blue in the foam graphics here and in \cite{QR} have no special significance. 
Later we will use specific colorings of foam facets to indicate decorations by idempotents, see Convention \ref{conv-coloring}. 
We strongly recommend reading this paper in its pdf version, as we make heavy use of colors.}. 
By convention, we view foams as mapping from the web determined by the bottom boundary to that on the top.
The facets of these foams again carry labelings by elements in $\{1,\ldots,N\}$, and a $k$-labeled facet may also be decorated by 
elements from the ring of symmetric functions in $k$ variables. 
Note that in \cite{QR}, the authors utilize the fact that this 2-category admits a grading; 
however, as we'll eventually pass to quotients $\foam{N}{}^\Sigma$ where this grading is broken, we won't concern ourselves with these issues.

Rather than recall the complete list of local relations, we refer the reader to \cite{QR} for full details, and 
list only a few that will play a substantial role in this paper:
\begin{equation}\label{FoamRel1}
\xy
(0,-8.8)*{
\begin{tikzpicture} [scale=.6,fill opacity=0.2]
	%shading
%	\node[opacity=1] at (-.29,1.5) {$+$};
	\path[fill=red] (1.5,4) to [out=180,in=45] (.75,3.5) to [out=270,in=90] (-.75,.25) 
		to [out=45,in=180] (1.5,1);
	\path[fill=blue] (-.75,2.75) to [out=270,in=135] (-.29,1.5) to [out=55,in=270] (.75,3.5) 
		to [out=180,in=45] (-.75,2.75);
	\path[fill=blue] (-.75,.25) to [out=90,in=240] (-.29,1.5) to [out=305,in=90] (.75,-.5) 
		to [out=180,in=315] (-.75,.25);
	\path[fill=red] (2,3) to [out=180,in=315] (.75,3.5) to [out=270,in=55] (-.29,1.5) 
		to [out=305,in=90] (.75,-.5) to [out=45,in=180] (2,0);
	\path[fill=red] (2.5,2) to [out=180,in=315] (-.75,2.75) to [out=270,in=90] (.75,-.5) 
		to [out=315,in=180] (2.5,-1);
	\path[fill=red] (-2,3) to [out=0,in=180] (-.75,2.75) to [out=270,in=125] (-.29,1.5) 
		to [out=235,in=90] (-.75,.25) to [out=180,in=0] (-2,0);
	%bottom web
	\draw[very thick, directed=.75] (1.5,1) to [out=180,in=45] (-.75,.25);
	\draw[very thick, directed=.55] (2,0) to [out=180,in=45] (.75,-.5);
	\draw[very thick, directed=.55] (2.5,-1) to [out=180,in=315] (.75,-.5);
	\draw[very thick, directed=.55] (.75,-.5) to [out=180,in=315] (-.75,.25);
	\draw[very thick, directed=.55] (-.75,.25) to [out=180,in=0] (-2,0);
	%seams
	\draw[very thick, red, directed=.75] (-.75,2.75) to [out=270,in=90] (.75,-.5);
	\draw[very thick, red, directed=.85] (.75,3.5) to [out=270,in=90] (-.75,.25);
	%vertical edges
	\draw[very thick] (1.5,1) -- (1.5,4);	
	\draw[very thick] (2,0) -- (2,3);
	\draw[very thick] (2.5,-1) -- (2.5,2);	
	\draw[very thick] (-2,3) -- (-2,0);
	%labels
	\node[blue, opacity=1] at (0,2.75) {\tiny$_{b+c}$};
	\node[blue, opacity=1] at (0,.25) {\tiny$_{a+b}$};
\end{tikzpicture}
};
(-.3,8.75)*{
\begin{tikzpicture} [scale=.6,fill opacity=0.2]
	%shading
%	\node[opacity=1] at (.35,1.5) {$+$};
	\path[fill=red] (1.5,4) to [out=180,in=45] (-.75,3.25) to [out=270,in=90] (.75,.5) 
		to [out=45,in=180] (1.5,1);
	\path[fill=blue] (-.75,3.25) to [out=270,in=135] (.35,1.5) to [out=45,in=270] (.75,2.5) 
		to [out=180,in=315] (-.75,3.25);
	\path[fill=blue] (-.75,-.25) to [out=90,in=225] (.35,1.5) to [out=315,in=90] (.75,.5) 
		to [out=180,in=45] (-.75,-.25);
	\path[fill=red] (2,3) to [out=180,in=45] (.75,2.5) to [out=270,in=45] (.35,1.5) 
		to [out=315,in=90] (.75,.5) to [out=315,in=180] (2,0);
	\path[fill=red] (2.5,2) to [out=180,in=315] (.75,2.5) to [out=270,in=90] (-.75,-.25) 
		to [out=315,in=180] (2.5,-1);
	\path[fill=red] (-2,3) to [out=0,in=180] (-.75,3.25) to [out=270,in=135] (.35,1.5) 
		to [out=225,in=90] (-.75,-.25) to [out=180,in=0] (-2,0);
	%bottom web
	\draw[red, dashed] (1.5,1) to [out=180,in=45] (.75,.5);
	\draw[red, dashed] (2,0) to [out=180,in=315] (.75,.5);
	\draw[red,dashed] (2.5,-1) to [out=180,in=315] (-.75,-.25);	
	\draw[blue, dashed] (.75,.5) to [out=180,in=45] (-.75,-.25);
	\draw[red,dashed] (-.75,-.25) to [out=180,in=0] (-2,0);
	%seams
	\draw[very thick, red, directed=.85] (-.75,3.25) to [out=270,in=90] (.75,.5);
	\draw[very thick, red, directed=.75] (.75,2.5) to [out=270,in=90] (-.75,-.25);
	%vertical edges
	\draw[very thick] (1.5,1) -- (1.5,4);	
	\draw[very thick] (2,0) -- (2,3);
	\draw[very thick] (2.5,-1) -- (2.5,2);	
	\draw[very thick] (-2,3) -- (-2,0);
	%top web
	\draw[very thick, directed=.55] (1.5,4) to [out=180,in=45] (-.75,3.25);
	\draw[very thick, directed=.75] (2,3) to [out=180,in=45] (.75,2.5);
	\draw[very thick, directed=.75] (2.5,2) to [out=180,in=315] (.75,2.5);
	\draw[very thick, directed=.55] (.75,2.5) to [out=180,in=315] (-.75,3.25);
	\draw[very thick, directed=.55] (-.75,3.25) to [out=180,in=0] (-2,3);
	%labels
	\node[red, opacity=1] at (1.25,3.75) {\tiny$c$};
	\node[red, opacity=1] at (1.75,2.75) {\tiny$b$};	
	\node[red, opacity=1] at (2.25,1.75) {\tiny$a$};
	\node[blue, opacity=1] at (.125,2.25) {\tiny$_{a+b}$};
	\node[red, opacity=1] at (-1.35,2.5) {\tiny$_{a+b+c}$};
\end{tikzpicture}
};
\endxy
\;\; = \;\;
\xy
(0,0)*{
\begin{tikzpicture} [scale=.6,fill opacity=0.2]
	%shading
	\path[fill=red] (1.5,7) to [out=180,in=45] (-.75,6.25) to (-.75,.25) to [out=45,in=180] (1.5,1);
	\path[fill=red] (2,6) to [out=180,in=45] (.75,5.5) to (.75,-.5) to [out=45,in=180] (2,0);
	\path[fill=red] (2.5,5) to [out=180,in=315] (.75,5.5) to (.75,-.5) to [out=315,in=180] (2.5,-1);
	\path[fill=red] (-.75,6.25) to [out=180,in=0] (-2,6) to (-2,0) to [out=0,in=180] (-.75,.25);
	\path[fill=blue] (.75,5.5) to [out=180,in=315] (-.75,6.25) to (-.75,.25) to [out=315,in=180] (.75,-.5);
	%bottom web
	\draw[very thick, directed=.55] (1.5,1) to [out=180,in=45] (-.75,.25);
	\draw[very thick, directed=.55] (2,0) to [out=180,in=45] (.75,-.5);
	\draw[very thick, directed=.55] (2.5,-1) to [out=180,in=315] (.75,-.5);
	\draw[very thick, directed=.55] (.75,-.5) to [out=180,in=315] (-.75,.25);
	\draw[very thick, directed=.55] (-.75,.25) to [out=180,in=0] (-2,0);
	%seams
	\draw[very thick, red, directed=.55] (.75,5.5) to (.75,-.5);
	\draw[very thick, red, directed=.55] (-.75,6.25) to (-.75,.25);
	%vertical edges
	\draw[very thick] (1.5,1) -- (1.5,7);	
	\draw[very thick] (2,0) -- (2,6);
	\draw[very thick] (2.5,-1) -- (2.5,5);	
	\draw[very thick] (-2,6) -- (-2,0);
	%top web
	\draw[very thick, directed=.55] (1.5,7) to [out=180,in=45] (-.75,6.25);
	\draw[very thick, directed=.75] (2,6) to [out=180,in=45] (.75,5.5);
	\draw[very thick, directed=.75] (2.5,5) to [out=180,in=315] (.75,5.5);
	\draw[very thick, directed=.55] (.75,5.5) to [out=180,in=315] (-.75,6.25);
	\draw[very thick, directed=.55] (-.75,6.25) to [out=180,in=0] (-2,6);	
	%labels
	\node[red, opacity=1] at (1.25,6.75) {\tiny$c$};
	\node[red, opacity=1] at (1.75,5.75) {\tiny$b$};	
	\node[red, opacity=1] at (2.25,4.75) {\tiny$a$};
	\node[blue, opacity=1] at (.125,5) {\tiny$_{a+b}$};
	\node[red, opacity=1] at (-1.37,5.5) {\tiny$_{a+b+c}$};
\end{tikzpicture}
};
\endxy
\quad , \quad 
\xy
(0,-9)*{
\begin{tikzpicture} [scale=.6,fill opacity=0.2]
	%shading
%	\node[opacity=1] at (.35,1.5) {$+$};
	\path[fill=red] (1.5,4) to [out=180,in=45] (-.75,3.25) to [out=270,in=90] (.75,.5) 
		to [out=45,in=180] (1.5,1);
	\path[fill=blue] (-.75,3.25) to [out=270,in=135] (.35,1.5) to [out=45,in=270] (.75,2.5) 
		to [out=180,in=315] (-.75,3.25);
	\path[fill=blue] (-.75,-.25) to [out=90,in=225] (.35,1.5) to [out=315,in=90] (.75,.5) 
		to [out=180,in=45] (-.75,-.25);
	\path[fill=red] (2,3) to [out=180,in=45] (.75,2.5) to [out=270,in=45] (.35,1.5) 
		to [out=315,in=90] (.75,.5) to [out=315,in=180] (2,0);
	\path[fill=red] (2.5,2) to [out=180,in=315] (.75,2.5) to [out=270,in=90] (-.75,-.25) 
		to [out=315,in=180] (2.5,-1);
	\path[fill=red] (-2,3) to [out=0,in=180] (-.75,3.25) to [out=270,in=135] (.35,1.5) 
		to [out=225,in=90] (-.75,-.25) to [out=180,in=0] (-2,0);
	%bottom web
	\draw[very thick, directed=.55] (1.5,1) to [out=180,in=45] (.75,.5);
	\draw[very thick, directed=.75] (2,0) to [out=180,in=315] (.75,.5);
	\draw[very thick, directed=.55] (2.5,-1) to [out=180,in=315] (-.75,-.25);	
	\draw[very thick, directed=.55] (.75,.5) to [out=180,in=45] (-.75,-.25);
	\draw[very thick, directed=.55] (-.75,-.25) to [out=180,in=0] (-2,0);
	%seams
	\draw[very thick, red, directed=.85] (-.75,3.25) to [out=270,in=90] (.75,.5);
	\draw[very thick, red, directed=.75] (.75,2.5) to [out=270,in=90] (-.75,-.25);
	%vertical edges
	\draw[very thick] (1.5,1) -- (1.5,4);	
	\draw[very thick] (2,0) -- (2,3);
	\draw[very thick] (2.5,-1) -- (2.5,2);	
	\draw[very thick] (-2,3) -- (-2,0);
	%labels
	\node[blue, opacity=1] at (.125,.75) {\tiny$_{b+c}$};
\end{tikzpicture}
};
(-.3,9.5)*{
\begin{tikzpicture} [scale=.6,fill opacity=0.2]
	%shading
%	\node[opacity=1] at (-.29,1.5) {$+$};
	\path[fill=red] (1.5,4) to [out=180,in=45] (.75,3.5) to [out=270,in=90] (-.75,.25) 
		to [out=45,in=180] (1.5,1);
	\path[fill=blue] (-.75,2.75) to [out=270,in=135] (-.29,1.5) to [out=55,in=270] (.75,3.5) 
		to [out=180,in=45] (-.75,2.75);
	\path[fill=blue] (-.75,.25) to [out=90,in=240] (-.29,1.5) to [out=305,in=90] (.75,-.5) 
		to [out=180,in=315] (-.75,.25);
	\path[fill=red] (2,3) to [out=180,in=315] (.75,3.5) to [out=270,in=55] (-.29,1.5) 
		to [out=305,in=90] (.75,-.5) to [out=45,in=180] (2,0);
	\path[fill=red] (2.5,2) to [out=180,in=315] (-.75,2.75) to [out=270,in=90] (.75,-.5) 
		to [out=315,in=180] (2.5,-1);
	\path[fill=red] (-2,3) to [out=0,in=180] (-.75,2.75) to [out=270,in=125] (-.29,1.5) 
		to [out=235,in=90] (-.75,.25) to [out=180,in=0] (-2,0);
	%bottom web
	\draw[red, dashed] (1.5,1) to [out=180,in=45] (-.75,.25);
	\draw[red, dashed] (2,0) to [out=180,in=45] (.75,-.5);
	\draw[red,dashed] (2.5,-1) to [out=180,in=315] (.75,-.5);
	\draw[blue,dashed] (.75,-.5) to [out=180,in=315] (-.75,.25);
	\draw[red,dashed] (-.75,.25) to [out=180,in=0] (-2,0);
	%seams
	\draw[very thick, red, directed=.75] (-.75,2.75) to [out=270,in=90] (.75,-.5);
	\draw[very thick, red, directed=.85] (.75,3.5) to [out=270,in=90] (-.75,.25);
	%vertical edges
	\draw[very thick] (1.5,1) -- (1.5,4);	
	\draw[very thick] (2,0) -- (2,3);
	\draw[very thick] (2.5,-1) -- (2.5,2);	
	\draw[very thick] (-2,3) -- (-2,0);
	%top web
	\draw[very thick, directed=.55] (1.5,4) to [out=180,in=45] (.75,3.5);
	\draw[very thick, directed=.75] (2,3) to [out=180,in=315] (.75,3.5);
	\draw[very thick, directed=.55] (2.5,2) to [out=180,in=315] (-.75,2.75);	
	\draw[very thick, directed=.55] (.75,3.5) to [out=180,in=45] (-.75,2.75);
	\draw[very thick, directed=.55] (-.75,2.75) to [out=180,in=0] (-2,3);
	%labels
	\node[red, opacity=1] at (1.25,3.5) {\tiny$c$};
	\node[red, opacity=1] at (1.75,2.75) {\tiny$b$};	
	\node[red, opacity=1] at (2.25,1.75) {\tiny$a$};
	\node[blue, opacity=1] at (0,2.75) {\tiny$_{b+c}$};
	\node[blue, opacity=1] at (0,.25) {\tiny$_{a+b}$};
	\node[red, opacity=1] at (-1.35,2.5) {\tiny$_{a+b+c}$};
\end{tikzpicture}
};
\endxy
\;\; = \;\;
\xy
(0,0)*{
\begin{tikzpicture} [scale=.6,fill opacity=0.2]
	%shading
	\path[fill=red] (1.5,7) to [out=180,in=45] (.75,6.5) to (.75,.5) to [out=45,in=180] (1.5,1);
	\path[fill=red] (2,6) to [out=180,in=315] (.75,6.5) to (.75,.5) to [out=315,in=180] (2,0);
	\path[fill=red] (2.5,5) to [out=180,in=315] (-.75,5.75) to (-.75,-.25) to [out=315,in=180] (2.5,-1);
	\path[fill=red] (-.75,5.75) to [out=180,in=0] (-2,6) to (-2,0) to [out=0,in=180] (-.75,-.25);
	\path[fill=blue] (.75,6.5) to [out=180,in=45] (-.75,5.75) to (-.75,-.25) to [out=45,in=180] (.75,.5);
	%bottom web
	\draw[very thick, directed=.55] (1.5,1) to [out=180,in=45] (.75,.5);
	\draw[very thick, directed=.75] (2,0) to [out=180,in=315] (.75,.5);
	\draw[very thick, directed=.55] (2.5,-1) to [out=180,in=315] (-.75,-.25);	
	\draw[very thick, directed=.55] (.75,.5) to [out=180,in=45] (-.75,-.25);
	\draw[very thick, directed=.55] (-.75,-.25) to [out=180,in=0] (-2,0);
	%seams
	\draw[very thick, red, directed=.55] (-.75,5.75) to (-.75,-.25);
	\draw[very thick, red, directed=.55] (.75,6.5) to (.75,.5);
	%vertical edges
	\draw[very thick] (1.5,1) -- (1.5,7);	
	\draw[very thick] (2,0) -- (2,6);
	\draw[very thick] (2.5,-1) -- (2.5,5);	
	\draw[very thick] (-2,6) -- (-2,0);
	%top web
	\draw[very thick, directed=.55] (1.5,7) to [out=180,in=45] (.75,6.5);
	\draw[very thick, directed=.75] (2,6) to [out=180,in=315] (.75,6.5);
	\draw[very thick, directed=.75] (2.5,5) to [out=180,in=315] (-.75,5.75);	
	\draw[very thick, directed=.55] (.75,6.5) to [out=180,in=45] (-.75,5.75);
	\draw[very thick, directed=.55] (-.75,5.75) to [out=180,in=0] (-2,6);	
	%labels
	\node[red, opacity=1] at (1.25,6.5) {\tiny$c$};
	\node[red, opacity=1] at (1.75,5.75) {\tiny$b$};	
	\node[red, opacity=1] at (2.25,4.75) {\tiny$a$};
	\node[blue, opacity=1] at (.125,5.75) {\tiny$_{b+c}$};
	\node[red, opacity=1] at (-1.37,5.5) {\tiny$_{a+b+c}$};	
\end{tikzpicture}
};
\endxy
\end{equation}
\begin{equation}\label{FoamRel2}
\xy
(0,0)*{
\begin{tikzpicture} [scale=.5,fill opacity=0.2]
	%labels
	\node[red,opacity=1] at (1.5,1.65) {\tiny$_{a+b}$};
	\node[blue,opacity=1] at (.25,1.4) {\tiny$a$};
	\node[blue,opacity=1] at (-.25,2.1) {\tiny$b$};
	%shading
	\path[fill=red] (2,2) to (.75,2) to (.75,-2) to (2,-2);
	\path[fill=red] (-2,2) to (-.75,2) to (-.75,-2) to (-2,-2);
	\path[fill=blue] (.75,2) .. controls (.5,2.5) and (-.5,2.5) .. (-.75,2) to (-.75,-2) .. controls (-.5,-1.5) and (.5,-1.5) .. (.75,-2);
	\path[fill=blue] (.75,2) .. controls (.5,1.5) and (-.5,1.5) .. (-.75,2) to (-.75,-2) .. controls (-.5,-2.5) and (.5,-2.5) .. (.75,-2);
	%bottom web
	\draw[very thick, directed=.55] (2,-2) -- (.75,-2);
	\draw[very thick, directed=.55] (-.75,-2) -- (-2,-2);
	\draw[very thick, directed=.55] (.75,-2) .. controls (.5,-2.5) and (-.5,-2.5) .. (-.75,-2);
	\draw[very thick, directed=.55] (.75,-2) .. controls (.5,-1.5) and (-.5,-1.5) .. (-.75,-2);	
	%vertical edges
	\draw[very thick] (2,-2) to (2,2);
	\draw[very thick] (-2,-2) to (-2,2);
	%seam
	\draw[very thick, red, directed=.5] (.75,-2) to (.75,2);
	\draw[very thick, red, directed=.5] (-.75,2) to (-.75,-2);	
	%top web
	\draw[very thick, directed=.55] (2,2) -- (.75,2);
	\draw[very thick, directed=.55] (-.75,2) -- (-2,2);
	\draw[very thick, directed=.55] (.75,2) .. controls (.5,2.5) and (-.5,2.5) .. (-.75,2);
	\draw[very thick, directed=.55] (.75,2) .. controls (.5,1.5) and (-.5,1.5) .. (-.75,2);	
\end{tikzpicture}
};
\endxy
\;\; = \sum_{\alpha \in P(a,b)} (-1)^{|\hat{\alpha}|} \;\;
\xy
(0,0)*{
\begin{tikzpicture} [scale=.5,fill opacity=0.2]
	%labels
	\node[red,opacity=1] at (1.5,1.65) {\tiny$_{a+b}$};
%	\node[blue,opacity=1] at (.25,-2.1) {\tiny$a$};
%	\node[blue,opacity=1] at (-.25,-1.4) {\tiny$b$};
%	\node[blue,opacity=1] at (.25,1.4) {\tiny$a$};
%	\node[blue,opacity=1] at (-.25,2.1) {\tiny$b$};
	\node[opacity=1] at (0,-2) {$_{\pi_{\alpha}}$};
	\node[opacity=1] at (0,2) {$_{\pi_{\hat{\alpha}}}$};
	%shading
	\path[fill=blue] (.75,-2) to [out=90,in=0] (0,-.5) to [out=180,in=90] (-.75,-2) .. controls (-.5,-1.5) and (.5,-1.5) .. (.75,-2);
	\path[fill=blue] (.75,-2) to [out=90,in=0] (0,-.5) to [out=180,in=90] (-.75,-2) .. controls (-.5,-2.5) and (.5,-2.5) .. (.75,-2);
	\path[fill=red] (2,-2) to (.75,-2) to [out=90,in=0] (0,-.5) to [out=180,in=90] (-.75,-2) to (-2,-2) to (-2,2) to (-.75,2) 
		to [out=270,in=180] (0,.5) to [out=0,in=270] (.75,2) to (2,2);
	\path[fill=blue] (.75,2) .. controls (.5,2.5) and (-.5,2.5) .. (-.75,2) to [out=270,in=180] (0,.5) to [out=0,in=270] (.75,2);
	\path[fill=blue] (.75,2) .. controls (.5,1.5) and (-.5,1.5) .. (-.75,2) to [out=270,in=180] (0,.5) to [out=0,in=270] (.75,2);
	%bottom web
	\draw[very thick, directed=.55] (2,-2) -- (.75,-2);
	\draw[very thick, directed=.55] (-.75,-2) -- (-2,-2);
	\draw[very thick, directed=.55] (.75,-2) .. controls (.5,-2.5) and (-.5,-2.5) .. (-.75,-2);
	\draw[very thick, directed=.55] (.75,-2) .. controls (.5,-1.5) and (-.5,-1.5) .. (-.75,-2);	
	%vertical edges
	\draw[very thick] (2,-2) to (2,2);
	\draw[very thick] (-2,-2) to (-2,2);
	%seam
	\draw[very thick, red, directed=.5] (.75,-2) to [out=90,in=0] (0,-.5) to [out=180,in=90] (-.75,-2);
	\draw[very thick, red, directed=.5] (-.75,2) to [out=270,in=180] (0,.5) to [out=0,in=270] (.75,2);	
	%top web
	\draw[very thick, directed=.55] (2,2) -- (.75,2);
	\draw[very thick, directed=.55] (-.75,2) -- (-2,2);
	\draw[very thick, directed=.55] (.75,2) .. controls (.5,2.5) and (-.5,2.5) .. (-.75,2);
	\draw[very thick, directed=.55] (.75,2) .. controls (.5,1.5) and (-.5,1.5) .. (-.75,2);	
\end{tikzpicture}
};
\endxy
\quad , \quad
\xy
(0,0)*{
\begin{tikzpicture} [scale=.6,fill opacity=0.2]
	%shading
	\path[fill=blue] (2.25,3) to (.75,3) to (.75,0) to (2.25,0);
	\path[fill=red] (.75,3) to [out=225,in=0] (-.5,2.5) to (-.5,-.5) to [out=0,in=225] (.75,0);
	\path[fill=red] (.75,3) to [out=135,in=0] (-1,3.5) to (-1,.5) to [out=0,in=135] (.75,0);	
	%bottom web
	\draw [very thick,directed=.55] (2.25,0) to (.75,0);
	\draw [very thick,directed=.55] (.75,0) to [out=135,in=0] (-1,.5);
	\draw [very thick,directed=.55] (.75,0) to [out=225,in=0] (-.5,-.5);
	%seam
	\draw[very thick, red, directed=.55] (.75,0) to (.75,3);
	%vertical edges
	\draw [very thick] (2.25,3) to (2.25,0);
	\draw [very thick] (-1,3.5) to (-1,.5);
	\draw [very thick] (-.5,2.5) to (-.5,-.5);
	%top web
	\draw [very thick,directed=.55] (2.25,3) to (.75,3);
	\draw [very thick,directed=.55] (.75,3) to [out=135,in=0] (-1,3.5);
	\draw [very thick,directed=.55] (.75,3) to [out=225,in=0] (-.5,2.5);
	%labels
	\node [blue, opacity=1]  at (1.5,2.75) {\tiny{$_{a+b}$}};
	\node[red, opacity=1] at (-.75,3.25) {\tiny{$b$}};
	\node[red, opacity=1] at (-.25,2.25) {\tiny{$a$}};	
	\node[opacity=1] at (1.5,1.5) {\small$\pi_\gamma$};
\end{tikzpicture}
};
\endxy
\quad = \sum_{\alpha,\beta} c_{\alpha,\beta}^\gamma \quad
\xy
(0,0)*{
\begin{tikzpicture} [scale=.6,fill opacity=0.2]
	%shading
	\path[fill=blue] (2.25,3) to (.75,3) to (.75,0) to (2.25,0);
	\path[fill=red] (.75,3) to [out=225,in=0] (-.5,2.5) to (-.5,-.5) to [out=0,in=225] (.75,0);
	\path[fill=red] (.75,3) to [out=135,in=0] (-1,3.5) to (-1,.5) to [out=0,in=135] (.75,0);	
	%bottom web
	\draw [very thick,directed=.55] (2.25,0) to (.75,0);
	\draw [very thick,directed=.55] (.75,0) to [out=135,in=0] (-1,.5);
	\draw [very thick,directed=.55] (.75,0) to [out=225,in=0] (-.5,-.5);
	%seam
	\draw[very thick, red, directed=.55] (.75,0) to (.75,3);
	%vertical edges
	\draw [very thick] (2.25,3) to (2.25,0);
	\draw [very thick] (-1,3.5) to (-1,.5);
	\draw [very thick] (-.5,2.5) to (-.5,-.5);
	%top web
	\draw [very thick,directed=.55] (2.25,3) to (.75,3);
	\draw [very thick,directed=.55] (.75,3) to [out=135,in=0] (-1,3.5);
	\draw [very thick,directed=.55] (.75,3) to [out=225,in=0] (-.5,2.5);
	%labels
	\node [blue, opacity=1]  at (1.5,2.75) {\tiny{$_{a+b}$}};
	\node[red, opacity=1] at (-.75,3.25) {\tiny{$b$}};
	\node[red, opacity=1] at (-.25,2.25) {\tiny{$a$}};	
	\node[opacity=1] at (.125,1.25) {\small$\pi_\alpha$};
	\node[opacity=1] at (-.25,3) {\small$\pi_\beta$};	
\end{tikzpicture}
};
\endxy
\end{equation}
\begin{equation}\label{FoamRel3}
\xy
(0,0)*{
\begin{tikzpicture} [scale=.45,fill opacity=0.2]
	%shading	
%	\node[opacity=1] at (.13,2) {$+$};
%	\node[opacity=1] at (.37,2) {$+$};
	\path[fill=red] (2.5,3) to [out=180,in=315] (-.5,3.25) to [out=270,in=90]  (1,.5) 
		to [out=315,in=180] (2.5,0);
	\path[fill=red] (2,4) to [out=180,in=0] (1,3.75) to [out=270,in=45] (.13,2) to [out=315,in=90]
		(1,.5) to [out=45,in=180] (2,1);
	\path[fill=blue] (1,3.75) to [out=225,in=45] (-.5,3.25) to [out=270,in=135] (.13,2) to 
		[out=45,in=270] (1,3.75);
	\path[fill=blue] (-.5,.5) to [out=90,in=235] (.13,2) to [out=315,in=90] (1,.5);
	\path[fill=red] (-2,1) to [out=0,in=135] (-.5,.5) to [out=90,in=270] (1,3.75) 
		to [out=135,in=0] (-2,4);
	\path[fill=red] (-1.5,3) to [out=0,in=180] (-.5,3.25) to [out=270,in=135] (.13,2)
		to [out=235,in=90] (-.5,.5) to [out=225,in=0] (-1.5,0);
	\path[fill=red] (2.5,6) to [out=180,in=315] (1,6.5) to [out=270,in=90]  (-.5,3.25)
		to [out=315,in=180] (2.5,3);
	\path[fill=red] (2,7) to [out=180,in=45] (1,6.5) to [out=270,in=55] (.37,5) to 
		[out=315,in=90] (1,3.75) to [out=0,in=180] (2,4);
	\path[fill=blue] (1,6.5) to [out=270,in=55] (.37,5) to [out=135,in=270] (-.5,6.5);
	\path[fill=blue] (1,3.75) to [out=225,in=45] (-.5,3.25) to [out=90,in=225] (.37,5) to
		[out=315,in=90] (1,3.75);
	\path[fill=red] (-2,4) to [out=0,in=135] (1,3.75) to [out=90,in=270] (-.5,6.5)
		 to [out=135,in=0] (-2,7);
	\path[fill=red] (-1.5,6) to [out=0,in=225] (-.5,6.5) to [out=270,in=135] (.37,5) to 
		[out=225,in=90] (-.5,3.25) to [out=180,in=0] (-1.5,3);
	%bottom web
	\draw[very thick, directed=.55] (2,1) to [out=180,in=45] (1,.5);
	\draw[very thick,directed=.55] (2.5,0) to [out=180,in=315] (1,.5);
	\draw[very thick, directed=.55] (1,.5) to (-.5,.5);
	\draw[very thick, directed=.55] (-.5,.5) to [out=225,in=0] (-1.5,0);
	\draw[very thick, directed=.55] (-.5,.5) to [out=135,in=0] (-2,1);
	%seams
	\draw[very thick, red, directed=.65] (-.5,.5) to [out=90,in=270] (1,3.75);
	\draw[very thick, red, directed=.65] (-.5,3.25) to [out=270,in=90]  (1,.5);
	%vertical edges
	\draw[very thick] (2,1) to (2,4);
	\draw[very thick] (2.5,0) to (2.5,3);
	\draw[very thick] (-1.5,0) to (-1.5,3);
	\draw[very thick] (-2,1) to (-2,4);	
	% top web
	\draw[red, dashed] (2,4) to [out=180,in=0] (1,3.75);
	\draw[red, dashed] (1,3.75) to [out=135,in=0] (-2,4);
	\draw[red, dashed] (2.5,3) to [out=180,in=315] (-.5,3.25);
	\draw[red, dashed] (-.5,3.25) to [out=180,in=0] (-1.5,3);
	\draw[blue, dashed] (1,3.75) to [out=225,in=45] (-.5,3.25);
	%seams
	\draw[very thick, red, directed=.65] (1,3.75) to [out=90,in=270] (-.5,6.5);
	\draw[very thick, red, directed=.65] (1,6.5) to [out=270,in=90]  (-.5,3.25);
	%vertical edges
	\draw[very thick] (2,4) to (2,7);
	\draw[very thick] (2.5,3) to (2.5,6);
	\draw[very thick] (-1.5,3) to (-1.5,6);
	\draw[very thick] (-2,4) to (-2,7);	
	%top web
	\draw[very thick, directed=.55] (2,7) to [out=180,in=45] (1,6.5);
	\draw[very thick,directed=.55] (2.5,6) to [out=180,in=315] (1,6.5);
	\draw[very thick, directed=.55] (1,6.5) to (-.5,6.5);
	\draw[very thick, directed=.55] (-.5,6.5) to [out=225,in=0] (-1.5,6);
	\draw[very thick, directed=.55] (-.5,6.5) to [out=135,in=0] (-2,7);
	%labels
	\node[blue,opacity=1] at (.25,.75) {\tiny$_{a+b}$};
	\node[red,opacity=1] at (1.75,6.75) {\tiny$b$};
	\node[red,opacity=1] at (2.25,5.75) {\tiny$a$};
	\node[red,opacity=1] at (-1.75,6.75) {\tiny$c$};
	\node[blue,opacity=1] at (.25,3.75) {\tiny$_{b-c}$};
	\node[blue,opacity=1] at (.25,6.25) {\tiny$_{a+b}$};
	\node[red,opacity=1] at (-1,5.75) {\tiny$_{a+b}$};
	\node[red,opacity=1] at (-1,5.5) {\tiny$_{-c}$};
\end{tikzpicture}
};
\endxy
\;\; = \sum_{\alpha \in P(a,c)} (-1)^{|\hat{\alpha}|} \;\;
\xy
(0,0)*{
\begin{tikzpicture} [scale=.45,fill opacity=0.2]
	%shading	
	\path[fill=red] (2,7) to [out=180,in=45] (1,6.5) to (1,.5) to [out=45,in=180] (2,1);
	\path[fill=red] (2.5,6) to [out=180,in=315] (1,6.5) to (1,.5) to [out=315,in=180] (2.5,0);
	\path[fill=blue] (1,6.5) to (-.5,6.5) to (-.5,.5) to (1,.5);
	\path[fill=red] (-1.5,0) to [out=0,in=225] (-.5,.5) to (-.5,6.5) to [out=225,in=0] (-1.5,6);
	\path[fill=red] (-2,1) to [out=0,in=135] (-.5,.5) to (-.5,6.5) to [out=135,in=0] (-2,7);
	%bottom web
	\draw[very thick, directed=.55] (2,1) to [out=180,in=45] (1,.5);
	\draw[very thick,directed=.55] (2.5,0) to [out=180,in=315] (1,.5);
	\draw[very thick, directed=.55] (1,.5) to (-.5,.5);
	\draw[very thick, directed=.55] (-.5,.5) to [out=225,in=0] (-1.5,0);
	\draw[very thick, directed=.55] (-.5,.5) to [out=135,in=0] (-2,1);
	%seams
	\draw[very thick, red, directed=.55] (1,6.5) to (1,.5);
	\draw[very thick, red, directed=.55] (-.5,.5) to (-.5,6.5);
	%vertical edges
	\draw[very thick] (2,1) to (2,7);
	\draw[very thick] (2.5,0) to (2.5,6);
	\draw[very thick] (-1.5,0) to (-1.5,6);
	\draw[very thick] (-2,1) to (-2,7);	
	%top web
	\draw[very thick, directed=.55] (2,7) to [out=180,in=45] (1,6.5);
	\draw[very thick,directed=.55] (2.5,6) to [out=180,in=315] (1,6.5);
	\draw[very thick, directed=.55] (1,6.5) to (-.5,6.5);
	\draw[very thick, directed=.55] (-.5,6.5) to [out=225,in=0] (-1.5,6);
	\draw[very thick, directed=.65] (-.5,6.5) to [out=135,in=0] (-2,7);
	%labels
	\node[red,opacity=1] at (1.75,6.75) {\tiny$b$};
	\node[red,opacity=1] at (2.25,5.75) {\tiny$a$};
	\node[red,opacity=1] at (-1.75,6.75) {\tiny$c$};
	\node[blue,opacity=1] at (.25,6.25) {\tiny$_{a+b}$};
	\node[red,opacity=1] at (-1,5.75) {\tiny$_{a+b}$};
	\node[red,opacity=1] at (-1,5.5) {\tiny$_{-c}$};
	\node[opacity=1] at (2,.5) {$_{\pi_{\alpha}}$};
	\node[opacity=1] at (-1.25,6.5) {$_{\pi_{\hat{\alpha}}}$};
\end{tikzpicture}
};
\endxy
\quad , \quad
\xy
(0,0)*{
\begin{tikzpicture} [scale=.45,fill opacity=0.2]
	%shading	
%	\node[opacity=1] at (.37,2) {$+$};
%	\node[opacity=1] at (.13,5) {$+$};
	\path[fill=red] (2.5,6) to [out=180,in=315] (-.5,6.25) to [out=270,in=90]  (1,3.5) 
		to [out=315,in=180] (2.5,3);
	\path[fill=red] (2,7) to [out=180,in=0] (1,6.75) to [out=270,in=45] (.13,5) to [out=315,in=90]
		(1,3.5) to [out=45,in=180] (2,4);
	\path[fill=blue] (1,6.75) to [out=225,in=45] (-.5,6.25) to [out=270,in=135] (.13,5) to 
		[out=45,in=270] (1,6.75);
	\path[fill=blue] (-.5,3.5) to [out=90,in=235] (.13,5) to [out=315,in=90] (1,3.5);
	\path[fill=red] (-2,4) to [out=0,in=135] (-.5,3.5) to [out=90,in=270] (1,6.75) 
		to [out=135,in=0] (-2,7);
	\path[fill=red] (-1.5,6) to [out=0,in=180] (-.5,6.25) to [out=270,in=135] (.13,5)
		to [out=235,in=90] (-.5,3.5) to [out=225,in=0] (-1.5,3);
	\path[fill=red] (2.5,3) to [out=180,in=315] (1,3.5) to [out=270,in=90]  (-.5,.25)
		to [out=315,in=180] (2.5,0);
	\path[fill=red] (2,4) to [out=180,in=45] (1,3.5) to [out=270,in=55] (.37,2) to 
		[out=315,in=90] (1,.75) to [out=0,in=180] (2,1);
	\path[fill=blue] (1,3.5) to [out=270,in=55] (.37,2) to [out=135,in=270] (-.5,3.5);
	\path[fill=blue] (1,.75) to [out=225,in=45] (-.5,.25) to [out=90,in=225] (.37,2) to
		[out=315,in=90] (1,.75);
	\path[fill=red] (-2,1) to [out=0,in=135] (1,.75) to [out=90,in=270] (-.5,3.5)
		 to [out=135,in=0] (-2,4);
	\path[fill=red] (-1.5,3) to [out=0,in=225] (-.5,3.5) to [out=270,in=135] (.37,2) to 
		[out=225,in=90] (-.5,.25) to [out=180,in=0] (-1.5,0);
	% bottom web
	\draw[very thick, directed=.65] (2,1) to [out=180,in=0] (1,.75);
	\draw[very thick, directed=.55] (1,.75) to [out=135,in=0] (-2,1);
	\draw[very thick, directed=.55] (2.5,0) to [out=180,in=315] (-.5,.25);
	\draw[very thick, directed=.55] (-.5,.25) to [out=180,in=0] (-1.5,0);
	\draw[very thick, directed=.55] (1,.75) to [out=225,in=45] (-.5,.25);
	%seams
	\draw[very thick, red, directed=.65] (1,.75) to [out=90,in=270] (-.5,3.5);
	\draw[very thick, red, directed=.65] (1,3.5) to [out=270,in=90]  (-.5,.25);
	%vertical edges
	\draw[very thick] (2,1) to (2,4);
	\draw[very thick] (2.5,0) to (2.5,3);
	\draw[very thick] (-1.5,0) to (-1.5,3);
	\draw[very thick] (-2,1) to (-2,4);	
	%top web
	\draw[dashed, red] (2,4) to [out=180,in=45] (1,3.5);
	\draw[red, dashed] (2.5,3) to [out=180,in=315] (1,3.5);
	\draw[blue, dashed] (1,3.5) to (-.5,3.5);
	\draw[red, dashed] (-.5,3.5) to [out=225,in=0] (-1.5,3);
	\draw[dashed, red] (-.5,3.5) to [out=135,in=0] (-2,4);
	%seams
	\draw[very thick, red, directed=.65] (-.5,3.5) to [out=90,in=270] (1,6.75);
	\draw[very thick, red, directed=.65] (-.5,6.25) to [out=270,in=90]  (1,3.5);
	%vertical edges
	\draw[very thick] (2,4) to (2,7);
	\draw[very thick] (2.5,3) to (2.5,6);
	\draw[very thick] (-1.5,3) to (-1.5,6);
	\draw[very thick] (-2,4) to (-2,7);	
	% top web
	\draw[very thick, directed=.65] (2,7) to [out=180,in=0] (1,6.75);
	\draw[very thick, directed=.55] (1,6.75) to [out=135,in=0] (-2,7);
	\draw[very thick, directed=.55] (2.5,6) to [out=180,in=315] (-.5,6.25);
	\draw[very thick, directed=.55] (-.5,6.25) to [out=180,in=0] (-1.5,6);
	\draw[very thick, directed=.55] (1,6.75) to [out=225,in=45] (-.5,6.25);
	%labels
	%labels
%	\node[red,opacity=1] at (1.75,3.75) {\tiny$b$};
%	\node[red,opacity=1] at (2.25,2.75) {\tiny$a$};
%	\node[red,opacity=1] at (-1.75,3.75) {\tiny$c$};
	\node[blue,opacity=1] at (.25,.825) {\tiny$_{b-c}$};
%	\node[blue,opacity=1] at (.25,3.25) {\tiny$_{a+b}$};
%	\node[red,opacity=1] at (-1,2.75) {\tiny$_{a+b}$};
%	\node[red,opacity=1] at (-.75,2.5) {\tiny$_{-c}$};
	\node[red,opacity=1] at (1.75,6.75) {\tiny$b$};
	\node[red,opacity=1] at (2.25,5.75) {\tiny$a$};
	\node[red,opacity=1] at (-1.75,6.75) {\tiny$c$};
	\node[blue,opacity=1] at (.25,6.25) {\tiny$_{b-c}$};
	\node[blue,opacity=1] at (.25,3.75) {\tiny$_{a+b}$};
	\node[red,opacity=1] at (-1,5.75) {\tiny$_{a+b}$};
	\node[red,opacity=1] at (-1,5.5) {\tiny$_{-c}$};
\end{tikzpicture}
};
\endxy
\;\; =  \sum_{\alpha \in P(a,c)} (-1)^{|\hat{\alpha}|} \;\;
\xy
(0,0)*{
\begin{tikzpicture} [scale=.45,fill opacity=0.2]
	%shading	
	\path[fill=red] (2,7) to [out=180,in=0] (1,6.75) to (1,.75) to [out=0,in=180] (2,1);
	\path[fill=red] (2.5,6) to [out=180,in=315] (-.5,6.25) to (-.5,.25) to [out=315,in=180] (2.5,0);
	\path[fill=blue] (1,6.75) to [out=225,in=45] (-.5,6.25) to (-.5,.25) to [out=45,in=225] 
		 (1,.75) to (1,6.75);
	\path[fill=red] (-1.5,6) to [out=0,in=180] (-.5,6.25) to (-.5,.25) to [out=180,in=0] (-1.5,0);
	\path[fill=red] (-2,1) to [out=0,in=135] (1,.75) to (1,6.75) to [out=135,in=0] (-2,7);
	% bottom web
	\draw[very thick, directed=.65] (2,1) to [out=180,in=0] (1,.75);
	\draw[very thick, directed=.65] (1,.75) to [out=135,in=0] (-2,1);
	\draw[very thick, directed=.55] (2.5,0) to [out=180,in=315] (-.5,.25);
	\draw[very thick, directed=.55] (-.5,.25) to [out=180,in=0] (-1.5,0);
	\draw[very thick, directed=.55] (1,.75) to [out=225,in=45] (-.5,.25);
	%seams
	\draw[very thick, red, directed=.45] (-.5,6.25) to (-.5,.25);
	\draw[very thick, red, directed=.45] (1,.75) to (1,6.75);
	%vertical edges
	\draw[very thick] (2,1) to (2,7);
	\draw[very thick] (2.5,0) to (2.5,6);
	\draw[very thick] (-1.5,0) to (-1.5,6);
	\draw[very thick] (-2,1) to (-2,7);	
	% top web
	\draw[very thick, directed=.65] (2,7) to [out=180,in=0] (1,6.75);
	\draw[very thick, directed=.55] (1,6.75) to [out=135,in=0] (-2,7);
	\draw[very thick, directed=.65] (2.5,6) to [out=180,in=315] (-.5,6.25);
	\draw[very thick, directed=.55] (-.5,6.25) to [out=180,in=0] (-1.5,6);
	\draw[very thick, directed=.55] (1,6.75) to [out=225,in=45] (-.5,6.25);
	%labels
	\node[red,opacity=1] at (1.75,6.75) {\tiny$b$};
	\node[red,opacity=1] at (2.25,5.75) {\tiny$a$};
	\node[red,opacity=1] at (-1.75,6.75) {\tiny$c$};
	\node[blue,opacity=1] at (.25,6.25) {\tiny$_{b-c}$};
	\node[red,opacity=1] at (-1,5.75) {\tiny$_{a+b}$};
	\node[red,opacity=1] at (-1,5.5) {\tiny$_{-c}$};
	\node[opacity=1] at (1.75,.25) {$_{\pi_{\alpha}}$};
	\node[opacity=1] at (-1,6.75) {$_{\pi_{\hat{\alpha}}}$};
\end{tikzpicture}
};
\endxy
\end{equation}
where here $P(a,b)$ denotes the set of partitions of length $\leq a$ with each part $\leq b$, $\pi_{\alpha}$ denotes the Schur function corresponding to the partition $\alpha$ and the 
$c_{\alpha,\beta}^\gamma$ are the corresponding Littlewood-Richardson coefficients.

A tangle diagram whose components are labeled by elements in $\{1,\ldots,N\}$ determines a complex in $\foam{N}$, 
which is, up to homotopy equivalence, an invariant of the underlying framed tangle. In the case that the tangle is a link, passing to the quotient $\foam{N}^{\bullet}$ and applying a representable functor yields a complex of vector spaces whose homology is isomorphic (up to shifts and grading conventions) 
to the $\slnn{N}$ link homology defined by Khovanov and Rozansky and generalized to the colored case by Wu and Yonezawa. 

\subsection{Higher representation theory}

The construction of $\foam{N}$ was motivated by a desired relation to higher representation theory. 
The categorified quantum group $\cal{U}_Q(\slm)$ is the 2-category whose objects are given by $\slm$ weights $\lambda$, 
and whose $1$-morphisms are formal direct sums of (shifts $\{k\}$ of) compositions of
\[
\onel, \quad \onenn{\l+\alpha_i} \sE_i = \onenn{\l+\alpha_i} \sE_i\onel = \sE_i \onel, \quad \text{ and }\quad 
\onenn{\lambda-\alpha_i} \sF_i = \onenn{\lambda-\alpha_i} \sF_i\onel = \sF_i\onel
\]
for $i \in \{1,\ldots,m-1\}$ and where the $\alpha_i$ are the simple $\slm$ roots. 
The 2-morphisms are given by matrices of linear combinations of (degree zero) string diagrams -- dotted, immersed oriented curves 
colored by elements $i \in \{1,\ldots,m-1\}$ with top and bottom boundary, e.g.:
\[
\xy
(0,0)*{
\begin{tikzpicture} [scale=.75]
	\draw[thick, ->] (0,2) to [out=270,in=90] (2,0);
	\draw[thick, ->] (1,0) to [out=90,in=270] (2,2);
	\draw[thick, ->] (3,2) to [out=270,in=0] (2,1) to [out=180,in=270] (1,2);
	\node at (2.5,.5) {$\lambda$};
	\node at (2,1) {$\bullet$};
	\node at (.5,1.25) {$\bullet$};
	\node at (.875,.5) {\tiny$i$};
	\node at (-.125,1.5) {\tiny$j$};
	\node at (3,1.25) {\tiny$k$};
\end{tikzpicture}
};
\endxy
\]
modulo local relations. 
The domain 1-morphism of such a diagram is given (up to grading shifts) by considering the orientations and 
labelings of the strands incident upon the bottom boundary, reading upward strands as $\cal{E}$'s and downward strands as $\cal{F}$'s, 
and similarly for the codomain by considering the top boundary. For example, the domain and codomain of the above string diagram are 
(up to shifts) $\cal{E}_i \cal{F}_j \onel$ and $\cal{F}_j \cal{E}_k \cal{E}_i \cal{F}_k \onel$. 

We refer the reader to the work of Lauda~\cite{Lau1} and Khovanov-Lauda~\cite{KhL1,KhL2,KhL3} (see also independent work of Rouquier~\cite{Rou})
for a detailed discussion on categorified quantum $\slm$.
The main result of \cite{KhL3} is that the 2-category $\UcatD_Q(\slm)$, obtained by passing to the Karoubi envelope in each $\Hom$-category 
of $\Ucat_Q(\slm)$, categorifies quantum $\slm$. Explicitly, they show that the Lusztig idempotent form $\U_q(\slm)$ of the quantum group 
is isomorphic to the category obtained by taking the Grothendieck group $\mathrm{K}_0$ 
in each $\Hom$-category of $\UcatD_Q(\slm)$.
We'll assume some familiarity with categorified quantum groups for the duration, and utilize the conventions and notation from \cite{QR}.

The 2-category $\foam{N}$ is constructed to give a 2-representation of $\cal{U}_Q(\slm)$ via categorical skew Howe duality. 
Recall that work of Cautis-Kamnitzer-Licata~\cite{CKL} and Cautis-Kamnitzer-Morrison~\cite{CKM} shows that the commuting 
(skew Howe dual) actions of quantum $\glm$ and $\slnn{N}$ on the vector spaces $\bV_q^k(\C_q^m \otimes \C_q^N)$ induce a functor
\[
\varphi_m \colon U_q(\glm) \to \mathrm{Rep}(U_q(\slnn{N}))
\]
which sends a $\glm$ weight $\mathbf{a} = (a_1,\ldots,a_m)$ to the tensor product of fundamental quantum $\slnn{N}$ representations
$\bV_q^{a_1} \C_q^N \otimes \cdots \otimes \bV_q^{a_m} \C_q^N$. 
In fact, Cautis-Kamnitzer-Morrison use this to give a completely combinatorial description for the full subcategory 
of quantum $\slnn{N}$ representations generated by the fundamental representations. 
In their description, objects are given as in $\foam{N}$ and morphisms are given by linear combinations of 
$\slnn{N}$ webs, modulo planar isotopy and relations.

By design, the 2-category $\foam{N}$ gives a categorification of this result, i.e. 
it admits a 2-functor $\Ucat_Q(\glm) \xrightarrow{\Phi_m} \foam{N}$
so that the diagram
\[
\xymatrix{
\Ucat_Q(\glm) \ar[d]^-{\mathrm{K}_0} \ar[r]^-{\Phi_m} & \foam{N} \ar[d]^-{\mathrm{K}_0} \\
\U_q(\glm) \ar[r]^-{\varphi_m} & \mathrm{Rep}(U_q(\slnn{N}))
}
\]
commutes, where $\Ucat_Q(\glm)$ is the direct sum of an infinite number of copies of $\Ucat_Q(\slm)$ and admits a similar 
description in which $\slm$ weights are replaced by $\glm$ weights. 

\subsection{Thick calculus}
\label{section-thick}
The 2-functor $\Ucat_Q(\glm) \to \foam{N}$ actually extends to a certain full 2-subcategory $\Ucatc_Q(\glm) \subset \UcatD_Q(\glm)$.
In the case $m=2$, $\Ucatc_Q(\glnn{2}) = \UcatD_Q(\glnn{2})$, and this category is described\footnote{Technically, they describe $\UcatD_Q(\slnn{2})$, 
but the only difference in passing to $\UcatD_Q(\glnn{2})$ is that we use $\glnn{2}$ weights.} graphically by Khovanov-Lauda-Mackaay-Sto\v{s}i\'c in \cite{KLMS}.
Recall that the objects in the Karoubi envelope of a category $\mathbf{C}$ are given by pairs $(c,e)$ where $c \in \mathrm{Ob}(\mathbf{C})$ 
and $c \xrightarrow{e} c$ is an idempotent morphism. 
In the case of $\Ucatc_Q(\glnn{2})$, consider the idempotent morphism $\cal{E}^a \onel \xrightarrow{\mathbf{e}_a} \cal{E}^a \onel$ where $\mathbf{e}_a$ is 
given by decorating any string diagram giving a reduced expression for the longest word in the symmetric group on $a$ elements
with a specific pattern of dots, starting with $a-1$ dots on the top left-most strand, and placing one fewer dot on each strand as we 
head to the right\footnote{We use the boldface notation $\mathbf{e}_a$ for the nilHecke idempotents to distinguish them clearly from elementary symmetric polynomials $e_i$.}. 
The following depicts the case $a=4$:
\begin{equation}\label{eqn-e_aEx}
\xy 0;/r.15pc/:
 (-12,-20)*{}; (12,20) **\crv{(-12,-8) & (12,8)}?(1)*\dir{>};
 (-4,-20)*{}; (4,20) **\crv{(-4,-13) & (12,2) & (12,8)&(4,13)}?(1)*\dir{>};?(.88)*\dir{}+(0.1,0)*{\bullet};
 (4,-20)*{}; (-4,20) **\crv{(4,-13) & (12,-8) & (12,-2)&(-4,13)}?(1)*\dir{>}?(.86)*\dir{}+(0.1,0)*{\bullet};
 ?(.92)*\dir{}+(0.1,0)*{\bullet};
 (12,-20)*{}; (-12,20) **\crv{(12,-8) & (-12,8)}?(1)*\dir{>}?(.70)*\dir{}+(0.1,0)*{\bullet};
 ?(.90)*\dir{}+(0.1,0)*{\bullet};?(.80)*\dir{}+(0.1,0)*{\bullet};
 \endxy
 \qquad =: \qquad   \xy
 (0,0)*{\includegraphics[scale=0.4]{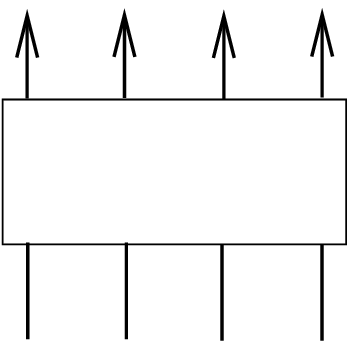}};
 (0,-0.5)*{\mathbf{e}_a};
  \endxy
\end{equation}
where we use the box notation from \cite{KLMS} for the 2-morphism (here, we do not depict the strand labels, as there is only one possible in the $\glnn{2}$ case).

Khovanov-Lauda-Mackaay-Sto\v{s}i\'c 
show that the 1-morphisms $\cal{E}^{(a)}\onel:= (\cal{E}^a\onel \left\{ \frac{a(a-1)}{2} \right\} , \mathbf{e}_a)$ and their duals $\onel \cal{F}^{(a)}$ generate 
$\UcatD_Q(\glnn{2})$, and also introduce a ``thick calculus'' to describe this 2-category. 
In the Karoubi envelope of a category $\cat{C}$, 
a morphism between two objects $(c,e)$ and $(c',e')$ is given by a 1-morphism $c \xrightarrow{f} c'$ in $\cat{C}$ so that $e'f=f=fe$. 
The map $\mathbf{e}_a$, which gives the identity 2-morphism on $\cal{E}^{(a)}\onel$ in $\UcatD_Q(\glnn{2})$, is depicted by a thick, colored upward strand
\begin{equation}\label{eqn-ea}
\xy
 (0,0)*{\includegraphics[scale=0.5]{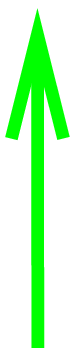}};
 (-2.5,-6)*{a}; %(5,5)*{n};
  \endxy
  \quad : = \quad
 \xy
 (0,0)*{\includegraphics[scale=0.5]{figs/c1-1.eps}};
 (0,0)*{\mathbf{e}_a}; %(14,6)*{n};
  \endxy
\end{equation}
and the remainder of the 2-morphisms in $\UcatD_Q(\glnn{2})$ are generated by splitter and merger maps
\begin{equation}\label{eqn-splitmerge}
\xy
 (0,0)*{\includegraphics[scale=0.5]{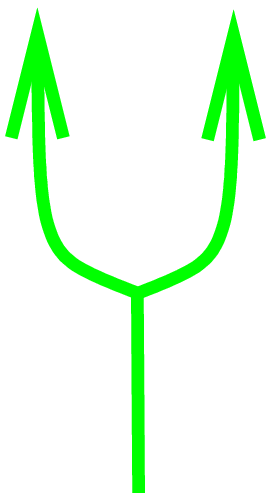}};
 (-5,-10)*{a+b};(-8,4)*{a};(8,4)*{b};% (7,-6)*{n};
  \endxy
 \;\; :=\;\;
     \xy
 (0,0)*{\includegraphics[scale=0.5]{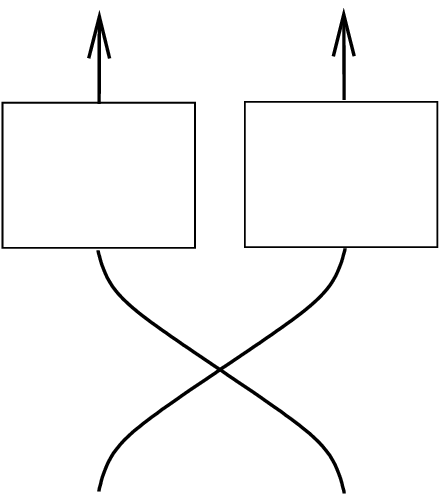}};
 (-7,-10)*{b};(7,-10)*{a};(-6,4)*{\mathbf{e}_a};(6,4)*{\mathbf{e}_b}; %(14,-2)*{n};
  \endxy
\qquad , \qquad 
    \xy
 (0,0)*{\includegraphics[scale=0.5,angle=180]{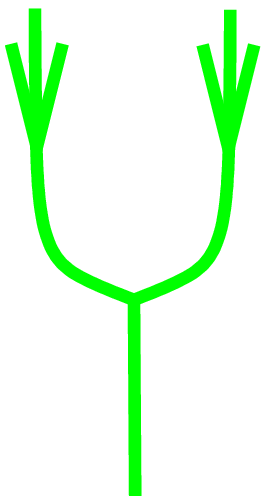}};
 (-5,10)*{a+b};(-8,-4)*{a};(8,-4)*{b}; %(7,6)*{n};
  \endxy
  \;\; :=\;\;
     \xy
 (0,0)*{\includegraphics[scale=0.5,angle=180]{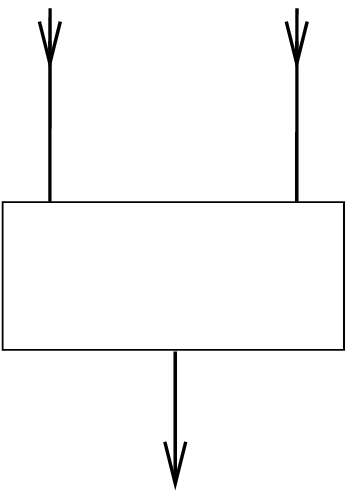}};
 (-8,-7)*{a};(8,-7)*{b};(0,1)*{\mathbf{e}_{a+b}};(-5,10)*{a+b}; %(11,6)*{n};
  \endxy 
\end{equation}
where
\[
  \xy
 (0,0)*{\includegraphics[scale=0.5]{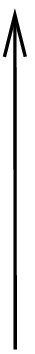}};
 (-3,-3)*{a};
  \endxy
  \quad : = \quad
  \xy
 (0,0)*{\includegraphics[scale=0.5]{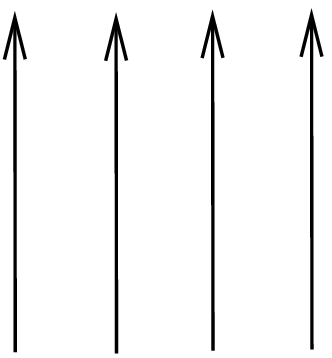}};
 (0,-11)*{\underbrace{\hspace{0.7in}}};  (0,-14)*{a};
  \endxy
  \quad \text{ and }  \quad
  \xy
 (0,0)*{\includegraphics[scale=0.5]{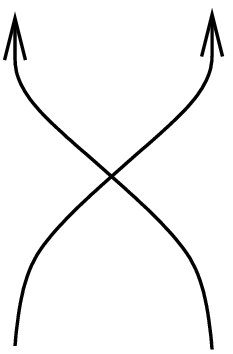}};
 (-7,-6)*{a};(7,-6)*{b};
  \endxy
  \quad := \quad
  \xy
 (0,0)*{\includegraphics[scale=0.5]{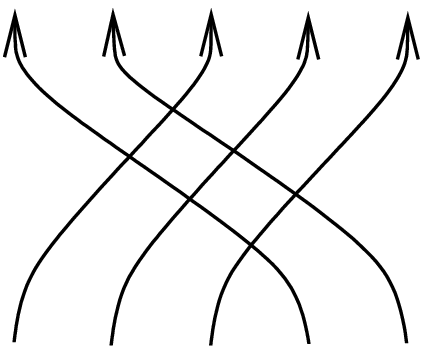}};
 (-5.5,-11)*{\underbrace{\hspace{0.45in}}};  (-5.5,-14)*{a};
 (7.5,-11)*{\underbrace{\hspace{0.25in}}};  (7.5,-14)*{b};
  \endxy 
\]
which are maps $\cal{E}^{(a+b)}\onel \to \cal{E}^{(a)} \cal{E}^{(b)}\onel \{-ab\}$ and $\cal{E}^{(a)} \cal{E}^{(b)}\onel \to \cal{E}^{(a+b)}\onel \{-ab\}$. 
Thick strands also may carry decorations by elements of the ring of symmetric functions in $a$ variables 
(depicted by placing a box containing the function on such a strand). 
Schur functors $\pi_{\alpha}$ satisfy the following relation
\begin{equation}\label{eqn-explode}
\xy
 (0,0)*{\includegraphics[scale=0.5]{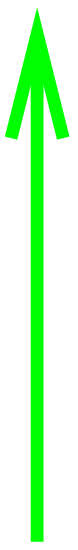}};
 (-2.5,-11)*{a};(0,-2)*{\bigb{\pi_{\alpha}}};
  \endxy
 \quad = \quad
 \xy
 (0,0)*{\includegraphics[scale=0.5]{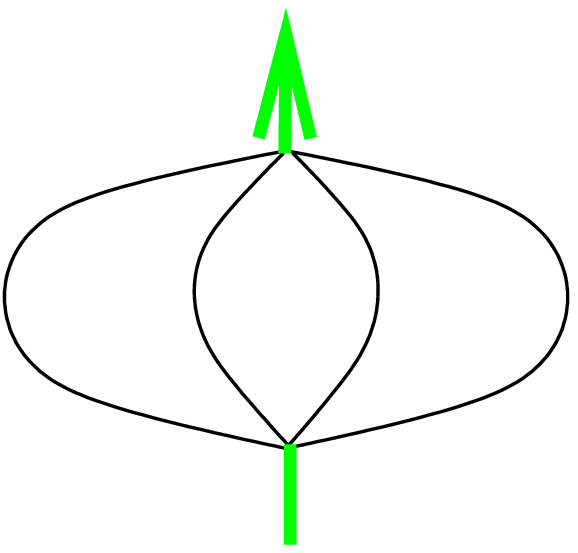}};
 (-3,-12)*{a};(-14,0)*{\bullet}+(-4.5,1)*{\scs \alpha_1+}+(0,-3)*{\scs a-1};
 (-4.5,0)*{\bullet}+(-4.5,1)*{\scs \alpha_2+}+(0,-3)*{\scs a-2};
 (4.5,0)*{\bullet}+(4,1)*{\scs \alpha_{a-1}}+(0,-3)*{\scs + 1};
 (14,0)*{\bullet}+(3,1)*{\scs \alpha_{a}};
 (0,-2)*{\cdots};
  \endxy
\end{equation}
in which the morphisms which split and merge thickness $a$ strands into thin (thickness $1$) strands are given by 
any of the possible compositions of the above mergers and splitters -- the relations for $\UcatD_Q(\glnn{2})$ given in \cite{KLMS} 
guarantee that they are the same.

There is not currently a completely diagrammatic description for $\UcatD_Q(\glm)$ for $m \geq 3$, hence we instead work 
with $\Ucatc_Q(\glm)$, the full 2-subcategory generated by 
$\cal{E}^{(a)}_i\onel:= (\cal{E}_i^a\onel \{\frac{a(a-1)}{2}\} , \mathbf{e}_a)$ 
and their adjoints, where here $\mathbf{e}_a$ is as above, but with all strands $i$-labeled. 
We refer the reader to \cite[Section 3.2]{QR} for details about the $2$-functor 
$\Phi_m \colon \Ucatc_Q(\glm) \to \foam{N}$, but note here that it acts on splitter/merger 
morphisms in $\Ucatc_Q(\glnn{2})$ via
\begin{equation}\label{PhMS}
\xy
(0,0)*{
\begin{tikzpicture} [scale=.4,fill opacity=0.2]
\draw[ultra thick, green] (0,-1.5) to (0,0);
\draw[ultra thick, green, directed=1] (0,0) to [out=30,in=270] (1,1.5);
\draw[ultra thick, green, directed=1] (0,0) to [out=150,in=270] (-1,1.5);
\node[green, opacity=1] at (0,-2) {\tiny$_{a+b}$};
\node[green, opacity=1] at (-1,2) {\tiny$a$};
\node[green, opacity=1] at (1,2) {\tiny$b$};
\end{tikzpicture}
}
\endxy
\mapsto \quad
\xy
(0,0)*{
\begin{tikzpicture} [scale=.4,fill opacity=0.2]
	%shading
%	\node[opacity=1] at (-1.125,5.91) {$+$};
%	\node[opacity=1] at (-.125,4.91) {$+$};
%	\node[opacity=1] at (.125,3.59) {$+$};
%	\node[opacity=1] at (1.125,2.59) {$+$};
	\path[fill=red] (3.5,2.5) to (-2.5,2.5) to (-2.5,7.5) to (3.5,7.5);
	\path[fill=red] (2.5,3.5) to (-3.5,3.5) to (-3.5,8.5) to (2.5,8.5);
	\path[fill=blue] (.5,2.5) to (-.5,3.5) to (-.5,5.75) to (.5,4.75);
	\path[fill=blue] (-.75,7.5) to [out=270,in=130] (-.125,4.91) to (-1.125,5.91) to [out=130,in=270] (-1.75,8.5);
	\path[fill=blue] (-.125,4.91) to [out=320,in=180] (.5,4.75) to (-.5,5.75) to [out=180,in=320] (-1.125,5.91);
	\path[fill=blue] (.5,4.75) to [out=0,in=270] (1.75,7.5) to (.75,8.5) to [out=270,in=0] (-.5,5.75);
%	\path[fill=blue] (-.75,0) to [out=90,in=180] (.5,2.75) to (-.5,3.75) to [out=180,in=90] (-1.75,1);
%	\path[fill=blue] (.5,2.75) to [out=0,in=140] (1.125,2.59) to (.125,3.59) to [out=140,in=0] (-.5,3.75);
%	\path[fill=blue] (1.125,2.59) to [out=320,in=270] (1.75,0) to (.75,1) to [out=270,in=320] (.125,3.59);
	%bottom web
	\draw[very thick, directed=.25, directed=.75] (3.5,2.5) to (-2.5,2.5);
	\draw[very thick, directed=.25, directed=.75] (2.5,3.5) to (-3.5,3.5);
	\draw[very thick, directed=.55] (.5,2.5) to (-.5,3.5);	
	%seams
	\draw[very thick, red] (.5,2.5) to [out=90,in=270] (.5,4.75);
	\draw[very thick, red] (.5,4.75) to [out=0,in=270] (1.75,7.5);
	\draw[very thick, red] (.5,4.75) to [out=180,in=270] (-.75,7.5);
	\draw[very thick, red] (.5,4.75) to (-.5,5.75);
	\draw[very thick, red] (-.5,3.5) to [out=90,in=270] (-.5,5.75);
	\draw[very thick, red] (-.5,5.75) to [out=0,in=270] (.75,8.5);
	\draw[very thick, red] (-.5,5.75) to [out=180,in=270] (-1.75,8.5);
	%vertical edges
	\draw[very thick] (3.5,2.5) to (3.5,7.5);	
	\draw[very thick] (2.5,3.5) to (2.5,8.5);
	\draw[very thick] (-2.5,2.5) to (-2.5,7.5);	
	\draw[very thick] (-3.5,3.5) to (-3.5,8.5);
	%top web
	\draw[very thick, directed=.5, directed=.9, directed=.1] (3.5,7.5) to (-2.5,7.5);
	\draw[very thick, directed=.5, directed=.9, directed=.1] (2.5,8.5) to (-3.5,8.5);
	\draw[very thick, directed=.5] (1.75,7.5) to (.75,8.5);	
	\draw[very thick, directed=.5] (-.75,7.5) to (-1.75,8.5);
	%labels
	\node[blue, opacity=1] at (-1.2,7) {\tiny$a$};
	\node[blue, opacity=1] at (1.25,7) {\tiny$b$};
	\node[blue, opacity=1] at (0,4.25) {\tiny$_{a+b}$};
	\end{tikzpicture}
};
\endxy
\quad \quad , \quad \quad
\xy
(0,0)*{
\begin{tikzpicture} [scale=.4,fill opacity=0.2]
\draw[ultra thick, green, directed=1] (0,0) to (0,1.5);
\draw[ultra thick, green] (1,-1.5) to [out=90,in=330] (0,0);
\draw[ultra thick, green] (-1,-1.5) to [out=90,in=210] (0,0);
\node[green, opacity=1] at (0,2) {\tiny$_{a+b}$};
\node[green, opacity=1] at (-1,-2) {\tiny$a$};
\node[green, opacity=1] at (1,-2) {\tiny$b$};
\end{tikzpicture}
}
\endxy
\mapsto \quad
\xy
(0,0)*{
\begin{tikzpicture} [scale=.4,fill opacity=0.2]
	%shading
%	\node[opacity=1] at (-1.125,5.91) {$+$};
%	\node[opacity=1] at (-.125,4.91) {$+$};
%	\node[opacity=1] at (.125,3.59) {$+$};
%	\node[opacity=1] at (1.125,2.59) {$+$};
	\path[fill=red] (3.5,0) to (-2.5,0) to (-2.5,5) to (3.5,5);
	\path[fill=red] (2.5,1) to (-3.5,1) to (-3.5,6) to (2.5,6);
	\path[fill=blue] (.5,2.75) to (-.5,3.75) to (-.5,6) to (.5,5);
%	\path[fill=blue] (-.75,7.5) to [out=270,in=130] (-.125,4.91) to (-1.125,5.91) to [out=130,in=270] (-1.75,8.5);
%	\path[fill=blue] (-.125,4.91) to [out=320,in=180] (.5,4.75) to (-.5,5.75) to [out=180,in=320] (-1.125,5.91);
%	\path[fill=blue] (.5,4.75) to [out=0,in=270] (1.75,7.5) to (.75,8.5) to [out=270,in=0] (-.5,5.75);
	\path[fill=blue] (-.75,0) to [out=90,in=180] (.5,2.75) to (-.5,3.75) to [out=180,in=90] (-1.75,1);
	\path[fill=blue] (.5,2.75) to [out=0,in=140] (1.125,2.59) to (.125,3.59) to [out=140,in=0] (-.5,3.75);
	\path[fill=blue] (1.125,2.59) to [out=320,in=270] (1.75,0) to (.75,1) to [out=270,in=320] (.125,3.59);
	%bottom web
	\draw[very thick, directed=.5, directed=.9, directed=.1] (3.5,0) to (-2.5,0);
	\draw[very thick, directed=.5, directed=.9, directed=.1] (2.5,1) to (-3.5,1);
	\draw[very thick, directed=.5] (1.75,0) to (.75,1);	
	\draw[very thick, directed=.5] (-.75,0) to (-1.75,1);		
	%seams
	\draw[very thick, red] (-.75,0) to [out=90,in=180] (.5,2.75);
	\draw[very thick, red] (1.75,0) to [out=90,in=0] (.5,2.75);
	\draw[very thick, red] (.5,2.75) to [out=90,in=270] (.5,5);
	\draw[very thick, red] (.5,2.75) to (-.5,3.75);
	\draw[very thick, red] (-1.75,1) to [out=90,in=180] (-.5,3.75);
	\draw[very thick, red] (.75,1) to [out=90,in=0] (-.5,3.75);
	\draw[very thick, red] (-.5,3.75) to [out=90,in=270] (-.5,6);
	%vertical edges
	\draw[very thick] (3.5,0) to (3.5,5);	
	\draw[very thick] (2.5,1) to (2.5,6);
	\draw[very thick] (-2.5,0) to (-2.5,5);	
	\draw[very thick] (-3.5,1) to (-3.5,6);
	%top web
	\draw[very thick, directed=.25, directed=.75] (3.5,5) to (-2.5,5);
	\draw[very thick, directed=.25, directed=.75] (2.5,6) to (-3.5,6);
	\draw[very thick, directed=.55] (.5,5) to (-.5,6);
	%labels
	\node[blue, opacity=1] at (-1.25,1.5) {\tiny$a$};
	\node[blue, opacity=1] at (0,4.5) {\tiny$_{a+b}$};
	\node[blue, opacity=1] at (1.25,1.5) {\tiny$b$};
	\end{tikzpicture}
};
\endxy
\end{equation}
and on (thin) cap/cup morphisms by
\begin{equation}\label{PhCC}
\xy
(0,0)*{
\begin{tikzpicture}[scale=.75]
\draw[thick, <-] (0,-1) to [out=90,in=180] (.5,0) to [out=0,in=90] (1,-1);
\node at (1.5,.25) {\tiny$(a,b)$};
\end{tikzpicture}
};
\endxy
\mapsto
 \quad \capFEfoam{a}{b} \quad , \qquad
\xy
(0,0)*{
\begin{tikzpicture}[scale=.75]
\draw[thick, ->] (0,-1) to [out=90,in=180] (.5,0) to [out=0,in=90] (1,-1);
\node at (1.5,.25) {\tiny$(a,b)$};
\end{tikzpicture}
};
\endxy
\mapsto
 (-1)^{b} \quad \capEFfoam{a}{b}
\end{equation}
\[
\xy
(0,0)*{
\begin{tikzpicture}[scale=.75]
\draw[thick, ->] (0,1) to [out=270,in=180] (.5,0) to [out=0,in=270] (1,1);
\node at (1.5,-.25) {\tiny$(a,b)$};
\end{tikzpicture}
};
\endxy
\mapsto
(-1)^{b + 1} \quad \cupFEfoam{a}{b} \quad , \qquad
\xy
(0,0)*{
\begin{tikzpicture}[scale=.75]
\draw[thick, <-] (0,1) to [out=270,in=180] (.5,0) to [out=0,in=270] (1,1);
\node at (1.5,-.25) {\tiny$(a,b)$};
\end{tikzpicture}
};
\endxy
\mapsto
\quad \cupEFfoam{a}{b}
\]
since these will be explicitly used later in our description of the link invariant.

The $2$-representation $\Ucatc_Q(\glm) \xrightarrow{\Phi_m} \foam{N}$ necessarily maps $\glm$ weights whose entries don't lie in $\{0,\ldots,N\}$ to zero, 
hence factors through the quotient $\Ucatc^{0\leq N}_Q(\glm)$ where we kill (the identity 2-morphism on the identity 1-morphism of) 
these weights. As we'll see in Section \ref{section-NHquot}, 
it is exactly the procedure of taking this quotient which gives rise to deformation parameters controlling the deformed link invariants.

\subsection{Quantum Weyl group action and Rickard complexes}

A crucial observation of Cautis-Kamnitzer-Licata \cite{CKL} is that the braiding on the category of 
quantum $\slnn{N}$ representations (which gives rise to $\slnn{N}$ link polynomials) can be recovered from 
the functor $\phi_m\colon \U_q(\glm) \to \mathrm{Rep}(U_q(\slnn{N}))$. 
Indeed, Lusztig's ``quantum Weyl group'' elements 
\[
T_i 1_{\mathbf{a}} = 
\begin{cases} 
{\displaystyle \sum_{\begin{smallmatrix} j_1,j_2\geq 0 \\ j_1-j_2=a_i-a_{i+1} \end{smallmatrix}}} (-q)^{j_2}F_i^{(j_1)}E_i^{(j_2)}1_{\mathbf{a}} & \text{ if $a_i \geq a_{i+1}$} \\
{\displaystyle \sum_{\begin{smallmatrix} j_1,j_2\geq 0 \\ j_1-j_2=a_i-a_{i+1} \end{smallmatrix}}} (-q)^{j_1}E_i^{(j_2)}F_i^{(j_1)}1_{\mathbf{a}} & \text{if $a_i \leq a_{i+1}$}
\end{cases}
\]
generate a braid group action on any finite-dimensional representation of quantum $\slm$, see \cite[Subsection 5.1.1]{Lus} and \cite{CKM}.
Under $\varphi_m$, these elements map to the braiding between fundamental $\slnn{N}$ representations; 
explicitly, the element $T 1_{(a,b)}$ gives the braiding 
$\bV_q^a \C_q^N \otimes \bV_q^b \C_q^N \to \bV_q^b \C_q^N \otimes \bV_q^a \C_q^N$.

The Rickard complexes, introduced in the $q=1$ case by Chuang and Rouquier \cite{ChR}, categorify these elements, and generate a 
categorical braid group action on any (integrable) 2-representation of $\Ucatc_Q(\glm)$. 
These complexes $\cal{T}_i\onea$ take the form
\begin{equation} \label{Rickardp}
\cal{T}_i \onea =
\xymatrix{ \uwave{\cal{F}_i^{(a_i-a_{i+1})} \onea} \ar[r]^-{d_1} & \cal{F}_i^{(a_i-a_{i+1}+1)} \cal{E}_i \onea \{1\} \ar[r]^-{d_2} & \cdots
\ar[r]^-{d_s} & \cal{F}_i^{(a_i-a_{i+1}+s)} \cal{E}_i^{(s)} \onea \{s\} \ar[r]^-{d_{s+1}} &\cdots}
\end{equation}
when $a_i \geq a_{i+1}$ and
\begin{equation} \label{Rickardn}
\cal{T}_i \onea =
\xymatrix{ \uwave{\cal{E}_i^{(a_{i+1}-a_i)} \onea} \ar[r]^-{d_1} & \cal{E}_i^{(a_{i+1}-a_i+1)} \cal{F}_i \onea \{1\} \ar[r]^-{d_2} & \cdots
\ar[r]^-{d_s} & \cal{E}_i^{(a_{i+1}-a_i+s)} \cal{F}_i^{(s)} \onea \{s\} \ar[r]^-{d_{s+1}} &\cdots}
\end{equation}
when $a_i \leq a_{i+1}$. 
Here and throughout, we've \uwave{underlined} the term in homological degree zero.
The differential $d_k$ that appears in the second complex is conveniently expressed in thick calculus as
\[
d_k=
\xy
(0,0)*{
\begin{tikzpicture}[scale=1]
\draw [thick, directed= .55] (1,1.2) .. controls (1,.5) and (0,.5) .. (0,1.2);
\draw [ultra thick, green, ->] (0,0) -- (0,2);
\draw [ultra thick, green, ->] (1,2) -- (1,0);
\node [green] at (0,2.5) {\footnotesize $-\lambda+k$}; 
%\node [green, rotate=90] at (0,2.8) {\footnotesize $-\lambda+k+1$}; 
\node at (.5,1) {\footnotesize $1$}; 
%\node [green] at (1,-.5) {\footnotesize $k-1$}; 
\node [green] at (1,2.5) {\footnotesize $k$}; 
\node at (1.7,1) {$\mathbf{a}$};
\end{tikzpicture}
};
\endxy
\]
where all strands are colored by the index $i \in I$ and $\lambda =a_i-a_{i+1}$. The differential in the first complex is defined similarly, 
and in both cases the equality $d^2=0$ follows directly from thick calculus relations.

Recall that the images of the Rickard complexes under any integrable $2$-representation are invertible, up to homotopy, with inverses given by the images of the complexes
\begin{equation}
\label{RickardInvp}
\onea \cal{T}_i^{-1} =
\xymatrix{ \cdots \ar[r]^-{d_{s+1}^*} & \onea \cal{F}_i^{(s)} \cal{E}_i^{(a_i-a_{i+1}+s)} \{-s\} \ar[r]^-{d_s^*}
& \cdots \ar[r]^-{d_2^*}
& \onea \cal{F}_i \cal{E}_i^{(a_i-a_{i+1}+1)} \{-1\} \ar[r]^-{d_1^*} &\uwave{ \onea \cal{E}_i^{(a_i-a_{i+1})}} }
\end{equation}
when $a_i \geq a_{i+1}$ and
\begin{equation}
\label{RickardInvn}
\onea \cal{T}_i^{-1} =
\xymatrix{ \cdots \ar[r]^-{d_{s+1}^*} & \onea \cal{E}_i^{(s)} \cal{F}_i^{(a_{i+1}-a_i+s)} \{-s\} \ar[r]^-{d_s^*}
& \cdots \ar[r]^-{d_2^*}
& \onea \cal{E}_i \cal{F}_i^{(a_{i+1}-a_i+1)} \{-1\}  \ar[r]^-{d_1^*} & \uwave{\onea \cal{F}_i^{(a_{i+1}-a_i)}} }
\end{equation}
when $a_i \leq a_{i+1}$. In both cases the differential is given by a composition of splitters with a 
thickness $1$ cap 2-morphisms, e.g. for \eqref{RickardInvn}:
\[
d^*_k=
\xy
(0,0)*{
\begin{tikzpicture}[scale=1]
%\draw [thick, directed= .55] (1,1.2) .. controls (1,.5) and (0,.5) .. (0,1.2);
\draw [thick, rdirected= .55] (1,0.8) .. controls (1,1.5) and (0,1.5) .. (0,0.8);
\draw [ultra thick, green, ->] (0,0) -- (0,2);
\draw [ultra thick, green, ->] (1,2) -- (1,0);
%\node [green] at (1,2.5) {\footnotesize $-\lambda+k-1$}; 
\node [green] at (1,-.5) {\footnotesize $\lambda+k$}; 
\node at (.5,1) {\footnotesize $1$}; 
%\node [green] at (0,2.5) {\footnotesize $k-1$}; 
\node [green] at (0,-.5) {\footnotesize $k$}; 
\node at (1.7,1) {$\mathbf{a}$};
\end{tikzpicture}
};
\endxy
\]
In Section \ref{section-decomp}, we'll use these complexes to define our tangle invariant.

\subsection{Karoubi envelope technology}
\label{subsubs_BNM}
In his famous paper \cite{BN} Bar-Natan shows that Khovanov homology can be constructed locally, 
by working in the homotopy category of chain complexes over a certain ($1+1$)-dimensional cobordism category. 
Objects of this category are formal direct sums of 1-manifolds embedded in the plane (possibly with boundary) and equipped with a formal $\Z$-grading. 
Morphisms are matrices of linear combinations of cobordisms between 1-manifolds, decorated with dots, modulo the following local relations: 
\begin{equation*}
\thinsphere[.5]=0 \quad , \quad \thindottedsphere[.5]{}=1 \quad , \quad  
\thincylinder[.5]{} \;\; = \;\; \grbambooRHSone[.5]+\grbambooRHStwo[.5] \quad , \quad
\twodotsheet[.5]{2} \;\; = \;\; 0.
\end{equation*}
In \cite{BNM} Bar-Natan and Morrison explain that Lee's deformed $\slnn{2}$ link homology \cite{Lee} arises from the same kind of construction, 
after modifying the final ``sheet'' relation above to
\begin{equation*} 
\twodotsheet[.5]{2} = \grsheet[.5]
\end{equation*}
so that the operator given by adding a dot to a cobordism is no longer nilpotent. 

To analyze the effects of this deformation, consider the algebra of endomorphisms of a strand, 
denoting the identity by $\idem$, a sheet decorated by a dot by $X$, 
and extend linearly so that polynomials in $X$ denote linear combinations of decorated sheets. 
The undeformed sheet relation can then be expressed as $X^2=0$ and the deformed relation is $X^2-\idem=0$. 
From this it is clear that in the deformed case the operator of placing a dot on a sheet has eigenvalues $1$ and $-1$ with corresponding eigenvectors $\idem_{+1}:= \frac{\idem+X}{2}$ and $\idem_{-1}:= \frac{\idem-X}{2}$. 

The decomposition into eigenspaces for the action of adding a dot splits the deformed cobordism category: 
every connected component of a cobordism can be written as a sum of the two decorations 
$\idem= \idem_{+1} + \idem_{-1}$, which are orthogonal (i.e. $\idem_{+1} \idem_{-1}=0$), 
idempotent (i.e. $\idem_{\pm 1} \idem_{\pm 1} = \idem_{\pm 1}$), and obviously commute.

Next, Bar-Natan and Morrison enlarge the cobordism category by proceeding to its Karoubi envelope\footnote{See the explanation in Section \ref{section-thick}.}. 
Practically, this means allowing objects, i.e. planar 1-manifolds, to be ``colored'' by $\idem_{+1}$ and $\idem_{-1}$ as well. 
Any uncolored 1-manifold is isomorphic to the direct sum of the $\idem_{+1}$ and $\idem_{-1}$ versions, 
and colored cobordisms between colored 1-manifolds are only non-zero if the corresponding idempotent decorations agree. 

Using this splitting of the deformed cobordism category, Bar-Natan and Morrison compute a decomposition for the chain complexes arising in the definition of the deformed link invariant, 
whose objects are planar 1-manifolds that arise as resolutions of the link diagram. 
The first result is that the only non-zero contributions to the deformed link invariant come from colorings of the planar 1-manifolds by $\idem_{+1}$ and $\idem_{-1}$ 
which are consistent along link components. 
The second result is that each such coloring contributes only one generator to the link homology. 
This reproduces Lee's result that the deformed $\slnn{2}$ homology of a $l$-component link is $2^l$-dimensional. 
Alternatively, we could say that it is a direct sum of tensor products of $\slnn{1}$ homologies, 
where $\slnn{1}$ homology assigns the 1-dimensional vector space $\C$ to any link. 
More precisely, we have one summand for each coloring of components of the link by $\idem_{+1}$ or $\idem_{-1}$, 
and the tensorands are the $\slnn{1}$ homologies of the $\idem_{+1}$- and $\idem_{-1}$-colored sublinks, respectively.

Gornik's generalization \cite{Gor} of the generic deformation result to $\slnn{N}$ can be understood along very similar lines. 
Again, there is a Frobenius algebra $\C[X]/\langle X^N \rangle$ of local decorations which is being deformed to 
$\C[X]/ \langle X^N-\beta^N \rangle \cong \C\oplus \cdots \oplus \C$, with one summand for each of the $N$ roots of the polynomial $X^N- \beta^N$.
The idempotents that project onto the $N$ summands then split the category underlying the chain complexes in the construction of the link homology. 
The resulting invariant for a knot is a direct sum of $N$ copies of its $\slnn{1}$ homology. 
Similarly, for $l$-component links one gets a $N^l$-dimensional vector space which can be understood as a direct sum 
over possible root-colorings of components of 1-dimensional tensor products of $\slnn{1}$ homologies of sublinks, one for each different root. 

In order to prove our decomposition result Theorem \ref{mainthm}, we start by computing the algebra of decorations on foams facets in the deformed foam 2-category $\foam{N}{}^\Sigma$. 
In fact, the algebra of decorations on a $k$-labeled facet is isomorphic to the deformed link homology of the $\bV^k \C^N$-colored unknot. 
We compute it, and hence prove Theorem \ref{mainthm} in the special case of the unknot, in Section \ref{section-defunknot}. 
In particular, the algebra of decorations on a $k$-labeled foam facet decomposes into a direct sum of local pieces indexed by $k$-element multisubsets of the set of roots $\Sigma$. 
This gives idempotent foam decorations along which the link invariant splits into a direct sum, which is proved in Section \ref{section-sum}. 
Similarly as for the generic deformation in the 1-colored case, 
the only non-zero contributions to the deformed link invariant come from idempotent colorings that are consistent along link components. 
This is shown in Lemma \ref{sumdecomp}. 
However, in the case of general deformations of colored invariants there are two new features that have not been rigorously addressed in the literature. 
One appears because we allow higher colors, the other because we allow non-generic deformations.

\textbf{Higher color:} Foam facets are not colored by roots, i.e. elements of $\Sigma$, anymore, but by multisubsets of $\Sigma$ of size corresponding to the label of the facet. 
Such a multisubset can contain several different roots and in this case we need a new way to split this facet into parts colored by single roots. 
This is where we use the full power of the foam technology, see Section \ref{section-foamcat} for preparatory work and Section \ref{section-tensor} for the actual tensor product decomposition of the link invariant.

\textbf{Non-generic deformation:} A root $\lambda$ can occur in $\Sigma$ with a multiplicity $N_\lambda \geq 1$. 
The $\lambda$-colored part of a direct summand of the deformed link invariants is essentially the $\slnn{N_\lambda}$ homology of the $\lambda$-colored sub-link, i.e. in particular it is usually not trivially 1-dimensional. 
To see this we need to check that after all splitting procedures, the $\lambda$-colored foams behave like $\slnn{N_\lambda}$-foams. This is done in Section \ref{section-unicoloredfoam}.

\section{Deforming nilHecke algebra quotients}
\label{section-defunknot}

The nilHecke algebra $\BNC_a$ plays a fundamental role in higher representation theory. 
Indeed, this algebra is given by $\mathrm{END}(\cal{E}_i^a \onel)$, the endomorphism algebra of (not necessarily degree zero) 
morphisms between a composition of $a$ $\cal{E}_i$'s in $\Ucat_Q(\slm)$. 
In this section, we'll review the nilHecke algebra, and then proceed to study certain deformations of its cyclotomic quotients, which control the deformed Khovanov-Rozansky homologies of colored unknots.
\newpage %% to sort out bad pagebreak
\subsection{The nilHecke algebra}
\begin{defi}
The nilHecke algebra on $a$ strands, $\BNC_a$ admits an algebraic presentation 
as the graded $\C$-algebra of endomorphisms of the abelian group $\C[X_1,\dots, X_a]$ generated by operators:
\begin{itemize}
\item $\xi_i$ of degree $2$ for $1\leq i \leq a$ acting by multiplication by $X_i$,
\item $\partial_i$ of degree $-2$ for $1 \leq i \leq a-1 $ acting as divided difference, \\i.e. for $p(X_1,\dots, X_a)\in \C[X_1,\dots, X_a]$:
\begin{equation*}\partial_i(p(X_1,\cdots X_a))= \frac{p(\cdots, X_i, X_{i+1},\cdots)-p(\cdots, X_{i+1},X_i,\cdots)}{X_i-X_{i+1}}.
\end{equation*}
\end{itemize} 
which satisfy the following complete set of relations:
\begin{itemize}
\item $\xi_i \xi_j = \xi_j \xi_i$,
\item $\xi_i \partial_j = \partial_j \xi_i\quad$ if $i\notin \{j,j+1\}$,
\item $\partial_i \partial_i =0$,
\item $\partial_i \partial_{i+1} \partial_{i} =\partial_{i+1} \partial_{i}\partial_{i+1} $,
\item $\xi_i\partial_i - \partial_i \xi_{i+1}= 1 = \partial_i \xi_i - \xi_{i+1}\partial_i$.
\end{itemize}
\end{defi}

The following result can be found in \cite[Proposition 3.5]{Lau1}:
\begin{prop}
\label{nilHeckeprop}
\begin{enumerate}
\item The center of $\BNC_a$ is $Z(\BNC_a)\cong \C[\xi_1,\dots,\xi_a]^{S_a}=: \Sym(\xi_1,\dots, \xi_a)$.
\item $\BNC_a$ is graded isomorphic to the algebra of $a!\times a!$ matrices over its center:
\begin{equation*}\BNC_a \cong Mat(a!, Z(\BNC_a)).
\end{equation*}
\end{enumerate}
\end{prop}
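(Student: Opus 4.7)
The plan is to exploit the defining faithful polynomial representation $\pi\colon \BNC_a \to \End_{\C}(\C[X_1,\dots,X_a])$ and compare $\BNC_a$ with the endomorphism algebra of $\C[X_1,\dots,X_a]$ viewed as a module over its ring of symmetric polynomials. First I would verify the easy inclusion $\Sym(\xi_1,\dots,\xi_a) \subseteq Z(\BNC_a)$. Since the $\xi_i$ commute among themselves, the only nontrivial check is that symmetric polynomials in the $\xi_i$ commute with every $\partial_j$. This follows from the Leibniz-type identity $\partial_j(fg) = \partial_j(f)g + s_j(f)\partial_j(g)$ satisfied by divided differences together with the fact that a polynomial symmetric in $X_j, X_{j+1}$ is fixed by $s_j$ and annihilated by $\partial_j$.

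For part (2), the key classical input is that $\C[X_1,\dots,X_a]$ is a free module of rank $a!$ over its subring $\Sym = \C[X_1,\dots,X_a]^{S_a}$, with an explicit basis given either by the Schubert polynomials $\{\mathfrak{S}_w\}_{w \in S_a}$ or by the staircase monomials $\{X_1^{b_1}\cdots X_a^{b_a} : 0 \leq b_i \leq a-i\}$. Consequently $\End_{\Sym}(\C[X_1,\dots,X_a]) \cong \mathrm{Mat}(a!,\Sym)$. The strategy is then to identify $\pi(\BNC_a)$ with this endomorphism algebra. The inclusion $\pi(\BNC_a) \subseteq \End_{\Sym}(\C[X_1,\dots,X_a])$ is immediate from the first paragraph.

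For the reverse inclusion, I would produce a complete set of matrix units inside $\BNC_a$. The crucial ingredient is the longest divided difference $\partial_{w_0} = \partial_{i_1}\cdots\partial_{i_{\ell(w_0)}}$, which satisfies $\partial_{w_0}(X_1^{a-1}X_2^{a-2}\cdots X_{a-1}) = 1$ and annihilates all other staircase monomials. Setting $e := \xi_1^{a-1}\xi_2^{a-2}\cdots\xi_{a-1}\cdot \partial_{w_0}$ (which is essentially the idempotent $\mathbf{e}_a$ from Section \ref{section-thick}, up to the standard normalization), one checks $e^2 = e$ and $e \in \BNC_a$. By acting with polynomial multiplications on one side and compositions of divided differences on the other, one obtains enough elements $\xi^{\alpha} e \partial_w$ to span all $a!\times a!$ matrix units over $\Sym$. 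This simultaneously shows surjectivity of $\pi$ onto $\End_{\Sym}(\C[X_1,\dots,X_a])$ and yields a $\Sym$-basis of $\BNC_a$ of size $(a!)^2$, which forces $\pi$ to be injective (hence $\BNC_a \cong \mathrm{Mat}(a!,\Sym)$) by rank comparison.

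Part (1) then follows from part (2): the center of $\mathrm{Mat}(a!,R)$ is $R$ for any commutative ring $R$, giving $Z(\BNC_a) = \Sym(\xi_1,\dots,\xi_a)$. The main obstacle is the explicit construction of the matrix units in $\BNC_a$; this requires careful combinatorial bookkeeping of reduced expressions in $S_a$ together with the nil-braid relations to confirm that $\partial_{w_0}$ is independent of the chosen reduced expression, and that the elements $\xi^{\alpha} e \partial_w$ behave correctly under multiplication. Everything else is either a general fact about free modules and their endomorphism algebras or a direct consequence of the defining relations.
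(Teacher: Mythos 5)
Your overall strategy is the same as the paper's: realize $\C[X_1,\dots,X_a]$ as a free $\Sym$-module of rank $a!$ via the staircase monomials, observe that the nilHecke generators act $\Sym$-linearly to get a map $\BNC_a \to \End_{\Sym}(\C[X_1,\dots,X_a]) \cong Mat(a!,\Sym)$, and then establish that this map is an isomorphism (the paper simply cites Lauda for this last step, while you sketch the matrix-unit argument that Lauda's proof uses). The inclusion $\Sym \subseteq Z(\BNC_a)$ via the twisted Leibniz rule, the idempotency of $e = \delta_a\partial_{w_0} = \mathbf{e}_a$, and the deduction of part (1) from part (2) are all correct.

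There is, however, one step that fails as written: the elements $\xi^{\alpha} e \,\partial_w$ cannot span the matrix units, because almost all of them are zero. Indeed $e\,\partial_w = \delta_a\,\partial_{w_0}\partial_w$, and since $w_0$ is the longest element one has $\ell(w_0 s_i) = \ell(w_0)-1$ for every $i$, hence $\partial_{w_0}\partial_{s_i} = 0$ and, inductively, $\partial_{w_0}\partial_w = 0$ for all $w \neq 1$. So your proposed set collapses to $\{\xi^{\alpha} e\}$, which spans only the left ideal $\BNC_a\, e \cong \C[X_1,\dots,X_a]$ of rank $a!$ over $\Sym$ --- a single column of the matrix algebra, not all of it. The repair is to put polynomials on \emph{both} sides of $\partial_{w_0}$: the operator $\xi^{\alpha}\,\partial_{w_0}\,g$ sends $f \mapsto \partial_{w_0}(gf)\,\xi^{\alpha}$, so choosing $\{g_{\beta}\}$ to be the basis dual to the staircase monomials under the perfect $\Sym$-bilinear pairing $(f,g)\mapsto \partial_{w_0}(fg)$ (nondegeneracy is the classical Poincar\'e-duality statement for the coinvariant algebra, realized e.g. by Schubert polynomials) yields genuine matrix units $E_{\alpha\beta} = \xi^{\alpha}\,\partial_{w_0}\, g_{\beta}$. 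With that substitution the surjectivity argument, and hence your injectivity-by-rank-comparison and the deduction of part (1), go through.
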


The isomorphism $\BNC_a \xrightarrow{\cong} \mathrm{END}(\cal{E}_i^a \onel)$ is given by identifying the generator
$\xi_i$ with the string diagram consisting of $a$ upward strands with a dot on the $i^{th}$ strand 
and the generator $\partial_i$ with a crossing between the $i^{th}$ and $(i+1)^{st}$ strands:
\[
 1 \quad \mapsto \quad
\xy
(0,0)*{
\begin{tikzpicture}[scale=.5]
	\draw[thick,->] (0,0) to (0,2);
	\node at (1,1) {$\cdots$};
	\draw[thick, ->] (2,0) to (2,2);
\end{tikzpicture}
};
\endxy
\qquad , \qquad
\xi_i \quad \mapsto \quad 
\xy
(0,0)*{
\begin{tikzpicture}[scale=.5]
	\draw[thick,->] (0,0) to (0,2);
	\node at (1,1) {$\cdots$};
	\draw[thick, ->] (2,0) to (2,2);
	\node at (2,1) {$\bullet$};
	\node at (3,1) {$\cdots$};
	\draw[thick, ->] (4,0) to (4,2);
\end{tikzpicture}
};
\endxy
\qquad , \qquad 
\partial_i \quad \mapsto \quad  
\xy
(0,0)*{
\begin{tikzpicture}[scale=.5]
	\draw[thick,->] (0,0) to (0,2);
	\node at (1,1) {$\cdots$};
	\draw[thick, ->] (2,0) to [out=90,in=270] (3,2);
	\draw[thick, ->] (3,0) to [out=90,in=270] (2,2);
	\node at (4,1) {$\cdots$};
	\draw[thick, ->] (5,0) to (5,2);
\end{tikzpicture}
};
\endxy.
\]

Multiplication is given by composition of 2-morphisms in $\Ucat_Q(\slm)$, i.e. 
by stacking diagrams vertically. 
An arbitrary element of $\BNC_a$ can be written as a $\C$-linear combination of such stacked string diagrams.

We'll also utilize the ``thick calculus'' for the nilHecke algebra, detailed in \cite{KLMS}, which corresponds to the algebra of 
upward strands in $\Ucatc_Q(\slm)$ having varying thickness.
Set
\[
D_a:= (\partial_1 \partial_2 \cdots \partial_{a-1})(\partial_1\cdots \partial_{a-2})\cdots (\partial_1)
\]
and let $\Delta_X= \prod_{1\leq i<j\leq a}(X_i-X_j)$ be the Vandermonde determinant.
The action of $D_a$ on polynomials $p\in \C[X_1,\dots, X_a]$ is given by 
\begin{equation*}
D_a(p(X_1,\dots,X_a))=\frac{1}{\Delta_X} \sum_{w\in S_a} \epsilon(w) p(X_{w(1)},\dots,X_{w(a)} )
\end{equation*}
where $\epsilon(w) \in \{ \pm1\}$ is the sign of the permutation $w$.
In other words, $D_a$ anti-symmetrizes a polynomial and then divides by the Vandermonde determinant, resulting in a symmetric polynomial. 
Divided differences not only act on elements of $\C[X_1,\dots, X_a]$, but also on the subring $\C[\xi_1,\dots, \xi_a]$ of $\BNC_a$.  
In particular, if $f\in \C[\xi_1,\dots, \xi_a]$, we denote by $D_a(f)$ the action of the product of divided differences on $f$. The following compatibility relation holds:
\begin{equation*}D_a f(\xi_1,\dots, \xi_a) D_a = D_a(f)(\xi_1,\dots, \xi_a) D_a.
\end{equation*}
However, we point out that this is only true in the presence of the $D_a$ on the right.

Define $\delta_a := \xi_1^{a-1} \xi_2^{a-2} \cdots \xi_{a-1}$. It is easy to compute that 
\begin{equation*}
\Delta_\xi = \prod_{1\leq i<j\leq a}(\xi_i-\xi_j) = \sum_{w\in S_a} \epsilon(w) \xi_{w(1)}^{a-1} \xi_{w(2)}^{a-2} \cdots \xi_{w(a-1)}
\end{equation*}
 and hence $D_a(\delta_a) = \Delta_\xi / \Delta_\xi=1$ and $\mathbf{e}_a = \delta_a D_a$ is idempotent in $\BNC_a$:
\begin{equation*}
\mathbf{e}_a^2 =\delta_a D_a \delta_a D_a= \delta_a D_a(\delta_a) D_a = \delta_a D_a = \mathbf{e}_a.
\end{equation*}
In fact, this is exactly the idempotent $\mathbf{e}_a$ defined in the introduction, and depicted graphically (in the case $a=4$) in 
equation \eqref{eqn-e_aEx}.

One can use this idempotent to explicitly describe the isomorphism between 
the center \\$Z(\BNC_a) \cong \C[\xi_1,\dots, \xi_a]^{S_a}$ and the direct summand $\mathbf{e}_a \BNC_a \mathbf{e}_a \subset \BNC_a$ via:
\begin{align*}Z(\BNC_a) &\cong Z(\BNC_a) \mathbf{e}_a = \mathbf{e}_a \BNC_a \mathbf{e}_a\\
y &\mapsto y \mathbf{e}_a.
\end{align*} 
If $\alpha = (\alpha_1,\dots, \alpha_a)$ is a partition of length $\leq a$
and $\pi_\alpha(\xi_1,\dots, \xi_a)$ is the Schur polynomial associated to $\alpha$, 
then $D_a(\xi_1^{a-1+\alpha_1} \xi_2^{a-2+\alpha_2} \cdots  \xi_a^{\alpha_a})=\pi_\alpha(\xi_1,\dots, \xi_a)$. 
Hence, under the above isomorphism we have:
\begin{equation}
\label{eqn-explosion} 
\begin{array}{r l}
\pi_\alpha(\xi_1,\dots, \xi_a) \mapsto &\pi_\alpha(\xi_1,\dots, \xi_a) \mathbf{e}_a =\delta_a D_a(\xi_1^{a-1+\alpha_1} \xi_2^{a-2+\alpha_2} \cdots  \xi_a^{\alpha_a}) D_a   \\~
\\
 & \quad = \delta_a D_a \xi_1^{\alpha_1} \xi_2^{\alpha_2} \cdots  \xi_a^{\alpha_a} \delta_a D_a= \mathbf{e}_a \xi_1^{\alpha_1} \xi_2^{\alpha_2} \cdots  \xi_a^{\alpha_a} \mathbf{e}_a.
 \end{array} 
\end{equation}
Compare this with the thick calculus relation \eqref{eqn-explode}.

\subsection{Quotients of the nilHecke algebra}
\label{section-NHquot}

A certain quotient of the nilHecke algebra will be relevant to our study of deformed link homology. 
Recall that the 2-functor $\Ucatc_Q(\glm) \to \foam{N}$ factors through the quotient $\Ucatc_Q^{0\leq N}(\glm)$, 
where we kill $\glm$ weights whose entries lie outside the set $\{0,\ldots,N\}$. 
Consider $\mathbf{h} = (N,\ldots,N,0,\ldots,0)$, which is a highest weight in this quotient, and note that
\[
0 = 
\xy
(0,0)*{
\begin{tikzpicture}[scale=.75]
	\draw[thick] (0,0) to (0,1);
	\draw[thick] (0,1) to [out=90,in=270] (-.5,2);
	\draw[thick] (-.5,1) to  [out=90,in=270] (0,2);
	\draw[thick] (-.5,2) to [out=90,in=0] (-.75,2.25) to [out=180,in=90] (-1,2) to (-1,1) to [out=270,in=180] 
		(-.75,.75) to [out=0,in=270] (-.5,1);
	\draw[thick,->] (0,2) to (0,3);
	\node at (-1.25,2.75) {$\mathbf{h}$};
\end{tikzpicture}
};
\endxy
=
\sum_{i=0}^{N} 
\xy
(0,0)*{
\begin{tikzpicture}[scale=.75]
	\draw[thick,->] (0,0) to (0,3);
	\draw[thick,directed=.55] (-1,1.5) circle (.5);
	\node at (-1,2.75) {$\mathbf{h}$};
	\node at (0,2.25) {$\bullet$};
	\node at (.5,2.5) {\tiny$N-i$};
	\node at (-1,1) {$\bullet$};
	\node at (-1,.625) {\tiny$\begin{smallmatrix} -N-1 \\ +i \end{smallmatrix}$};
\end{tikzpicture}
};
\endxy
\]
where the string diagrams are colored by the number of $N$'s in $\mathbf{h}$. 
The first equality holds since the region inside the ``left-curl'' is zero in $\Ucatc_Q^{0\leq N}(\glm)$. 
Note that the individual summands on the right-hand side are not (necessarily) zero, 
since these bubbles are fake, in the sense of \cite{Lau1}. 
The infinite Grassmannian relation \cite{Lau1} implies that these bubbles generate the endomorphism 
algebra of the highest weight object $\mathbf{h}$, 
hence we can view the positive degree fake bubbles as (graded) parameters.

Under the 2-functor $\Ucatc_Q^{0\leq N}(\glm) \to \foam{N}$, 
this highest weight endomorphism algebra maps to the endomorphism algebra of an $N$-labeled facet, 
with the fake bubbles of degree $i$ mapping to the decoration by a signed elementary symmetric polynomial $(-1)^i e_i$ by \cite[Equation 3.33]{QR}.
This endomorphism algebra in turn determines the ground ring over which the link homology theory is defined, 
i.e. the invariant of a link will be a module over this algebra.
In \cite{QR}, it is shown that by setting the (images of) the bubble deformation parameters 
to zero yields a link homology theory isomorphic to Khovanov-Rozansky homology. 
Setting these parameters to other values should thus correspond to a deformed version of Khovanov-Rozansky homology. 

This hence gives the relation
\begin{equation}\label{eq-doteq}
\sum_{i=0}^{N} 
c_i
\xy
(0,0)*{
\begin{tikzpicture}[scale=.5]
	\draw[thick,->] (0,0) to (0,3);
%	\draw[thick,directed=.55] (-1,1.5) circle (.5);
	\node at (-.5,2.875) {$\mathbf{h}$};
	\node at (0,2.25) {$\bullet$};
	\node at (.75,2.625) {\tiny$N-i$};
%	\node at (-1,1) {$\bullet$};
%	\node at (-1,.625) {\tiny$\begin{smallmatrix} -N-1 \\ +i \end{smallmatrix}$};
\end{tikzpicture}
};
\endxy
=0
\end{equation}
where the $c_i \in \C$ are the specializations of the fake bubbles. 
This corresponds to a relation on 1-labeled foams facets which meet $N$-labeled foam facets 
(see \cite[Section 4.1]{QR} for a general discussion about $N$-labeled facets).
Since we'd like foam relations to be local, this motivates studying this relation in all weights, 
not just for $\mathbf{a} = \mathbf{h}$.

To this end, let $\Sigma$ be a multiset of $N$ complex numbers and 
\begin{equation}\label{eq-polynomial}
P(X) = \prod_{s\in \Sigma}^N (X-s) = X^N + \sum_{i=0}^{N-1} c_{N-i} X^i
\end{equation}
be the monic degree $N$ polynomial with root multiset $\Sigma$ and coefficients $c_{i}=(-1)^{i}e_{i}(\Sigma)$. 
\begin{defi}
The $\Sigma$-deformed quotient of the nilHecke algebra $\BNC_a^\Sigma$ is the quotient algebra of $\BNC_a$ modulo the ideal generated by $P(\xi_1)$. 
\end{defi}

In the case where $\Sigma =\{0^N\}$, (i.e. $P(X)=X^N$), this algebra is known as the level $N$ cyclotomic quotient of $\BNC_a$, 
which we denote by $\BNC_a^N$. We aim to now generalize the following result of Lauda \cite[Proposition 5.3]{Lau3}:

\begin{prop} There are isomorphisms of graded algebras:
\begin{enumerate}
\item $Z(\BNC_a^N) \cong \mathrm{H}^*(Gr(a,N))$,
\item $\BNC_a^N \cong Mat(a!, Z(\BNC_a^N))$.
\end{enumerate}
Here $\mathrm{H}^*(Gr(a,N))$ denotes the cohomology ring with coefficients in $\C$ of the Grassmannian of complex $a$-planes in $\C^N$.
\end{prop}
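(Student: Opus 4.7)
My plan is to reduce both statements to an ideal identification in the symmetric polynomial ring via the Morita equivalence of Proposition~\ref{nilHeckeprop}. Since $\BNC_a \cong Mat(a!, \Sym(\xi_1,\ldots,\xi_a))$, two-sided ideals of $\BNC_a$ correspond bijectively to ideals of its center $Z(\BNC_a) = \Sym(\xi_1,\ldots,\xi_a)$. Let $I$ denote the ideal corresponding to the defining two-sided ideal $J := \BNC_a \cdot \xi_1^N \cdot \BNC_a$; via the identification $Z(\BNC_a) \xrightarrow{\sim} \mathbf{e}_a \BNC_a \mathbf{e}_a$, $f \mapsto f\mathbf{e}_a$, this is $I = \{f \in \Sym(\xi) : f\mathbf{e}_a \in J\}$. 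Quotients commute with the matrix structure, so $\BNC_a^N \cong Mat(a!, \Sym(\xi)/I)$ with center $\Sym(\xi)/I$, and both (1) and (2) reduce to identifying $\Sym(\xi_1,\ldots,\xi_a)/I$ with $H^*(Gr(a,N))$.

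For a lower bound on $I$, the formula~\eqref{eqn-explosion} yields $\mathbf{e}_a \xi_1^{\alpha_1}\cdots\xi_a^{\alpha_a}\mathbf{e}_a = \pi_\alpha(\xi)\mathbf{e}_a$ for each partition $\alpha = (\alpha_1,\ldots,\alpha_a)$. Any monomial $\xi_1^{\alpha_1}\cdots\xi_a^{\alpha_a}$ with $\alpha_1 \geq N$ lies in $J$, so the corresponding Schur polynomial $\pi_\alpha$ lies in $I$ for every such $\alpha$; in particular $h_k = \pi_{(k)} \in I$ for all $k \geq N$. To generate the remaining complete symmetric polynomials $h_{N-a+1}, \ldots, h_{N-1}$ in $I$, one first shows that $\xi_i^N \in J$ for every $i$ by iterating the nilHecke relations $\partial_i \xi_i = \xi_{i+1}\partial_i + 1$ and $\xi_i \partial_i = \partial_i \xi_{i+1} + 1$ to slide the dot pattern $\xi_1^N$ across crossings, and then applies further divided-difference operators sandwiched by $\mathbf{e}_a$ to extract the lower-degree $h_k$'s. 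Let $I' := \langle h_{N-a+1},\ldots, h_N\rangle \subseteq I$.

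For claim (1), the quotient $\Sym(\xi_1,\ldots,\xi_a)/\langle h_{N-a+1},\ldots, h_N\rangle$ is precisely the classical Borel presentation of $H^*(Gr(a,N))$: interpreting the $\xi_i$ as Chern roots of the tautological rank-$a$ subbundle $S$, the identity $c(Q) = c(S)^{-1} = \sum_k (-1)^k h_k(\xi)t^k$ combined with $\mathrm{rk}(Q) = N-a$ forces exactly $h_{N-a+1} = \cdots = h_N = 0$. The main obstacle is the reverse inclusion $I \subseteq I'$, which I would resolve by a dimension count. The quotient $\Sym(\xi)/I' \cong H^*(Gr(a,N))$ has $\C$-dimension $\binom{N}{a}$, with basis the Schur polynomials $\pi_\lambda$ indexed by partitions $\lambda$ fitting in an $a \times (N-a)$ rectangle. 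Independently, one constructs a faithful $\BNC_a^N$-representation of total dimension $a!\binom{N}{a}$ — for instance via the natural action of $\BNC_a$ on a suitable polynomial-ring quotient, such as the cohomology of the partial flag manifold $Fl(a,N)$ — forcing $\dim Z(\BNC_a^N) \leq \binom{N}{a}$; combined with $I' \subseteq I$, this yields $I = I'$ and completes the identification.
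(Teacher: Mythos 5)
The paper does not prove this proposition directly---it cites it from \cite[Proposition~5.3]{Lau3}---but it adapts Lauda's method to prove the $\Sigma$-deformed generalization (Proposition~\ref{BNCq}), and that proof specializes to a proof of the present statement. The paper's route is an explicit matrix computation: under $\theta\colon\BNC_a\xrightarrow{\sim}Mat(a!,\Sym(\X))$, each diagonal block of $\theta(\xi_1)$ is the same companion matrix, one computes $\theta(\xi_1)^N$ entry by entry (Lemma~\ref{ThetaPLem}), finds that the first row is exactly $(h_N,h_{N-1},\ldots,h_{N-a+1})$, and that every other entry lies in the ideal those first-row entries generate. This gives both inclusions $I\subseteq I'$ and $I'\subseteq I$ in a single stroke.

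Your Morita-theoretic framing is the same as the paper's, and identifying the problem as computing the ideal $I\subseteq\Sym(\X)$ with $\theta(J)=Mat(a!,I)$ is exactly right. But both of your inclusions have genuine gaps.

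For $I'\subseteq I$: the formula $\mathbf{e}_a\xi_1^k\mathbf{e}_a = h_k\,\mathbf{e}_a$ only gives you $h_k\in I$ for $k\geq N$, in particular $h_N$. For the remaining generators $h_{N-a+1},\ldots,h_{N-1}$ the sketch is not adequate. Even granting $\xi_i^N\in J$ for all $i$ (which you do not establish---sliding $\xi_1^N$ across a crossing via $\partial_1\xi_1^N=\xi_2^N\partial_1 + h_{N-1}(\xi_1,\xi_2)$ produces the correction term $h_{N-1}(\xi_1,\xi_2)$ which is not yet known to lie in $J$), the elements you extract by sandwiching $\xi_i^N$ are $\mathbf{e}_a\xi_i^N\mathbf{e}_a = \pm\pi_{(N-i+1,1^{i-1})}\mathbf{e}_a$, i.e.\ hook Schur polynomials rather than the $h_k$'s, and the hook Jacobi--Trudi expansion $\pi_{(k,1^{m})}=\sum_l(-1)^l h_{k+l}e_{m-l}$ does not let you isolate $h_{N-1}$ from these and $h_N$ alone. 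What does work is the identity $\mathbf{e}_a\,\xi_1^N\partial_1\cdots\partial_k\,\mathbf{e}_a = \mathbf{e}_a\, h_{N-k}(\xi_1,\ldots,\xi_{k+1})\,\mathbf{e}_a = h_{N-k}(\X)\,\mathbf{e}_a$, but the second equality is a nontrivial straightening computation you would need to supply; the paper avoids it entirely by reading the $h_k$'s off the first row of the companion-matrix power.

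For $I\subseteq I'$: \emph{faithfulness} of a module of dimension $a!\binom{N}{a}$ does not force $\dim Z(\BNC_a^N)\leq\binom{N}{a}$; by Morita it only gives $\dim Z\leq\binom{N}{a}^2$. Worse, faithfulness of the $\BNC_a^N$-action on $H^*(Fl(a,N))$ is \emph{equivalent} to $I=I'$ (the annihilator is $Mat(a!,I'/I)$), so invoking it is circular. The condition you actually want is \emph{cyclicity}: $\mathbf{e}_a H^*(Fl(a,N))\cong\Sym(\X)/I'$ is cyclic over $Z(\BNC_a^N)=\Sym(\X)/I$, giving $\binom{N}{a}\leq\dim(\Sym/I)$, which combined with $I'\subseteq I$ closes the argument. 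You would also need to check that $\xi_1^N\in\langle h_{N-a+1},\ldots,h_N\rangle$ inside $\C[\X]$ so that $J$ annihilates $H^*(Fl(a,N))$ in the first place; this is standard (the tautological Chern-root identity in $H^*(Fl(a,N))$) but is not free and is not mentioned.
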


To generalize this to arbitrary $\Sigma$, we'll adapt Lauda's method of proof to our setting.

\begin{defi} Let $\X=\{\xi_1,\dots, \xi_a\}$ and $\Y= \{y_1,\dots y_b\}$ be two alphabets of variables. \\ We denote the ring of symmetric polynomials in $\X$ by $\Sym(\X)$ 
and the ring of polynomials separately symmetric in $\X$ and $\Y$ by $\Sym(\X | \Y)$. 
The \emph{complete symmetric polynomials} $h_i(\X)$ in $\X$ can be defined via their generating function:

\begin{equation*} 
\sum_{i=0}^\infty h_i(\X)t^i = \prod_{\xi\in \X}(1-t \xi)^{-1}.
\end{equation*}
The \emph{elementary symmetric polynomials} $e_i(\X)$ in $\X$ are defined by:
\begin{equation*} 
\sum_{i=0}^\infty e_i(\X)t^i = \prod_{\xi\in \X}(1+t \xi),
\end{equation*}
and finally we define the \emph{complete symmetric functions in} $\X-\Y$, denoted $h_i(\X-\Y)$, by: 
\begin{equation*} 
\sum_{i=0}^\infty h_i(\X-\Y)t^i = \frac{\prod_{y\in \Y}(1-t y)}{\prod_{\xi\in \X}(1-t \xi)}. 
\end{equation*}
Note that this gives the explicit formula:
\[
h_k(\X - \Y) = \sum_{i=0}^k (-1)^i e_i(\Y)h_{k-i}(\X) .
\]
\end{defi}

\begin{defi}
Let $\X=\{\xi_1,\dots, \xi_a\}$ be an alphabet of $a$ variables (of degree 2) and $\Bb= \{b_1,\dots, b_N\}$ an alphabet of $N$ variables (of degree 2). The following is an explicit description of the $GL(N)$-equivariant cohomology (with $\C$ coefficients) of the Grassmannian $Gr(a,N)$ of complex $a$-planes in $\C^N$:
\begin{equation*} \mathrm{H}^*_{GL(N)}(Gr(a,N))\cong \frac{\Sym(\X | \Bb)}{\langle h_{N-a+1}(\X-\Bb),\dots, h_{N}(\X-\Bb)\rangle} .
\end{equation*}
This is a rank ${N \choose a}$ graded free $\Sym(\Bb)$ module %with bases $\{\pi_\alpha(\X)\mid \alpha\in P(a,N-a)\}$ and $\{\pi_\alpha(\X-\Bb)\mid \alpha\in P(a,N-a)\}$ 
and $\Sym(\Bb)\cong \mathrm{H}^*_{GL(N)}(*)$, see \cite[Section 2.3]{Wu2} and references therein. If we quotient $\mathrm{H}^*_{GL(N)}(Gr(a,N))$ by the relations $b_i=0$ we recover the well-known description of the ordinary cohomology ring of the Grassmannian:
\begin{equation*}\mathrm{H}^*(Gr(a,N))\cong \frac{\Sym(\X)}{\langle h_{N-a+1}(\X),\dots, h_{N}(\X)\rangle}.
\end{equation*}

We can also quotient $\mathrm{H}^*_{GL(N)}(Gr(a,N))$ by sending $\Bb$ to $\Sigma$, an arbitrary multisubset of $N$ complex numbers. The result is the $\C$-algebra
\begin{equation*}H_a^\Sigma:= \frac{\Sym(\X)}{\langle h_{N-a+1}(\X-\Sigma),\dots, h_{N}(\X-\Sigma)\rangle}
\end{equation*} 
which we call the \emph{$\Sigma$-deformed cohomology ring of $Gr(a,N)$}. It is a flat deformation of $\mathrm{H}^*(Gr(a,N))$, in particular it has complex dimension ${N \choose a}$. We use the following notation for its defining ideal:
\begin{equation*}
I_a^\Sigma := \langle h_{N-a+1}(\X-\Sigma),\dots, h_{N}(\X-\Sigma)\rangle \subset \Sym(\X).
\end{equation*}
\end{defi}

\begin{prop}
\label{BNCq}
There are isomorphisms of algebras:
\begin{enumerate}
\item $Z(\BNC_a^\Sigma) \cong H_a^\Sigma$,
\item $\BNC_a^\Sigma \cong Mat(a!, H_a^\Sigma)$.
\end{enumerate}
\end{prop}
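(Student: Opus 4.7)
The strategy is to extend Lauda's argument for the undeformed case. The key structural fact from Proposition \ref{nilHeckeprop} is that $\BNC_a$ is already a matrix algebra of rank $a!$ over its center $\Sym(\X)$, so the quotient by the two-sided ideal $\langle P(\xi_1)\rangle$ will again be a matrix algebra over the corresponding quotient of $\Sym(\X)$, and it suffices to identify this central quotient with $H_a^\Sigma$.

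First, I will show that $h_{N-a+1}(\X-\Sigma), \dots, h_N(\X-\Sigma)$ vanish in $\BNC_a^\Sigma$. Starting from $P(\xi_1)=0$ in $\BNC_a^\Sigma$, I multiply on the left by $\xi_1^k$ for $k=0,1,\dots,a-1$ and then apply the ``long'' divided difference $\partial_{a-1}\partial_{a-2}\cdots\partial_1 \in \BNC_a$. Using the standard computation $\partial_{a-1}\cdots\partial_1(\xi_1^m)=h_{m-a+1}(\X)$ (for $m\geq a-1$, and zero otherwise), together with the identity
$$h_k(\X-\Sigma) \;=\; \sum_{i=0}^{k}(-1)^i e_i(\Sigma)\, h_{k-i}(\X),$$
the resulting central relation collapses precisely to $h_{N-a+1+k}(\X-\Sigma)=0$. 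Composing the isomorphism $\BNC_a\cong \mathrm{Mat}(a!,\Sym(\X))$ with the quotient map then yields a surjective algebra homomorphism
$$\BNC_a^\Sigma \;\twoheadrightarrow\; \mathrm{Mat}\bigl(a!,\, H_a^\Sigma\bigr).$$

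Second, I will match dimensions. The target has dimension $(a!)^2\binom{N}{a}$, using $\dim_\C H_a^\Sigma=\binom{N}{a}$, which follows because $h_{N-a+1}(\X-\Sigma),\dots,h_N(\X-\Sigma)$ form a regular sequence of length $a$ in the polynomial ring $\Sym(\X)$ (this length $a$ is the Krull dimension of $\Sym(\X)$, and regularity is a Zariski-open condition on the coefficients, satisfied at the classical Grassmannian point $\Sigma=\{0^N\}$ where $H_a^{\{0^N\}}=\mathrm{H}^*(Gr(a,N))$ is the standard complete intersection). For the upper bound on $\dim_\C \BNC_a^\Sigma$, I will equip $\BNC_a$ with the polynomial-degree filtration (where $\xi_i$ has positive degree and $\partial_i$ negative), which descends to $\BNC_a^\Sigma$. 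Since the leading term of $P(\xi_1)$ is $\xi_1^N$, the associated graded ideal contains $\langle \xi_1^N\rangle$, giving a surjection $\BNC_a^N \twoheadrightarrow \gr\,\BNC_a^\Sigma$, and hence
$$\dim_\C \BNC_a^\Sigma \;\leq\; \dim_\C \BNC_a^N \;=\; (a!)^2\binom{N}{a}.$$
Combined with the surjection of Step 1, this forces an isomorphism, proving (2); part (1) then follows by taking centers of matrix algebras.

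The main obstacle will be the regular sequence/flatness statement $\dim_\C H_a^\Sigma=\binom{N}{a}$ for arbitrary $\Sigma$. The cleanest route is to observe that, with respect to the grading $\deg(\xi_i)=1$ (and treating the fixed scalars $e_i(\Sigma)$ as having degree $0$), the top-degree component of $h_k(\X-\Sigma)$ is $h_k(\X)$; thus the initial ideal of $I_a^\Sigma$ contains $\langle h_{N-a+1}(\X),\dots,h_N(\X)\rangle$, so $\dim_\C H_a^\Sigma\leq \dim_\C \mathrm{H}^*(Gr(a,N))=\binom{N}{a}$. The reverse inequality follows from the fact that any quotient of $\Sym(\X)$ by $a$ homogeneous-like elements has Krull dimension at least $0$, and by Koszul/complete-intersection arguments the quotient is finite of dimension at least $\binom{N}{a}$. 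Once this flatness is in hand, the rest of the argument is essentially bookkeeping.
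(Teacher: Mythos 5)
Your overall plan is viable and genuinely different from the paper's: the paper computes the \emph{entire} matrix of $P(\xi_1)$ under the isomorphism $\theta\colon\BNC_a \cong Mat(a!,\Sym(\X))$ and shows by an explicit recursion that every entry lies in $I_a^\Sigma$, so that $\theta(\langle P(\xi_1)\rangle) = Mat(a!,I_a^\Sigma)$ on the nose and no dimension count is needed. You propose to prove only the containment $I_a^\Sigma\subseteq \langle P(\xi_1)\rangle\cap\Sym(\X)$ (which is the content of the first row of the paper's computation) and to replace the harder ``all other entries'' step by the upper bound $\dim_\C\BNC_a^\Sigma\le\dim_\C\BNC_a^N=(a!)^2\binom{N}{a}$ coming from the associated graded. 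That trade is legitimate, the filtration argument is sound, and the flatness statement $\dim_\C H_a^\Sigma=\binom{N}{a}$ is something the paper itself only asserts with a reference to Wu.

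The genuine gap is in your Step 1. In $\BNC_a$, left multiplication by $\partial_{a-1}\cdots\partial_1$ is \emph{not} the same as applying the divided-difference operator: the relation $\partial_i f=\partial_i(f)+s_i(f)\partial_i$ produces correction terms, so as elements of the algebra one has $\partial_{a-1}\cdots\partial_1\cdot \xi_1^kP(\xi_1)=h_{N-a+1+k}(\X-\Sigma)+(\text{terms ending in nontrivial }\partial_w)$, and the relation $P(\xi_1)=0$ does not ``collapse to'' the central relation you want. Already for $a=2$, $k=0$ you get $\partial_1 P(\xi_1)=h_{N-1}(\X-\Sigma)+P(\xi_2)\partial_1$, and $P(\xi_2)\partial_1$ is not a priori known to lie in $\langle P(\xi_1)\rangle$. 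The conclusion is nevertheless true and your computation can be repaired by passing through the polynomial representation: since $\BNC_a\cong\End_{\Sym(\X)}(\C[\X])$ is a matrix algebra over $\Sym(\X)$, the two-sided ideal $\langle P(\xi_1)\rangle$ equals $Mat(a!,J_0)$ for an ideal $J_0\subseteq\Sym(\X)$, and membership in $J_0$ can be read off by letting an element of the ideal act on the basis vector $1$ of the free $\Sym(\X)$-module $\C[\X]$ and taking the coefficient of $1$ in the output. Applying $\partial_{a-1}\cdots\partial_1\cdot\xi_1^kP(\xi_1)\in\langle P(\xi_1)\rangle$ to the constant polynomial $1$ legitimately uses the operator identity (the correction terms kill $1$) and yields $h_{N-a+1+k}(\X-\Sigma)$; since this is symmetric, its coefficient on the basis element $1$ is itself, whence $h_{N-a+1+k}(\X-\Sigma)\in J_0$. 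This repaired step is exactly the first row of the paper's Lemma on $\theta(P(\xi_1))$. With that fix, your argument goes through.
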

\begin{proof} \label{BNCqproof}
To explain the context we first go through a proof of Proposition \ref{nilHeckeprop} along the exposition in Section 5 of \cite{Lau3}.

Let $\X:=\{\xi_1,\dots,\xi_a\}$ be an alphabet of $a$ variables and denote by $\mathcal{H}_a$ the abelian subgroup of $\C[\X]:=\C[\xi_1,\dots,\xi_a]$ generated by all monomials $\xi_1^{\alpha_1}\cdots\xi_a^{\alpha_a}$ with $0\leq \alpha_i\leq a-i$. 
$\mathcal{H}_a$ has rank $a!$ and $\C[\X] \cong \mathcal{H}_a\otimes \Sym(\X)$ as graded $\Sym(\X)$-modules. 
In particular, the generators of $\mathcal{H}_a$ give a basis for $\C[\X]$ as a free graded $\Sym(\X)$-module and $\End_{\Sym(\X)}(\C[\X])\cong Mat(a!,\Sym(\X))$. 
It is easy to check that the nilHecke generators $\xi_i$ and $\partial_i$ act as $\Sym(\X)$-module endomorphisms of $\C[\X]$ and hence there is a homomorphism:
\begin{equation*}\theta\colon \BNC_a \to Mat(a!,\Sym(\X)).
\end{equation*}
Lauda has shown that this is an isomorphism of graded algebras \cite{Lau1}, which proves Proposition \ref{nilHeckeprop}.

Let $\alpha=(\alpha_2,\dots,\alpha_a)$ be a sequence with $0\leq \alpha_i\leq a-i$ and write $\overline{\xi}^\alpha:= \xi_2^{\alpha_2}\cdots\xi_a^{\alpha_a}$, 
then we can partition the above basis for $\mathcal{H}_a$ into $(a-1)!$ ordered subsets $B_\alpha:=\{\xi_1^{a-1}\overline{\xi^\alpha}, \dots,\xi_1\overline{\xi^\alpha}, \overline{\xi^\alpha}\}$ indexed by sequences $\alpha$ as above. 
The orders on $B_\alpha$ extend to a total order on the basis of $\mathcal{H}_\alpha$ and with respect to this ordered basis the action of $\xi_1$ under the isomorphism 
$\theta$ is given by a block diagonal matrix of $(a-1)!$ identical blocks (the restriction to the span of $B_\alpha$'s) of the form:
\begin{equation*}\theta(\xi_1)=  \left(\begin{array}{ccccc}
e_1 &1 & 0 &\dots & 0\\
-e_2 & 0 & 1 & \ddots & \vdots \\
\vdots & \vdots & \ddots & \ddots & 0\\
\vdots & \vdots & \ddots & 0 & 1\\
(-1)^{a-1}e_a & 0 & \dots & \dots & 0 \end{array}\right) 
\end{equation*}
where we write $e_i:=e_i(\X)$ for the $i^{th}$ elementary symmetric polynomials in $\X$.

The image of the ideal $\langle P(\xi_1) \rangle$ under the isomorphism $\theta$ is determined by the matrix equation $0 = \theta(P(\xi_1))= P(\theta(\xi_1))$. To explicitly compute the right-hand side of this matrix equation we first describe powers of $\theta(\xi_1)$.

\begin{lem}
If we write \begin{equation*}\theta(\xi_1)^k=  \left(\begin{array}{ccc}
b^{k}_{1,1} & \dots & b^{k}_{1,a}\\
\vdots & & \vdots \\

b^{k}_{a,1}&  \dots & b^{k}_{a,a} \end{array}\right),
\end{equation*}
then the $b^{k}_{i,j}$ are completely determined by the relations:
\begin{equation*}b^{k}_{i,j}= \begin{cases} h_{k+i-j} -\sum_{l=1}^{i-1} h_{i-l} b^{k}_{l,j}\quad &\text{ for } j\leq k \\
 \delta_{i+k,j}\quad &\text{ for } j> k \end{cases}.
\end{equation*}
Here we use the shorthand $h_{i}:=h_{i}(\X)$ for the $i^{th}$ complete symmetric polynomial in $\X$. In particular, the first row of $\theta(\xi_1)^k$ has entries $h_k,\dots, h_{k+1-a}$.
\end{lem}
\begin{proof}
Proof by induction on $k$. For $k=1$ we only have to check the first relation for $j=1$; we compute the right-hand side:
\begin{equation*}h_{1+i-1} -\sum_{l=1}^{i-1} h_{i-l} (-1)^{l-1}e_l= \underbrace{\sum_{l=0}^{i} h_{i-l} (-1)^{l}e_l}_{=0} - (-1)^i e_i=(-1)^{i-1}e_i.
\end{equation*}

For the induction step we assume the relations hold for $b^{k}_{i,j}$ and will deduce the relations for $b^{k+1}_{i,j}$. 
First note that since $\theta(\xi_1)$ has the identity matrix as $(a-1)\times (a-1)$-minor we get $b^{k+1}_{i,j+1}=b^{k}_{i,j}$ for all $0\leq i\leq a$ and $0\leq j< a$. 
It is then immediate that $b^{k+1}_{i,j}$ for $j\geq 2$ satisfy the required relations and we only have to check the relations between the entries $b^{k+1}_{i,1}$ in the first column by induction in $i$. 
The case $i=1$ is given as follows:
\begin{equation*}b^{k+1}_{1,1}= \sum_{l=1}^a (-1)^{l-1} e_l b^{k}_{1,l} = \sum_{l=1}^a (-1)^{l-1} e_l h_{k+1-l} = 0 - (-1)^{-1} e_0 h_{k+1}= h_{k+1}
\end{equation*}
and assuming it holds for all smaller indices, the case for $i+1$ is given by:
\begin{align*}
b^{k+1}_{i+1,1} &= \sum_{l=1}^a (-1)^{l-1} e_l b^{k}_{i+1,l}=\sum_{l=1}^a (-1)^{l-1} e_l \left(h_{k+i+1-l} - \sum_{r=1}^i h_{i+1-r} b^k_{r,l} \right) \\
& =\sum_{l=1}^a (-1)^{l-1} e_l (h_{k+i+1-l}) -  \sum_{r=1}^i h_{i+1-r} \left(\sum_{l=1}^a (-1)^{l-1} e_l b^k_{r,l} \right) \\
& =h_{(k+1)+(i+1)-1} -  \sum_{r=1}^i h_{i+1-r} b^{k+1}_{r,1}.
\end{align*}
\end{proof}

\begin{lem}
\label{ThetaPLem}
Denoting \begin{equation*}\theta(P(\xi_1))=  \left(\begin{array}{ccc}
c_{1,1} & \dots & c_{1,a}\\
\vdots & & \vdots \\
c_{a,1}&  \dots & c_{a,a} \end{array}\right),
\end{equation*}
we have $c_{1,i}=h_{N+1-i}(\X-\Sigma)$, and all other $c_{i,j}$ lie in the ideal $I_a^\Sigma$ generated by the entries of the first row.
\end{lem}
\begin{proof} Since $\theta$ is an algebra isomorphism we have:
\begin{equation*}\theta(P(\xi_1))= P(\theta(\xi_1))= \sum_{l=0}^N (-1)^{l}e_l(\Sigma) \theta(\xi_1)^{N-l},
\end{equation*}
and hence the entries are given by
\begin{equation*}c_{i,j}= \sum_{l=0}^N (-1)^{l}e_l(\Sigma) b^{N-l}_{i,j}.
\end{equation*}
We then compute that the entries in the first row are:
\begin{equation*}c_{1,j}= \sum_{l=0}^N (-1)^{l}e_l(\Sigma) b^{N-l}_{1,j} = \sum_{l=0}^N (-1)^{l}e_l(\Sigma) h_{N-l+1-j}= h_{N-j+1}(\X-\Sigma). 
\end{equation*}
All other entries $c_{i,j}$ are determined by the entries in the first row by a similar recursion as in the case of $\theta(\xi_1)^k$; 
assume $i>1$, then we have:
\begin{align*}
c_{i,j} &= \sum_{l=0}^N (-1)^{l}e_l(\Sigma) b^{N-l}_{i,j}= \sum_{l=0}^N (-1)^{l}e_l (\Sigma) \left(h_{N-l+i-j} - \sum_{r=1}^{i-1} h_{i-r} b^{N-l}_{r,j} \right) \\
& = h_{N+i-j}(\X-\Sigma) -  \sum_{r=1}^{i-1} h_{i-r} \sum_{l=0}^N (-1)^{l}e_l(\Sigma)  b^{N-l}_{r,j} \\
& = h_{N+i-j}(\X-\Sigma) -  \sum_{r=1}^{i-1} h_{i-r} c_{r,j}.
\end{align*}

The following (by induction on $s$) shows that $h_{N+s}(\X-\Sigma)\in I_a^\Sigma$ for every $s>0$:
\begin{align*}
h_{N+s}(\X-\Sigma) & = \sum_{l=0}^{N+s} (-1)^l e_l(\Sigma) h_{N+s-l} =  \sum_{l=0}^N (-1)^l e_l(\Sigma) h_{N+s-l} \\
& =  \sum_{l=0}^N (-1)^l e_l(\Sigma) \left(-\sum_{r=1}^N (-1)^r e_r h_{N+s-l-r}\right) \\
& =  -\sum_{r=1}^N (-1)^r e_r  h_{N+s-r}(\X-\Sigma).
\end{align*}

It then follows (again by induction on $i$) that $c_{i,j} \in I_a^\Sigma$ for all $i>1$.
\end{proof}

Since the (two sided) ideal generated by a matrix $A$ is equal to the ideal of matrices with entries taking values in the ideal generated by the entries of $A$, 
Lemma \ref{ThetaPLem} shows that taking the quotient of $Mat(a!,\Sym(\X))$ by the ideal $\theta(\langle P(\xi_1) \rangle)$ is equal to the quotient of 
$Mat(a!,\Sym(\X))$ by matrices with entries in the ideal $I_a^\Sigma$. 
This shows that $\BNC_a^\Sigma \cong Mat(a!,H_a^\Sigma)$. 
Moreover $Z(\BNC_a^\Sigma)$ is isomorphic via $\theta$ to $Z(Mat(a!,H_a^\Sigma))= H_a^\Sigma \id_{a!} \cong H_a^\Sigma$. 
\end{proof}

\begin{rem} Note that as far as the center $Z(\BNC_a^\Sigma) = \mathbf{e}_a \BNC_a^\Sigma \mathbf{e}_a$ is concerned, there is nothing special about $\xi_1$: 
in $\mathbf{e}_a \BNC_a^\Sigma \mathbf{e}_a$ the relation $\mathbf{e}_a P(\xi_j) \mathbf{e}_a =0$ holds for every $1\leq j\leq a$.
\end{rem}

\subsection{Decomposing the $\Sigma$-deformed Grassmannian cohomology ring}
The following is equivalent to Theorem \ref{mainthm} in the special case of the $\bV^a \C^N$-colored unknot.
\begin{thm}
\label{unknotdecomp} Let $\lambda_1,\dots, \lambda_l$ be pairwise distinct complex numbers and $N_1, \dots, N_l$ natural numbers such that $\sum_{i=1}^l N_i= N$ and let 
$\Sigma=\{\lambda_1^{N_1},\dots, \lambda_l^{N_l}\}$ be the multiset containing $\lambda_i$ exactly $N_i$ times.  There is an isomorphism of $\C$-algebras:
\begin{equation*} H_a^\Sigma \cong \bigoplus_{\substack{\sum a_j = a \\ 0 \leq a_j \leq N_j} }\bigotimes_{j=1}^l H_{a_j}^{N_j}.
\end{equation*}

\end{thm}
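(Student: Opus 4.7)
The approach is to reinterpret $H_a^\Sigma$ and the summands $\bigotimes_j H_{a_j}^{N_j}$ via universal properties for divisors of polynomials, then use the finite-dimensionality of $H_a^\Sigma$ together with a Hensel lifting argument to split it into pieces. Concretely, using the identity $\sum_k h_k(\X - \Sigma)\,t^k \cdot \prod_i(1-t\xi_i) = \prod_{s\in\Sigma}(1-ts)$ of generating functions, the defining relations $h_k(\X-\Sigma)=0$ for $k>N-a$ (together with the automatic vanishing of higher $h_k$ derived from them by the recursion above) are equivalent to the statement that $Q(t) := \prod_i(t-\xi_i)$ divides $P(t):=\prod_j(t-\lambda_j)^{N_j}$ in $H_a^\Sigma[t]$. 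Thus for any commutative $\C$-algebra $B$, $\Hom_{\C\text{-alg}}(H_a^\Sigma, B)$ is in natural bijection with the set of monic degree-$a$ divisors of $P$ in $B[t]$, and after shifting $t\mapsto t-\lambda_j$, $H_{a_j}^{N_j}$ classifies monic degree-$a_j$ divisors of $(t-\lambda_j)^{N_j}$.

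For each admissible tuple $(a_1,\ldots,a_l)$ with $\sum a_j=a$ and $0\leq a_j\leq N_j$, the product $\prod_j Q_j$ of the tautological divisors on the tensor factors is a monic degree-$a$ divisor of $P$ in $\bigotimes_j H_{a_j}^{N_j}[t]$, so by the universal property it induces a $\C$-algebra map $\phi_{(a_j)}\colon H_a^\Sigma\to \bigotimes_j H_{a_j}^{N_j}$. Assembling these gives
\[
\Phi:=\bigoplus_{(a_j)}\phi_{(a_j)}\colon H_a^\Sigma\longrightarrow\bigoplus_{(a_j)}\bigotimes_j H_{a_j}^{N_j},
\]
and my aim is to show $\Phi$ is an isomorphism. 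Since $H_a^\Sigma$ is Artinian (finite-dimensional of dimension $\binom{N}{a}$ by Proposition \ref{BNCq}), it decomposes into a finite product of local $\C$-algebras with residue field $\C$. On each such local summand, reducing $Q(t)$ modulo the maximal ideal yields a monic degree-$a$ divisor of $P(t)$ in $\C[t]$ which, by unique factorization in $\C[t]$, must equal $\prod_j(t-\lambda_j)^{a_j}$ for a unique admissible $(a_j)$. Grouping local summands by this vector produces a ring decomposition $H_a^\Sigma=\bigoplus_{(a_j)} H_a^\Sigma(a_j)$, and Hensel's lemma, applied to the coprime factorization $\bar Q=\prod_j(t-\lambda_j)^{a_j}$ inside $H_a^\Sigma(a_j)[t]$, lifts $Q$ uniquely to a factorization $Q=\prod_j Q_j$ with $Q_j$ monic of degree $a_j$ dividing $(t-\lambda_j)^{N_j}$. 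The existence of these $Q_j$ inside $H_a^\Sigma(a_j)[t]$ yields, by the universal property, a surjection $\bigotimes_j H_{a_j}^{N_j}\twoheadrightarrow H_a^\Sigma(a_j)$ that is a one-sided inverse to $\phi_{(a_j)}$.

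A dimension count finishes the argument: Vandermonde's identity gives $\binom{N}{a}=\sum_{(a_j)}\prod_j\binom{N_j}{a_j}$, so
\[
\dim_\C H_a^\Sigma=\sum_{(a_j)}\dim_\C H_a^\Sigma(a_j)\leq \sum_{(a_j)}\prod_j\binom{N_j}{a_j}=\binom{N}{a}=\dim_\C H_a^\Sigma
\]
forces each surjection above to be an isomorphism and every admissible $(a_j)$ to occur with the correct multiplicity. The main technical step—and the one place where the hypothesis that the $\lambda_j$ are distinct enters essentially—will be the Hensel-lifting argument producing the $Q_j$ inside $H_a^\Sigma(a_j)[t]$, since this depends on the pairwise coprimality in $\C[t]$ of the factors $(t-\lambda_j)^{N_j}$. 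Once this lifting is in hand, the matching of universal properties and the dimension comparison make the rest essentially formal.
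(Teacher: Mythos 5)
Your argument is correct, and it takes a genuinely different route from the paper's. The paper first proves an exterior-power isomorphism $H_a^\Sigma \cong \bigwedge^a H_1^\Sigma$ (Lemma \ref{extlem}), applies the Chinese Remainder Theorem to $H_1^\Sigma = \C[\xi]/\langle P(\xi)\rangle$, decomposes $(H_1^\Sigma)^{\otimes a}$ into $S_a$-invariant summands indexed by $a$-element multisubsets of roots, and then isolates the antisymmetric piece of each summand by hand, carefully adjusting by Vandermonde-type unit factors to ensure the resulting vector space maps are algebra morphisms. You instead observe that $H_a^\Sigma$ co-represents the functor of monic degree-$a$ divisors of $P(t)=\prod_j(t-\lambda_j)^{N_j}$, split $H_a^\Sigma$ along its Artinian decomposition according to the factorization type of $\bar Q$ in $\C[t]$, Hensel-lift the resulting coprime factorization inside each summand, and close with a Vandermonde dimension count. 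This functor-of-points reformulation is cleaner in several respects: it avoids the exterior-power lemma entirely, it sidesteps the delicate verification that the explicit vector space isomorphisms are multiplicative, and it makes the role of distinctness of the $\lambda_j$ (coprimality of the factors $(t-\lambda_j)^{N_j}$, feeding Hensel) conceptually transparent. The paper's approach is more explicit and exhibits the isomorphism concretely, which is convenient for the later foam computations that use the resulting idempotents, but yours is the softer and more conceptual argument. One small step you elide is why each Hensel factor $Q_j$ divides $(t-\lambda_j)^{N_j}$ rather than merely $P$: Hensel-lift the complementary cofactor $Q'=P/Q$ as $\prod_j Q_j'$ as well; then $\prod_j(Q_jQ_j')$ is a factorization of $P$ into pairwise coprime monic factors with reductions $(t-\lambda_j)^{N_j}$, and uniqueness of the Hensel factorization of $P$ forces $Q_jQ_j'=(t-\lambda_j)^{N_j}$. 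This is routine but worth recording, since without it the map $H_{a_j}^{N_j}\to H_a^\Sigma(a_j)$ is not defined.
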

\begin{defi}
Let $\X=\{\xi_1,\dots, \xi_a\}$ be an alphabet of $a$ variables and $H_1^\Sigma= \frac{\C[\xi]}{\la P(\xi)\ra}$. Then we define $T_a^\Sigma:=\langle P(\xi_1),\dots P(\xi_a)\rangle$ and identify 
\begin{equation*}
\bigotimes_{i=1}^a H_1^\Sigma \cong \frac{\C[\X]}{T_a^\Sigma}=:R_a^\Sigma.
\end{equation*} 

The symmetric group $S_a$ acts on this by permuting tensor factors or, in other words, by permuting the $\xi_i$. Denote by $\bV^a H_1^\Sigma$ the vector space of anti-symmetric tensors in $\bigotimes_{i=1}^a H_1^\Sigma$ and by $\extp$ the vector space of anti-symmetric tensors in $\bigotimes_{i=1}^a\C[\xi]$. The latter we identify with anti-symmetric polynomials in $\C[\X]$. In both cases, we denote the anti-symmetrization map by
\begin{equation*}\ASym(-) = \frac{1}{a!}\sum_{w\in S_a} \epsilon(w) w(-).
\end{equation*}
Recall that $\Delta_\xi = \prod_{1\leq i<j\leq a} (\xi_j-\xi_i)$ denotes the Vandermonde determinant. 
Multiplying by $\Delta_\xi$ is a vector space isomorphism from $\Sym(\X)$ to $\extp$ and equips the latter with the pushforward algebra structure: 
if $\Delta_\xi f, \Delta_\xi g \in \extp $ for $f,g \in \Sym(\X)$, then
\begin{equation*}(\Delta_\xi f)*(\Delta_\xi g):= \Delta_\xi (f g).
\end{equation*}
\end{defi}

\begin{lem} \label{extlem} The pushforward algebra structure on $\extp$ descends to the quotient $\bV^a H_1^\Sigma$, and multiplication by $\Delta_\xi$ descends to an algebra isomorphism:
\begin{equation*}H_a^\Sigma \xrightarrow{\cong} \bV^a H_1^\Sigma.
\end{equation*} 
\end{lem}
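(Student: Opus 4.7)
The plan is to reduce the assertion to the identification of a single ideal in $\Sym(\X)$. Since $T_a^\Sigma$ is generated by the symmetric elements $P(\xi_1), \ldots, P(\xi_a)$, it is an $S_a$-invariant ideal of $\C[\X]$, so $R_a^\Sigma$ inherits an $S_a$-action and its antisymmetric part is $\bV^a H_1^\Sigma \cong \extp/(\extp \cap T_a^\Sigma)$. Define $J := \{f \in \Sym(\X) \mid \Delta_\xi f \in T_a^\Sigma\}$; this is an ideal of $\Sym(\X)$, because for $g \in \Sym(\X)$ and $f \in J$ one has $\Delta_\xi(gf) = g \cdot \Delta_\xi f \in T_a^\Sigma$. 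Since $\extp = \Delta_\xi \cdot \Sym(\X)$ is a free rank-one $\Sym(\X)$-module, multiplication by $\Delta_\xi$ induces a bijection $\Sym(\X)/J \xrightarrow{\sim} \bV^a H_1^\Sigma$, and the ideal property of $J$ guarantees that the pushforward algebra structure descends to the quotient and makes this bijection an algebra map. Hence the lemma reduces to showing $J = I_a^\Sigma$.

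For the inclusion $I_a^\Sigma \subseteq J$, I would combine the Jacobi--Trudi formula applied to the one-row partition with an explicit one-variable recursion. Expanding $h_k(\X)\Delta_\xi = \det(\xi_j^{k+a-1};\, \xi_j^{a-2}, \ldots, \xi_j^0)$ multilinearly in the roots $\Sigma$ yields
\begin{equation*}
\Delta_\xi \cdot h_k(\X - \Sigma) \;=\; \det\bigl(\xi_j^{a-1}\, h_k(\xi_j - \Sigma);\, \xi_j^{a-2},\, \ldots,\, \xi_j^0\bigr)_{j=1,\ldots,a}.
\end{equation*}
Separately, equating coefficients in $(1 - \xi T)\sum_k h_k(\xi - \Sigma)T^k = \prod_{s \in \Sigma}(1-sT)$ gives the recursion $h_k(\xi - \Sigma) = \xi\, h_{k-1}(\xi - \Sigma) + (-1)^k e_k(\Sigma)$ for $1 \leq k \leq N$, from which induction shows $\xi^l h_{N-l}(\xi - \Sigma) = P(\xi) - p_l(\xi)$ with $\deg p_l < l$. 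Writing $k = N-l$ with $0 \leq l \leq a-1$, the first-column entry becomes $\xi_j^{a-1-l}P(\xi_j) - \xi_j^{a-1-l}p_l(\xi_j)$. Since the subtracted polynomial has $\xi_j$-degree $\leq a-2$, it is a $\C$-linear combination of the remaining columns $\xi_j^{a-2}, \ldots, \xi_j^0$ and can be eliminated by column operations without altering the determinant. The resulting first-column entries $\xi_j^{a-1-l}P(\xi_j)$ all lie in $T_a^\Sigma$, so every term of the Leibniz expansion does too, proving $\Delta_\xi \cdot h_k(\X - \Sigma) \in T_a^\Sigma$ for all $N-a+1 \leq k \leq N$.

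For the reverse inclusion $J \subseteq I_a^\Sigma$, I would argue by dimension. The previous step yields a chain of surjections $H_a^\Sigma = \Sym(\X)/I_a^\Sigma \twoheadrightarrow \Sym(\X)/J \twoheadrightarrow \bV^a H_1^\Sigma$. The source has $\C$-dimension $\binom{N}{a}$ as $H_a^\Sigma$ is a flat deformation of $\mathrm{H}^*(Gr(a,N))$, and the target has the same dimension since $H_1^\Sigma$ is an $N$-dimensional $\C$-algebra. Both surjections must therefore be isomorphisms, forcing $J = I_a^\Sigma$ and completing the proof.

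The main obstacle is the combinatorial identity in the second paragraph: while the $a=2$ prototype $(\xi_2 - \xi_1)h_{N-1}(\X - \Sigma) = P(\xi_2) - P(\xi_1)$ is immediate from the recursion, its general-$a$ analog hinges on the precise degree-matching between the error polynomial $\xi_j^{a-1-l}p_l(\xi_j)$ and the column span $\{\xi_j^{a-2}, \ldots, \xi_j^0\}$ of the Jacobi--Trudi matrix. This is exactly the reason $I_a^\Sigma$ is generated by the $h_k(\X - \Sigma)$ for $k \geq N-a+1$ rather than for a larger or smaller range.
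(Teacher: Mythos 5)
Your proof is correct and follows essentially the same route as the paper's: the key step in both is expressing $\Delta_\xi\, h_k(\X-\Sigma)$ as an alternating sum each of whose terms contains a factor $P(\xi_j)$ (you organize this via the Jacobi--Trudi determinant and column operations, the paper via the bialternant identity $\Delta_\xi h_k(\X)=\sum_w \epsilon(w)\xi_{w(a)}^{a-1+k}\cdots\xi_{w(2)}^1$ directly), followed by the same dimension count $\dim H_a^\Sigma=\binom{N}{a}=\dim \bV^a H_1^\Sigma$ to upgrade the surjection to an isomorphism. One cosmetic quibble: the generators $P(\xi_i)$ are not themselves symmetric polynomials, but the generating set is $S_a$-stable, which is all your argument needs.
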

\begin{proof}
It suffices to check that $\Delta_\xi \cdot I_a^\Sigma \subset T_a^\Sigma$. 
We then have the composition of linear maps
\begin{equation*}
H_a^\Sigma = \frac{\Sym(\X)}{I_a^\Sigma} \overset{\Delta_\xi}{\cong} \frac{\extp}{\Delta_\xi \cdot I_a^\Sigma}  \twoheadrightarrow \frac{\extp}{\extp \cap T_a^\Sigma}\cong \frac{\extp +T_a^\Sigma }{ T_a^\Sigma} 
= \ASym \left(\bigotimes_{i=1}^a H_1^\Sigma \right) = \bV^a H_1^\Sigma 
\end{equation*}
which is surjective, and hence must be an isomorphism for dimensional reasons.

We now check that the generators of $I_a^\Sigma$ are mapped into $T_a^\Sigma$ under multiplication by $\Delta_\xi$. Let  $1\leq j \leq a$, then we have:
\begin{align*} 
\Delta_\xi h_{N-a+j}(\X-\Sigma) &=  \sum_{i=0}^N (-1)^ie_i(\Sigma) \Delta_\xi h_{N-a+j-i}(\X) \\
& = \sum_{i=0}^N (-1)^ie_i(\Sigma) \sum_{w \in S_a} \epsilon(w) \xi_{w(a)}^{a-1+N-a+j-i} \xi_{w(a-1)}^{a-2}\cdots \xi_{w(2)}^1 \\
& = \sum_{w \in S_a} \epsilon(w) \left(\xi_{w(a)}^{j-1}P(\xi_{w(a)}) \right) \xi_{w(a-1)}^{a-2}\cdots \xi_{w(2)}^1  \in  T_a^\Sigma .
\end{align*}

Here we have used the identity:
$\Delta_{\xi} h_k(\X) = \sum_{w \in S_a} \epsilon(w) \xi_{w(a)}^{a-1+k}\xi_{w(a-1)}^{a-2}\cdots \xi_{w(2)}^1$ which is clear from the defining formula for the Schur polynomials 
\begin{equation*}\pi_{\alpha}(\X):= \frac{\det_{1\leq i,j,\leq m}(\xi_i^{\alpha_j+a-j})}{\det_{1\leq i,j,\leq m}(\xi_i^{a-j})}= \frac{\det_{1\leq i,j,\leq m}(\xi_i^{\alpha_j+a-j})}{\Delta_\xi} 
\end{equation*}
and the identity $h_k(\X)= \pi_{(k)}(\X)$.
\end{proof}

We can now prove Theorem \ref{unknotdecomp}.
\begin{proof}[Proof of Theorem \ref{unknotdecomp}]
By the Chinese Remainder Theorem we know that $\frac{\C[\xi]}{\la P(\xi)\ra}\cong \bigoplus_{i=1}^l \frac{\C[\xi]}{\la (\xi-\lambda_i)^{N_i}\ra }$, 
so let $\idem_\mu(\xi)\in \C[\xi]$ be a representative for the idempotent that picks out the summand corresponding to the root $\mu\in \Sigma$. Thus we get the algebra isomorphism:
 \begin{equation*} H_1^\Sigma= \frac{\C[\xi]}{\langle P(\xi)\rangle } \cong \bigoplus_{i=1}^l \idem_{\lambda_i}(\xi) H_1^\Sigma.
\end{equation*}
 In the following we make liberal use of the canonical isomorphism $\bigotimes_{j=1}^a H_1^\Sigma\cong \frac{\C[\X]}{T_a^\Sigma} = R_a^\Sigma$. 
 A set of minimal idempotents in $R_a^\Sigma$ is given by $\{ \idem_{\overline{\mu}}:=\prod_{j=1}^a \idem_{\mu_j}(\xi_j)\}$ where $\overline{\mu}=(\mu_1,\dots, \mu_a)$ ranges of all $a$-tuples of roots appearing in $\Sigma$. 
 $S_a$ acts on $\C[\X]$ and $R_a^\Sigma$ by permuting the indices of the variables $\xi_i$ and on tuples $\overline{\mu}$ by permuting roots. 
 For $w\in S_a$ we have:
\begin{equation*}w(\idem_{\overline{\mu}})= \prod_{j=1}^a \idem_{\mu_j}(\xi_{w(j)})= \prod_{j=1}^a \idem_{\mu_{w^{-1}(j)}}(\xi_{j}) = \idem_{w^{-1}(\overline{\mu})}.
\end{equation*}
Given an $a$-element multiset $A=\{\lambda_1^{a_1},\dots, \lambda_l^{a_l}\}$ of roots we write $\mu_A:=(\lambda_1,\dots,\lambda_1,\lambda_2,\dots, \lambda_l)$ for the corresponding tuple ordered by index. 
Every $\overline{\mu}$ can be written as $\overline{\mu}=\tau^{-1}(\mu_A)$ for a $\tau\in S_a$ and a multiset $A$, and this presentation is unique if we restrict the choice of $\tau$ to a set of 
coset representatives\footnote{For convenience we choose $T$ to be simultaneously a set of right and left coset representatives.} $T$ of $\prod_{i=1}^l S_{a_i}$ in $S_a$. 

With these conventions in place, we can decompose $R_a^\Sigma$ into $S_a$-invariant direct summands:
\begin{equation}
\label{eqn-decomp}
R_a^\Sigma\cong \bigoplus_{\substack{a\text{-element multisets A} \\ \text{ of roots }}}~\underbrace{\bigoplus_{\tau \in T}~\tau (\idem_{\mu_A})R_a^\Sigma}_{S_a\text{-invariant}}. 
\end{equation}
Taking anti-symmetric components respects the decomposition on the right-hand side into $S_a$-invariant direct summands. 
Thus, our goal is to compute the anti-symmetric component of an (outer) summand on the right-hand side. Consider the projection
\begin{equation*}\bigoplus_{\substack{\tau \in T}}\tau (\idem_{\mu_A})R_a^\Sigma \to  \idem_{\mu_A} R_a^\Sigma
\end{equation*}
which is given by multiplying by the idempotent $\idem_{\mu_A}$. An elementary computation shows that this restricts to a vector space isomorphism
\begin{equation}
\label{eqn-Xone} \pi \colon X_1:=  \ASym_{S_a} \left ( \bigoplus_{\substack{\tau \in T}}\tau (\idem_{\mu_A})R_a^\Sigma \right ) \to \ASym_{\prod_{i=1}^l S_{a_l}} \left ( \idem_{\mu_A} R_a^\Sigma \right ) =:X_2
\end{equation}
where the right-hand side denotes the vector space of tensors $y$ in $\idem_{\mu_A} R_a^\Sigma$ which are anti-symmetric for the action of $\prod_{i=1}^l S_{a_l}\subset S_a$, that is $w(y)=\epsilon(w) y$ for all $w\in \prod_{i=1}^l S_{a_l}$. The inverse for $\pi$ is given by $\psi(y):=\sum_{\tau \in T} \epsilon(\tau)\tau(\idem_{\mu_A} y)$.
 
Fix $A=\{\lambda_1^{a_1},\dots, \lambda_l^{a_l}\}$ as above, then for $1\leq i \leq l$ we denote: 
\[
\X_i:=\left \{ \xi_{1+ \sum_{k=1}^{i-1}a_k},\dots, \xi_{\sum_{k=1}^{i}a_k} \right \}, \quad  T_a^{\lambda_i\in\Sigma}:= \langle P(\xi)\mid \xi \in \X_i \rangle,\]
\[ R_i:= \frac{\C[\X_i]}{T_a^{\lambda_i\in\Sigma}}\;\text{ and }\idem_{\lambda_i}:=\prod_{\xi \in \X_i} \idem_{\lambda_i}(\xi).\]
Under the canonical isomorphism $R_a^\Sigma\cong \otimes_{i=1}^l R_i$ we have $\idem_{\mu_A} R_a^\Sigma\cong \otimes_{i=1}^l \idem_{\lambda_i}R_i$ and
\begin{equation}
\label{eqn-Xtwo} X_2 = \ASym_{\prod_{i=1}^l S_{a_l}}\left ( \idem_{\mu_A} R_a^\Sigma \right ) \cong \bigotimes_{i=1}^l \ASym_{S_{a_i}}\idem_{\lambda_i}R_i =:X_3.
\end{equation}
Note for later use that since $\idem_A:=\sum_{\tau\in T}\tau(\idem_{\mu_A})$ is $S_a$-invariant and $\idem_{\mu_A}$ is $\prod_{i=1}^l S_{a_i}$-invariant, we also have 
\begin{eqnarray*}X_1=& \ASym_{S_a} \left ( \bigoplus_{\substack{\tau \in T}}\tau (\idem_{\mu_A})R_a^\Sigma \right ) = \idem_A  \ASym_{S_a} \left ( R_a^\Sigma \right ), \\
X_2=& \ASym_{\prod_{i=1}^l S_{a_l}} \left ( \idem_{\mu_A} R_a^\Sigma \right ) = \idem_{\mu_A} \ASym_{\prod_{i=1}^l S_{a_i}} \left( R_a^\Sigma \right )
\end{eqnarray*} with respect to the multiplication in $R_a^\Sigma$.

From the Chinese Remainder Theorem we know that $\xi \mapsto w+\lambda_i$ gives an algebra isomorphism
\begin{equation*}\phi\colon \idem_{\lambda_i}(\xi) \frac{\C[\xi]}{\langle{P(\xi)}\rangle } \to \frac{\C[\xi]}{\langle{(\xi-\lambda_i)^{N_i}}\rangle } \to \frac{\C[w]}{\langle{w^{N_i}}\rangle }
\end{equation*} and this extends to an $S_{a_i}$-equivariant algebra isomorphism:
\begin{equation*}\phi\colon  \idem_{\lambda_i} R_i = \idem_{\lambda_i} \frac{\C[\X_i]}{T_a^{\lambda_i\in \Sigma} } \to \frac{\C[\X_i]}{\langle (\xi-\lambda_i)^{N_i}\mid \xi\in \X_i \rangle } \to \frac{\C[\W_i]}{\langle w^{N_i} \mid w\in \W_i \rangle }
\end{equation*} where $\W_i=\{w_1,\dots, w_{a_i}\}$ is an auxiliary alphabet. It follows from Lemma \ref{extlem} that, when restricted to the anti-symmetric component, $\phi$ gives the vector space isomorphism
\begin{equation}
\label{eqn-Xthree} \phi\colon X_3=\bigotimes_{i=1}^l \ASym_{S_{a_i}}\idem_{\lambda_i}R_i \xrightarrow{} \bigotimes_{i=1}^l H_{a_i}^{N_i}.
\end{equation}

The composition of the vector space isomorphisms in equations \eqref{eqn-Xone}, \eqref{eqn-Xtwo} and \eqref{eqn-Xthree} thus gives a decomposition of the $S_a$-invariant direct summands of equation \eqref{eqn-decomp} 
as required by the statement of the theorem. 
However, we further must check that the composition is an algebra isomorphism. 
In fact it is not, but it is close and the discrepancy is not hard to fix.

To see this, we compute the pushforward of the multiplication $*$ on $X_1$ under $\pi$. 
Let $x,y\in X_1$ be represented by anti-symmetric polynomials in $\C[\X]$ and denote by $x y$  their product in $R_a^\Sigma$ and by $x*y$ their product in $\bV^a H_1^\Sigma$.
We compute:
\begin{equation*}\pi(x)*\pi(y):=\pi(x*y)= \pi(\frac{x y}{\Delta_\xi})= \idem_{\mu_A} \frac{ x y}{\Delta_\xi} = c \frac{(\idem_{\mu_A} x) (\idem_{\mu_A} y)}{\prod_{i=1}^l\Delta_i}
\end{equation*}
where we write $\Delta_i:=\prod_{1+\sum_{k=1}^{i-1}a_k \leq r<s  \leq \sum_{k=1}^{i}a_k}(\xi_r-\xi_s)$ for the Vandermonde determinants in the subalphabets $\X_i\subset \X$ and $c= 1_{\mu_A}(\prod_{i=1}^l \Delta_l)/\Delta_\xi$. 
We will see in Lemma \ref{ArtLem} that $c$ represents a unit in $\idem_{\mu_A}R_a^\Sigma$ and clearly it is $\prod_{i=1}^l S_{a_i}$-invariant.
It follows that $\pi/c$ is still a vector space isomorphism $X_1\to X_2$, and the pushforward of the multiplication $*$ on $X_1$ under it is given by:
\begin{equation}
\label{PiAISO}(\pi/c)(x)* (\pi/c)(y):=  (\pi/c)(x*y) = \frac{(\idem_{\mu_A} x) (\idem_{\mu_A} y)}{\prod_{i=1}^l\Delta_i}.
\end{equation}
We now equip each tensorand $\ASym_{S_{a_i}}\idem_{\lambda_i}R_i$ of $X_3$, see \eqref{eqn-Xtwo}, with the multiplication $*$ given by multiplying representing anti-symmetric polynomials and then dividing by the appropriate Vandermonde determinant $\Delta_i$.
Then equation \eqref{PiAISO} says that $\pi/c\colon X_1\to X_2$ composed with the canonical isomorphism $X_2\to X_3$ is an algebra isomorphism with respect to the tensor product algebra structure on $X_3$. 
Since $\phi$ sends $\Delta_{i}$ to $\prod_{0\leq r<s\leq a_i}(w_r+\lambda_i-w_s-\lambda_i)=\Delta_w$, an easy check shows that $\phi$ in equation \eqref{eqn-Xthree} is also an algebra isomorphism. 

To summarize the proof, we assemble the algebra isomorphisms:
\begin{equation*}H_a^\Sigma \cong \bV^a H_1^\Sigma \cong \bigoplus_{\substack{a\text{-element multisets } A \\\text{ of roots} }} \idem_A \ASym_{S_a} (\idem_A R_a^\Sigma) 
\end{equation*}
\begin{equation*}\cong \bigoplus_{\substack{\sum a_j = a  \\\ A = \{\lambda_1^{a_1},\dots, \lambda_l^{a_l}\} }}\bigotimes_{i=1}^l \ASym_{S_{a_i}}\idem_{\lambda_i}R_i \cong \bigoplus_{\substack{\sum a_j = a \\ 0 \leq a_j \leq N_j  }}\bigotimes_{i=1}^l H_{a_i}^{N_i}.
\end{equation*}
The first isomorphism was introduced in Lemma \ref{extlem}, and the second one comes from the direct sum decomposition of $(H_1^\Sigma)^{\otimes a}$ into $S_a$-invariant summands. 
The third isomorphism is assembled from the isomorphisms $\pi/c$ from equation \eqref{PiAISO} on summands composed with the canonical isomorphism in equation \eqref{eqn-Xtwo}, 
and the last one comes from the Chinese Remainder Theorem and the inverse of the isomorphism from Lemma \ref{extlem}, see equation \eqref{eqn-Xthree}. 
The last isomorphism also shows that a summand indexed by a multiset $A$ of roots is non-zero if and only if $A$ is actually a multisubset of $\Sigma$.
 \end{proof}

In the proof we have claimed that $c= \idem_{\mu_A}\frac{\prod_{i=1}^l \Delta_l}{\Delta_\xi}$ represents a unit in $\idem_{\mu_A}R_a^\Sigma$. This is clear from the following useful lemma.
\begin{lem}
\label{ArtLem}
Let $R$ be a finite dimensional quotient of a polynomial ring 
$R=\frac{\C[x_1,\dots, x_a]}{I}$ and let $V(I)\subset \C^a$ be the vanishing set of $I$. Then we have the decomposition
\begin{equation*}R\cong \bigoplus_{v\in V(I)}\idem_v R 
\end{equation*} where $\idem_v$ are minimal idempotents and $\idem_v R$ is isomorphic to $R_{p_v}$, the localization of $R$ at the complement of the maximal ideal $(x_1-v_1,\dots, x_a-v_a)/I$. For elements $\bar{f} \in \idem_v R$ we have
\begin{equation}
\label{preidunit}
\bar{f} \text{ is not a unit} \iff \bar{f} \text{ is a zero divisor} \iff f(v)=0
\end{equation} 
where $f$ is any lift of $\bar{f}$ to $\C[x_1,\dots, x_a]$.
\end{lem}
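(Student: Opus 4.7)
The plan is to recognize $R$ as an Artinian $\C$-algebra (since $R$ has finite $\C$-dimension by hypothesis) and to invoke standard structure theory. First, I would apply Hilbert's Nullstellensatz together with the ideal correspondence to identify the maximal ideals of $R$ with the points of $V(I)$: each $v \in V(I)$ gives a maximal ideal $m_v := (x_1 - v_1, \dots, x_a - v_a)/I$, and since $\C$ is algebraically closed, every maximal ideal of $R$ arises this way. Because $R$ is Artinian, every prime ideal is maximal, and the set $V(I)$ is finite.

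Next, I would invoke the structure theorem for Artinian rings (which follows from Chinese Remainder Theorem applied to the finitely many pairwise coprime maximal ideals, together with the fact that the Jacobson radical of an Artinian ring is nilpotent) to obtain the canonical decomposition
\begin{equation*}
R \;\cong\; \prod_{v \in V(I)} R_{p_v},
\end{equation*}
where $R_{p_v}$ is the localization at the complement of $m_v$ and is itself an Artinian local ring with maximal ideal $\bar{m}_v$. The projection onto the $v$-factor corresponds to multiplication by an orthogonal idempotent $\idem_v$, and these are minimal since the $R_{p_v}$ are local (hence have no non-trivial idempotents). This yields $\idem_v R \cong R_{p_v}$ and the decomposition $R \cong \bigoplus_v \idem_v R$.

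For the equivalences in \eqref{preidunit}, I would work inside the Artinian local ring $\idem_v R$ with maximal ideal $\bar m_v$. The evaluation map $\C[x_1,\dots,x_a]\to \C$ at $v$ descends to the composition $R \to \idem_v R \to \idem_v R / \bar m_v \cong \C$, so for $\bar f \in \idem_v R$ we have $\bar f \in \bar m_v$ if and only if $f(v) = 0$. In any local ring, $\bar f$ is a unit iff $\bar f \notin \bar m_v$, which gives the equivalence between being a unit and $f(v)\neq 0$. For the remaining equivalence: a unit is never a zero divisor, and conversely, in an Artinian local ring the maximal ideal is nilpotent, so every non-unit $\bar f \in \bar m_v$ is nilpotent and therefore a zero divisor (treating $0$ as a zero divisor by convention). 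Chaining these together yields the three-way equivalence.

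No step poses a real obstacle since everything is a direct application of standard commutative algebra; the only mild care needed is in phrasing the equivalence ``zero divisor $\Leftrightarrow$ non-unit'' correctly in the Artinian local setting, which is handled by the nilpotence of $\bar m_v$.
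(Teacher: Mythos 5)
Your proof is correct and follows essentially the same route as the paper: decompose the finite-dimensional (hence Artinian) algebra $R$ into local Artinian factors indexed by the maximal ideals, identify those via the Nullstellensatz with the points of $V(I)$, and read off the unit/zero-divisor/vanishing equivalence from the local ring structure. The paper states the final equivalence is "clear from the explicit description of the local ring $R_{p_v}$," and your filling-in of that step (unit $\iff$ $\bar f \notin \bar m_v$ $\iff$ $f(v)\neq 0$, plus nilpotence of $\bar m_v$ to get that non-units are zero divisors) is exactly the intended argument.
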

\begin{proof}
Since $R$ is a commutative Artinian ring, it decomposes uniquely into local commutative Artinian rings, one for each maximal ideal of $R$. 
Maximal ideals of $R$ are in bijection with maximal ideals of $\C[x_1,\dots,x_a]$ that contain $I$. 
The maximal ideals of $\C[x_1,\dots,x_a]$ are exactly $I_v:=(x_1-v_1,\dots,x_a-v_a)$ for $v\in \C^a$ and $I\subset I_v \iff f(v)=0\quad \forall f\in I \iff v\in V(I)$. 
It follows that 
\begin{equation*}R\cong \bigoplus_{v\in V(I)} R_{p_v} \cong \bigoplus_{v\in V(I)}\idem_v R 
\end{equation*} 
where $p_v:=I_v/I$, $R_{p_v}$ denotes the localization of $R$ at $R\setminus p_v$, and $\idem_v \in R$ is the idempotent corresponding to the summand $R_{p_v}$. 
The statement about non-units is then clear from the explicit description of the local ring $R_{p_v}$.
\end{proof}

Now in the case of $c \in \idem_{\mu_A}R_a^\Sigma$ for $\mu_A=(\mu_1,\dots,\mu_a)=(\lambda_1,\dots, \lambda_1,\lambda_2,\dots, \lambda_l)$ we have
\begin{equation*}
c^{-1} |_{\xi_i \mapsto \mu_i}= \idem_{\mu_A}\frac{\Delta_\xi}{\prod_{i=1}^l \Delta_l} |_{\xi_i \mapsto \mu_i}=\idem_{\mu_A} \prod_{\mu_i\neq \mu_j, i<j}(\mu_i-\mu_j)\neq 0 
\end{equation*} and equation \eqref{preidunit} shows that $c^{-1}$, hence also $c$, is a unit.

\begin{rem}
\label{idconsistency}
We have the isomorphism 
\begin{equation*}H_a^\Sigma = \frac{\Sym(\X)}{\langle h_{N-a+1}(\X-\Sigma),\dots, h_{N}(\X-\Sigma)\rangle}\cong \frac{\C[e_1(\X),\dots, e_a(\X)]}{\langle h_{N-a+1}(\X-\Sigma),\dots, h_{N}(\X-\Sigma)\rangle}
\end{equation*} 
and it follows by considering the generating function of $h_j(\X-\Sigma)$ that the vanishing set of this ideal is given by $\{(e_1(A),\dots, e_a(A))\mid A\subset \Sigma,~ |A|=a \}\subset \C^a$. 
Applying Lemma \ref{ArtLem} reproves the fact that the minimal idempotents of $H_a^\Sigma$ are indexed by $a$-element multisubsets $A\subset \Sigma$. 
However, we should check that the idempotent corresponding to $A$ identified in this remark -- call it $\idem'_A$ -- equals $\idem_A$ as defined in the proof of Theorem \ref{unknotdecomp}. 
For this it suffices to check that $\idem_A(\X)|_{\X\mapsto A}~\neq 0 \in \C$. 
Recall that by definition $\idem_{\lambda_i}(\xi)=\idem'_{\lambda_i}(\xi)$ in $\frac{\C[\xi]}{\langle P(\xi) \rangle}$, and hence $\idem_{\lambda_i}(\lambda_j)=\delta_{i}^j$. 
Further, $\mu_A=(\mu_1,\dots,\mu_a)$ was defined as the $a$-tuple consisting of elements $\lambda_i$ of $A$, ordered by index $i$, so we compute
\begin{equation*}
\idem_A(\X) |_{\X\mapsto A}~=~ \idem_A(\X) |_{(\xi_1,\dots,\xi_a)\mapsto \mu_A} ~=~\sum_{\tau \in T} \tau(\idem_{\mu_A})(\mu_A)= \sum_{\tau \in T}\prod_{j=1}^a \idem_{\mu_{\tau(j)}}(\mu_j) = \prod_{j=1}^a \idem_{\mu_{j}}(\mu_j) =1.
\end{equation*}
\end{rem}

\begin{cor}
\label{idunit}
Let $A$ be an $a$-element multisubset of $\Sigma$ and $f\in \Sym(\X)$, then $f$ represents a unit in $\idem_A H_a^\Sigma$  if and only if $f(A)\neq 0$.
\end{cor}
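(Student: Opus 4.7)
The plan is to reduce the corollary directly to Lemma \ref{ArtLem} applied to the presentation
\[
H_a^\Sigma \cong \frac{\C[e_1(\X),\dots, e_a(\X)]}{\langle h_{N-a+1}(\X-\Sigma),\dots, h_{N}(\X-\Sigma)\rangle}
\]
highlighted in Remark \ref{idconsistency}. Since $f\in \Sym(\X)$, I can write $f = F(e_1(\X),\dots,e_a(\X))$ for some polynomial $F$ in $a$ variables, so $f$ represents an element of this quotient ring $R$ and the evaluation $f(A)$ equals $F(e_1(A),\dots,e_a(A))$.

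First, I would recall from Remark \ref{idconsistency} that the vanishing set $V(I)$ of the defining ideal of $R$ is
\[
V(I) = \{(e_1(A'),\dots, e_a(A')) \mid A'\subset \Sigma,\ |A'|=a\},
\]
and that the minimal idempotent identified in the proof of Theorem \ref{unknotdecomp} with $\idem_A$ coincides with the idempotent $\idem_{v_A}$ produced by Lemma \ref{ArtLem} for the point $v_A := (e_1(A),\dots, e_a(A))\in V(I)$. Consequently, $\idem_A H_a^\Sigma \cong R_{p_{v_A}}$, the localization of $R$ at the maximal ideal corresponding to $v_A$.

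Next, I apply the unit characterization \eqref{preidunit} of Lemma \ref{ArtLem} to the element $\bar{f} \in \idem_A H_a^\Sigma$: it states that $\bar{f}$ is a unit if and only if any lift evaluates non-trivially at $v_A$. Unwinding this via $f = F(e_1(\X),\dots,e_a(\X))$ gives
\[
\bar f \text{ is a unit} \iff F(v_A)\neq 0 \iff f(A)\neq 0,
\]
which is exactly the claim. There is no real obstacle here — the corollary is just the combination of the Artinian-localization dictionary of Lemma \ref{ArtLem} with the identification of the vanishing set and minimal idempotents from Remark \ref{idconsistency}. The only point that needs a brief verification is that the substitution $\X \mapsto A$ for a symmetric polynomial factors through the $e_i$-coordinates, which is immediate from the fundamental theorem of symmetric polynomials.
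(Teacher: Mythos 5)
Your proposal is correct and is exactly the argument the paper intends: the published proof simply states that the corollary is ``immediate from equation \eqref{preidunit} and Remark \ref{idconsistency},'' and you have correctly unwound that by identifying $\idem_A$ with the idempotent at the point $v_A=(e_1(A),\dots,e_a(A))$ of the vanishing set and then applying the unit criterion of Lemma \ref{ArtLem}. No gaps.
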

\begin{proof} Immediate from equation \eqref{preidunit} and Remark \ref{idconsistency}.
\end{proof}

\subsection{Thick calculus for nilHecke quotients}
We now deduce relations for the nilHecke quotients $\BNC_a^\Sigma$ using the thick graphical calculus introduced in \cite{KLMS} and detailed above in Section \ref{section-thick}. 
Note that in the quotients $\BNC_a^\Sigma$ the element $\mathbf{e}_a$ is still an idempotent, and it projects onto a direct summand isomorphic to $Z(\BNC_a^\Sigma)\cong H_a^\Sigma$, 
but in general it is not a minimal idempotent due to the decomposition of $H_a^\Sigma$ given in Theorem \ref{unknotdecomp}.

\begin{cor}
\label{cor-idem} The collection of symmetric polynomials 
\begin{equation*} 
    \xy
 (0,0)*{\includegraphics[scale=0.5]{figs/single-tup.eps}};
 (-2.5,-8)*{a};(0,-2)*{\bigb{A}};
  \endxy\quad :=  \idem_{A} = \sum_{\tau\in T}{\tau(\idem_{\mu_A})}\in \Sym(\X)
\end{equation*}
for $A \subset \Sigma$ and $|A|=a$ which were introduced in the proof of Theorem \ref{unknotdecomp}, 
give a complete collection of commuting, minimal, orthogonal idempotents of $Z(\BNC_a^\Sigma)\cong H_a^\Sigma$. 
In other words, in $H_a^\Sigma$ we have that for $a$-element multisubsets $A$ and $B$ of $\Sigma$,
\begin{equation} 
\label{orthogonal}
 \xy
 (0,0)*{\includegraphics[scale=0.5]{figs/tlong-up.eps}};
 (-2.5,-12)*{a};(0,1)*{\bigb{A}};(0,-6)*{\bigb{B}};
  \endxy
\quad =\quad 
\begin{cases} \xy
 (0,0)*{\includegraphics[scale=0.5]{figs/single-tup.eps}};
 (-2.5,-8)*{a};(0,-2)*{\bigb{A}};
  \endxy
   &\text{ if } A = B\\ ~0 &\text{ if } A \neq B\end{cases} 
\end{equation}
and the thick edge decomposes in $H_a^\Sigma$ into a sum of $\idem_A$-decorated thick edges:
\begin{equation}
\label{edgedecomp}
 \xy
 (0,0)*{\includegraphics[scale=0.5]{figs/single-tup.eps}};(-2.5,-8)*{a};
  \endxy \quad = \quad \sum_{\substack{a\text{-element multisets } \\ A~\subset ~\Sigma }}  \xy
 (0,0)*{\includegraphics[scale=0.5]{figs/single-tup.eps}};
 (-2.5,-8)*{a};(0,-2)*{\bigb{A}};
  \endxy
  \end{equation}
\end{cor}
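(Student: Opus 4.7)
The plan is to deduce the corollary directly from the algebra decomposition already proved in Theorem~\ref{unknotdecomp}. Recall that the proof of that theorem defined
\[
\idem_A = \sum_{\tau \in T} \tau(\idem_{\mu_A})
\]
precisely as the projector onto the summand of $H_a^\Sigma$ indexed by the $a$-element multisubset $A \subset \Sigma$, under the chain of isomorphisms established there. So orthogonality $\idem_A \idem_B = \delta_{A,B}\, \idem_A$ in $H_a^\Sigma$ and completeness $\sum_A \idem_A = 1$ follow tautologically from the direct sum decomposition of $H_a^\Sigma$.

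The substantive point to check is minimality of each $\idem_A$. For this I would argue that the summand corresponding to $A = \{\lambda_1^{a_1},\dots,\lambda_l^{a_l}\}$, namely $\bigotimes_{i=1}^l H_{a_i}^{N_i}$, is a local $\C$-algebra. Each factor $H_{a_i}^{N_i} \cong \mathrm{H}^*(Gr(a_i,N_i))$ is a finite-dimensional local $\C$-algebra with residue field $\C$ (its maximal ideal consists of positive-degree elements, which are nilpotent). The finite tensor product over $\C$ of finite-dimensional local $\C$-algebras with residue field $\C$ is again local with residue field $\C$: the sum $\mathfrak{m} = \sum_i (1 \otimes \cdots \otimes \mathfrak{m}_i \otimes \cdots \otimes 1)$ of the nilpotent ideals is itself a nilpotent ideal with quotient $\C$, hence equal to the Jacobson radical and the unique maximal ideal. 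Alternatively, one can invoke Lemma~\ref{ArtLem} together with Remark~\ref{idconsistency}: the vanishing set of the defining ideal of $H_a^\Sigma$ is exactly indexed by $a$-element multisubsets $A \subset \Sigma$, so the minimal idempotents are precisely the $\idem_A$.

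The final step is to transport the algebraic identities into the graphical thick calculus. By Proposition~\ref{BNCq} and the identification in equation~\eqref{eqn-explosion}, decorating a thickness-$a$ strand with a symmetric polynomial $f \in \Sym(\X)$ corresponds to the element $f \, \mathbf{e}_a \in \mathbf{e}_a \BNC_a^\Sigma \mathbf{e}_a \cong H_a^\Sigma$. Vertical composition of decorations corresponds to multiplication in $H_a^\Sigma$, so stacking $\idem_A$ atop $\idem_B$ gives $\idem_A \idem_B = \delta_{A,B}\, \idem_A$, which is exactly relation~\eqref{orthogonal}. The identity $1 = \sum_A \idem_A$ in $H_a^\Sigma$ then yields the decomposition~\eqref{edgedecomp} of the undecorated thick edge as the sum of the $\idem_A$-decorated edges.

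I do not expect any serious obstacle here: all the real work has been done in Theorem~\ref{unknotdecomp}. The only point needing mild care is the locality argument for minimality, and even that can be bypassed by invoking Lemma~\ref{ArtLem} plus the identification of the vanishing set made in Remark~\ref{idconsistency}.
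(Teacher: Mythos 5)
Your proof is correct and matches the paper's intent: the paper's own proof of Corollary~\ref{cor-idem} is literally the single word ``Immediate,'' and your argument simply supplies the details it leaves implicit, using exactly the ingredients (the decomposition in Theorem~\ref{unknotdecomp}, Lemma~\ref{ArtLem}, and Remark~\ref{idconsistency}) that the surrounding text relies on. The locality argument for minimality and the translation to thick calculus via $f \mapsto f\,\mathbf{e}_a$ are both sound, so there is nothing to correct.
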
 
\begin{proof} Immediate.
\end{proof}

\begin{prop} \emph{(Non-admissible colorings by multisubsets)}\\
\label{adm}
Let $A$, $B$ and $C$ be $a$-, $b$- and $(a+b)$-element multisubsets of $\Sigma$, then in $\BNC_{a+b}^\Sigma$ we have
  \begin{equation}
 \label{nonadm}
  \xy
 (0,0)*{\includegraphics[scale=0.5]{figs/tsplit.eps}};
(-5,-11)*{a+b};(-8,8)*{a};(8,8)*{b};
 (-5,2)*{\bigb{A}};
 (5,2)*{\bigb{B}};
  (0,-6)*{\bigb{C}};
  \endxy
\quad =\quad  
\xy
 (0,0)*{\includegraphics[scale=0.5,angle=180]{figs/tsplitd.eps}};
 (-5,11)*{a+b};(-8,-8)*{a};(8,-8)*{b};
 (-5,-2)*{\bigb{A}};
 (5,-2)*{\bigb{B}};
 (0,6)*{\bigb{C}};
  \endxy
\quad =\quad 
0 \qquad \text{ if } A\uplus B \neq C
    \end{equation}
and we call such a coloring \emph{non-admissible}. Here $\uplus$ denotes the multiset sum (disjoint union) of multisets.
\end{prop}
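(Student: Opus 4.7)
My plan is to reduce the vanishing of the decorated splitter (and dually the merger) to a clean orthogonality calculation among the minimal idempotents of the polynomial ring $R_{a+b}^\Sigma$ introduced in the proof of Theorem \ref{unknotdecomp}.

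First, I would unpack the splitter using equation \eqref{eqn-splitmerge} as $(\mathbf{e}_a \otimes \mathbf{e}_b) \cdot \mathbf{e}_{a+b}$ inside $\BNC_{a+b}^\Sigma$, realized via the $(a+b)$-strand nilHecke algebra. The three decorations $\idem_A \in \Sym(\X)$, $\idem_B \in \Sym(\Y)$, and $\idem_C \in \Sym(\X \cup \Y)$ are each central in the relevant nilHecke subalgebras. Since $\mathbf{e}_a = \delta_a D_a$, $\mathbf{e}_b$, and $\mathbf{e}_{a+b}$ are built from divided differences which commute with symmetric polynomials in the appropriate variables, all three decorations can be slid past these $D$-operators and combined into a single polynomial factor. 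Consequently, the decorated splitter becomes the plain splitter multiplied by the single element $\idem_A(\X)\,\idem_B(\Y)\,\idem_C(\X \cup \Y) \in R_{a+b}^\Sigma := \C[\X \cup \Y]/T_{a+b}^\Sigma$.

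Second, I would reduce the proposition to showing that $\idem_A(\X)\,\idem_B(\Y)\,\idem_C(\X \cup \Y) = 0$ in $R_{a+b}^\Sigma$ whenever $A \uplus B \neq C$. Recall from the proof of Theorem \ref{unknotdecomp} that $R_{a+b}^\Sigma$ decomposes into minimal orthogonal idempotent summands $\idem_\rho R_{a+b}^\Sigma$ indexed by ordered $(a+b)$-tuples $\rho \in \Sigma^{a+b}$, where $\idem_\rho = \prod_j \idem_{\rho_j}(\xi_j)$. Expanding $\idem_A(\X) = \sum_\mu \idem_\mu(\X)$ over $a$-tuples $\mu$ with underlying multiset $A$, and similarly for $\idem_B$ and $\idem_C$, the triple product becomes a sum of terms $\idem_\mu(\X)\,\idem_\nu(\Y)\,\idem_\tau(\X \cup \Y)$. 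By the orthogonality of the $\idem_\rho$, a term survives only if $\tau$ equals the concatenation $(\mu, \nu)$, which forces the underlying multiset of $\tau$ to be $A \uplus B$. If $A \uplus B \neq C$, no such $\tau$ has underlying multiset $C$, and the product vanishes.

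The merger case then follows by an entirely parallel argument, or by invoking the adjunction between splitters and mergers to transfer the result.

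The main obstacle is the absorption step: verifying cleanly that the decorations on the two thin-side strands and the one thick-side strand really do combine into a single multiplication in $R_{a+b}^\Sigma$. This requires careful bookkeeping with the thick calculus relations, in particular the fact that $S_{a+b}$-symmetric polynomials lie in the center of $\BNC_{a+b}^\Sigma$ and thus commute with all of $D_a$, $D_b$, and $D_{a+b}$. Once this compatibility is pinned down, the orthogonality argument is essentially an immediate unwinding of the Chinese Remainder decomposition already used in Theorem \ref{unknotdecomp}.
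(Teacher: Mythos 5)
Your route is sound in outline but genuinely different from the paper's. The paper does not absorb the decorations into the polynomial subalgebra; instead it invokes the KLMS description (their equation (2.67)) of the \emph{entire} algebra of decorations on the splitter/merger diagram as $\bigl(H_{a+b}^\Sigma(\X)\otimes H_a^\Sigma(\X_1)\otimes H_b^\Sigma(\X_2)\bigr)/\la e_i(\X)-e_i(\X_1\sqcup\X_2)\mid i>0\ra$, identifies the idempotent-decorated diagram with the direct summand obtained by localizing at the point $(C,A,B)$, and then observes via Corollary \ref{idunit} that if $C\neq A\uplus B$ some relation $e_j(\X)-e_j(\X_1\sqcup\X_2)$ becomes a \emph{unit} on that summand, so the quotient collapses it. Your version trades that presentation for an explicit computation with the minimal idempotents $\idem_\rho$ of $R_{a+b}^\Sigma$; the absorption step you worry about is in fact the easy part ($\idem_C(\X)$ is central, and $\idem_A(\X_1)\idem_B(\X_2)$ already sits as a left multiplier above $\mathbf{e}_a\otimes\mathbf{e}_b$, so the decorated splitter is literally $\idem_A(\X_1)\idem_B(\X_2)\idem_C(\X)$ times the undecorated one), and the orthogonality computation $\idem_\mu(\X_1)\idem_\nu(\X_2)\idem_\tau(\X)=\delta_{(\mu,\nu),\tau}\,\idem_\tau(\X)$ is correct and gives the vanishing when $A\uplus B\neq C$.

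The one genuine gap is that your argument takes place in $R_{a+b}^\Sigma=\C[\X]/T_{a+b}^\Sigma$, whereas $\BNC_{a+b}^\Sigma$ is by definition the quotient of $\BNC_{a+b}$ by the two-sided ideal generated by $P(\xi_1)$ \emph{only}. For the orthogonality relations among the $\idem_\rho$ to be available, you must know that $P(\xi_j)=0$ in $\BNC_{a+b}^\Sigma$ for every $j$, i.e.\ that the map $\C[\X]\to\BNC_{a+b}^\Sigma$ factors through $R_{a+b}^\Sigma$. This is true but not automatic, and you should state and prove it: for instance, Proposition \ref{BNCq} identifies $\BNC_{a+b}^\Sigma$ with $Mat((a+b)!,H_{a+b}^\Sigma)$, and one checks (as in Lemma \ref{ThetaPLem}, or by noting that the generating-function identity forces $\prod_{\xi\in\X}(T-\xi)$ to divide $P(T)$ in $H_{a+b}^\Sigma[T]$, whence $P(\xi_j)\in I_{a+b}^\Sigma\cdot\C[\X]$) that every $\theta(P(\xi_j))$ has entries in $I_{a+b}^\Sigma$. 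With that lemma in hand, your proof is complete; without it, the expansion of $\idem_A$, $\idem_B$, $\idem_C$ into orthogonal idempotents has no meaning in the quotient you are actually working in.
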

Labelings by idempotents corresponding to multisubsets of $\Sigma$ that ``add up'' at mergers and splitters (i.e. $A\uplus B = C$) are called \emph{admissible}.

\begin{proof}
Denote by $\X_1$, $\X_2$ and $\X$ the alphabets of operators $\xi_j$ on the thickness $a$, $b$ and $a+b$ strands respectively 
and by $H_a^\Sigma(\X_1)$, $H_b^\Sigma(\X_2)$ and $H_{a+b}^\Sigma(\X)$ the algebras of decorations on these strands.

Equation (2.67) in \cite{KLMS} then implies that the algebras of decorations on the diagrams
\begin{equation*} 
\xy
 (0,0)*{\includegraphics[scale=0.5]{figs/tsplit.eps}};
(-5,-11)*{a+b};(-8,8)*{a};(8,8)*{b};
  \endxy \qquad \text{and}\qquad 
\xy
 (0,0)*{\includegraphics[scale=0.5,angle=180]{figs/tsplitd.eps}};
 (-5,11)*{a+b};(-8,-8)*{a};(8,-8)*{b};
  \endxy
  \end{equation*}
  are both given by 
  \begin{equation*}
  \frac{H_{a+b}^\Sigma(\X)\otimes H_{a}^\Sigma(\X_1) \otimes H_{b}^\Sigma(\X_2)}{\la e_i(\X)-e_i(\X_1\sqcup \X_2)\mid i>0 \ra } \quad .
  \end{equation*}

Now let $A$, $B$ and $C$ be $a$-, $b$- and $(a+b)$-element multisubsets of $\Sigma$ respectively. Then the algebra of additional decorations on the idempotent-decorated diagrams in \eqref{nonadm} is
  \begin{equation}
  \label{algsplitter} 
  \frac{\idem_C(\X)H_{a+b}^\Sigma(\X)\otimes \idem_A(\X_1) H_{a}^\Sigma(\X_1) \otimes \idem_B(\X_2) H_{b}^\Sigma(\X_2)}{\la e_i(\X)-e_i(\X_1\sqcup \X_2)\mid i>0 \ra \cap (\idem_C(\X)H_{a+b}^\Sigma(\X)\otimes \idem_A(\X_1) H_{a}^\Sigma(\X_1) \otimes \idem_B(\X_2) H_{b}^\Sigma(\X_2)) }.
\end{equation} 
The numerator here is a direct summand of $H_{a+b}^\Sigma(\X)\otimes H_{a}^\Sigma(\X_1) \otimes H_{b}^\Sigma(\X_2)$ that can be picked out by localizing at the complement of the maximal ideal 
$\la e_i(\X)-e_i(C),e_i(\X_1)-e_i(A), e_i(\X_2)-e_i(B)\mid i>0 \ra$. 
If $C\neq A\uplus B$ then there is a $j\in \N$ such that $e_j(C)-e_j(A\uplus B) \neq 0 \in \C$, thus by Lemma \ref{idunit} $e_j(\X)-e_j(\X_1\sqcup \X_2)$ is a unit in the numerator. 
Taking the quotient in equation \eqref{algsplitter} then collapses the direct summand, and equation \eqref{nonadm} then follows.
\end{proof}

\begin{cor}\emph{(Idempotent decoration migration)}\\
\label{adm2}
Let $A$ be an $(a+b)$-element multisubset of $\Sigma$, then in $\BNC_{a+b}^\Sigma$ we have:

\begin{equation} 
\label{splitteridem}
    \xy
 (0,0)*{\includegraphics[scale=0.5]{figs/tsplit.eps}};
 (-5,-11)*{a+b};(-8,8)*{a};(8,8)*{b};
  (0,-6)*{\bigb{A}};
  \endxy
  \quad =\quad 
  \sum_{\substack{A_1\uplus A_2 = A\\ |A_1|=a   }} \xy
 (0,0)*{\includegraphics[scale=0.5]{figs/tsplit.eps}};
 (-5,-11)*{a+b};(-8,8)*{a};(8,8)*{b};
 (-5,2)*{\bigb{A_1}};(5,2)*{\bigb{A_2}};
 (0,-6)*{\bigb{A}};
  \endxy \quad =
 \quad
 \sum_{\substack{A_1\uplus A_2 = A\\ |A_1|=a   }} \;
    \xy
 (0,0)*{\includegraphics[scale=0.5]{figs/tsplit.eps}};
(-5,-11)*{a+b};(-8,8)*{a};(8,8)*{b};
 (-5,2)*{\bigb{A_1}};(5,2)*{\bigb{A_2}};
  \endxy \quad ,
\end{equation}
\begin{equation}
\label{mergeidem}
\xy
 (0,0)*{\includegraphics[scale=0.5,angle=180]{figs/tsplitd.eps}};
 (-5,11)*{a+b};(-8,-8)*{a};(8,-8)*{b};
  (0,6)*{\bigb{A}};
  \endxy
  \quad =
 \quad
 \sum_{\substack{A_1\uplus A_2 = A\\ |A_1|=a   }} \;
    \xy
 (0,0)*{\includegraphics[scale=0.5,angle=180]{figs/tsplitd.eps}};
 (-5,11)*{a+b};(-8,-8)*{a};(8,-8)*{b};
 (-5,-2)*{\bigb{A_1}};
 (5,-2)*{\bigb{A_2}};
 (0,6)*{\bigb{A}};
  \endxy
  \quad =
 \quad
 \sum_{\substack{A_1\uplus A_2 = A\\ |A_1|=a   }} \;
    \xy
 (0,0)*{\includegraphics[scale=0.5,angle=180]{figs/tsplitd.eps}};
 (-5,11)*{a+b};(-8,-8)*{a};(8,-8)*{b};
 (-5,-2)*{\bigb{A_1}};
 (5,-2)*{\bigb{A_2}};
  \endxy \quad .
\end{equation}
In particular, for multisubsets $A,B\subset \Sigma$, $|A|=a$, $|B|=b$ we have: 
\begin{equation} 
\label{splitteridem2}
\xy
 (0,0)*{\includegraphics[scale=0.5]{figs/tsplit.eps}};
(-5,-11)*{a+b};(-8,8)*{a};(8,8)*{b};
 (-5,2)*{\bigb{A}};(5,2)*{\bigb{B}};
  \endxy
\quad =\quad 
\begin{cases} \xy
 (0,0)*{\includegraphics[scale=0.5]{figs/tsplit.eps}};
(-5,-11)*{a+b};(-8,8)*{a};(8,8)*{b};
 (-5,2)*{\bigb{A}};(5,2)*{\bigb{B}};
 (0,-6)*{\bigb{A\uplus B}};
  \endxy
   &\text{ if } A\uplus B \subset \Sigma, \\ 0 &\text{ otherwise. }\end{cases} 
   \end{equation}
   
   \begin{equation}
   \label{mergeidem2}
  \xy
 (0,0)*{\includegraphics[scale=0.5,angle=180]{figs/tsplitd.eps}};
 (-5,11)*{a+b};(-8,-8)*{a};(8,-8)*{b};
 (-5,-2)*{\bigb{A}};
 (5,-2)*{\bigb{B}};
  \endxy
\quad =\quad 
\begin{cases} \xy
 (0,0)*{\includegraphics[scale=0.5,angle=180]{figs/tsplitd.eps}};
 (-5,11)*{a+b};(-8,-8)*{a};(8,-8)*{b};
 (-5,-2)*{\bigb{A}};
 (5,-2)*{\bigb{B}};
 (0,6)*{\bigb{A\uplus B}};
  \endxy
   &\text{ if } A\uplus B \subset \Sigma, \\ 0 &\text{ otherwise. }\end{cases}
\end{equation}
\end{cor}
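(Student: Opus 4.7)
The plan is to deduce all four equations from two ingredients already established: the idempotent decomposition of the identity on a thick strand in equation \eqref{edgedecomp} and the vanishing of non-admissibly decorated splitters and mergers in equation \eqref{nonadm}. The general strategy is that whenever a strand at a trivalent vertex carries no idempotent label, we insert $\sum_B \idem_B$ using \eqref{edgedecomp}, and then use \eqref{nonadm} to collapse the resulting sum to only those terms whose labels are admissible at the vertex.

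For equation \eqref{splitteridem}, I would first prove the equality between the leftmost and middle expressions. Starting with the splitter carrying $\idem_A$ on the thick edge, I insert $\sum_{A_1}\idem_{A_1}$ on the $a$-labeled outgoing strand and $\sum_{A_2}\idem_{A_2}$ on the $b$-labeled outgoing strand, where the sums range over multisubsets of $\Sigma$ of the appropriate sizes. By Proposition~\ref{adm}, every term in the resulting double sum vanishes unless $A_1\uplus A_2 = A$, which cuts the sum down to the middle expression. For the equality between the middle and the rightmost expressions, I perform the reverse maneuver: insert $\sum_C \idem_C$ on the thick edge of each summand on the right, and observe that \eqref{nonadm} forces $C = A_1 \uplus A_2 = A$, so the inserted decoration is automatic. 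The proof of \eqref{mergeidem} is word-for-word the same, with the roles of top and bottom swapped.

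Equations \eqref{splitteridem2} and \eqref{mergeidem2} are immediate consequences of the first two. Starting with the left-hand side of \eqref{splitteridem2}, insert $\sum_C \idem_C$ on the thick edge. By Proposition~\ref{adm}, the only potentially surviving term is $C = A \uplus B$; this term is present (and equals the right-hand side) exactly when $A \uplus B$ is a multisubset of $\Sigma$, and otherwise no valid $C$ exists and the sum is empty. The same argument, applied with top and bottom exchanged, yields \eqref{mergeidem2}.

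There is no serious obstacle here: once Proposition~\ref{adm} and the idempotent decomposition \eqref{edgedecomp} are in hand, the corollary is a formal consequence. The only minor point to be careful about is the case $A\uplus B \not\subset \Sigma$ in \eqref{splitteridem2} and \eqref{mergeidem2}, where one should explicitly note that no $(a+b)$-element multisubset $C$ of $\Sigma$ equals $A \uplus B$, so that \eqref{nonadm} kills every term in the inserted sum.
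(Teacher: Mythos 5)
Your proposal is correct and uses precisely the two ingredients the paper uses — the idempotent resolution of identity \eqref{edgedecomp} and the admissibility vanishing \eqref{nonadm} — in the same way; the paper just writes the two insertions in \eqref{splitteridem} as a single four-step chain rather than splitting off the middle-to-right equality as a separate step. Your handling of the case $A\uplus B \not\subset \Sigma$ in \eqref{splitteridem2} and \eqref{mergeidem2} is the correct reading of what the paper dismisses with ``follow similarly.''
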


\begin{proof} For equation \eqref{splitteridem} we compute
\begin{equation*} 
    \xy
 (0,0)*{\includegraphics[scale=0.5]{figs/tsplit.eps}};
 (-5,-11)*{a+b};(-8,8)*{a};(8,8)*{b};
  (0,-6)*{\bigb{A}};
  \endxy
  \quad \overset{\eqref{edgedecomp}}{=}\quad 
  \sum_{\substack{A_1,A_2\subset \Sigma\\ |A_1|=a,~|A_2|=b    }} \xy
 (0,0)*{\includegraphics[scale=0.5]{figs/tsplit.eps}};
 (-5,-11)*{a+b};(-8,8)*{a};(8,8)*{b};
 (-5,2)*{\bigb{A_1}};(5,2)*{\bigb{A_2}};
 (0,-6)*{\bigb{A}};
  \endxy \quad \overset{\eqref{nonadm}}{=}\quad 
  \sum_{\substack{A_1\uplus A_2 = A\\ |A_1|=a    }} \xy
 (0,0)*{\includegraphics[scale=0.5]{figs/tsplit.eps}};
 (-5,-11)*{a+b};(-8,8)*{a};(8,8)*{b};
 (-5,2)*{\bigb{A_1}};(5,2)*{\bigb{A_2}};
 (0,-6)*{\bigb{A}};
  \endxy
\end{equation*}
\begin{equation*}
 \overset{\eqref{nonadm}}{=}\quad
 \sum_{\substack{A_1\uplus A_2 = A\\ |A_1|=a \\ B\subset \Sigma \\
 |B|=a+b    }} \;
    \xy
 (0,0)*{\includegraphics[scale=0.5]{figs/tsplit.eps}};
(-5,-11)*{a+b};(-8,8)*{a};(8,8)*{b};
 (-5,2)*{\bigb{A_1}};(5,2)*{\bigb{A_2}};
 (0,-6)*{\bigb{B}};
  \endxy
  \quad \overset{\eqref{edgedecomp}}{=} \quad \sum_{\substack{A_1\uplus A_2 = A\\ |A_1|=a   }} \;
    \xy
 (0,0)*{\includegraphics[scale=0.5]{figs/tsplit.eps}};
(-5,-11)*{a+b};(-8,8)*{a};(8,8)*{b};
 (-5,2)*{\bigb{A_1}};(5,2)*{\bigb{A_2}};
  \endxy.
\end{equation*}
and the proof of equation \eqref{mergeidem} is analogous. 
Equations \eqref{splitteridem2} and \eqref{mergeidem2} follow similarly. 
\end{proof}

\section{The $\Sigma$-deformed foam category $\Foam{N}^\Sigma$}
\label{section-foamcat}
We define the 2-category $\foam{N}{}^\Sigma$ of $\Sigma$-deformed $\slnn{N}$-foams as the quotient of the foam 2-category
$\foam{N}{}$, described in Section \ref{subsubsec_foams}, by the following additional relation on $1$-labeled foam facets:
\begin{equation}
\label{dotrel}
\eqnAX \quad .
\end{equation}
Since this equation is not degree-homogeneous, we hence ignore the grading on foams (i.e. to be precise we first pass to 
the ungraded version of $\Foam{N}$, then impose this relation to pass to $\Foam{N}^\Sigma$).

This quotient is motivated by the deformed nilHecke algebra quotient introduced in the last section. 
Indeed, the 2-representation $\Ucatc_Q(\glm) \to \Foam{N}$ gives an action of the nilHecke algebra 
on the latter, and in order to obtain an action of the $\Sigma$-deformed nilHecke quotient, 
we impose this local foam analog of equation \eqref{eq-doteq}.

\begin{defi} We define $\Phi_\Sigma\colon \Ucatc_Q(\glm) \to \foam{N}^\Sigma$ as the composition of the foamation 2-functor $\Phi_m\colon \Ucatc_Q(\glm) \to \foam{N}$ 
and the quotient 2-functor $\foam{N}\to \foam{N}^\Sigma$.
\end{defi}

It follows that the 2-functor $\Phi_\Sigma \colon  \Ucatc_Q(\glm) \to \foam{N}^\Sigma$ factors through the quotient of $\Ucatc_Q(\glm)$ in which we've imposed the relation that 
dots satisfy the equation $
P\left(
\xy
(0,0)*{
\begin{tikzpicture} [scale=.3]
	\draw[thick, directed = .99] (0,0) to (0,2);
	\node at (0,1) {$\bullet$};
\end{tikzpicture}
};
\endxy
\right)
=0$, hence, the thick calculus equations in Corollaries \ref{cor-idem} and \ref{adm2} and Proposition \ref{adm} 
correspond to analogous foam relations in $\foam{N}^\Sigma$. In fact, the thick calculus relations can be seen as intersections of foam relations with planes.
More precisely, we get:
\begin{lem} The algebra of decorations of a $k$-labeled foam facet, or alternatively, the endomorphism algebra of the $k$-labeled web edge, 
carries an action of $H_k^\Sigma$. In fact, from the 2-representation on deformed matrix factorizations in Section \ref{section-defmf} it follows that there is an isomorphism \[\End \left(
\xy
(0,0)*{
\begin{tikzpicture}
\node[rotate=-45] at (0,0){\xy
(0,0)*{
\begin{tikzpicture} [scale=.2]
	\draw[very thick, directed=.55] (2,-2) -- (-2,-2);
\end{tikzpicture}
};
\endxy};
\node at (.25,.25) {\small$k$};
\end{tikzpicture}
};
\endxy
\right) \cong H_k^\Sigma .
\]
\end{lem}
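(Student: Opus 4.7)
The plan is to establish both claims in the lemma in two stages. First, I would construct the $H_k^\Sigma$-action combinatorially using the foamation 2-functor $\Phi_\Sigma \colon \Ucatc_Q(\glm) \to \foam{N}{}^\Sigma$. Under $\Phi_\Sigma$, the thick 1-morphism $\cal{E}_i^{(k)}\onel$ is sent to a $k$-labeled web edge. Since the thick 1-morphism arises in the Karoubi envelope as the image of the idempotent $\mathbf{e}_k$ in $\BNC_k$, Proposition \ref{nilHeckeprop} identifies its endomorphism algebra with $\mathbf{e}_k \BNC_k \mathbf{e}_k \cong Z(\BNC_k) \cong \Sym(\X)$, the ring of symmetric polynomials in $k$ variables. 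Via the thick-to-thin explode formula \eqref{eqn-explode}, symmetric polynomial endomorphisms of the thick edge correspond precisely to symmetric polynomial decorations of the $k$-labeled facet. Since $\Phi_\Sigma$ factors through the quotient of $\Ucatc_Q(\glm)$ imposing $P(\text{dot}) = 0$ on thin strands (by construction of $\foam{N}{}^\Sigma$ via relation \eqref{dotrel}), the $\BNC_k$-action descends to a $\BNC_k^\Sigma$-action, and its restriction to the centre factors through $Z(\BNC_k^\Sigma) \cong H_k^\Sigma$ by Proposition \ref{BNCq}. This yields a canonical algebra homomorphism $\rho \colon H_k^\Sigma \to \End(k\text{-labeled edge})$, which is the desired action.

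Surjectivity of $\rho$ is then straightforward: any decoration on a $k$-labeled facet is a polynomial in dots on subfacets, and the splitting-and-exploding relation \eqref{eqn-explode} rewrites any such decoration as a symmetric polynomial applied to an appropriate thick strand, placing it in the image of $\rho$ after passage to $H_k^\Sigma$.

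The main obstacle, and where the 2-representation on deformed matrix factorizations from Section \ref{section-defmf} becomes essential, is proving injectivity of $\rho$. A priori the defining relation \eqref{dotrel} together with the other foam relations might impose additional constraints on symmetric decorations of a single facet beyond those of $H_k^\Sigma$. The plan to rule this out is to post-compose $\rho$ with the 2-representation on deformed matrix factorizations, obtaining a composite
\[
H_k^\Sigma \xrightarrow{\rho} \End(k\text{-labeled edge}) \to \End(\text{image of }k\text{-labeled edge})
\]
and then to identify the last endomorphism algebra with $H_k^\Sigma$ by direct computation, parallel to the undeformed case where this endomorphism ring realizes the cohomology of the Grassmannian $Gr(k,N)$. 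By construction the composite will be the identity on $H_k^\Sigma$, which forces $\rho$ to be injective and hence an isomorphism. The technical heart of the argument is therefore the construction of the 2-representation on deformed matrix factorizations and the explicit computation of the relevant endomorphism ring, both of which I defer to Section \ref{section-defmf}; once these are in hand the lemma follows from the formal argument sketched above.
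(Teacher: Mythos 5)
Your proposal is correct and follows essentially the same route the paper takes: the action of $H_k^\Sigma$ comes from the foamation functor factoring through the quotient of $\Ucatc_Q(\glm)$ by $P(\text{dot})=0$ (so Proposition \ref{BNCq} applies), and faithfulness is obtained by composing with the 2-representation on deformed matrix factorizations of Section \ref{section-defmf} and computing that the endomorphism ring of the Koszul factorization assigned to a $k$-labeled edge is exactly $H_k^\Sigma$. The paper states this lemma with only a pointer to Section \ref{section-defmf}, so your write-up is in fact a more explicit version of the intended argument.
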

Compare with Remark 4.1 in \cite{QR}. Moreover, we have the following important consequences.
\begin{itemize}
\item Every $k$-labeled foam facet in $\foam{N}{}^\Sigma$ splits into a sum over foam facets colored by minimal idempotent decorations corresponding to 
$k$-element multisubsets of $\Sigma$.
\item Equation \eqref{nonadm} then implies that a foam is zero whenever it contains a seam whose adjacent facets are non-admissibly colored by idempotents. 
Here, similar to the case of thick calculus diagrams, we say that a foam is {\it admissibly colored} precisely when around any seam the sum of the multisets of the idempotents 
coloring two of the facets equals the multiset coloring the third.
Consequently, foam relations analogous to those of Corollary \ref{adm2} hold in a neighborhood of any seam. 
\end{itemize}

\subsection{Foam splitting relations}
\label{relations}
\begin{conv}
\label{conv-coloring}
Let $A,B\subset \Sigma$ be disjoint multisubsets of roots:  
\begin{equation*}
\lambda\in A \Rightarrow \lambda \notin B \text{ and } \mu \in B \Rightarrow \mu \notin A.
\end{equation*}
For the duration, unless otherwise stated, we use red and blue colored foam facets to denote facets decorated by the orthogonal idempotents $\idem_A$ and $\idem_B$, respectively. 
We use green as a generic color for both undecorated foam facets and for decorations by $\idem_{A\uplus B}$.
\end{conv}

\begin{lem} \label{split1} The following foams are invertible as 2-morphisms in $\foam{N}{}^\Sigma$: 
\begin{equation*}
\unzip[1]{}{}
\quad,\quad
\zip[1]{}{}.
\end{equation*}
\end{lem}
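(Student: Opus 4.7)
The plan is to establish invertibility by explicitly computing the compositions $\text{zip} \circ \text{unzip}$ and $\text{unzip} \circ \text{zip}$ and showing that each one equals a \emph{unit} multiple of an identity foam on its respective source, where the unit lives in the appropriate idempotent-decorated endomorphism algebra. Once this is done, rescaling by the inverse of the unit in question produces two-sided inverses to unzip and zip.

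First I would focus on $\text{zip} \circ \text{unzip}$. This composition forms a ``digon'' foam whose outer facets are the green $\idem_{A \uplus B}$-decorated $(a{+}b)$-edge and whose inner facets are $\idem_A$-decorated (red) and $\idem_B$-decorated (blue). Applying the digon/tube removal relation recorded on the left of \eqref{FoamRel2} (which passes to $\foam{N}^\Sigma$ since it is inherited from the undeformed foam relations), this composition equals
\begin{equation*}
\sum_{\alpha \in P(a,b)} (-1)^{|\hat{\alpha}|}\, \pi_\alpha\cdot \pi_{\hat{\alpha}}
\end{equation*}
viewed as a symmetric function decoration on the remaining green facet. By Proposition \ref{adm} no summand is killed by admissibility, and by Corollary \ref{adm2} the idempotent $\idem_{A\uplus B}$ absorbs all decoration migration cleanly, so the result represents an element $c_1 \in \idem_{A \uplus B} H_{a+b}^\Sigma$.

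Next I would show that $c_1$ is a unit. By Corollary \ref{idunit} it suffices to check that the polynomial representing $c_1$ evaluates to a nonzero complex number at the point $A \uplus B \in \C^{a+b}$. Specializing the Schur polynomial sum using the classical Cauchy-type identity (and arguing essentially as in the proof of Theorem \ref{unknotdecomp} where the factor $c = \idem_{\mu_A} \prod_{i}\Delta_i / \Delta_\xi$ was shown to be a unit), this value works out, up to a nonzero rational combinatorial constant, to the product $\prod_{\lambda \in A,\, \mu \in B}(\lambda - \mu)$, which is nonzero precisely because the multisets $A$ and $B$ are disjoint. Thus $c_1^{-1}\cdot \text{zip}$ is a left inverse of $\text{unzip}$.

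For $\text{unzip} \circ \text{zip}$ I would proceed analogously: this foam contains a pair of parallel red and blue facets forming a digon between red and blue edges, and the same digon relation expresses it as a decorated identity on the red-blue pair. After pushing idempotents through, the result lands in $\idem_A H_a^\Sigma \otimes \idem_B H_b^\Sigma$, and the evaluation argument above again produces a unit $c_2$ whose nonvanishing is controlled by the disjointness $A \cap B = \emptyset$. Together the two computations yield both-sided inverses, so unzip and zip are invertible. The main obstacle will be the Schur polynomial computation in Step 2: identifying the partition sum with the product $\prod_{\lambda,\mu}(\lambda-\mu)$ after specialization. Once this symmetric function identity is in place, the rest of the argument is essentially a bookkeeping exercise using Corollaries \ref{adm2} and \ref{idunit}.
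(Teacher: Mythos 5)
Your overall strategy --- compute both compositions, identify the resulting decoration as a unit via Corollary \ref{idunit} and a dual Cauchy identity, and invert by decorating the zip/unzip foams with a representative of the inverse unit --- is exactly the paper's. However, two steps are wrong as written.

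First, you invoke the wrong foam relation. The left relation of \eqref{FoamRel2} governs the digon web $a{+}b \to (a,b) \to a{+}b$ and the closed tube between two $(a{+}b)$-facets; that is the relation needed for Lemma \ref{split2} (blister creation/removal), not here. The zip and unzip foams interpolate between the merge-then-split web $(a,b)\to a{+}b\to (a,b)$ and the two parallel strands, and the neck-cutting identities for their two compositions are the $b=c$ specialization of \eqref{FoamRel3} (these are \eqref{iso1} and \eqref{iso2} in the paper). As literally stated, \eqref{FoamRel2} does not apply to either composite.

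Second, the element $\sum_{\alpha\in P(a,b)}(-1)^{|\hat\alpha|}\pi_\alpha\pi_{\hat\alpha}$ cannot be placed on the green $(a{+}b)$-facet. Writing $\X$ and $\Y$ for the alphabets of the red and blue facets, this sum equals $\prod_{x\in\X}\prod_{y\in\Y}(y-x)$, which is symmetric in $\X$ and in $\Y$ separately but not in $\X\cup\Y$; it is therefore an element of $\idem_A H_a^\Sigma(\X)\otimes \idem_B H_b^\Sigma(\Y)$ and not of $\idem_{A\uplus B}H_{a+b}^\Sigma$, so ``evaluating at $A\uplus B$'' is not well-defined. For \emph{both} compositions the decorations $\pi_\alpha$ and $\pi_{\hat\alpha}$ land on the red and blue facets respectively, and the single unit check is the one you state correctly for $\mathrm{unzip}\circ\mathrm{zip}$: evaluate at $\X\mapsto A$, $\Y\mapsto B$ to get $\prod_{\lambda\in A,\,\mu\in B}(\mu-\lambda)\neq 0$ (an exact identity, no extra combinatorial constant). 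With these corrections your argument coincides with the paper's; note also that ``rescaling'' concretely means decorating zip (resp.\ unzip) by a representative $\sum_r f_r(\X)g_r(\Y)$ of the inverse unit on its red and blue facets, which yields a two-sided inverse because the same unit appears in both compositions.
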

\begin{proof}
Decorating the $b=c$ case of the foam relations in equation \eqref{FoamRel3} by red and blue idempotents we get:
\begin{equation}\label{iso1}
\unzipzip[1]{}{}{} \quad = \sum_{\alpha \in P(a,b)} (-1)^{|\hat{\alpha}|} \quad 
\zipthrough[1]{\pi_{\alpha}}{\pi_{\hat{\alpha}}}{}{}
\end{equation}
and
\begin{equation}\label{iso2}
\zipunzip[1]{}{}
\quad =  \sum_{\alpha \in P(a,b)} (-1)^{|\hat{\alpha}|} \quad 
\parallelsheets[1]{\pi_{\alpha}}{\pi_{\hat{\alpha}}}.
\end{equation}

Let $\X$ and $\Y$ be the alphabets assigned to the red and blue foam facets where $\pi_\alpha$ and $\pi_{\hat{\alpha}}$ are placed. 
We check that $ \sum_{\alpha \in P(a,b)} (-1)^{|\hat{\alpha}|} \pi_{\alpha}(\X)\pi_{\hat{\alpha}}(\Y)$ represents a unit in $\idem_A H_a^\Sigma(\X)\otimes \idem_B H_b^\Sigma(\Y)$ by using the criterion in Corollary \ref{idunit}:

\begin{equation*}\sum_{\alpha \in P(a,b)} (-1)^{|\hat{\alpha}|} \pi_{\alpha}(\X)\pi_{\hat{\alpha}}(\Y) |_{\substack{\X\mapsto A \\ \Y \mapsto B}} = \sum_{\alpha \in P(a,b)} (-1)^{|\hat{\alpha}|} \pi_{\alpha}(A)\pi_{\hat{\alpha}}(B)
= \prod_{\lambda\in A}\prod_{\mu \in B} (\mu - \lambda) \neq 0 \in \C.
\end{equation*}
A proof for the second equality can e.g. be found in \cite[p. 65, Example 5]{M}, and the product is non-zero because $A$ and $B$ consist of distinct roots.

Let $\sum_r f_r(\X)g_r(\Y)$ be a representative of $(\sum_{\alpha \in P(a,b)} (-1)^{|\hat{\alpha}|}\pi_\alpha(\X)\pi_{\hat{\alpha}}(\Y))^{-1}$ in $H_a^\Sigma(\X)\otimes H_b^\Sigma(\Y)$, 
then the following are explicit inverses for the decorated unzip and zip foams:

\begin{equation*}
\left ( \; \unzip[1]{}{} \; \right )^{-1}= \sum_r ~\zip[1]{f_r}{g_r}
\quad,\quad
\left ( \; \zip[1]{}{} \; \right )^{-1}=\sum_r ~\unzip[1]{f_r}{g_r} \quad .
\end{equation*}
\end{proof}

\begin{lem} Let $p$ and $q$ be symmetric polynomials in $a$ and $b$ variables respectively. Then the following relations hold:

\begin{equation}\label{dotmigration1}
\zipthrough[1]{p}{}{q}{}  \quad = \quad \zipthrough[1]{}{q}{}{p}\quad, \quad \zipthroughflipcol[1]{p}{}{q}{}\quad = \quad \zipthroughflipcol[1]{}{p}{}{q} \quad ,
\end{equation}

\begin{equation}
\label{iso5}
 \sum_{\alpha \in P(a,b)} (-1)^{|\hat{\alpha}|}~  
 \splitterbubble[1]{\pi_{\alpha}}{\pi_{\hat{\alpha}}}{} \quad = \quad 
\splitter[1] \quad .
\end{equation}

\end{lem}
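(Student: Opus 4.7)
The plan is to derive both equations from the bubble-evaluation relation \eqref{FoamRel2} together with the invertibility results in Lemma \ref{split1} and the idempotent migration rules of Corollary \ref{adm2}, without needing any genuinely new foam manipulation.

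For \eqref{dotmigration1}, first I would observe that in the zipthrough foam (and its mirror colored version) the bottom red facet and the top red facet form a single connected red subcomplex running vertically through the foam, and likewise for the blue facets. Symmetric polynomial decorations on a connected facet can be transported locally via the standard foam dot-migration rules inherited from \eqref{FoamRel2}. Since the red and blue facets are colored by the orthogonal idempotents $\idem_A$ and $\idem_B$ (with $A \cap B = \emptyset$ in the sense of Convention \ref{conv-coloring}), a decoration on the red component cannot contribute to the blue component or vice versa, so the migration produces no cross terms. Applying this to $p$ (respectively $q$) transports it across the foam to the matching facet on the opposite side, yielding both configurations asserted in \eqref{dotmigration1}.

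For \eqref{iso5}, the strategy is to recognize the splitterbubble foam as the plain splitter with an attached ``blister''---a closed tube-like bubble whose red and blue halves carry the decorations $\pi_\alpha$ and $\pi_{\hat\alpha}$. This blister is, up to isotopy, precisely the configuration on the left side of the tube/bubble relation in \eqref{FoamRel2}. The signed sum $\sum_{\alpha \in P(a,b)} (-1)^{|\hat{\alpha}|} \pi_\alpha \otimes \pi_{\hat\alpha}$ is the very combination that makes the blister collapse, which is the same pairing used in the proofs of equations \eqref{iso1} and \eqref{iso2}: evaluated on the idempotents, it reproduces the Vandermonde-like product $\prod_{\lambda\in A,\mu\in B}(\mu-\lambda)$ shown to be a unit in the proof of Lemma \ref{split1}. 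After accounting for this, the decorated blister is isotopic to the trivial sheet and the left-hand side of \eqref{iso5} reduces to the plain splitter.

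The main obstacle will be matching the combinatorial structure of the splitterbubble blister to the bubble configuration on the left of \eqref{FoamRel2} and keeping track of signs through that identification. The three-way seam where red, blue, and green facets meet also requires careful use of the admissibility constraints from Proposition \ref{adm} to rule out spurious contributions from non-admissible idempotent colorings of intermediate facets. Once the blister has been put in standard form via foam isotopy, the rest is an essentially bookkeeping argument; the analogous care for \eqref{dotmigration1} reduces to verifying that migrating a symmetric polynomial across the three-way seam in the zipthrough foam does not generate boundary corrections, which follows directly from the orthogonality relations \eqref{orthogonal} and \eqref{splitteridem2}--\eqref{mergeidem2}.
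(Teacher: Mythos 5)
Your proof of \eqref{dotmigration1} rests on a claim that is false: in the foams appearing there, the red facet carrying $p$ on one side of the green facet and the red facet it is supposed to migrate to on the other side are \emph{not} parts of one connected facet. They are distinct facets, each meeting the green $(a+b)$-labeled facet along its own seam, and a symmetric polynomial in only $a$ variables cannot be pushed through that green facet by the local seam relations of \eqref{FoamRel2} (those transport only symmetric functions of the full $(a+b)$-letter alphabet). The entire content of \eqref{dotmigration1} is that migration across the green facet nevertheless holds, and this genuinely requires the invertibility from Lemma \ref{split1}: one composes with the inverse of the (un)zip foam, using the unit $\sum_r f_r\otimes g_r$ inverse to $\sum_\alpha(-1)^{|\hat\alpha|}\pi_\alpha\otimes\pi_{\hat\alpha}$, to replace the given foam by the foam on the left of \eqref{iso1}, in which the red facet really is a single facet wrapping around the green region; there the decoration slides freely, and one then converts back. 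You cite Lemma \ref{split1} in your opening sentence but never actually deploy it. Moreover the second, color-swapped relation is not ``likewise'': there the thickness-$a$ facets sit on opposite sides of the thickness-$b$ ones, so any migration must pass one color through the other, and this produces the sign $(-1)^{ab}$ that the paper handles by inserting a closed red--blue bubble into the green facet (via \eqref{iso5}) and reordering its two halves. Your argument gives no account of this sign and does not close in that case.

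For \eqref{iso5}, the blister is a \emph{closed} red--blue bubble sitting inside the green facet of the splitter. The relation that evaluates such closed bubbles is \eqref{iso4}, not \eqref{FoamRel2}, which is a tube/neck-cutting relation for a different local configuration; and \eqref{iso4} is only established later, in Lemma \ref{split2}, by a separate nilHecke computation -- the paper explicitly notes it cannot be obtained by the trick you describe. The intended derivation runs in the opposite direction: one first proves the decoration migration \eqref{dotmigration1}, uses it to move $\pi_\alpha$ out of the bubble onto the red upright of the splitter, and only then does \eqref{iso1} apply to identify the resulting signed sum with the splitter. Finally, the unit computation $\prod_{\lambda\in A,\,\mu\in B}(\mu-\lambda)\neq 0$ establishes invertibility of the zip/unzip foams; it does not show that your signed sum of blisters collapses to the plain splitter with coefficient exactly $1$, so it cannot substitute for the missing step.
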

\begin{proof}
We again use  $\sum_r f_r(\X)g_r(\Y)$, which is a representative of $(\sum_{\alpha \in P(a,b)} (-1)^{|\hat{\alpha}|}\pi_\alpha(\X)\pi_{\hat{\alpha}}(\Y))^{-1}$ in $H_a^\Sigma(\X)\otimes H_b^\Sigma(\Y)$. 
For the first relation in equation \eqref{dotmigration1} we compute:

\begin{equation*}
\zipthrough[1]{p}{}{q}{}  \quad \overset{\eqref{iso1}}{=} \quad  \sum_r ~ \unzipzip[1]{f_r p}{g_r}{q}{} \quad = \quad \sum_r ~ \unzipzip[1]{f_r}{g_r q}{}{p} \quad \overset{\eqref{iso1}}{=} \quad \zipthrough[1]{}{q}{}{p} \;\; .
\end{equation*}
Equation \eqref{iso5} then follows via:
 \begin{align*}
 \sum_{\alpha \in P(a,b)} (-1)^{|\hat{\alpha}|}~  
 \splitterbubble[1]{\pi_{\alpha}}{\pi_{\hat{\alpha}}}{}
  &=  \sum_{\alpha \in P(a,b)} (-1)^{|\hat{\alpha}|}~
 \splitterbubble[1]{}{\pi_{\hat{\alpha}}}{\pi_{\alpha}} \\
& \overset{\eqref{iso1}}{=}
 \fatsplitter[1] \quad = \quad 
\splitter[1] \quad .
\end{align*}

For the second relation in \eqref{dotmigration1} we now have:

\begin{align*}
\zipthroughflipcol[1]{p}{}{q}{}
& = \sum_{\alpha \in P(a,b)} (-1)^{|\hat{\alpha}|}~  
\zipthroughspecA[1]{\pi_{\hat{\alpha}}}{\pi_{\alpha}}{}{}{p}{q}
~ = \sum_{\alpha \in P(a,b)} (-1)^{a b + |\hat{\alpha}|}~  
\zipthroughspecB[1]{\pi_{\hat{\alpha}}}{\pi_{\alpha}}{}{}{q}{p} \\
&= \sum_{\alpha \in P(a,b)} (-1)^{a b + |\hat{\alpha}|}~  
\zipthroughspecB[1]{\pi_{\hat{\alpha}}}{\pi_{\alpha}}{q}{p}{}{}
~ = \sum_{\alpha \in P(a,b)} (-1)^{|\hat{\alpha}|}~  
\zipthroughspecA[1]{\pi_{\hat{\alpha}}}{\pi_{\alpha}}{q}{p}{}{} \\
& = ~
 \zipthroughflipcol[1]{}{p}{}{q} \quad .
\end{align*}
\end{proof}

\begin{lem} \label{split2} The following foams are invertible as 2-morphisms in $\foam{N}{}^\Sigma$: 
\begin{equation*}
\blisterclose[1]{}{}
\quad,\quad
\blisteropen[1]{}{}.
\end{equation*}
\end{lem}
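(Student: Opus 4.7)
The plan is to mirror the strategy used in Lemma \ref{split1}: compute the composition of the blister-close and blister-open foams, apply foam relations to reduce it to a sum of decorated identity-like foams, and show that the resulting decoration represents a unit in the appropriate algebra of decorations. This unit then allows us to construct explicit two-sided inverses.

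More concretely, first I would stack the blister-open foam on top of the blister-close foam, which produces locally the ``tube'' (or ``capped cylinder'') configuration appearing on the left-hand side of the tube/blister relation~\eqref{FoamRel2}. Applying this relation, the composition equals the sum
\[
\sum_{\alpha \in P(a,b)} (-1)^{|\hat{\alpha}|}\, \pi_\alpha(\X)\,\pi_{\hat{\alpha}}(\Y)
\]
of Schur polynomial decorations on the red ($\idem_A$-colored) and blue ($\idem_B$-colored) facets, where $\X$ and $\Y$ denote the alphabets associated to these facets, applied to an identity foam between the relevant webs. The other composition is handled analogously using the appropriate decorated variant of~\eqref{FoamRel2}.

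Next, I would check that this sum represents a unit in $\idem_A H_a^\Sigma(\X) \otimes \idem_B H_b^\Sigma(\Y)$. By Corollary \ref{idunit}, it suffices to evaluate the polynomial at $\X \mapsto A$, $\Y \mapsto B$; using the identity
\[
\sum_{\alpha \in P(a,b)} (-1)^{|\hat{\alpha}|}\, \pi_\alpha(A)\,\pi_{\hat{\alpha}}(B) \;=\; \prod_{\lambda \in A}\prod_{\mu \in B}(\mu-\lambda)
\]
already invoked in the proof of Lemma~\ref{split1}, this value is nonzero precisely because $A$ and $B$ are disjoint multisubsets of $\Sigma$. Writing $\sum_r f_r(\X)g_r(\Y)$ for a representative of the inverse of this element in the decoration algebra, explicit two-sided inverses for the blister-close and blister-open foams can then be written down as sums, weighted by the $f_r(\X)g_r(\Y)$, of the reversed blister foams carrying the corresponding decorations on their red and blue facets.

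The main obstacle will be carefully identifying the correct local form of relation~\eqref{FoamRel2} to apply to each of the two compositions (blister-close followed by blister-open, and vice versa) in the presence of the surrounding green facet decorated by $\idem_{A\uplus B}$; in particular, one must verify that the Schur polynomial combinatorics and signs indeed produce the same unit $\prod_{\lambda\in A,\,\mu\in B}(\mu-\lambda)$ in both cases, so that a single inverse element simultaneously serves to invert the blister-close and blister-open foams from both sides.
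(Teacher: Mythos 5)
Your handling of one of the two compositions is correct and matches the paper: stacking blister-open on blister-close gives the tube configuration on the digon web, to which the idempotent-decorated version of~\eqref{FoamRel2} (the paper's relation~\eqref{iso3}) applies directly, and the unit computation via Corollary~\ref{idunit} and the identity $\sum_{\alpha}(-1)^{|\hat\alpha|}\pi_\alpha(A)\pi_{\hat\alpha}(B)=\prod_{\lambda\in A,\mu\in B}(\mu-\lambda)$ is exactly as in Lemma~\ref{split1}.

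However, the other composition — blister-close stacked on blister-open, which produces a closed red-blue bubble sitting inside a flat $(a+b)$-labeled, $\idem_{A\uplus B}$-decorated facet — is \emph{not} ``handled analogously using the appropriate decorated variant of~\eqref{FoamRel2},'' and this is where your argument has a genuine gap. Relation~\eqref{FoamRel2} is a decomposition of the identity foam on the \emph{digon} web; there is no corresponding relation in $\foam{N}$ decomposing the identity foam on a flat $(a+b)$-labeled edge, since that facet has no trivalent vertex, so the decoration migration relations~\eqref{dotmigration1} (which underlie the related splitter-bubble relation~\eqref{iso5}) cannot be brought to bear. The required identity — the paper's~\eqref{iso4}, equating the decorated flat sheet to the Schur-weighted sum of bubbled foams — is a \emph{new} relation that holds only in $\foam{N}^\Sigma$: it relies essentially on the deformation~\eqref{dotrel} and on $A$ and $B$ being disjoint multisubsets of $\Sigma$ (in the undeformed category the bubbled sum does not give the identity). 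The paper proves~\eqref{iso4} by a separate thick-calculus computation in the deformed nilHecke quotient $\BNC_{a+b}^\Sigma$: exploding the $\idem_{A\uplus B}$-decorated thick strand into thin root-decorated strands via~\eqref{eqn-explode} and Corollary~\ref{adm2}, sorting the thin strands by root (tracking the sign $(-1)^{|\hat\alpha|}$), and recollecting them into $\pi_\alpha$- and $\pi_{\hat\alpha}$-decorated thick strands colored by $A$ and $B$. This step constitutes the bulk of the paper's proof and must be supplied; without it you only obtain a one-sided inverse and cannot conclude invertibility.
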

\begin{proof}
Given the relations\footnote{In relation \eqref{iso4} the green shading is meant to indicate a decoration by the mixed idempotent $\idem_{A\uplus B}$} 
\begin{equation}\label{iso3}
\tube[1]{}{}
\quad = \sum_{\alpha \in P(a,b)} (-1)^{|\hat{\alpha}|} \quad 
\blistercloseopen[1]{\pi_{\alpha}}{\pi_{\hat{\alpha}}}{},
\end{equation}
\begin{equation}
\label{iso4}
\flatsheet[1]
\quad = \quad \sum_{\alpha \in P(a,b)}(-1)^{|\hat{\alpha}|}~
\greenbubble[1]{\pi_{\alpha}}{\pi_{\hat{\alpha}}}
\end{equation}
and the decoration migration relations \eqref{dotmigration1}, it follows immediately that
\begin{equation*}
\sum_{\alpha \in P(a,b)} (-1)^{|\hat{\alpha}|}~\blisteropen[1]{\pi_{\alpha}}{\pi_{\hat{\alpha}}}
\quad\text{and}\quad
\sum_{\alpha \in P(a,b)} (-1)^{|\hat{\alpha}|}~\blisterclose[1]{\pi_{\alpha}}{\pi_{\hat{\alpha}}}
\end{equation*}
are inverse to the digon removal and creation foams, respectively.

Equation \eqref{iso3} is just an idempotent decorated version of relation \eqref{FoamRel2}. 
Equation \eqref{iso4} is a stronger, more local version of \eqref{iso5}, but we cannot use the same trick to deduce it. 
To de-clutter the pictures, we compute this relation in $\BNC_{a+b}^\Sigma$; the result can then be transferred using the foamation functor $\Phi_\Sigma$. 
Alternatively, one can interpret the following nilHecke pictures as $2d$-slices through the corresponding foams.

We begin by using equation \eqref{eqn-explode} to explode a thick edge into thin edges, and then combine this relation with Corollary \ref{adm2} 
to slide the decoration by the multiset onto the thin edges. In the simplest case where the multiset contains only one root $\nu$ we have:
\begin{equation*} 
 \xy
 (0,0)*{\includegraphics[scale=0.5]{figs/tlong-up.eps}};
 (-3,-12)*{a};
  (0,0)*{ \bigb{\{\nu,\dots, \nu\}}};
  \endxy
   \qquad = \quad
  \xy
 (0,0)*{\includegraphics[scale=0.5]{figs/texplode.eps}};
 (-3,-12)*{a};
 (-14,1)*{ \bigb{\nu}};
 (-4.5,1)*{ \bigb{\nu}};
 (4.5,1)*{ \bigb{\nu}};
 (14,1)*{ \bigb{\nu}};
 (-13,-5)*{\bullet}+(-4.5,1)*{\scs a-1};
 (-3.5,-5)*{\bullet}+(-4.5,1)*{\scs a-2};
 (3.5,-5)*{\bullet}+(-2.5,1)*{\scs 1};
 (0,-2)*{\cdots};
  \endxy \quad  .
\end{equation*}
Now suppose that $A=\{\lambda,\dots,\lambda\}$ and $B=\{\mu,\dots, \mu\}$ with $\lambda\neq \mu$, then similarly we have:

\begin{equation*}
\xy
 (0,0)*{\includegraphics[scale=0.6]{figs/tlong-up.eps}};
 (-6,-17)*{a+b};
  (0,0)*{ \bigb{A \uplus B}};
  \endxy
   \qquad = \quad \sum_{\substack{ (\nu_{a+b},\dots, \nu_1) \\ \text{is a re-ordering of }\\ (\lambda,\dots, \lambda,\mu,\dots, \mu)}}  
 \xy
 (0,0)*{\includegraphics[scale=0.7]{figs/texplode.eps}};
 (-6,-17)*{a+b};
 (-19,1)*{ \bigb{\nu_{a+b}}};
 (-5.5,1)*{ \bigb{\nu_{b+1}}};
 (5.5,1)*{ \bigb{\nu_{b}}};
 (19,1)*{ \bigb{\nu_1}};
 (-19.5,-5)*{\bullet}+(-5.5,1)*{\scs a+b-1};
 (-5.6,-5)*{\bullet}+(-2.5,1)*{\scs b};
 (5.6,-5)*{\bullet}+(4.5,1)*{\scs b-1};
 (-10,-7)*{\cdots};
  (10,-7)*{\cdots};
% (-12,1)*{\cdots};
  \endxy \quad .
\end{equation*}
Next we reorder the decorations on the strands (at the expense of signs) so that all $\lambda$ idempotents lie on the left, all $\mu$ idempotents on the right, and in both groups of strands the number of additional dots decreases from left to right:
\begin{equation*}
= \sum_{\substack{ l_1 >\cdots > l_a }}
 \pm 
  \xy
 (0,0)*{\includegraphics[scale=0.7]{figs/texplode.eps}};
 (-6,-17)*{a+b};
 (-19,1)*{ \bigb{\lambda}};
 (-5.5,1)*{ \bigb{\lambda}};
 (5.5,1)*{ \bigb{\mu}};
 (19,1)*{ \bigb{\mu}};
 (-19.5,-5)*{\bullet}+(-3.5,1)*{\scs l_1};
 (-5.6,-5)*{\bullet}+(-3.5,1)*{\scs l_a};
 (5.6,-5)*{\bullet}+(3.5,1)*{\scs r_1};
  (19.5,-5)*{\bullet}+(3.5,1)*{\scs r_b};
   (-10,-7)*{\cdots};
  (10,-7)*{\cdots};
  \endxy .
\end{equation*}
Here the sum is taken over all strictly decreasing sequences $a+b-1 \geq l_1>\dots > l_a \geq 0$ and $r_1,\dots, r_b$ are the remaining $b$ numbers between $0$ and $a+b-1$ in decreasing order. 
Clearly the set of such sequences $(l_1,\dots,l_a)$ is in bijection with partitions $(l_1-(a-1), l_2-(a-2), \dots, l_a)$ whose Young diagrams fit into a $a\times b$ box. 
If $(l_1,\dots, l_a)$ corresponds to a partition $\alpha \in P(a,b)$, then it is easy to check that $(r_1,\dots,r_b)$ corresponds to $\hat{\alpha}\in P(b,a)$ and the sign introduced by reordering decorations on strands is $(-1)^{|\hat{\alpha}|}$. 
Finally, we use equation \eqref{eqn-explode} to express the $a$ strands on the left and the $b$ strands on the right in terms of strands of thickness $a$ and $b$ respectively. 
This expresses the decorations $(l_1,\dots, l_a)$ and $(r_1,\dots, r_b)$ on the thin strands as Schur polynomials $\pi_\alpha$ and $\pi_{\hat{\alpha}}$, 
and using Corollary \ref{adm2} we can slide the idempotents onto the thick strands:

\begin{equation*}
= \quad \sum_{\alpha \in P(a,b)} (-1)^{|\hat{\alpha}|}\quad
  \xy
 (0,0)*{\includegraphics[scale=0.6]{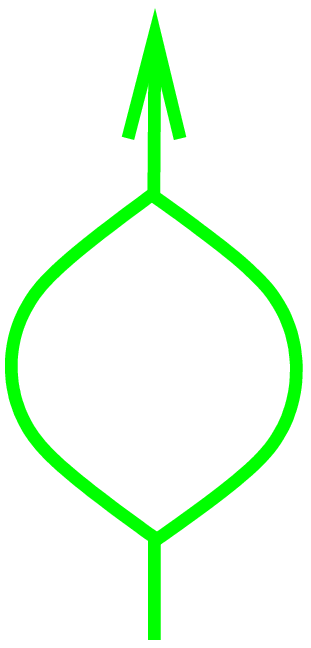}};
 %(-6,5)*{a};
 %(6,5)*{b};
 (-6,-17)*{a+b};
  (-7,2)*{\bigb{A}};
  (7,2)*{\bigb{B}};
 (-7,-7)*{\bigb{\pi_{\alpha}}};
 (7,-7)*{\bigb{\pi_{\hat{\alpha}}}};
  \endxy .
\end{equation*}
This gives the thick calculus version of equation \eqref{iso4} for this choice of $A$ and $B$.

The case of general $A$ and $B$ is very similar. The main difference is that there are more possible re-orderings of the decoration by roots on thin strands. 
However, if we interpret a nilHecke picture decorated by idempotents $\lambda$ and $\mu$ as sum over all possible ways of replacing $\lambda$'s by elements of $A$ and $\mu$'s by elements of $B$, 
then the proof of the special case immediately carries over to the general setting.
\end{proof}

\subsection{Karoubi envelope technology} 
Let $W$ be a web, i.e. a 1-morphism in $\foam{N}{}^\Sigma$, then the foam versions of Corollary \ref{adm} and Proposition \ref{adm2} show that the identity 2-morphism $\id_W$ decomposes into a sum of idempotent foams -- 
one for each coloring of the edges of $W$ by multisubsets of roots that is compatible at vertices. We now proceed to a 2-category $\hat{\foam{N}{}^\Sigma}$ in which these idempotents split.

\begin{defi}
Let $\mathrm{Kar}(\Foam{N}^\Sigma)$ denote the 2-category obtained by passing to the Karoubi envelope in each $\Hom$-category of $\Foam{N}^{\Sigma}$. 
We define $\hat{\foam{N}{}^\Sigma}$ to be a certain full 2-subcategory of $\mathrm{Kar}(\Foam{N}^\Sigma)$ that contains as $1$-morphisms all the pairs $(W,F_W)$ where $W$ is a web in $\Foam{N}^\Sigma$ and $F_W$ is a decorated identity foam on $W$ in $\Foam{N}^\Sigma$ such that each $a$-labeled facet is decorated by an idempotent $\idem_A$ corresponding to an $a$-element multisubset $A \subset \Sigma$. More precisely, $\hat{\foam{N}{}^\Sigma}$ has the same objects as $\Foam{N}^\Sigma$ and has $\Hom$-categories given by the full subcategories of the corresponding $\Hom$-categories of $\mathrm{Kar}(\Foam{N}^\Sigma)$ that contain all formal direct sums of pairs $(W,F_W)$.
\end{defi}
Note that, in particular, $\Foam{N}^\Sigma$ embeds as a full 2-subcategory of $\hat{\foam{N}{}^\Sigma}$, since the identity foam over any web can be expressed as the sum over all possible colorings of its facets.
Practically speaking, $\hat{\foam{N}{}^\Sigma}$ can be viewed as the 2-category in which:
\begin{itemize}
\item objects are sequences $\mathbf{a} = (a_1,\ldots, a_m)$ for $m \geq 0$ as in $\Foam{N}^\Sigma$, 
\item 1-morphisms are formal direct sums of webs where in addition to a labeling, each $a$-labeled edge is colored by an 
idempotent $\idem_A$ corresponding to an $a$-element multisubset $A \subset \Sigma$, and
\item 2-morphisms are matrices of linear combinations of foams as in $\Foam{N}^\Sigma$, but with each facet incident upon a web edge decorated by the idempotent coloring the edge.
\end{itemize}

As in the case of thick calculus diagrams and foams, we call a web {\it admissibly colored} if at each trivalent vertex the union of the multisets coloring two of the edges equals the third. 
Since non-admissibly colored foams are zero, it follows that a non-admissibly colored web is isomorphic to the ``zero web'' (i.e. the zero object in the relevant $\Hom$-category).

We now point out that in $\hat{\foam{N}{}^\Sigma}$  there are three\footnote{This is due to the fact that $\Foam{N}$ is secretly a 3-category.} ways of composing morphisms, 
and establish our notation for them.
\begin{itemize}
\item Sequences, webs, and foams can be placed side by side (i.e. on objects this is concatenation of sequences). We denote this operation by $\sqcup$.
\item Webs and foams can be composed in the 1-morphism direction, i.e. glued horizontally along their left and right boundaries, and we denote this by $\otimes$.
\item Foams can be composed in the 2-morphism direction by gluing vertically, and we write $\circ$ for this operation. 
\end{itemize}

We will also utilize two notions of ``equivalence'' for colored webs in $\hat{\foam{N}{}^\Sigma}$.
A $1$-morphism $W\colon \mathbf{o}_1 \to \mathbf{o}_2$ in $\hat{\foam{N}{}^\Sigma}$ is \emph{isomorphic} to a $1$-morphism $V\colon \mathbf{o}_1 \to \mathbf{o}_2$ 
if there exist 2-morphisms $F_1\colon W \to V$ and $F_2\colon V \to W$ in $\hat{\foam{N}{}^\Sigma}$ such that:
\begin{equation*}
F_2\circ F_1 = \id_W \text{ and } F_1\circ F_2 = \id_V .
\end{equation*} 
In this case we write $V \cong W$. 
Next, a $1$-morphism $W\colon \mathbf{o}_1 \to \mathbf{o}_2$ in $\hat{\foam{N}{}^\Sigma}$ is \emph{weakly equivalent} to a $1$-morphism $V\colon \mathbf{u}_1 \to \mathbf{u}_2$ if there exist $1$-morphisms 
$L \colon \mathbf{o}_2 \to \mathbf{u}_2$, $L^{-1} \colon \mathbf{u}_2 \to \mathbf{o}_2$, $R\colon \mathbf{u}_1 \to \mathbf{o}_1$ and $R^{-1}\colon \mathbf{o}_1 \to \mathbf{u}_1$ such that 
\begin{equation*}
L\otimes W \otimes R \cong V,
\end{equation*}
\begin{equation*}
L^{-1}\otimes L \cong 1_{\mathbf{o}_2}, \;L\otimes L^{-1} \cong 1_{\mathbf{u}_2},\; R\otimes R^{-1} \cong 1_{\mathbf{o}_1},\; R^{-1}\otimes R \cong 1_{\mathbf{u}_1}. 
\end{equation*}

We now aim to use these notions of equivalence to ``split'' the foam 2-category $\hat{\foam{N}{}^\Sigma}$ into pieces in which webs and foams are colored by multisubsets of $\Sigma$ containing only 
one root $\lambda \in \Sigma$. Although we do not prove a full decomposition theorem (see Remark \ref{rem-conj} below), we'll see in Section \ref{section-tensor} that the splitting results 
obtained here suffice to decompose the link invariant as in Theorem \ref{mainthm}.

Let $F$ be a foam with an admissible coloring of facets by multisubsets of $\Sigma$ and let $\lambda\in \Sigma$ be a root. 
We want to define the foam $F_\lambda$ that results from forgetting everything in $F$ that is not colored by $\lambda$. 
More precisely, consider the underlying CW-complex of $F$; in it we erase all 2-cells that are colored with multisubsets not containing $\lambda$ and smoothen out all seams that have become obsolete. 
We define a foam structure on the resulting CW-complex by setting the label of each remaining 2-cell to be the (positive) multiplicity of $\lambda$ in the corresponding color on $F$. 
This is again a foam by admissibility of the original coloring. 
Finally we decorate each facet with the idempotent of the multisubset containing only $\lambda$'s.
\begin{defi}
The \emph{$\lambda$-component} of an admissibly colored foam $F$, denoted by $F_\lambda$, is the foam in $\hat{\foam{N}{}^\Sigma}$ constructed via the procedure just described.
\end{defi}
In the following, we'll use the shorthand $\bigsqcup_\lambda F_\lambda := F_{\lambda_l}\sqcup \cdots \sqcup F_{\lambda_1}$.

\begin{exa}\label{exa-foamsplit} If $\lambda_1 \neq \lambda_2$ are two roots in $\Sigma$ and colors red, blue and green indicate decorations with idempotents corresponding to multisets 
$\{\lambda_1^a \}$, $\{\lambda_2^b \}$ and $\{\lambda_1^a, \lambda_2^b\}$ respectively, then we have, for example:
\begin{equation*}
F = \zipunzip[1]{}{}\quad , \quad F_ {\lambda_2} \sqcup F_{\lambda_1} =
\parallelsheets[1]{}{}.
\end{equation*}
\end{exa}
\newpage %% to sort out bad pagebreak
\begin{defi} Let $W$ be a colored web in $\hat{\foam{N}{}^\Sigma}$.
\begin{itemize}
\item The \emph{$\lambda$-component $W_\lambda$ of $W$} is the (co-)domain of $(\id_W)_\lambda$, the $\lambda$-component of the identity foam on $W$. 
As for foams, we define the shorthand: $\bigsqcup_\lambda W_\lambda := W_{\lambda_l}\sqcup \cdots \sqcup W_{\lambda_1}$.
\item $W$ is called \emph{split} if $W= \bigsqcup_\lambda W_\lambda$. More generally, 
for any colored web $W'$, the split web $\bigsqcup_\lambda W'_\lambda$ is called the \emph{split web associated to} $W'$.
\end{itemize}
\end{defi}

\begin{exa}\label{exa-websplit} With coloring conventions as in Example \ref{exa-foamsplit} we have, for example:
\begin{equation*}
W = \xy
(0,0)*{
\begin{tikzpicture} [scale=.5,fill opacity=0.2]
	\draw[blue, very thick, directed=.55] (-2,3) to [out=180, in=45] (-3,2.5);
	\draw[red, very thick, directed=.55]  (-1.5,2) to [out=180, in=315] (-3,2.5);
	\draw[green, very thick, directed=.55] (-3,2.5) to (-4.5,2.5);
	\draw[blue, very thick, directed=.55] (-4.5,2.5) to [out=135,in=0] (-6,3);
	\draw[red, very thick, directed=.55] (-4.5,2.5) to [out=225,in=0] (-5.5,2);	
\end{tikzpicture}
};
\endxy  
\quad , \quad W_ {\lambda_2} \sqcup W_{\lambda_1} =
\xy
(0,0)*{
\begin{tikzpicture} [scale=.5,fill opacity=0.2]
	\draw[blue, very thick, directed=.55] (-2,3) to (-6,3);
	\draw[red, very thick, directed=.55] (-1.5,2) to  (-5.5,2);	
\end{tikzpicture}
};
\endxy \quad .
\end{equation*}
\end{exa}

Next, let $\mathbf{o}=(a_1,\dots, a_m)$ be an object in $\hat{\foam{N}{}^\Sigma}$ and suppose that for every entry $a_i$ of $\mathbf{o}$ we are given an $a_i$-element multisubset $A_{i}=\{\lambda_1^{a_{i,1}},\dots, \lambda_l^{a_{i,l}}\}\subset \Sigma$ -- 
we call such a collection $\mathbf{A}=(A_1,\dots, A_m)$ an \emph{incidence condition} for $\mathbf{o}$. We then consider the identity web on $\mathbf{o}$ with strands colored by multisubsets $A_i$, 
and use the following notation for the (co-)domain of the associated split web: 
\begin{equation}\label{eq-ICob}
\bigsqcup_\lambda \mathbf{o}_\lambda:= (a_{1,l},\dots, a_{m,l}, \dots, a_{1,1}, \dots, a_{m,1}).
\end{equation}

\begin{defi} \label{LRdef} 
Let $L \colon \mathbf{o} \to \bigsqcup_\lambda \mathbf{o}_\lambda$ be the combinatorially simplest web from $\mathbf{o}$ to $\bigsqcup_\lambda \mathbf{o}_\lambda$ that is colored with the multiset $A_i$ on the strand starting at the entry $a_i$ of $\mathbf{o}$ 
and colored with the multiset $\{\lambda_j^{a_{i,j}}\}$ on the strand terminating at the entry $a_{i,j}$ of $\bigsqcup_\lambda \mathbf{o}_\lambda$. 
Analogously we define $R\colon \bigsqcup_\lambda \mathbf{o}_\lambda \to \mathbf{o}$ to be the combinatorially simplest web from $\bigsqcup_\lambda \mathbf{o}_\lambda$ to $\mathbf{o}$ that is colored with 
$\{\lambda_j^{a_{i,j}}\}$ on the strand starting at the entry $a_{i,j}$ of $\bigsqcup_\lambda \mathbf{o}_\lambda$ and colored with $A_i$ on the strand terminating at the entry $a_i$ of $\mathbf{o}$.
 \end{defi}

Explicitly, $L$ is given as a composition $L:=L_{l-1}\otimes \cdots \otimes L_1$ with one component $L_j$ for each root $\lambda_j$, except the last one. 
Each $L_j$ itself can be decomposed as $L_j=L_{1,j}\otimes \cdots \otimes L_{m,j} $, where $L_{m,1}$ splits off the $\lambda_1$-component from the strand coming out of $a_m$ and continues it below the remainder of the $a_m$ strand. 
$L_{m-1,1}$ splits off the $\lambda_1$-component from the strand coming out of $a_{m-1}$, merges it with the remainder of the $a_m$ strand, which contains no $\lambda_1$'s any more, and splits it off on the other side. 
In general $L_{i,j}$ splits the $\lambda_j$-component off the remainder of the $a_i$-strand, and passes it through the remainders of all $a_k$-strands with $k>i$, which contain no $\lambda_j$'s any more. 
The composite $L_j$ thus is the combinatorially simplest web that splits the $\lambda_j$-components off all $a_k$ strands and continues them as a bundle of parallel strands below the $a_i$ remainder strands and above the bundles of 
$\lambda_{j'}$-colored parallel strands for $j'<j$ that have been split off by $L_{j'}$. 
It is not hard to see that the composition $L$ is, up to planar isotopy, the combinatorially simplest web from $\mathbf{o}$ to $ \bigsqcup_\lambda \mathbf{o}_\lambda$ with the prescribed boundary colorings.
The colored web $R$ can be obtained similarly, or simply by reflecting $L$ horizontally.

Given a colored web $W\colon \mathbf{o}_1 \to \mathbf{o}_2$, we will mostly be interested in the webs $L\colon \mathbf{o}_2 \to \bigsqcup_\lambda \mathbf{o}_{2,\lambda}$ and 
$R\colon \bigsqcup_\lambda \mathbf{o}_{1,\lambda} \to \mathbf{o}_1$ constructed from the incidence conditions for $\mathbf{o}_2$ and $\mathbf{o}_1$ determined by the coloring of left and right boundary edges of the colored web $W$. 
In particular, we can then consider the colored web $L\otimes W\otimes R$.

\begin{exa}\label{exa-LR} In the case of the identity web $1_{\mathbf{o}}$ on $\mathbf{o}=(a+b)$, which is colored by the multisubset $\{\lambda_1^a, \lambda_2^b\} \subset \Sigma$, 
and using the coloring conventions from Example \ref{exa-foamsplit}, we have the following prototypical example:
\begin{equation*}
L = \xy
(0,0)*{
\begin{tikzpicture} [scale=.5,fill opacity=0.2]
	%\draw[blue, very thick, directed=.55] (-2,3) to [out=180, in=45] (-3,2.5);
	%\draw[red, very thick, directed=.55]  (-1.5,2) to [out=180, in=315] (-3,2.5);
	\draw[green, very thick, directed=.55] (-3,2.5) to (-4.5,2.5);
	\draw[blue, very thick, directed=.55] (-4.5,2.5) to [out=135,in=0] (-6,3);
	\draw[red, very thick, directed=.55] (-4.5,2.5) to [out=225,in=0] (-5.5,2);	
\end{tikzpicture}
};
\endxy  
\quad , \quad R =
\xy
(0,0)*{
\begin{tikzpicture} [scale=.5,fill opacity=0.2]
	\draw[blue, very thick, directed=.55] (-2,3) to [out=180, in=45] (-3,2.5);
	\draw[red, very thick, directed=.55]  (-1.5,2) to [out=180, in=315] (-3,2.5);
	\draw[green, very thick, directed=.55] (-3,2.5) to (-4.5,2.5);
	%\draw[blue, very thick, directed=.55] (-4.5,2.5) to [out=135,in=0] (-6,3);
	%\draw[red, very thick, directed=.55] (-4.5,2.5) to [out=225,in=0] (-5.5,2);	
\end{tikzpicture}
};
\endxy \quad .
\end{equation*}
\end{exa}

\begin{lem} \label{lem-LR} Suppose $\mathbf{o}$ is an object in $\hat{\foam{N}{}^\Sigma}$ and fix an incidence condition for $\mathbf{o}$. 
Let $L$ and $R$ be the corresponding webs constructed in Definition \ref{LRdef}. Then we have 
\begin{equation*}
R\otimes L \cong 1_{\mathbf{o}} \quad \text{ and } \quad L\otimes R \cong 1_{\bigsqcup_\lambda \mathbf{o}_\lambda}.
\end{equation*}
\end{lem}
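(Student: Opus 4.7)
The plan is to exhibit explicit invertible foams in $\hat{\foam{N}{}^\Sigma}$ realizing the two desired isomorphisms, built from the elementary splitting/merging foams shown invertible in Lemmas \ref{split1} and \ref{split2}. The key observation is that the composites $R\otimes L$ and $L\otimes R$ are, up to planar isotopy, composed of local configurations (bigons and merge-split pairs) that match the hypotheses of those lemmas precisely.

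For $R\otimes L \cong 1_{\mathbf{o}}$, I would first treat the elementary case $\mathbf{o}=(a)$ with a two-root incidence $A = A_1\uplus A_2$. Here $L$ is a single split vertex and $R$ is the mirror-image merge, so that $R\otimes L$ is a bigon whose two boundary edges carry the orthogonal idempotent colorings $\idem_{A_1}$ and $\idem_{A_2}$. The digon-close foam of Lemma \ref{split2} is then the invertible 2-morphism exhibiting the isomorphism $R\otimes L \cong 1_{\mathbf{o}}$. For a general $\mathbf{o}$ and multiple roots, I would proceed by induction on $l$ and the length of $\mathbf{o}$, using the decomposition $L = L_{l-1}\otimes \cdots \otimes L_1$ (and its mirror for $R$): each component $L_j$ contributes a split that is paired with its partner in $R$ to form a bigon separating the $\lambda_j$-component from the remaining roots. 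Contracting these bigons in an order opposite to the construction of $L$ (outermost first) collapses them one at a time via Lemma \ref{split2}, eventually yielding $1_{\mathbf{o}}$.

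For $L\otimes R \cong 1_{\bigsqcup_\lambda \mathbf{o}_\lambda}$, the composite contains in its middle a copy of $\mathbf{o}$: each $a_i$-thick strand is flanked on the right by a merge vertex (from $R$) and on the left by the corresponding split vertex (from $L$). Applying the invertible unzip foam of Lemma \ref{split1} to such an internal thick edge replaces it with two parallel strands of orthogonal idempotent colors, simultaneously trivializing the adjacent merge/split vertices and reducing the local configuration to two straight-through strands. Iterating this unzipping over all internal thick edges arising from components $R_j$, and applying web isotopy to simplify the resulting identity-like web, gives the desired isomorphism to $1_{\bigsqcup_\lambda \mathbf{o}_\lambda}$.

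The main technical obstacle will be the combinatorial bookkeeping required to identify the relevant bigons and internal thick edges at each stage of the induction, and to verify that the facet colorings involved are always orthogonal so that Lemmas \ref{split1} and \ref{split2} apply. This step is routine but delicate: the explicit description of $L$ and $R$ as iterated compositions shows that at every stage we are separating one root $\lambda_j$ from the remaining roots, so that by Proposition \ref{adm} and Corollary \ref{adm2} the relevant idempotent decorations are automatically orthogonal and admissible, and no non-admissible (hence zero) configurations are produced along the way.
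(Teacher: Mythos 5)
Your proposal is correct and follows essentially the same route as the paper: the paper's proof simply observes that the splitters and mergers in $L$ and $R$ pair up in reverse order and that repeated application of Lemmata \ref{split1} and \ref{split2} (i.e.\ the digon-collapse for $R\otimes L$ and the zip/unzip for $L\otimes R$, as packaged in equation \eqref{webiso1}) yields the isomorphisms. Your write-up just makes explicit the pairing, the induction over roots, and the admissibility bookkeeping that the paper leaves implicit.
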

\begin{proof}
$L$ and $R$ are both compositions of mergers (and splitters) whose two incoming (outgoing) strands are colored with disjoint multisubsets. Moreover, splitters and mergers in $R$ are paired up with mergers and splitters in $L$ -- 
in reverse order. Repeated application of Lemmata \ref{split1} and \ref{split2} allows the construction of foams giving the isomorphism (see also equation \eqref{webiso1} below).
\end{proof}

\begin{defi} Let $W$ be a colored web in $\hat{\foam{N}{}^\Sigma}$.
$W$ is called \emph{boundary-split}, if it is of the form $L\otimes W'\otimes R$ for some colored web $W'\colon \mathbf{o}_1 \to \mathbf{o}_2$ in $\hat{\foam{N}{}^\Sigma}$ and for $L\colon \mathbf{o}_2\to \bigsqcup_\lambda \mathbf{o}_{2,\lambda}$ 
and $R\colon \bigsqcup_\lambda \mathbf{o}_{1,\lambda}\to \mathbf{o}_1$ as in Definition \ref{LRdef}. $L\otimes W'\otimes R$ is then called the \emph{boundary-split web associated to} $W'$.
\end{defi}
\begin{rem} Lemma \ref{lem-LR} shows that every web $W'$ in $\hat{\foam{N}{}^\Sigma}$ is weakly equivalent to its associated boundary-split web $L\otimes W'\otimes R$.
\end{rem}

Our goal is now to show that a boundary-split web $W$ is isomorphic to its associated split web $\bigsqcup_\lambda W_\lambda$. 
Unless stated otherwise, we use red and blue colors to denote colorings of web edges with disjoint multisubsets of $\Sigma$. 
Green denotes mixed or arbitrary colorings.

\begin{lem}
\label{webisolem}
The following isomorphisms hold in $\hat{\foam{N}{}^\Sigma}$:
\begin{equation}
\label{webiso1}
\eqnA 
\end{equation}
\begin{equation}
\label{webiso2}
\eqnB 
\end{equation}
\begin{equation}
\label{webiso3}
\eqnC 
\end{equation}
The reflections of these relations across the horizontal axis in the plane also hold.
\end{lem}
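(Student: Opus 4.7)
The plan is to prove the three isomorphisms by combining the local invertibility results of Lemmata \ref{split1} and \ref{split2} with the foam associativity relation \eqref{FoamRel1}. In each case the red and blue labels stand for decorations by idempotents $\idem_A$ and $\idem_B$ corresponding to \emph{disjoint} multisubsets of $\Sigma$, which is precisely the hypothesis needed to invoke Lemmata \ref{split1} and \ref{split2}.

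For \eqref{webiso1}, the first isomorphism contracts a merger-splitter pair joined by a green edge. The foam realizing this contraction is exactly the ``zipper through'' of equation \eqref{iso2}, which by Lemma \ref{split1} is invertible since its two zigzag facets are decorated by $\idem_A$ and $\idem_B$ with $A\cap B=\emptyset$; the explicit inverse is given by the formula appearing in the proof of Lemma \ref{split1}. The second isomorphism of \eqref{webiso1} is the adjoint situation (a splitter followed by a merger on a green edge), and invertibility follows dually from equation \eqref{iso1}.

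For \eqref{webiso2} and \eqref{webiso3}, the plan is to reduce to repeated applications of \eqref{webiso1}, inserting the associativity move \eqref{FoamRel1} wherever needed to re-bracket a tree of red or blue strands so that an adjacent merger-splitter pair becomes visible and contractible. Concretely, for \eqref{webiso2} one first applies \eqref{FoamRel1} to the two red trivalent vertices on the LHS, producing an intermediate web containing a local pattern matching the LHS of \eqref{webiso1}; contracting this pattern via \eqref{webiso1} removes one green segment, and a final application of \eqref{FoamRel1} yields the RHS. The isomorphism \eqref{webiso3} follows by an entirely symmetric sequence (alternatively, by horizontally reflecting \eqref{webiso2} and relabeling). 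The reflected versions of all three relations are obtained by vertical reflection of the constructed foams, which preserves invertibility since the roles of unit and counit for the adjoint pair swap but the hypotheses of Lemmata \ref{split1} and \ref{split2} are symmetric under this reflection.

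The only part that is not entirely mechanical is the bookkeeping in \eqref{webiso2} and \eqref{webiso3}: one must verify that after each application of \eqref{FoamRel1} the resulting web is still admissibly colored (so that the next invocation of \eqref{webiso1} is legitimate), and that the colors of the facets appearing in the intermediate zipper foams remain disjoint so that Lemma \ref{split1} continues to apply. This is guaranteed by the fact that the multisets coloring the two outer strands at each merger/splitter are disjoint by hypothesis, and the admissibility condition forces the green edges to be colored by their union, so disjointness is propagated through every intermediate configuration. Thus the main conceptual content is already contained in \eqref{webiso1}, and the remaining relations reduce to diagrammatic manipulation.
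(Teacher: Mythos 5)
Your proposal matches the paper's proof: \eqref{webiso1} is deduced from the invertibility of the zip/unzip and digon foams established in Lemmata \ref{split1} and \ref{split2}, and \eqref{webiso2} and \eqref{webiso3} then follow by alternating applications of \eqref{webiso1} with the associativity relation \eqref{FoamRel1}, exactly as you describe (the paper uses \eqref{FoamRel1} three times in its chain for \eqref{webiso2}). The only slip is attributing the second isomorphism of \eqref{webiso1} to a dual of \eqref{iso1}; that digon contraction is governed by Lemma \ref{split2} (equations \eqref{iso3} and \eqref{iso4}), which you do correctly cite at the outset.
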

\begin{proof} Equation \eqref{webiso1} follows from Lemmata \ref{split1} and \ref{split2}. For \eqref{webiso2} we have:

\begin{eqnarray*}
\eqnD \\
\eqnE
\end{eqnarray*}
where we have used (the splitter version of) relation \eqref{FoamRel1} three times.
\end{proof}

\begin{conv}
We define the following shorthand for ``crossings'' of web edges colored by disjoint multisubsets:
\begin{equation}
\label{4valdef}
\eqnF.
\end{equation}
\end{conv}
Using this, equations \eqref{webiso2} and \eqref{webiso3} take the form:
\begin{equation*}
\eqnG ,
\end{equation*}
\begin{equation*}
\eqnH .
\end{equation*}

\begin{defi}
We call a web \emph{semi-split} if it is boundary-split and each edge is either colored by a multisubset containing a single root or a multisubset containing exactly two distinct roots, 
in which case the edge (green) is required to have a neighborhood as on the left-hand side of \eqref{4valdef}. 
\end{defi}

\begin{lem}
\label{websemisplitlem}
Every boundary-split web $L\otimes W \otimes R$ in $\hat{\foam{N}{}^\Sigma}$ is isomorphic to a semi-split web $W'$. 
\end{lem}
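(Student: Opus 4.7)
The plan is to construct the required isomorphism by iteratively applying the isomorphisms from Lemma \ref{webisolem} to transform $L \otimes W \otimes R$ into a semi-split web. Since the construction of $L$ and $R$ in Definition \ref{LRdef} already ensures that every boundary edge of $L \otimes W \otimes R$ is colored by a single-root multisubset, the task is to propagate this single-root structure from the boundary into the interior, replacing multi-root edges with either single-root edges or the crossing configurations of equation \eqref{4valdef}.

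The strategy is induction on a complexity measure $c(W')$ that counts, over all internal edges $e$ whose coloring involves two or more distinct roots and which do not already sit within a crossing neighborhood as in the left-hand side of \eqref{4valdef}, the cardinality of the coloring multisubset of $e$ (say, summed). The base case $c(W') = 0$ is precisely the semi-split condition. For the inductive step, suppose $c(W') > 0$ and pick an offending internal edge $e$ colored by a multisubset $A \uplus B$ with $A$ and $B$ disjoint nonempty. Tracing from $e$ toward the boundary (where edges are single-root colored), one locates a subweb of $W'$ matching the left-hand side of one of the equations \eqref{webiso2} or \eqref{webiso3} (or a horizontal reflection). Applying the associated isomorphism contracts a pair of splitter/merger vertices and produces a web in which the mixing of roots on this edge is either eliminated or confined into a proper crossing configuration, as permitted by the semi-split condition.

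Each such local move yields an isomorphism of colored webs in $\hat{\foam{N}{}^\Sigma}$ by Lemma \ref{webisolem}, and composing these isomorphisms gives the desired $L \otimes W \otimes R \cong W'$ once the process terminates. Termination is guaranteed because each application strictly decreases $c$: the moves \eqref{webiso2} and \eqref{webiso3} reduce the number of trivalent vertices at which different colored strands interact nontrivially, and equation \eqref{webiso1} can be used as needed to collapse parallel merger/splitter pairs that arise during the propagation. Since $W$ has finitely many edges and vertices, $c(W')$ is a nonnegative integer that cannot decrease indefinitely.

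The main obstacle is bookkeeping: the isomorphisms in Lemma \ref{webisolem} rearrange the combinatorial structure of a web rather than simply ``deleting'' mixed coloring, so the fact that $c$ strictly decreases requires a careful choice of which subweb to target at each step. A clean way to organize the argument is to fix a planar order on the trivalent vertices of $W$ (say, by distance from a chosen boundary) and process them in this order, maintaining as an invariant that after $k$ steps the first $k$ vertices lie in the semi-split part of the web and the interface with the unprocessed part consists of single-root edges. This invariant ensures that each subsequent application of \eqref{webiso2} or \eqref{webiso3} (or their reflections) is well-posed, and yields the semi-split web $W'$ after finitely many steps.
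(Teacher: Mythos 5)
Your overall plan (apply the isomorphisms of Lemma \ref{webisolem} repeatedly until only single-root edges and crossing configurations remain) points in the right direction, but the argument as written has two genuine gaps. First, the termination argument does not work: the moves you actually need do not strictly decrease your complexity measure $c$. The essential first move is to open a red--blue digon in a mixed (green) edge via the second relation in \eqref{webiso1}, read from right to left; this \emph{increases} the number of vertices and replaces one green edge by two green edges plus new single-root edges. Likewise, the local replacement of an all-green trivalent vertex by crossing configurations (which is needed before \eqref{webiso2} or \eqref{webiso3} can even be applied) adds vertices and mixed edges. So $c$ is not monotone under the individual moves, and the induction on $c$ collapses. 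Relatedly, your claim that tracing from an offending edge toward the boundary always produces a subweb matching the left-hand side of \eqref{webiso2} or \eqref{webiso3} is unjustified: those relations require a mixed edge already flanked by compatible red--blue splitters and mergers in crossing position, and arranging that configuration is precisely the work that has to be done, not something that comes for free from boundary-splitness.

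Second, you do not address edges colored by three or more distinct roots, which is the main combinatorial difficulty. The semi-split condition only tolerates mixed edges carrying exactly two distinct roots, and only inside a crossing neighborhood as in \eqref{4valdef}; an edge carrying three roots cannot be put directly into such a configuration and must be resolved in stages. The paper handles this by inducting on the roots $\lambda_1,\lambda_2,\dots$ one at a time, splitting off the $\lambda_i$-component at step $i$, and it needs a rather elaborate bookkeeping scheme (seven edge-colorings tracking which roots occur and which have already been separated) to guarantee that at each step the $\lambda_i$-strands only meet the remaining strands in configurations where the splitting moves apply. Your ``process vertices in planar order'' invariant is a reasonable instinct, but without specifying how a vertex whose three incident edges carry overlapping multi-root colorings gets resolved, and without a termination measure that survives the digon-creation steps, the proof is not complete. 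Restructuring the induction to be over the roots rather than over a single global complexity count would repair both problems.
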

\begin{proof} 
We inductively split off roots, starting with $\lambda_1$. 
For this we draw in red edges colored with multisubsets of the single root $\lambda_1$, in blue edges colored with multisubsets not containing $\lambda_1$, 
and in green edges colored by mixed multisubsets. 
By \eqref{webiso1} we can open a red-blue digon in every green edge of  $L\otimes W \otimes R$ and get an isomorphic web. 
Next we replace all vertices that are adjacent to at least two green edges with isomorphic webs that only contain green edges of type \eqref{4valdef}, e.g. for an all-green merger web:
\begin{equation*}
\eqnI
\end{equation*}
\begin{equation*}
\cong \eqnJ .
\end{equation*}
The case of an all-green splitter web is completely analogous, and vertices with only two adjacent green edges are even easier to split. 
The local replacements of green vertices as above patch together to give an isomorphism to a web in which green edges are flanked by red-blue mergers and splitters in 
the crossing configuration from equation \eqref{4valdef}.

For the induction step $i-1\mapsto i$ we use the following coloring on edges:\\
\begin{center}
\begin{tabular}{c | c}
color & roots occuring in multisubset (with some multiplicity)\\ \hline
red & only $\lambda_i$\\
orange & exactly one $\lambda_k$ with $k<i$\\
magenta & exactly two distinct $\lambda_k$ with $k\leq i$\\
blue & some roots $\lambda_k$ with $k>i$\\
green & $\lambda_i$ and at least one $\lambda_k$ with $k>i$\\
cyan & exactly one $\lambda_k$ with $k<i$ and at least one with $k>i$\\
black & $\lambda_i$, exactly one $\lambda_k$ with $k<i$ and at least one with $k>i$\\
\end{tabular}
\end{center}
We can assume that only these colorings are present. Moreover, orange strands can interact with \{other orange, red, blue, green\} strands only in crossing configurations around \{magenta, magenta, cyan, black\} edges respectively. Furthermore, such crossing configurations are the only occurrences of magenta, cyan and black edges. 

The goal for the induction step is to split red edges off green and black edges. As before we introduce red-blue digons in every green edge and locally replace green vertices. 
Every remaining green edge is in red-blue crossing configuration or bounds red-blue on one side and orange-black on the other side (and every black bounds orange-green on both sides). 
We get rid of all black edges by splitting off their red component:
\begin{equation*}
\eqnK
\end{equation*}
\begin{equation*}
\eqnL
\end{equation*}
Note that now red and orange strands interact with each other and with strands that contain higher index roots (blue) only in crossing configurations (around magenta, green and cyan edges), as required in the induction step. 
For the next step old \{blue, green, cyan\} edges become \{green, black, black\} or \{blue, cyan, cyan\} depending on whether they contain $\lambda_{i+1}$ or not. Orange stays orange, red becomes orange, and magenta stays magenta. 
This colored web satisfies the induction hypothesis for the next step. After repeating this process for each root, it terminates in a semi-split web $W'$. 
\end{proof}

\begin{prop}
\label{webisoprop} Every boundary-split web $L\otimes W \otimes R$ is isomorphic to its associated split web $\bigsqcup_\lambda W_\lambda$.
\end{prop}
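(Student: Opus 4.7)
My plan is to reduce to the semi-split case and then iteratively eliminate the remaining mixed-color crossings until the web literally decomposes as a disjoint union over the roots of $\Sigma$.

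First, by Lemma \ref{websemisplitlem}, the boundary-split web $L\otimes W\otimes R$ is isomorphic to a semi-split web $W'$. In $W'$, every edge is colored by a multisubset containing at most two distinct roots of $\Sigma$, and every edge colored by a genuinely mixed multisubset occurs only as the middle edge of a crossing configuration of the form \eqref{4valdef}. In particular, after invoking the defining equation \eqref{4valdef}, the 1-morphism $W'$ is built entirely out of splitters, mergers, caps, and cups whose adjacent facets are all colored by multisubsets containing a \emph{single} root, together with finitely many crossing configurations between strands colored by different single roots.

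Second, I would argue that each such single-root crossing configuration is invertible and that two consecutive ones cancel. The invertibility is already contained in Lemma \ref{split1}, applied after decomposing an idempotent coloring by a multisubset $\{\lambda^a\}$ (respectively $\{\mu^b\}$) on a strand; the cancellation relation then amounts to a local version of \eqref{webiso1}, which together with planar isotopy plays the role of a Reidemeister-II move for these formal crossings. Using these moves, I can push every $\lambda_j$-colored strand past every $\lambda_k$-colored strand (for $j\neq k$), which is the crucial tool for separating sub-webs of different colors.

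Third, I would perform an induction on the number of mixed-color crossings appearing in $W'$. The base case of no such crossings means $W'$ is already a disjoint union of single-root-colored sub-webs, and by construction of $L$ and $R$ in Definition \ref{LRdef} the boundary points of these sub-webs are already grouped by color into the ordering \eqref{eq-ICob}; thus $W'=\bigsqcup_\lambda W'_\lambda$ on the nose. For the inductive step, I pick an outermost mixed crossing, apply the cancellation/sliding move to push one of its strands entirely around the other and out to the boundary (where the grouping by $L$ and $R$ absorbs it), and then appeal to the inductive hypothesis on the resulting web with strictly fewer mixed crossings. Finally, I verify that the single-root sub-web $W'_\lambda$ produced from $W'$ is isomorphic to $W_\lambda$ as defined via the $\lambda$-component of the identity foam, since both are obtained from the same combinatorial data by retaining only the $\lambda$-colored facets.

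The main obstacle is the bookkeeping in the inductive step: one must ensure that sliding a strand through the web never creates new mixed crossings, and that the boundary behavior remains compatible with the rigid forms of $L$ and $R$. I expect to handle this by working with a lexicographic complexity measure that gives strict priority first to eliminating crossings nearest the boundary and second to reducing the total length of mixed-color strands, so that each move strictly decreases the measure and the process terminates in the split configuration.
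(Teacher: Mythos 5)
Your overall strategy matches the paper's: first reduce to a semi-split web via Lemma \ref{websemisplitlem}, then separate the single-root components by local web isomorphisms, and finally observe that the resulting single-root sub-webs coincide with the $W_\lambda$. However, your execution of the separation step has a genuine gap. You only invoke the Reidemeister-II-type cancellation coming from \eqref{webiso1} (two opposite crossings between a $\lambda_j$- and a $\lambda_k$-strand cancel). That move alone cannot separate the colored components: the $\lambda_j$-sub-web has its own trivalent (splitter/merger) vertices, and to move a $\lambda_k$-colored strand from one side of such a vertex to the other you need the ``pitchfork'' relations \eqref{webiso2} and \eqref{webiso3} of Lemma \ref{webisolem}, which slide a crossing past a trivalent vertex. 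Since web edges are constrained to be leftward oriented, you cannot in general route a strand around a vertex by planar isotopy, so these moves are indispensable and your argument as written does not go through without them.

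A second, related weakness is the induction itself. ``Picking an outermost mixed crossing and pushing one of its strands entirely around the other and out to the boundary'' is not a well-defined local move, and it is not clear that it reduces either the number of mixed crossings or your proposed lexicographic measure: dragging a strand across the rest of the diagram typically creates new crossings with other strands. The paper avoids this bookkeeping entirely by choosing a single generic ambient isotopy that moves all of $W'_{\lambda_1}$ off $W'\setminus W'_{\lambda_1}$ at once; genericity guarantees that the superimposed family of graphs has only finitely many non-isotopy events, and these events are exactly the local moves of Lemma \ref{webisolem} (digon creation/removal and crossing cancellation from \eqref{webiso1}, vertex slides from \eqref{webiso2} and \eqref{webiso3}), each realized by an isomorphism foam. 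If you replace your crossing-by-crossing induction with this generic-isotopy argument, and invoke all three relations of Lemma \ref{webisolem} rather than only \eqref{webiso1}, your proof becomes the paper's.
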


\begin{proof} The proof proceeds in two steps, first we use Lemma \ref{websemisplitlem} to find an isomorphism from $L\otimes W \otimes R$ to a semi-split web $W'$. Clearly $W$, $L\otimes W \otimes R$ and $W'$ have equal associated split webs. It remains to completely separate the $\lambda_i$-components in $W'$. 
Again we proceed by induction and start by peeling off the $\lambda_1$-component $W'_{\lambda_1}$. 
For this, consider a web-isotopy $t\mapsto W'_{\lambda_1}(t)$ for $t\in [0,1]$, i.e. an ambient isotopy of $W'_{\lambda_1}$ in the plane which preserves the left-directedness of web edges and which moves $W'_{\lambda_1}$ off the rest  
$W'\setminus W'_{\lambda_1}$. 
If we superimpose $W'_{\lambda_1}(t)$ and $W'\setminus W'_{\lambda_1}$ we get a homotopy $t\mapsto W'(t)$ of graphs of valence $\leq 6$. If the original web-isotopy is generic, the graphs $W'(t)$ actually are of valence $\leq 5$ and there are only finitely many $t$ for which the valence is $5$ -- these correspond to the moves in \eqref{webiso2} and \eqref{webiso3}. 
Furthermore, $4$-valent vertices in $W(t)$ should be understood as composition of a merge- and a split- $3$-valent vertex, either in crossing configuration as in Definition \ref{4valdef}, or splittable as in \eqref{webiso1}. 
Thus, $t\mapsto W'(t)$ is a web-isotopy except in finitely many points $t$ where the number and valence of vertices changes locally. 
It is not hard to see that the possible local changes are exactly the ones from Lemma \ref{webisolem} and hence can be realized by isomorphism foams. 
A composition of the appropriate local isomorphism foams, thus, splits off $W'_{\lambda_1}$ from $W'$. 
One then proceeds to split off, in exactly the same way, $W'_{\lambda_2}$ and so forth up to $W'_{\lambda_{l-1}}$. The result then follows since $W'_{\lambda_i}=W_{\lambda_i}$ for $1\leq i\leq l$.
\end{proof}

\begin{rem} Proposition \ref{webisoprop} together with Lemma \ref{lem-LR} show that every web $W$ in $\hat{\foam{N}{}^\Sigma}$ is weakly equivalent to its associated split web $\bigsqcup_\lambda W_\lambda$.
\end{rem}

\subsection{A web splitting functor}
\label{2hom}

We now extend Proposition \ref{webisoprop} to the 2-categorical level. Ideally, we'd like a 2-endofunctor of $\hat{\foam{N}{}^\Sigma}$ which fully splits foams into 
pieces carrying colorings of only one root, but due to coherence issues, we are currently unable to deduce such a result. 
Instead, we define a family of functors between $\Hom$-categories in $\hat{\foam{N}{}^\Sigma}$ which will suffice to split the complex assigned to a tangle.

We begin by fixing, for each colored web, an isomorphism between its associated boundary split and split webs. 
Precisely, let $W\colon \mathbf{o}_1 \to \mathbf{o}_2$ be a colored web in $\hat{\foam{N}{}^\Sigma}$ and suppose that $L\colon \mathbf{o}_2 \to \bigsqcup_\lambda \mathbf{o}_{2,\lambda}$ and 
$R\colon \bigsqcup_\lambda \mathbf{o}_{1,\lambda} \to \mathbf{o}_1$ are the webs given in Definition \ref{LRdef}. 
Proposition \ref{webisoprop} guarantees that there is an isomorphism $T_W\colon L\otimes W \otimes R \to \bigsqcup_\lambda W_\lambda$, so fix one and denote its inverse by $B_W$. 
We have some freedom in choosing $T_W$, and in Section \ref{section-tensor} we will specify a convenient choice for webs that arise as resolutions of tangle diagrams. 

For the next definition,
suppose $F\colon W_1 \to W_2$ is a foam between colored webs $W_1,W_2\colon \mathbf{o}_1 \to \mathbf{o}_2$ in $\hat{\foam{N}{}^\Sigma}$ with identical incident conditions on the boundary sequences $\mathbf{o}_1$ and $\mathbf{o}_2$ respectively. 
Further, consider the webs $L$ and $R$ and the isomorphism foams $T_{W_2}$ and $B_{W_1}$ described above.

\begin{defi}
\label{def-phi}
Let $\phi:= \phi_2\circ \phi_1$ for
\begin{align*}
\phi_1\colon& \Hom(W_1,W_2) \to \Hom(L\otimes W_1 \otimes R, L\otimes W_2 \otimes R),  \quad F\mapsto \id_L \otimes F \otimes \id_R, \\
\phi_2\colon& \Hom(L\otimes W_1 \otimes R, L\otimes W_2 \otimes R) \to \Hom(\bigsqcup_\lambda W_{1, \lambda}, \bigsqcup_\lambda W_{2,\lambda}),  \quad F\mapsto T_{W_2} \circ F \circ B_{W_1}.
\end{align*}
\end{defi}

\begin{prop}\label{2homiso} Fix objects $\mathbf{o}_1,\mathbf{o}_2 \in \hat{\foam{N}{}^\Sigma}$. Then the maps 
\begin{equation*}\phi \colon \Hom(W_1,W_2) \to \Hom(\bigsqcup_\lambda W_{1, \lambda}, \bigsqcup_\lambda W_{2,\lambda})
\end{equation*}
for colored webs $W_1,W_2\colon \mathbf{o}_1 \to \mathbf{o}_2$ with identical incident conditions on $\mathbf{o}_1$ and $\mathbf{o}_2$ respectively, are vector space isomorphisms that respect composition $\circ$ of foams.
\end{prop}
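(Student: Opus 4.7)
The plan is to decompose the argument along the definition $\phi = \phi_2 \circ \phi_1$ and show that each factor is a linear isomorphism, then verify functoriality with respect to vertical composition $\circ$. Linearity is immediate since both tensor product with identity 2-morphisms and pre/post-composition with fixed 2-morphisms are bilinear operations on Hom-spaces; so I will concentrate on invertibility.

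First I would dispatch $\phi_2$. Since $T_{W_i}$ and $B_{W_i}$ are, by construction, mutually inverse isomorphism foams (i.e. $B_{W_i} \circ T_{W_i} = \id_{L \otimes W_i \otimes R}$ and $T_{W_i} \circ B_{W_i} = \id_{\bigsqcup_\lambda W_{i,\lambda}}$), the map $G \mapsto B_{W_2} \circ G \circ T_{W_1}$ is a two-sided inverse for $\phi_2$, and this is a purely formal check in any category.

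The bulk of the work is showing $\phi_1$ is an isomorphism. Writing $L_i : \mathbf{o}_i \to \bigsqcup_\lambda \mathbf{o}_{i,\lambda}$ and $R_i : \bigsqcup_\lambda \mathbf{o}_{i,\lambda} \to \mathbf{o}_i$ for the zips/unzips of Definition \ref{LRdef}, Lemma \ref{lem-LR} provides iso foams $\alpha_i : R_i \otimes L_i \xrightarrow{\cong} 1_{\mathbf{o}_i}$ and $\gamma_i : L_i \otimes R_i \xrightarrow{\cong} 1_{\bigsqcup_\lambda \mathbf{o}_{i,\lambda}}$ with their inverses. The plan is to define the candidate inverse $\psi_1 : \Hom(L_2 \otimes W_1 \otimes R_1, L_2 \otimes W_2 \otimes R_1) \to \Hom(W_1, W_2)$ by
\[ \psi_1(G) := (\alpha_2 \otimes \id_{W_2} \otimes \alpha_1^{-1}\text{-inverse pattern}) \circ (\id_{R_2} \otimes G \otimes \id_{L_1}) \circ (\text{corresponding unit}) \]
i.e.\ sandwich $G$ between $R_2$ on the left and $L_1$ on the right and then apply the invertible foams witnessing $R_2 \otimes L_2 \cong 1_{\mathbf{o}_2}$ and $L_1 \otimes R_1 \cong 1_{\bigsqcup_\lambda \mathbf{o}_{1,\lambda}}$ (for the other side $R_1 \otimes L_1 \cong 1_{\mathbf{o}_1}$) to bring the domain and codomain back to $W_1$ and $W_2$. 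The verifications $\psi_1 \circ \phi_1 = \id$ and $\phi_1 \circ \psi_1 = \id$ reduce, via the interchange law for horizontal and vertical composition in the 2-category $\hat{\foam{N}{}^\Sigma}$, to the identities $\alpha_i \circ \alpha_i^{-1} = \id$ (and $\gamma_i \circ \gamma_i^{-1} = \id$), which hold by construction in Lemma \ref{lem-LR}. This is just the standard observation that tensoring with an invertible 1-morphism in a 2-category induces an equivalence of Hom-categories.

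Finally, compatibility with composition is a short calculation: for $F_1 : W_1 \to W_2$ and $F_2 : W_2 \to W_3$,
\[ \phi(F_2) \circ \phi(F_1) = T_{W_3} \circ (\id_L \otimes F_2 \otimes \id_R) \circ B_{W_2} \circ T_{W_2} \circ (\id_L \otimes F_1 \otimes \id_R) \circ B_{W_1}, \]
and the middle factor $B_{W_2} \circ T_{W_2}$ collapses to $\id_{L \otimes W_2 \otimes R}$, after which the interchange law combines the two $\phi_1$-factors into $\id_L \otimes (F_2 \circ F_1) \otimes \id_R$, yielding $\phi(F_2 \circ F_1)$. The main conceptual obstacle, such as it is, lies not in these formal checks but in trusting that the freedom in the choice of $T_W$ is genuinely harmless; the proposition is stated for any fixed choice, so coherence across different choices is not required here, but it will need to be dealt with in Section \ref{section-tensor} where a specific choice adapted to tangle resolutions is pinned down.
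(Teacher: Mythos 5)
Your proposal is correct and follows essentially the same route as the paper: $\phi_2$ is invertible because $T_W$ and $B_W$ are mutually inverse foams, $\phi_1$ is inverted by sandwiching with the horizontally reflected webs (your $R_2$, $L_1$ are the paper's $L^{-1}$, $R^{-1}$) and collapsing $R_2\otimes L_2\cong 1_{\mathbf{o}_2}$, $R_1\otimes L_1\cong 1_{\mathbf{o}_1}$ via Lemma \ref{lem-LR}, and compatibility with $\circ$ follows from the interchange law together with $B_{W}\circ T_{W}=\id$. The only quibble is the momentary mislabeling of which isomorphism of Lemma \ref{lem-LR} is needed on the right-hand side (it is $R_1\otimes L_1\cong 1_{\mathbf{o}_1}$, as your parenthetical correctly notes), which does not affect the argument.
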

\begin{proof}
It is clear that the maps $\phi_1$ respect composition of foams and for $\phi_2$ it follows from the definition of $B_W$ as the inverse of $T_W$.

Next, note that $\phi_2$ is clearly a vector space isomorphism, since it is pre- and post-composition with isomorphism foams. 
To see that $\phi_1$ is as well, let $L^{-1}$ and $R^{-1}$ be the webs obtained by reflecting $L$ and $R$ horizontally. 
We have isomorphism foams $\phi_L \colon L^{-1}\otimes L \to 1_{\mathbf{o}_2}$ and $\phi_R\colon R\otimes R^{-1} \to 1_{\mathbf{o}_1}$, and an inverse for $\phi_1$ is then given by $\psi\colon G\mapsto (\phi_L\otimes \id_{W_2} \otimes \phi_R)\circ(\id_{L^{-1}}\otimes G \otimes \id_{R^{-1}})\circ (\phi^{-1}_L \otimes \id_{W_1} \otimes \phi^{-1}_R)$. 
\end{proof}

Finally, suppose that $\mathbf{A}$ and $\mathbf{B}$ are incidence conditions for objects $\mathbf{o}_1$ and $\mathbf{o}_2$ in $\hat{\foam{N}{}^\Sigma}$, respectively. 
By expressing each facet incident upon a left or right boundary as a sum over colorings, we see that the $\Hom$-categories in $\hat{\foam{N}{}^\Sigma}$ split into direct sums: 
\begin{equation}
\Hom(\mathbf{o}_1,\mathbf{o}_2) \cong \bigoplus_{\mathbf{A}, \mathbf{B}}\Homcat^{\mathbf{A}\to\mathbf{B}}(\mathbf{o}_1,\mathbf{o}_2)
\end{equation}
where the sum is over all incidence conditions $\mathbf{A}$ and $\mathbf{B}$, 
and $\Hom^{\mathbf{A}\to\mathbf{B}}(\mathbf{o}_1,\mathbf{o}_2)$ denotes the full subcategory of $\Hom(\mathbf{o}_1,\mathbf{o}_2)$ generated by webs that are colored with the multisets prescribed by 
$\mathbf{A}$ and $\mathbf{B}$ on the right and left boundary edges respectively.

\begin{defi}
\label{def-phifunct}
Let $W$ be a web in $\Homcat^{\mathbf{A}\to\mathbf{B}}(\mathbf{o}_1,\mathbf{o}_2)$ and suppose that $\bigsqcup_\lambda \mathbf{o}_{1,\lambda}$ and $\bigsqcup_\lambda \mathbf{o}_{2,\lambda}$ 
are the objects given in equation \eqref{eq-ICob}, then we also denote by $\phi$ the functor 
\[
\Homcat^{\mathbf{A}\to\mathbf{B}}(\mathbf{o}_1,\mathbf{o}_2) \to \Homcat(\bigsqcup_\lambda \mathbf{o}_{1,\lambda}, \bigsqcup_\lambda \mathbf{o}_{2,\lambda})
\]
defined on webs $W$ and foams $F$ in $\Homcat^{\mathbf{A}\to\mathbf{B}}(\mathbf{o}_1,\mathbf{o}_2)$ by:
\begin{align*}
\phi(W) &:= \;\bigsqcup_\lambda W_{\lambda},\\
\phi(F) &:= T_{W_2}\circ (\id_L \otimes F \otimes \id_R) \circ B_{W_1} .
\end{align*}
\end{defi}

We call the functors $\phi$ \emph{web splitting functors}. Note that their definition depends on our choice of isomorphism foam $T_W$ for every colored web $W$ in $\Homcat^{\mathbf{A}\to\mathbf{B}}(\mathbf{o}_1,\mathbf{o}_2)$. 
In Section \ref{section-tensor} we show that with a suitable choice of $T_W$ the functors $\phi$ not only split webs, but also certain foams between them. We give a prototypical example of this:

\begin{exa} With coloring conventions as in Example \ref{exa-foamsplit} we have
\begin{equation}
\phi\left (\flatsheet[1] \right ) \quad = \quad
\parallelsheets[1]{}{}.
\end{equation}
\end{exa}
Indeed, this follows from Lemma \ref{split1} using
\begin{equation*}
\id_L:= \leftsplit[1]~, \quad \id_R := 
\rightsplit[1] ~, \quad  T:= \unzip[1]{}{}, \quad B:= 
\sum_r ~ \zip[1]{f_r}{g_r}.
\end{equation*}

\begin{rem}\label{rem-conj}
For colored webs $W$ in $\hat{\foam{N}{}^\Sigma}$ we conjecture that the map 
\begin{equation*}
\bigotimes_{\lambda} \Hom( W_{1,\lambda}, W_{2,\lambda}) \to \Hom \left ( \bigsqcup_\lambda W_{1,\lambda}, \bigsqcup_\lambda W_{2,\lambda}\right ),
\end{equation*} given by placing foams colored by individual roots side by side, is an isomorphism of vector spaces. 
In particular, this would mean that every foam between split webs can be split into non-interacting colored components, possibly with additional decorations.
\end{rem}

\subsection{A 2-representation of $\Foam{N}^\Sigma$}
\label{section-defmf}
In this section, we prove that the deformed $\slnn{N}$ foam 2-category $\Foam{N}^\Sigma$ is sufficiently non-degenerate, 
by constructing a 2-representation onto a version of Wu's deformed matrix factorizations \cite{Wu2}. 
Indeed, let $\HMF$ denote the 2-category %\footnote{Throughout, 2-category always means weak 2-category, i.e. a bicategory.} 
given as follows:
\begin{itemize}

\item objects are pairs $(R,w)$ where $R$ is a $\C$-algebra and $w\in R$,

\item 1-morphisms $(R,w) \to (S,v)$ are matrix factorizations $X$ over $R \otimes_\C S$ with potential $v-w$, and

\item 2-morphisms $X \to Y$ are morphisms in the homotopy category of matrix factorizations.
\end{itemize}
We'll assume the basics concerning matrix factorizations, which can e.g. be found in \cite{KR}; 
see \cite{CM2} for details about the 2-category of matrix factorizations.

Our result is the following:

\begin{thm}\label{thm-nondegen}
There is a 2-representation from the deformed foam 2-category $\Foam{N}^\Sigma$ to the 2-category of matrix factorizations. 
Moreover, this 2-representation assigns to a web in $\Foam{N}^\Sigma$ the same matrix factorization as in 
Wu's construction of deformed link homology.
\end{thm}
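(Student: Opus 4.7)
The plan is to explicitly construct the assignment on objects, 1-morphisms, and generating 2-morphisms, and then verify the defining relations of $\Foam{N}^\Sigma$. On objects, a sequence $\mathbf{a}=(a_1,\ldots,a_m)$ is sent to the pair $(R_\mathbf{a}, w_\mathbf{a})$, where $R_\mathbf{a}$ is the polynomial ring in alphabets $\X_1,\ldots,\X_m$ of sizes $a_1,\ldots,a_m$, and $w_\mathbf{a}$ is the deformed potential obtained by summing, over each alphabet $\X_i$, the trace of the antiderivative $Q(X)=\int P(X)\,dX$ expressed in terms of power sums of $\X_i$. Here $P(X)=\prod_{s\in\Sigma}(X-s)$ is the polynomial with root multiset $\Sigma$, exactly as in Wu's construction \cite{Wu2}. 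On 1-morphisms, a web is sent to the corresponding tensor product of (Koszul-type) elementary matrix factorizations associated to merge and split vertices, which is precisely Wu's assignment. This already ensures the second assertion of the theorem.

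Next, I would define the image of each generating foam 2-morphism following Mackaay--Yonezawa \cite{MY}, but replacing the polynomial $X^{N+1}/(N+1)$ with the deformed antiderivative $Q(X)$ throughout. The dot on a $1$-labeled facet is sent to multiplication by the corresponding variable $X$; a $k$-labeled facet decorated by $\pi_\alpha$ is sent to multiplication by the corresponding Schur polynomial in its alphabet; and the generating foams (zip, unzip, cup, cap, saddle, and the trivalent seams) are sent to the obvious Koszul morphisms, pre- or post-composed with the appropriate boundary identifications.

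To streamline the verification of the relations, I would invoke the stabilization theory of matrix factorizations: the deformed potential $w_\mathbf{a}^\Sigma$ differs from the undeformed $w_\mathbf{a}^0$ by lower-order terms, and the category of matrix factorizations is preserved under such deformations up to Morita-type equivalence, with the same Koszul resolutions underlying both. This reduces every relation not involving \eqref{dotrel} to the corresponding undeformed foam relation, which is already verified by Mackaay--Yonezawa. The relation \eqref{dotrel} itself becomes the statement that in the endomorphism algebra of a $1$-labeled facet, the operator ``multiplication by $X$'' satisfies $P(X)=0$; but by construction the derivative of the potential $Q$ is exactly $P$, so $P(X)$ acts as $\partial_X Q(X)$, which is the Jacobi ideal relation and hence zero modulo boundaries in the matrix factorization category. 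This is the central computation that makes the whole scheme go through.

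The hard part will be organizing the stabilization argument so that it cleanly transfers the undeformed verification to the deformed one without having to reprove each foam relation by hand, and so that the identification with Wu's matrix factorizations is manifest on the nose rather than up to homotopy. A secondary difficulty is keeping track of signs in the boundary identifications coming from the $2$-functor on cups and caps in equations \eqref{PhCC}, since these sign conventions must be compatible with Wu's boundary cobordism maps; this can be handled by matching conventions on a small list of test foams (the sphere, the theta-foam, and the bamboo foam) and invoking naturality.
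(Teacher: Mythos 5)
Your overall scaffolding (objects, Wu's Koszul factorizations on webs, the assertion that $P(\text{dot})$ lands in the Jacobian ideal of the potential and is hence null-homotopic) matches the paper, and the observation that the derivative of $Q$ being $P$ is what makes the deformed dot relation hold is exactly the paper's computation for the dot relation. However, the heart of your argument contains a serious error. You claim that because the deformed potential $w^\Sigma_{\mathbf{a}}$ differs from the undeformed one by lower-order terms, ``the category of matrix factorizations is preserved under such deformations up to Morita-type equivalence,'' and that this reduces all the non-dot foam relations to the already-verified undeformed case. This is false. If the deformed and undeformed matrix factorization 2-categories were equivalent in a way that intertwined the Koszul factorizations assigned to webs, the resulting deformed link homologies would be isomorphic to the undeformed ones — but the entire point of this paper (and of Lee, Gornik, Wu) is that they are \emph{not}. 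Lee's deformation of Khovanov homology is trivial on knots, Khovanov homology is not. So no such reduction can exist, and the word ``stabilization'' as you are using it is not the stabilization that actually gets invoked.

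What the paper actually does is use \emph{stabilization of linear factorizations} in the sense of Carqueville--Murfet (Proposition~\ref{prop-stab}): a finite-rank Koszul matrix factorization $\{\mathbf{a},\mathbf{b}\}$ with $\mathbf{b}$ regular is a stabilization of the quotient ring $R/(\mathbf{b})$ viewed as a linear factorization concentrated in even degree, meaning morphisms into it from finite-rank matrix factorizations are uniquely determined up to homotopy by their composition with $\{\mathbf{a},\mathbf{b}\}\to R/(\mathbf{b})$. This lets the paper define the image of each generating foam by specifying a map between the much simpler quotient rings and then lifting uniquely, and it lets one check foam relations at the level of these quotient-ring maps. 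The other simplification the paper uses is not a Morita equivalence but a higher-representation-theoretic one: it suffices to build 2-functors $\Ucat_Q(\glm)\to\HMF$ killing non-$N$-bounded weights and the dot relation, which means only the foam relations coming from $\Ucat_Q(\glm)$ (not the derived thick-calculus ones) need independent verification, and the M-P relations come for free from how the generating foam images are chosen. Even after these reductions, the paper must and does verify a concrete list of relations (QR~3.9 through QR~3.20 plus isotopy relations), each by an explicit computation on the linear-factorization side — this work cannot be bypassed, and your proposal as written does not contain a correct replacement for it.

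A smaller issue: you defer the sign bookkeeping for cups and caps to ``matching conventions on a small list of test foams and invoking naturality.'' Naturality alone will not fix signs; the sign conventions in equations~\eqref{PhCC} are genuine choices that have to be made consistently so that the isotopy relations hold, and the paper verifies these separately via the sideways-crossing compatibility and a direct computation.
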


Of course, it suffices to assign pairs $(R,w)$ to sequences, 
the same matrix factorizations as in \cite{Wu2} to generating webs, and 
morphisms of matrix factorizations to generating foams, 
and then check that the images of the foam relations hold in $\HMF$. 
However, we can simplify this check using an argument similar to that in \cite{QR}. 
Indeed, there it is shown that the undeformed foam category $\Foam{N}$ is equivalent 
to a certain 2-subcategory of the quotient of $\dot{\cal{U}}_Q(\glnn{\infty})$ by the $N$-bounded weights. 
Since the 2-category of matrix factorizations is idempotent complete, it suffices to construct 
a 2-representation of $\cal{U}_Q(\glnn{\infty})$ sending non-$N$-bounded weights to zero 
and satisfying (the preimage of) the additional foam relation in $\Foam{N}^\Sigma$, which then 
induces a 2-functor from $\Foam{N}^\Sigma$.

Practically speaking, this shows that we need only check the foam relations coming from relations in 
$\cal{U}_Q(\glnn{\infty})$ and not those coming from the thick calculus in $\dot{\cal{U}}_Q(\glnn{\infty})$, 
(which are used to split certain idempotent foams in $\Foam{N}$). This simplifies the number of 
relations needed to be checked (more details below).

We hence begin by following Wu, assigning a pair $(R,w)$ to an object $(a_1,\ldots,a_k)$ in $\Foam{N}^{\Sigma}$.
We set $R=\Sym(\X_1| \cdots |\X_k)$, the $\C$-algebra of partially symmetric functions in the alphabets $\X_1, \ldots , \X_k$, 
where $\X_i$ consists of $a_i$ variables. 
We let $w=Q(\X_1 \cup \cdots \cup \X_k)$ where $Q'(X)=(N+1)P(X)$ with $P(X)$ as in equation \eqref{eq-polynomial}, 
$Q(0)=0$, and for a polynomial $T(X)=\sum_{i=0}^k c_i X^i \in \C[X]$ we set $T(\X) = \sum_{i=0}^k c_i p_i(\X)$ 
where $p_i(\X)$ denotes the $i^{th}$ power sum symmetric polynomial in the alphabet $\X$.

Given sequences $\mathbf{a}$ and $\mathbf{b}$ of elements of a $\C$-algebra, we'll follow \cite{KR} and 
denote by $\{\mathbf{a},\mathbf{b}\}$ the Koszul matrix factorization they determine.
We then assign the Koszul matrix factorizations:
\begin{equation}\label{MFgenweb}
\Big\{
(U_i)_{i=1}^{k+l} \; , \; (e_i(\W \cup \X) - e_i(\Y))_{i=1}^{k+l}
\Big\}
%\begin{pmatrix} 
%U_1 & e_1(\W \cup \X) - e_1(\Y) \\
%\vdots & \vdots \\
%U_{k+l} & e_{k+l}(\W \cup \X) - e_{k+l}(\Y)
%\end{pmatrix} 
\quad \text{and} \quad
\Big\{
(-U_i)_{i=1}^{k+l} \; , \; (e_i(\Y) - e_i(\W \cup \X))_{i=1}^{k+l}
\Big\}
%\begin{pmatrix} 
%-U_1 & e_1(\Y) - e_1(\W \cup \X) \\
%\vdots & \vdots \\
%-U_{k+l} & e_{k+l}(\Y) - e_{k+l}(\W \cup \X)
%\end{pmatrix} 
\end{equation}
over $\Sym(\W|\X|\Y)$ with potentials $Q(\W \cup \X) - Q(\Y)$ and $Q(\Y) - Q(\W \cup \X)$ (respectively)
to the generating webs
\[
\xy
(0,0)*{
\begin{tikzpicture}[scale=.5]
	\draw [very thick,directed=.55] (2.25,0) to (.75,0);
	\draw [very thick,directed=.55] (.75,0) to [out=135,in=0] (-1,.75);
	\draw [very thick,directed=.55] (.75,0) to [out=225,in=0] (-1,-.75);
	\node at (3.125,0) {\tiny $k+l$};
	\node at (-1.5,.75) {\tiny $k$};
	\node at (-1.5,-.75) {\tiny $l$};
\end{tikzpicture}
};
\endxy
\quad , \quad
\xy
(0,0)*{
\begin{tikzpicture}[scale=.5]
	\draw [very thick,rdirected=.55] (-2.25,0) to (-.75,0);
	\draw [very thick,rdirected=.55] (-.75,0) to [out=45,in=180] (1,.75);
	\draw [very thick,rdirected=.55] (-.75,0) to [out=315,in=180] (1,-.75);
	\node at (-3.125,0) {\tiny $k+l$};
	\node at (1.5,.75) {\tiny $k$};
	\node at (1.5,-.75) {\tiny $l$};
\end{tikzpicture}
};
\endxy
\]
where $|\W| = k$, $|\X| = l$, and $|\Y| = k+l$.
Here the polynomials $U_i$ are chosen so that 
\[
Q(\W \cup \X) - Q(\Y) = \sum_{i=1}^{k+l} \big(e_i(\W \cup \X) - e_i(\Y)\big)U_i.
\]
Note that these are the same matrix factorizations that Wu assigns to trivalent vertices.

We now assign a morphism of matrix factorizations to each generating foam. 
To do so, we utilize the concept of stabilization of linear factorizations. 
Recall from \cite{CM} that a linear 
factorization $L$ over a ring $R$ with potential $w\in R$ is a $\Z/2\Z$-graded $R$-module, 
equipped with an odd degree differential $d$ satisfying $d^2 = w~\id$. Informally, a linear factorization 
is a matrix factorization where we loosen the requirement that the $R$-module be free. In particular, 
matrix factorizations give examples of linear factorizations.

Following \cite{CM}, define the {\it stabilization} of a linear factorization $L$ over $(R,w)$ to be a finite-rank 
matrix factorization $M_L$ over $(R,w)$ together with a morphism of linear factorizations $\pi: M_L \to L$ 
inducing a quasi-isomorphism of $\Z/2\Z$-graded complexes:
\begin{equation}\label{eq-qi_stab}
\Hom_R(K,M_L) \xrightarrow{\pi \circ} \Hom_R(K,L)
\end{equation}
for any finite-rank matrix factorization $K$ over $(R,w)$.

We use stabilizations as follows: 
suppose that we are given linear factorizations $L_1$ and $L_2$ with corresponding stabilizations $M_{L_i}$, 
then, provided $M_{L_1}$ is homotopy equivalent to a finite-rank matrix factorization, the diagram
\begin{equation}\label{eq-ind}
\xymatrix{
M_{L_1}\ar[r]^{\pi_1} & L_1 \ar[d] \\
M_{L_2} \ar[r]^{\pi_2} & L_2}
\end{equation}
induces a map on homology 
$\mathrm{H}_*(\Hom_R(L_1,L_2)) \to \mathrm{H}_*(\Hom_R(M_{L_1},L_2)) \to
\mathrm{H}_*(\Hom_R(M_{L_1},M_{L_2}))$. 

Since $\mathrm{H}_0$ gives the morphisms in 
the homotopy category of matrix (or linear) factorizations, we can construct a morphism 
$\stab(\varphi) \in \Hom_{\HMF}(M_{L_1},M_{L_2})$ from a morphism 
$\varphi: L_1 \to L_2$
which is the unique (up to homotopy) morphism so that the diagram
\[
\xymatrix{
M_{L_1}\ar[r]^{\pi_1} \ar[d]_{\stab(\varphi)} & L_1 \ar[d]^{\varphi} \\
M_{L_2} \ar[r]^{\pi_2} & L_2}
\] 
commutes. We'll use this to define the morphisms of matrix factorizations assigned to generating foams, 
and to check that the foam relations are satisfied.

In doing so, we utilize facts about the stabilization of Koszul matrix factorizations. 
Let $\{\mathbf{a},\mathbf{b}\}$ be a Koszul factorization over a $\C$-algebra $R$, then there 
exists a morphism of linear factorizations $\{\mathbf{a},\mathbf{b}\} \to R/(\mathbf{b})$, where the 
latter is viewed as a linear factorization concentrated in degree zero.

\begin{prop}[{\cite[Corollary D.3]{CM}}] \label{prop-stab}
If $\mathbf{b}$ is a regular sequence in $R$, then $\{\mathbf{a},\mathbf{b}\} \to R/(\mathbf{b})$ 
is a stabilization.
\end{prop}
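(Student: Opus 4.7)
The plan is to verify directly that the augmentation map $\pi \colon \{\mathbf{a},\mathbf{b}\} \to R/(\mathbf{b})$, given by projection onto the wedge-degree-zero summand followed by the quotient map $R \twoheadrightarrow R/(\mathbf{b})$, induces a quasi-isomorphism of $\Z/2$-graded complexes $\Hom_R(K, \{\mathbf{a},\mathbf{b}\}) \to \Hom_R(K, R/(\mathbf{b}))$ for every finite-rank matrix factorization $K$ over $(R,w)$. First I would check that $\pi$ is a well-defined morphism of linear factorizations: since $w = \sum_i a_i b_i$ lies in the ideal $(\mathbf{b})$, the potential acts as zero on $R/(\mathbf{b})$, so viewing $R/(\mathbf{b})$ as a linear factorization concentrated in even degree with zero differential is legitimate, and $\pi$ visibly commutes with the differentials.

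Next I would exploit the fact that $\{\mathbf{a},\mathbf{b}\}$ is built on the underlying module $\bigwedge^\bullet R^n$ with differential $d = d_a + d_b$, where $d_a = \sum_i a_i\, \xi_i$ raises wedge degree by one and $d_b = \sum_i b_i\, \iota_i$ lowers it by one. The operator $d_b$ alone assembles into the ordinary $\Z$-graded Koszul complex $K_\bullet(\mathbf{b})$, and the hypothesis that $\mathbf{b}$ is a regular sequence is precisely the statement that this Koszul complex is a free resolution of $R/(\mathbf{b})$ with augmentation $\pi$. Filtering $\{\mathbf{a},\mathbf{b}\}$ by wedge degree (a bounded, hence strongly convergent, filtration since the sequences have finite length $n$), the associated graded differential is $d_b$ alone.

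Applying the exact functor $\Hom_R(K,-)$, which preserves filtrations and is exact because $K$ is finite-rank free as an $R$-module, yields a convergent spectral sequence whose $E_1$-page is $\Hom_R(K, \mathrm{H}^\bullet_{d_b}(\bigwedge^\bullet R^n))$; by regularity of $\mathbf{b}$ this cohomology is concentrated in wedge degree $0$ and equals $R/(\mathbf{b})$. On the target side, the trivial filtration on $R/(\mathbf{b})$ gives a spectral sequence concentrated on the same page, and $\Hom_R(K,\pi)$ is a map of filtered complexes inducing an isomorphism on $E_1$. Comparison of spectral sequences then yields the required quasi-isomorphism, verifying \eqref{eq-qi_stab}.

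The main obstacle will be handling the interaction between the bounded $\Z$-filtration and the $\Z/2$-grading carefully: one must confirm that the spectral sequence constructed from folding a bounded $\Z$-graded filtered complex down to a $\Z/2$-periodic complex genuinely converges to the homology of the folded complex, and that the comparison morphism on $E_\infty$-pages correctly computes the effect of $\Hom_R(K,\pi)$ on homology. Finiteness of the wedge range and projectivity of $K$ are the key inputs that make this bookkeeping routine rather than delicate.
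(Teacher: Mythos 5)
Your overall strategy is the right one, and in spirit it is exactly the argument the paper (following Carqueville--Murfet) relies on: regularity of $\mathbf{b}$ makes the $d_b$-part of the Koszul factorization into a free resolution of $R/(\mathbf{b})$, and one transfers this along the exact functor $\Hom_R(K,-)$. The gap is in the tool you use to pass from $d_b$ to the full differential. The wedge-degree filtration on $\{\mathbf{a},\mathbf{b}\}$ is \emph{not} preserved by $d=d_a+d_b$: since $d_a$ raises wedge degree by one while $d_b$ lowers it by one, the increasing filtration $\bigwedge^{\leq p}R^n$ is violated by $d_a$, and the decreasing filtration $\bigwedge^{\geq p}R^n$ is violated by $d_b$. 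There is therefore no filtered complex here whose associated graded (or $E_1$-) differential is ``$d_b$ alone,'' and the spectral sequence you invoke is simply not defined. The obstacle is not, as you suggest, the bookkeeping of folding a bounded $\Z$-filtration into a $\Z/2$-periodic complex; it occurs one step earlier, at the existence of the filtered complex itself. (This is precisely the feature that distinguishes a matrix factorization from a complex: the two halves of the differential point in opposite directions.)

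The rigorous substitute --- and the argument the paper sketches when it extends this proposition to tensor products, following \cite[Prop.~D.1]{CM} --- is the homological perturbation lemma. Regularity of $\mathbf{b}$ yields a deformation retract datum between the Koszul complex $\bigl(\bigwedge^\bullet R^n, d_b\bigr)$ and $R/(\mathbf{b})$, with contracting homotopy $h$. One tensors this datum with $K^\vee$ (using $K^\vee\otimes_R M\cong\Hom_R(K,M)$, which is where finite-rank freeness of $K$ enters) and then perturbs the differential by $d_a$. The perturbation lemma applies because $d_a h$ strictly raises wedge degree and is therefore nilpotent by the boundedness of $\bigwedge^\bullet R^n$; this local nilpotence is exactly what replaces the convergence you were trying to arrange with the filtration. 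The output is a deformation retract between $K^\vee\otimes\{\mathbf{a},\mathbf{b}\}$ and $K^\vee\otimes R/(\mathbf{b})$ realizing the quasi-isomorphism \eqref{eq-qi_stab}. Your first paragraph (well-definedness of $\pi$ as a morphism of linear factorizations, exactness of $\Hom_R(K,-)$) is correct and is needed in either version of the argument.
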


\begin{conv}
In the following we use a large number of quotient rings of the form 
\begin{equation*}
\frac{\Sym(\X_1|\cdots|\X_a|\X_{a+1}|\cdots|\X_{a+b})}{\langle e_i(\X_1 \cup \cdots \cup \X_{a}) - e_i(\X_{a+1} \cup \cdots \cup \X_{a+b})| i>0 \rangle}
\end{equation*}
where $\Sym(\X_1|\cdots|\X_a|\X_{a+1}|\cdots|\X_{a+b})$ denotes the subring of polynomials in $\C[\X_1\cup \cdots \cup \X_{a+b}]$ symmetric in each of the alphabets $\X_1, \dots, \X_{a+b}$ separately. Since the quotient has the effect of identifying symmetric polynomials in the alphabets $\X_1\cup \cdots \cup \X_{a}$ and $\X_{a+1}\cup \cdots \cup \X_{a+b}$, we use the shorthand 
\begin{equation*}
\frac{\Sym(\X_1|\cdots|\X_a|\X_{a+1}|\cdots|\X_{a+b})}{\langle \X_1 \cup \cdots \cup \X_{a} = \X_{a+1} \cup \cdots \cup \X_{a+b}\rangle}
\end{equation*}
for such a quotient ring. We further abbreviate by writing $m$ for a $1$-element alphabet $\X=\{m\}$ in this notation.
\end{conv}

Proposition \ref{prop-stab} implies that the matrix factorizations in equation \eqref{MFgenweb} are stabilizations of the 
linear factorizations $\Sym(\W|\X|\Y)/\langle \W \cup \X = \Y \rangle$ and $\Sym(\W|\X|\Y)/ \langle \Y = \W \cup \X \rangle$. 
Moreover, denoting the matrix factorization associated to a web $W$ by $\MF(W)$, 
we have that for the following maps:
\begin{gather}
\nonumber
\MF \big(
\xy
(0,0)*{
\begin{tikzpicture} [scale=.3,fill opacity=0.2]
	%bottom web
	\draw[very thick, directed=.55] (1,1) -- (-1,1);
%	\path (.75,1) .. controls (.5,.5) and (-.5,.5) .. (-.75,1); %for spacing symmetry
%	%labels
%	\node [opacity=1]  at (1.5,3.5) {\tiny{$_{a+b}$}};
%	\node[opacity=1] at (.25,3.4) {\tiny{$a$}};
%	\node[opacity=1] at (-.25,4.1) {\tiny{$b$}};	
\end{tikzpicture}
};
\endxy
\big)
\xrightarrow{\pi} 
\frac{\Sym(\V | \Y)}{\langle \V =\Y \rangle} \\
%\k(\W|\Y) \big/ (X_i - Z_i)
%\quad , \quad
\MF \big(
\xy
(0,0)*{
\begin{tikzpicture} [scale=.3,fill opacity=0.2]
	%top web
	\draw[very thick,directed=.55] (2,4) -- (.75,4);
	\draw[very thick,directed=.55] (-.75,4) -- (-2,4);
	\draw[very thick,directed=.55] (.75,4) .. controls (.5,3.5) and (-.5,3.5) .. (-.75,4);
	\draw[very thick,directed=.55] (.75,4) .. controls (.5,4.5) and (-.5,4.5) .. (-.75,4);
%	%labels
%	\node [opacity=1]  at (1.5,3.5) {\tiny{$_{a+b}$}};
%	\node[opacity=1] at (.25,3.4) {\tiny{$a$}};
%	\node[opacity=1] at (-.25,4.1) {\tiny{$b$}};	
\end{tikzpicture}
};
\endxy
\big)
\xrightarrow{\pi}
\left( \frac{\Sym(\V | \L | \M)}{\langle \V = \L \cup \M \rangle} \right) \otimes_{\Sym(\L | \M)} 
\left( \frac{\Sym(\L | \M | \Y)}{\langle \L \cup \M = \Y \rangle} \right) \\
\nonumber
\MF \big(
\xy
(0,0)*{
\begin{tikzpicture} [scale=.3,fill opacity=0.2]
	\draw [very thick,directed=.55] (.75,0) to (-.75,0);
	\draw [very thick,directed=.75] (-.75,0) to [out=135,in=0] (-2.25,.5);
	\draw [very thick,directed=.75] (-.75,0) to [out=225,in=0] (-2.25,-.5);
	\draw [very thick,directed=.45] (2.25,.5) to [out=180,in=45] (.75,0);
	\draw [very thick,directed=.45] (2.25,-.5) to [out=180,in=315] (.75,0);
\end{tikzpicture}
};
\endxy
\big)
\xrightarrow{\pi}
\left( \frac{\Sym(\V | \W | \L)}{\langle \V \cup \W = \L \rangle} \right) \otimes_{\Sym(\L)} 
\left( \frac{\Sym(\L | \X | \Y)}{\langle \L = \X \cup \Y \rangle} \right) \\
\label{eq-linfac}
\MF \big(
\xy
(0,0)*{
\begin{tikzpicture} [scale=.3,fill opacity=0.2]
	\draw [very thick,directed=.55] (1,.5) to [out=190,in=350] (-1,.5);
	\draw [very thick,directed=.55] (1,-.5) to [out=170,in=10] (-1,-.5);
\end{tikzpicture}
};
\endxy
\big)
\xrightarrow{\pi} 
\frac{\Sym(\V | \X)}{\langle \V =\X \rangle} \otimes_{\C}
\frac{\Sym(\W | \Y)}{\langle \W= \Y \rangle}  \\
\nonumber
\MF \left(
\xy
(0,0)*{
\begin{tikzpicture} [scale=.3,fill opacity=0.2]
	% top web
	\draw[very thick, directed=.65] (2,3) to [out=180,in=0] (.75,2.75);
	\draw[very thick, directed=.65] (.75,2.75) to [out=135,in=0] (-.75,3.5);
	\draw[very thick, directed=.65] (.75,2.75) to [out=225,in=0] (-2,2);
	\draw[very thick, directed=.75]  (-.75,3.5) to [out=225,in=0] (-2,3);
	\draw[very thick, directed=.75]  (-.75,3.5) to [out=135,in=0] (-2,4);
\end{tikzpicture}
};
\endxy
\right) 
\xrightarrow{\pi}
\left( \frac{\Sym(\V | \W | \L)}{\langle \V \cup \W =\L \rangle} \right) \otimes_{\Sym(\L)} 
\left( \frac{\Sym(\L | \X | \Y)}{\langle \L \cup \X = \Y \rangle} \right) \\
\nonumber
\MF \left(
\xy
(0,0)*{
\begin{tikzpicture} [scale=.3,fill opacity=0.2]
	% top web
	\draw[very thick, directed=.55] (2,3) to [out=180,in=0] (.75,3.25);
	\draw[very thick, directed=.65] (.75,3.25) to [out=225,in=0] (-.75,2.5);
	\draw[very thick, directed=.65] (.75,3.25) to [out=135,in=0] (-2,4);
	\draw[very thick, directed=.75] (-.75,2.5) to [out=135,in=0] (-2,3);
	\draw[very thick, directed=.75] (-.75,2.5) to [out=225,in=0] (-2,2);
\end{tikzpicture}
};
\endxy
\right)
\xrightarrow{\pi}
\left( \frac{\Sym(\W | \X | \M)}{\langle \W \cup \X = \M \rangle} \right) \otimes_{\Sym(\M)} 
\left( \frac{\Sym(\V | \M | \Y)}{\langle \V \cup \M =\Y \rangle} \right)
\end{gather}
the matrix factorizations are (homotopy equivalent to) stabilizations of the indicated linear factorizations.
The fact that these maps are stabilizations follows in each case, except for the digon web in the second line, 
since the matrix factorizations are homotopy equivalent to Koszul factorizations,
and the indicated linear factorization is isomorphic to the corresponding linear factorization 
which the Koszul factorization stabilizes.

The matrix factorization assigned to the digon web is a tensor product of Koszul factorizations, 
and we must slightly generalize Proposition \ref{prop-stab} 
to show that it stabilizes the tensor product of the corresponding linear factorizations. 
Recall from \cite[Proposition D.1]{CM} that Proposition \ref{prop-stab} can be proven as follows. 
One first considers the Koszul complex $\{ \mathbf{b} \}$ over $R$ given by the regular sequence $\mathbf{b}$. 
There exists a homotopy equivalence (over $\C$) between $\{ \mathbf{b} \}$ and $R/ (\mathbf{b})$ which specifies 
a deformation retract datum. 
Tensoring with the finite-rank matrix factorization $K^{\vee}$ (the dual of the matrix factorization $K$) and applying perturbation gives a deformation 
retract datum over $\C$ between $K^{\vee} \otimes R/(\mathbf{b})$ and $K^{\vee} \otimes \{ \mathbf{a}, \mathbf{b} \}$ 
which gives the quasi-isomorphism in equation \eqref{eq-qi_stab}. Here we utilize the isomorphism of matrix factorizations  
$K^{\vee} \otimes_R M \cong \Hom_R(K,M)$.

This same method (which is adapted from the results in \cite{DM}) shows that the stabilization result for the digon web 
follows provided the tensor product of Koszul complexes associated to the web only has homology in degree zero, 
and which equals the corresponding tensor product of linear factorizations. 
We hence consider the Koszul complexes $C_1 = \{e_i(\V) - e_i(\L \cup \M) \}$ and $C_2 = \{ e_i(\L \cup \M) - e_i(\Y) \}$ 
over the rings $\Sym(\V | \L | \M)$ and $\Sym(\L | \M | \Y)$, respectively. 
Let $S=\Sym(\L|\M)$, then the homology of $C_1 \otimes_S C_2$ is computed using the K{\"u}nneth spectral sequence to be 
\[
\mathrm{H}_{i} ( C_1 \otimes_S C_2 ) %\cong \mathrm{H}_{i}\left( C_1 \otimes_S \frac{\Sym(\L | \M | \Y)}{\langle \sum M_j N_{i-j} - Z_i \rangle} \right) \\
\cong 
\left( \mathrm{H}_i(C_1) \otimes_S \frac{\Sym(\L | \M | \Y)}{\langle \L \cup \M = \Y \rangle} \right) 
\oplus \mathrm{Tor}_1^S\left(\mathrm{H}_{i-1}(C_1), \frac{\Sym(\L | \M | \Y)}{\langle \L \cup \M =\Y \rangle} \right)
\]
which is only non-zero when $i=0$ (since $\frac{\Sym(\V | \L | \M)}{\langle \V =\L \cup \M \rangle}$ is a free 
$S$-module) in which case it equals 
$
\left( \frac{\Sym(\V | \L | \M)}{\langle \V = \L \cup \M \rangle} \right) \otimes_S
\left( \frac{\Sym(\L | \M | \Y)}{\langle \L \cup \M = \Y \rangle} \right)
$
as desired.

We now use equation \eqref{eq-ind} to assign a morphism of matrix factorizations to each generating foam, noting 
that the domain web of each generator is mapped to a matrix factorization which is homotopy equivalent to a 
finite-rank matrix factorization. We send:
\[
\xy
(0,0)*{
\begin{tikzpicture} [scale=.5,fill opacity=0.2]
	%back cup 
	\path[fill=green] (-.75,4) to [out=270,in=180] (0,2.5) to [out=0,in=270] (.75,4) .. controls (.5,4.5) and (-.5,4.5) .. (-.75,4);
	%sheet
	\path[fill=green] (-.75,4) to [out=270,in=180] (0,2.5) to [out=0,in=270] (.75,4) -- (2,4) -- (2,1) -- (-2,1) -- (-2,4) -- (-.75,4);
	%front cup
	\path[fill=green] (-.75,4) to [out=270,in=180] (0,2.5) to [out=0,in=270] (.75,4) .. controls (.5,3.5) and (-.5,3.5) .. (-.75,4);
	%bottom web
	\draw[very thick, directed=.55] (2,1) -- (-2,1);
	\path (.75,1) .. controls (.5,.5) and (-.5,.5) .. (-.75,1); %for spacing symmetry
	%seam
	\draw [very thick, red, directed=.65] (-.75,4) to [out=270,in=180] (0,2.5) to [out=0,in=270] (.75,4);
	%vertical edges
	\draw[very thick] (2,4) -- (2,1);
	\draw[very thick] (-2,4) -- (-2,1);
	%top web
	\draw[very thick,directed=.55] (2,4) -- (.75,4);
	\draw[very thick,directed=.55] (-.75,4) -- (-2,4);
	\draw[very thick,directed=.55] (.75,4) .. controls (.5,3.5) and (-.5,3.5) .. (-.75,4);
	\draw[very thick,directed=.55] (.75,4) .. controls (.5,4.5) and (-.5,4.5) .. (-.75,4);
	%labels
	\node [opacity=1]  at (1.5,3.5) {\tiny{$_{a+b}$}};
	\node[opacity=1] at (.25,3.4) {\tiny{$a$}};
	\node[opacity=1] at (-.25,4.1) {\tiny{$b$}};	
\end{tikzpicture}
};
\endxy 
\mapsto
\big(
%\left\{
\overline{1} \mapsto \overline{1} \otimes \overline{1}
%\right\}
\big)
%\]
%\[
\quad , \quad
\xy
(0,0)*{
\begin{tikzpicture} [scale=.5,fill opacity=0.2]
	%back cup 
	\path[fill=green] (-.75,-4) to [out=90,in=180] (0,-2.5) to [out=0,in=90] (.75,-4) .. controls (.5,-4.5) and (-.5,-4.5) .. (-.75,-4);
	%sheet
	\path[fill=green] (-.75,-4) to [out=90,in=180] (0,-2.5) to [out=0,in=90] (.75,-4) -- (2,-4) -- (2,-1) -- (-2,-1) -- (-2,-4) -- (-.75,-4);
	%front cup
	\path[fill=green] (-.75,-4) to [out=90,in=180] (0,-2.5) to [out=0,in=90] (.75,-4) .. controls (.5,-3.5) and (-.5,-3.5) .. (-.75,-4);
	%top web
	\draw[very thick, directed=.55] (2,-1) -- (-2,-1);
	\path (.75,-1) .. controls (.5,-.5) and (-.5,-.5) .. (-.75,-1); %for spacing symmetry
	%seam
	\draw [very thick, red, directed=.65] (.75,-4) to [out=90,in=0] (0,-2.5) to [out=180,in=90] (-.75,-4);
	%vertical edges
	\draw[very thick] (2,-4) -- (2,-1);
	\draw[very thick] (-2,-4) -- (-2,-1);
	%bottom web
	\draw[very thick,directed=.55] (2,-4) -- (.75,-4);
	\draw[very thick,directed=.55] (-.75,-4) -- (-2,-4);
	\draw[very thick,directed=.55] (.75,-4) .. controls (.5,-3.5) and (-.5,-3.5) .. (-.75,-4);
	\draw[very thick,directed=.55] (.75,-4) .. controls (.5,-4.5) and (-.5,-4.5) .. (-.75,-4);
	%labels
	\node [opacity=1]  at (1.25,-1.25) {\tiny{$_{a+b}$}};
	\node[opacity=1] at (-.25,-3.4) {\tiny{$b$}};
	\node[opacity=1] at (.25,-4.1) {\tiny{$a$}};
\end{tikzpicture}
};
\endxy
\mapsto
\left(
%\left\{
\overline{\pi_{\lambda}^{\L}} \otimes \overline{1} \mapsto 
\left\{ \begin{array}{l} 
\overline{1} \quad \text{if} \quad \lambda=b^a, \\
0 \quad \text{all other } \lambda \in P(a,b),
\end{array} \right.
%\right\}
\right)
\]
\begin{equation}\label{eq-2MorMap}
\xy
(0,0)*{
\begin{tikzpicture} [scale=.5,fill opacity=0.2]
	%shading
	\path [fill=green] (4.25,-.5) to (4.25,2) to [out=165,in=15] (-.5,2) to (-.5,-.5) to 
		[out=0,in=225] (.75,0) to [out=90,in=180] (1.625,1.25) to [out=0,in=90] 
			(2.5,0) to [out=315,in=180] (4.25,-.5);
	\path [fill=green] (3.75,.5) to (3.75,3) to [out=195,in=345] (-1,3) to (-1,.5) to 
		[out=0,in=135] (.75,0) to [out=90,in=180] (1.625,1.25) to [out=0,in=90] 
			(2.5,0) to [out=45,in=180] (3.75,.5);
	\path[fill=green] (.75,0) to [out=90,in=180] (1.625,1.25) to [out=0,in=90] (2.5,0);
	%bottom web
	\draw [very thick,directed=.55] (2.5,0) to (.75,0);
	\draw [very thick,directed=.55] (.75,0) to [out=135,in=0] (-1,.5);
	\draw [very thick,directed=.55] (.75,0) to [out=225,in=0] (-.5,-.5);
	\draw [very thick,directed=.55] (3.75,.5) to [out=180,in=45] (2.5,0);
	\draw [very thick,directed=.55] (4.25,-.5) to [out=180,in=315] (2.5,0);
	%seam
	\draw [very thick, red, directed=.75] (.75,0) to [out=90,in=180] (1.625,1.25);
	\draw [very thick, red] (1.625,1.25) to [out=0,in=90] (2.5,0);
	%vertical edges
	\draw [very thick] (3.75,3) to (3.75,.5);
	\draw [very thick] (4.25,2) to (4.25,-.5);
	\draw [very thick] (-1,3) to (-1,.5);
	\draw [very thick] (-.5,2) to (-.5,-.5);
	%top web
	\draw [very thick,directed=.55] (4.25,2) to [out=165,in=15] (-.5,2);
	\draw [very thick, directed=.55] (3.75,3) to [out=195,in=345] (-1,3);
	%labels
	\node[opacity=1]  at (1.625,.5) {\tiny{$_{a+b}$}};
	\node[opacity=1] at (3.5,2.65) {\tiny{$b$}};
	\node[opacity=1] at (4,1.85) {\tiny{$a$}};		
\end{tikzpicture}
};
\endxy
\mapsto
\big(
%\left\{
\overline{1} \mapsto \overline{1} \otimes \overline{1}
%\right\}
\big)
%\]
%\[
\quad , \quad
\xy
(0,0)*{
\begin{tikzpicture} [scale=.5,fill opacity=0.2]
	%shading
	\path [fill=green] (4.25,2) to (4.25,-.5) to [out=165,in=15] (-.5,-.5) to (-.5,2) to
		[out=0,in=225] (.75,2.5) to [out=270,in=180] (1.625,1.25) to [out=0,in=270] 
			(2.5,2.5) to [out=315,in=180] (4.25,2);
	\path [fill=green] (3.75,3) to (3.75,.5) to [out=195,in=345] (-1,.5) to (-1,3) to [out=0,in=135]
		(.75,2.5) to [out=270,in=180] (1.625,1.25) to [out=0,in=270] 
			(2.5,2.5) to [out=45,in=180] (3.75,3);
	\path[fill=green] (2.5,2.5) to [out=270,in=0] (1.625,1.25) to [out=180,in=270] (.75,2.5);
	%bottom web
	\draw [very thick,directed=.55] (4.25,-.5) to [out=165,in=15] (-.5,-.5);
	\draw [very thick, directed=.55] (3.75,.5) to [out=195,in=345] (-1,.5);
	%seam
	\draw [very thick, red, directed=.75] (2.5,2.5) to [out=270,in=0] (1.625,1.25);
	\draw [very thick, red] (1.625,1.25) to [out=180,in=270] (.75,2.5);
	%vertical edges
	\draw [very thick] (3.75,3) to (3.75,.5);
	\draw [very thick] (4.25,2) to (4.25,-.5);
	\draw [very thick] (-1,3) to (-1,.5);
	\draw [very thick] (-.5,2) to (-.5,-.5);
	%top web
	\draw [very thick,directed=.55] (2.5,2.5) to (.75,2.5);
	\draw [very thick,directed=.55] (.75,2.5) to [out=135,in=0] (-1,3);
	\draw [very thick,directed=.55] (.75,2.5) to [out=225,in=0] (-.5,2);
	\draw [very thick,directed=.55] (3.75,3) to [out=180,in=45] (2.5,2.5);
	\draw [very thick,directed=.55] (4.25,2) to [out=180,in=315] (2.5,2.5);
	%labels
	\node [opacity=1]  at (1.625,2) {\tiny{$_{a+b}$}};
	\node[opacity=1] at (3.5,2.65) {\tiny{$b$}};
	\node[opacity=1] at (4,1.85) {\tiny{$a$}};		
\end{tikzpicture}
};
\endxy
\mapsto
\left(
\overline{1} \otimes \overline{1} \mapsto
\sum_{\alpha \in P(a,b)} (-1)^{|\hat{\alpha}|} \overline{\pi_{\hat{\alpha}}^{\V}} \otimes \overline{\pi_{\alpha}^{\Y}}
\right)
\end{equation}
\[
\xy
(0,0)*{
\begin{tikzpicture} [scale=.5,fill opacity=0.2]
	%shading
%	\node[opacity=1] at (.29,1.5) {$+$};
	\path[fill=green] (-2.5,4) to [out=0,in=135] (-.75,3.5) to [out=270,in=90] (.75,.25)
		to [out=135,in=0] (-2.5,1);
	\path[fill=green] (-.75,3.5) to [out=270,in=125] (.29,1.5) to [out=55,in=270] (.75,2.75) 
		to [out=135,in=0] (-.75,3.5);
	\path[fill=green] (-.75,-.5) to [out=90,in=235] (.29,1.5) to [out=315,in=90] (.75,.25) 
		to [out=225,in=0] (-.75,-.5);
	\path[fill=green] (-2,3) to [out=0,in=225] (-.75,3.5) to [out=270,in=125] (.29,1.5)
		to [out=235,in=90] (-.75,-.5) to [out=135,in=0] (-2,0);
	\path[fill=green] (-1.5,2) to [out=0,in=225] (.75,2.75) to [out=270,in=90] (-.75,-.5)
		to [out=225,in=0] (-1.5,-1);
	\path[fill=green] (2,3) to [out=180,in=0] (.75,2.75) to [out=270,in=55] (.29,1.5)
		to [out=305,in=90] (.75,.25) to [out=0,in=180] (2,0);
	%bottom web
	\draw[very thick, directed=.55] (2,0) to [out=180,in=0] (.75,.25);
	\draw[very thick, directed=.55] (.75,.25) to [out=225,in=0] (-.75,-.5);
	\draw[very thick, directed=.55] (.75,.25) to [out=135,in=0] (-2.5,1);
	\draw[very thick, directed=.55] (-.75,-.5) to [out=135,in=0] (-2,0);
	\draw[very thick, directed=.55] (-.75,-.5) to [out=225,in=0] (-1.5,-1);
	%seams
	\draw[very thick, red, rdirected=.85] (-.75,3.5) to [out=270,in=90] (.75,.25);
	\draw[very thick, red, rdirected=.75] (.75,2.75) to [out=270,in=90] (-.75,-.5);	
	%vertical edges
	\draw[very thick] (-1.5,-1) -- (-1.5,2);	
	\draw[very thick] (-2,0) -- (-2,3);
	\draw[very thick] (-2.5,1) -- (-2.5,4);	
	\draw[very thick] (2,3) -- (2,0);
	% top web
	\draw[very thick, directed=.55] (2,3) to [out=180,in=0] (.75,2.75);
	\draw[very thick, directed=.55] (.75,2.75) to [out=135,in=0] (-.75,3.5);
	\draw[very thick, directed=.65] (.75,2.75) to [out=225,in=0] (-1.5,2);
	\draw[very thick, directed=.55]  (-.75,3.5) to [out=225,in=0] (-2,3);
	\draw[very thick, directed=.55]  (-.75,3.5) to [out=135,in=0] (-2.5,4);
	%labels
	\node[opacity=1] at (-2.25,3.375) {\tiny$c$};
	\node[opacity=1] at (-1.75,2.75) {\tiny$b$};	
	\node[opacity=1] at (-1.25,1.75) {\tiny$a$};
	\node[opacity=1] at (0,2.75) {\tiny$_{b+c}$};
	\node[opacity=1] at (0,.25) {\tiny$_{a+b}$};
%	\node[opacity=1] at (1.37,2.5) {\tiny$_{a+b+c}$};	
\end{tikzpicture}
};
\endxy
\mapsto
\big(
%\left\{
\overline{1} \otimes \overline{1} \mapsto \overline{1} \otimes \overline{1}
%\right\}
\big)
%\]
%\[
\quad , \quad
\xy
(0,0)*{
\begin{tikzpicture} [scale=.5,fill opacity=0.2]
	%shading
%	\node[opacity=1] at (-.35,1.5) {$+$};
	\path[fill=green] (-2.5,4) to [out=0,in=135] (.75,3.25) to [out=270,in=90] (-.75,.5)
		 to [out=135,in=0] (-2.5,1);
	\path[fill=green] (-.75,2.5) to [out=270,in=125] (-.35,1.5) to [out=45,in=270] (.75,3.25) 
		to [out=225,in=0] (-.75,2.5);
	\path[fill=green] (-.75,.5) to [out=90,in=235] (-.35,1.5) to [out=315,in=90] (.75,-.25) 
		to [out=135,in=0] (-.75,.5);	
	\path[fill=green] (-2,3) to [out=0,in=135] (-.75,2.5) to [out=270,in=125] (-.35,1.5) 
		to [out=235,in=90] (-.75,.5) to [out=225,in=0] (-2,0);
	\path[fill=green] (-1.5,2) to [out=0,in=225] (-.75,2.5) to [out=270,in=90] (.75,-.25)
		to [out=225,in=0] (-1.5,-1);
	\path[fill=green] (2,3) to [out=180,in=0] (.75,3.25) to [out=270,in=45] (-.35,1.5) 
		to [out=315,in=90] (.75,-.25) to [out=0,in=180] (2,0);				
	%bottom web
	\draw[very thick, directed=.55] (2,0) to [out=180,in=0] (.75,-.25);
	\draw[very thick, directed=.55] (.75,-.25) to [out=135,in=0] (-.75,.5);
	\draw[very thick, directed=.55] (.75,-.25) to [out=225,in=0] (-1.5,-1);
	\draw[very thick, directed=.45]  (-.75,.5) to [out=225,in=0] (-2,0);
	\draw[very thick, directed=.35]  (-.75,.5) to [out=135,in=0] (-2.5,1);	
	%seams
	\draw[very thick, red, rdirected=.75] (-.75,2.5) to [out=270,in=90] (.75,-.25);
	\draw[very thick, red, rdirected=.85] (.75,3.25) to [out=270,in=90] (-.75,.5);
	%vertical edges
	\draw[very thick] (-1.5,-1) -- (-1.5,2);	
	\draw[very thick] (-2,0) -- (-2,3);
	\draw[very thick] (-2.5,1) -- (-2.5,4);	
	\draw[very thick] (2,3) -- (2,0);
	% top web
	\draw[very thick, directed=.55] (2,3) to [out=180,in=0] (.75,3.25);
	\draw[very thick, directed=.55] (.75,3.25) to [out=225,in=0] (-.75,2.5);
	\draw[very thick, directed=.55] (.75,3.25) to [out=135,in=0] (-2.5,4);
	\draw[very thick, directed=.55] (-.75,2.5) to [out=135,in=0] (-2,3);
	\draw[very thick, directed=.55] (-.75,2.5) to [out=225,in=0] (-1.5,2);
	%labels
	\node[opacity=1] at (-2.25,3.75) {\tiny$c$};
	\node[opacity=1] at (-1.75,2.75) {\tiny$b$};	
	\node[opacity=1] at (-1.25,1.75) {\tiny$a$};
	\node[opacity=1] at (-.125,2.25) {\tiny$_{a+b}$};
	\node[opacity=1] at (-.125,.75) {\tiny$_{b+c}$};
%	\node[opacity=1] at (1.35,2.75) {\tiny$_{a+b+c}$};	
\end{tikzpicture}
};
\endxy
\mapsto
\big(
%\left\{
\overline{1} \otimes \overline{1} \mapsto \overline{1} \otimes \overline{1}
%\right\}
\big)
\]
where in each case the map on the right-hand side describes a morphism 
between the linear factorizations from equation \eqref{eq-linfac} corresponding to the 
top and bottom webs, and $\bar{f}$ denotes the equivalence class of $f$ in the quotient.
In these formulae, $\pi_{\lambda}^{\W}$ denotes the Schur 
polynomial in the alphabet $\W$ corresponding to the partition $\lambda$, 
and $b^a = (b,\ldots,b)$, the partition of $ab$ given by a sequence of $b$'s of 
length $a$. We can now proceed with the proof of Theorem \ref{thm-nondegen}.

\begin{proof}
It suffices to show that the foam relations hold in $\HMF$. 
As we mentioned above, rather than check them all by hand, we'll instead adopt a method of proof from \cite{QR}. 
By an argument similar to that in Section 4 of that paper, it suffices to construct a family of 
2-functors $\cal{U}_Q(\glm) \xrightarrow{\Phi_m} \HMF$ which kill non-$N$-bounded weights and the 2-morphism
$
P\left(
\xy
(0,0)*{
\begin{tikzpicture} [scale=.3]
	\draw[thick, directed = .99] (0,0) to (0,2);
	\node at (0,1) {$\bullet$};
\end{tikzpicture}
};
\endxy
\right)
$,
and so that the triangles 
\[
\xymatrix{
\cal{U}_Q(\glm) \ar[r] \ar[rd]_{\Gamma_{m}} & \cal{U}_Q(\glnn{m+1}) \ar[d]^{\Gamma_{m+1}} \\
& \HMF
}
\]
commute. From the definition of the foamation 2-functor in \cite{QR} and our above assignments to 
webs and foams, it is clear how such 2-functors should be defined. 
To see that they are well-defined, we must check that all relations 
in $\cal{U}_Q(\glm)$ are satisfied. This in turn implies that we need only check the foam relations which 
are the analogs of the relations in $\cal{U}_Q(\glm)$. 
Since equation \eqref{eq-2MorMap} implies that the image in $\HMF$ of the ``M-P foam relations'' 
from equation \eqref{FoamRel1} are satisfied,
things simplify even more, and we finally deduce that we need only check a subset of the general foam 
relations, which we verify below.

To do so, we'll again employ stabilization. The matrix factorizations through which the 
(images of the) foam relations factor are all given as tensor products of Koszul factorizations assigned to 
trivalent webs, and we can consider the corresponding tensor product of the linear factorizations they stabilize. 
This gives a diagram
\[
\xymatrix{
\bigotimes_{i} M_{L_{i,1}}\ar[r]^{\pi_1} \ar[d]_{\stab(\varphi_1)} & \bigotimes_{i} L_{i,1} \ar[d]^{\varphi_1} \\
\bigotimes_{j} M_{L_{j,2}}\ar[r]^{\pi_2} \ar[d]_{\stab(\varphi_2)} & \bigotimes_{j} L_{j,2} \ar[d]^{\varphi_2} \\
\vdots \ar[d]_{\stab(\varphi_{l-1})} & \vdots \ar[d]^{\varphi_{l-1}} \\
\bigotimes_{k} M_{L_{k,l}}\ar[r]^{\pi_k} & \bigotimes_{k} L_{k,l}  \\
}
\] 
which commutes up to homotopy. Each side of a foam relation gives rise to such a diagram, and the morphism 
of matrix factorizations is uniquely determined by the morphism of linear factorizations, 
provided the matrix factorizations assigned to the bottom webs are homotopic to ones which are finite-rank, 
and provided that the matrix factorizations assigned to the top webs (i.e. the bottom left in the above diagram)
are homotopic to ones which stabilize the corresponding tensor product of linear factorizations.

The finite-rank condition for the bottom webs follows similarly to results of Wu \cite{Wu1} in the undeformed case. 
To see that the matrix factorizations corresponding to the top webs (are homotopic to ones which) 
stabilize the corresponding linear factorizations, we note that we've already shown this for 
\cite[Equations (3.9) -- (3.12)]{QR}. 
For the remainder of the relations we argue as for the digon web above. 
It again suffices to show that the tensor product of Koszul complexes associated to the top web has homology 
only in degree zero, and equal to the corresponding tensor product of linear factorizations. 
In each case, this follows from (possibly repeated) use of the K{\"u}nneth spectral sequence, 
and the fact that
$
\frac{\Sym(\V|\W|\X)}{\langle \V \cup \W =\X \rangle}
$
is a free module, over both $\Sym(\V|\W)$ and $\Sym(\X)$.
Note that this is essentially a version of results of Becker \cite[Theorem 2]{Bec} and Webster \cite[Theorem 2.5]{Web} for deformed potentials. 

We now check the requisite foam relations (with numbering and notation from \cite{QR} for the remainder of this section)
by confirming that the corresponding maps of linear factorizations agree. 
\newline

\noindent\underline{Equation (QR 3.9)}: %\\
By \cite[Remark 3.2]{QR}, this only needs to be checked when $\pi_\gamma = e_s$.
This relation then follows since multiplication by $e_i(\X)$ on $\frac{\Sym(\V | \W | \X)}{\langle \V \cup \W =\X \rangle}$ 
is equal to multiplication by $e_i(\V \cup \W)$.
\newline

\noindent\underline{Equation (QR 3.10), first relation}: %\\
Again by \cite[Remark 3.2]{QR}, it suffices to check the case when $\pi_\alpha = 1$.
Let $\V$ and $\Y$ be alphabets with $k+1$ variables.
It suffices to show that the morphism corresponding to the right-hand side is the identity, hence we compute:
\[
\xymatrix{
\frac{\Sym(\V | \Y)}{\langle \V =\Y \rangle} \ar[r] 
\POS p-(0,7.5)*+{\overline{1}}="a"
&\frac{\Sym(\V | m | \M)}{\langle \V = m \cup \M \rangle} \otimes%_{\Sym(\L | \M)} 
 \frac{\Sym(m | \M | \Y)}{\langle m \cup \M = \Y \rangle} \ar[r]
\POS p-(0,7.5)*+{\overline{1}\otimes \overline{1}}="b"
&\frac{\Sym(\V | m | \M)}{\langle \V = m \cup \M \rangle} \otimes%_{\Sym(\L | \M)} 
 \frac{\Sym(m | \M | \Y)}{\langle m \cup \M =\Y \rangle}  \ar[r]
\POS p-(0,7.5)*+{\overline{m^k}\otimes \overline{1}}="c" 
&  \frac{\Sym(\V | \Y)}{\langle \V =\Y \rangle}
\POS p-(0,7.5)*+{\overline{1}}="d"
\ar@{|->} "a";"b"
\ar@{|->} "b";"c"
\ar@{|->} "c";"d"
}
\]
which verifies the relation.
\newline

\noindent \underline{Equation (QR 3.10), second relation}:
It suffices to check the case when with $a = 1 = b$, by the M-P relation, isotopy, and equation (QR 3.9). We compute the 
left-hand side:
\[
\xymatrix{
%\frac{\C[e_1(\W),e_2(\W),e_1(\X),e_2(\X)]} {\langle \W = \X \rangle} \ar[r] 
\frac{\Sym(\W | \X)} {\langle \W = \X \rangle} \ar[r] 
\POS p-(0,10)*+{\overline{1}}="a"
&  \frac{\Sym(\W|m|n)}{\left\langle \W= \{m,n\} \right\rangle}  \otimes
\frac{\Sym(m|n|\X)}{\left\langle \{m,n\}=\X \right\rangle}  \ar[r]
\POS p-(0,10)*+{\overline{1} \otimes \overline{1}}="b"
&  \frac{\Sym(\W|m|n)}{\left\langle \W= \{m,n\} \right\rangle}  \otimes
\frac{\Sym(m|n|\X)}{\left\langle \{m,n\}=\X \right\rangle} \ar[r]
\POS p-(0,10)*+{\overline{f(m)g(n)}\otimes \overline{1}}="c" 
\POS p+(0,-15)*+{\overline{m} \otimes \overline{1}}="d"
\POS p+(0,-20)*+{\overline{1} \otimes \overline{1}}="e"
&
\frac{\Sym(\W | \X)} {\langle \W = \X \rangle}
\POS p+(0,-15)*+{\overline{1}}="f"
\POS p+(0,-20)*+{0}="g"
\ar@{|->} "a";"b"
\ar@{|->} "b";"c"
\ar@{|->} "d";"f"
\ar@{|->} "e";"g"
}
\]
while the right-hand side is the negative of the map which is the same as the above, 
but with the second map given instead by 
$\overline{1} \otimes \overline{1} \mapsto \overline{g(m)f(n)}\otimes \overline{1}$.
Equivalently, this is the negative of the map which is the same as the above, but instead with the 
third map given by $\overline{n} \otimes \overline{1} \mapsto \overline{1}$ and 
$\overline{1} \otimes \overline{1} \mapsto 0$. 
Since $\overline{n} \otimes \overline{1} = \overline{e_1(\W)} \otimes \overline{1} - \overline{m} \otimes \overline{1}$
and $\overline{e_1(\W)} \otimes \overline{1} \mapsto 0$ under the final map, this confirms the relation.
\newline

\noindent \underline{Equation (QR 3.11)}:
The $a=1=b$ case of this relation is used to deduce that the image of the $3^{\mathrm{rd}}$ nilHecke relation 
is satisfied. A careful analysis of the proof of \cite[Lemma 3.7]{QR} shows that the only remaining version of this 
relation required are those when $a=1, b=2$ and $a=2,b=1$, which are used to prove the $2^{\mathrm{nd}}$, 
Reidemeister 3-like nilHecke relation.

In the $a=1=b$ case, the right-hand side corresponds to the sum of the map
\[
\xymatrix{
\frac{\Sym(\W|m|n)}{\left\langle \W=\{m,n\} \right\rangle}  \otimes
\frac{\Sym(m|n|\X)}{\left\langle \{m,n\}=\X \right\rangle} \ar[r]
\POS p-(0,10)*+{\overline{1} \otimes \overline{1}}="a"
\POS p-(0,20)*+{\overline{m} \otimes \overline{1}}="b"
& 
\frac{\Sym(\W | \X)} {\langle \W = \X \rangle} \ar[r]
%\frac{\C[e_1(\W),e_2(\W),e_1(\X),e_2(\X)]} {\langle X_i - Y_i \rangle} \ar[r]
\POS p-(0,10)*+{\overline{1}}="c"
\POS p-(0,20)*+{\overline{e_1(\W)}}="d"
& \frac{\Sym(\W|m|n)}{\left\langle \W=\{m,n\} \right\rangle}  \otimes
\frac{\Sym(m|n|\X)}{\left\langle \{m,n\}=\X  \right\rangle} 
\POS p-(0,10)*+{\overline{1} \otimes \overline{1}}="e"
\POS p-(0,20)*+{\overline{m+n} \otimes \overline{1}}="f"
\ar@{|->} "a";"c"
\ar@{|->} "b";"d"
\ar@{|->} "c";"e"
\ar@{|->} "d";"f"
}
\]
and the negative of the map
\[
\xymatrix{
\frac{\Sym(\W|m|n)}{\left\langle\W=\{m,n\} \right\rangle}  \otimes
\frac{\Sym(m|n|\X)}{\left\langle \{m,n\}=\X \right\rangle} \ar[r]
\POS p-(0,10)*+{\overline{1} \otimes \overline{1}}="a"
\POS p-(0,20)*+{\overline{m} \otimes \overline{1}}="b"
&
\frac{\Sym(\W | \X)} {\langle \W = \X \rangle} \ar[r]
%\frac{\C[e_1(\W),e_2(\W),e_1(\X),e_2(\X)]} {\langle X_i - Y_i \rangle} \ar[r]
\POS p-(0,10)*+{0}="c"
\POS p-(0,20)*+{\overline{1}}="d"
& \frac{\Sym(\W|m|n)}{\left\langle \W=\{m,n\} \right\rangle}  \otimes
\frac{\Sym(m|n|\X)}{\left\langle \{m,n\}=\X \right\rangle} 
\POS p-(0,10)*+{0}="e"
\POS p-(0,20)*+{\overline{n} \otimes \overline{1}}="f"
\ar@{|->} "a";"c"
\ar@{|->} "b";"d"
\ar@{|->} "c";"e"
\ar@{|->} "d";"f"
}
\]
which confirms that this map equals the identity, as desired.

For the $a=1,b=2$ case, let $|\V|=3=|\Y|$ and $|\M|=2$.
The right-hand side corresponds to the sum of the map, 
\[
\xymatrix{
\frac{\Sym(\V | m | \M)}
{\left\langle \V= m\cup \M \right\rangle} \otimes 
\frac{\Sym(m | \M | \Y)}
{\left\langle m\cup \M = \Y \right\rangle} \ar[r]
\POS p-(0,12.5)*+{\overline{1} \otimes \overline{1}}="a"
\POS p-(0,20)*+{\overline{m} \otimes \overline{1}}="d"
\POS p-(0,27.5)*+{\overline{m^2} \otimes \overline{1}}="g"
& \frac{\Sym(\V|\Y)} {\langle \V = \Y \rangle} \ar[r]
\POS p-(0,12.5)*+{\overline{1}}="b"
\POS p-(0,20)*+{\overline{e_1(\V)}}="e"
\POS p-(0,27.5)*+{\overline{e_1(\V)^2 - e_2(\V)}}="h"
& \frac{\Sym(\V | m | \M)}
{\left\langle \V= m\cup \M \right\rangle} \otimes 
\frac{\Sym(m | \M | \Y)}
{\left\langle m\cup \M = \Y \right\rangle}
\POS p-(0,12.5)*+{\overline{1}}="c"
\POS p-(0,20)*+{\overline{m+e_1(\M)} \otimes \overline{1}}="f"
\POS p-(-10,27.5)*+{\overline{m^2+me_1(\M)+e_1(\M)^2-e_2(\M)} \otimes \overline{1}}="i" %% extra shift
\ar@{|->} "a";"b"
\ar@{|->} "b";"c"
\ar@{|->} "d";"e"
\ar@{|->} "e";"f"
\ar@{|->} "g";"h"
\ar@{|->} "h";"i"
}
\]
the negative of the map
\[
\xymatrix{
\frac{\Sym(\V | m | \M)}
{\left\langle \V = m\cup \M  \right\rangle} \otimes 
\frac{\Sym(m | \M | \Y)}
{\left\langle m\cup\M = Z \right\rangle} \ar[r]
\POS p-(0,12.5)*+{\overline{1} \otimes \overline{1}}="a"
\POS p-(0,20)*+{\overline{m} \otimes \overline{1}}="d"
\POS p-(0,27.5)*+{\overline{m^2} \otimes \overline{1}}="g"
& \frac{\Sym(\V|\Y)} {\langle \V = \Y \rangle} \ar[r]
\POS p-(0,12.5)*+{0}="b"
\POS p-(0,20)*+{\overline{1}}="e"
\POS p-(0,27.5)*+{\overline{e_1(\V)}}="h"
& \frac{\Sym(\V | m | \M)}
{\left\langle \V = m\cup \M \right\rangle} \otimes 
\frac{\Sym(m | \M | \Y)}
{\left\langle m\cup\M = Z \right\rangle}
\POS p-(0,12.5)*+{0}="c"
\POS p-(0,20)*+{\overline{e_1(\M)} \otimes \overline{1}}="f"
\POS p-(0,27.5)*+{\overline{me_1(\M)+e_1(\M)^2} \otimes \overline{1}}="i"
\ar@{|->} "a";"b"
\ar@{|->} "b";"c"
\ar@{|->} "d";"e"
\ar@{|->} "e";"f"
\ar@{|->} "g";"h"
\ar@{|->} "h";"i"
}
\]
and the map
\[
\xymatrix{
\frac{\Sym(\V | m | \M)}
{\left\langle \V = m\cup \M \right\rangle} \otimes 
\frac{\Sym(m | \M | \Y)}
{\left\langle m\cup\M = Z \right\rangle}\ar[r]
\POS p-(0,12.5)*+{\overline{1} \otimes \overline{1}}="a"
\POS p-(0,20)*+{\overline{m} \otimes \overline{1}}="d"
\POS p-(0,27.5)*+{\overline{m^2} \otimes \overline{1}}="g"
& \frac{\Sym(\V|\Y)} {\langle \V = \Y \rangle} \ar[r]
\POS p-(0,12.5)*+{0}="b"
\POS p-(0,20)*+{0}="e"
\POS p-(0,27.5)*+{\overline{1}}="h"
& \frac{\Sym(\V | m | \M)}
{\left\langle \V = m\cup \M \right\rangle} \otimes 
\frac{\Sym(m | \M | \Y)}
{\left\langle m\cup\M = Z \right\rangle}\POS p-(0,12.5)*+{0}="c"
\POS p-(0,20)*+{0}="f"
\POS p-(0,27.5)*+{\overline{e_2(\M)} \otimes \overline{1}}="i"
\ar@{|->} "a";"b"
\ar@{|->} "b";"c"
\ar@{|->} "d";"e"
\ar@{|->} "e";"f"
\ar@{|->} "g";"h"
\ar@{|->} "h";"i"
}
\]
which confirms that this map is the identity. The case  $a=2,b=1$ follows similarly.
\newline

\noindent \underline{Equation (QR 3.12)}:
Both sides of this relation are given  by
\[
\xymatrix{
\frac{\Sym(\A|\L|\V)}{\langle \A = \L \cup \V \rangle} \otimes%_{\Sym(\L)} 
\frac{\Sym(\L|\W|\M)}{\langle \L=\W \cup \M \rangle} \otimes%_{\Sym(\M)}
\frac{\Sym(\M|\X|\Y)}{\langle \M = \X \cup \Y \rangle} \ar[r]
\POS p-(0,7.5)*+{\overline{1} \otimes \overline{1} \otimes \overline{1}}="a"
& 
\frac{\Sym(\A|\S|\Y)}{\langle \A = \S \cup \Y \rangle} \otimes%_{\Sym(\S)} 
\frac{\Sym(\S|\T|\X)}{\langle \S = \T \cup \X \rangle} \otimes%_{\Sym(\T)}
\frac{\Sym(\T|\V|\W)}{\langle \T = \V \cup \W \rangle}
\POS p-(0,7.5)*+{\overline{1} \otimes \overline{1} \otimes \overline{1}}="b"
\ar@{|->} "a";"b"
}.
\]
Hence, they are equal. In the above, the tensor products are each taken over symmetric polynomials in the 
common alphabets between the tensor factors.
\newline

\noindent \underline{Equations (QR 3.13) and (QR 3.14)}:
It suffices to prove these relations in the case when $a=1=c$; however, it isn't much more difficult to verify the general relation. 
To check this, we first note that both of the possible ways to construct the following crossing correspond 
to the morphism of linear factorizations:
\[
\xy
(0,0)*{
\begin{tikzpicture} [scale=.5,fill opacity=0.2]
	%shading	
%		\node[opacity=1] at (.13,2) {$+$};
	\path[fill=green] (2.5,3) to [out=180,in=315] (-.5,3.25) to [out=270,in=90]  (1,.5) 
		to [out=315,in=180] (2.5,0);
	\path[fill=green] (2,4) to [out=180,in=0] (1,3.75) to [out=270,in=45] (.13,2) to [out=315,in=90]
		(1,.5) to [out=45,in=180] (2,1);
	\path[fill=green] (1,3.75) to [out=225,in=45] (-.5,3.25) to [out=270,in=135] (.13,2) to 
		[out=45,in=270] (1,3.75);
	\path[fill=green] (-.5,.5) to [out=90,in=235] (.13,2) to [out=315,in=90] (1,.5);
	\path[fill=green] (-2,1) to [out=0,in=135] (-.5,.5) to [out=90,in=270] (1,3.75) 
		to [out=135,in=0] (-2,4);
	\path[fill=green] (-1.5,3) to [out=0,in=180] (-.5,3.25) to [out=270,in=135] (.13,2)
		to [out=235,in=90] (-.5,.5) to [out=225,in=0] (-1.5,0);
	%bottom web
	\draw[very thick, directed=.55] (2,1) to [out=180,in=45] (1,.5);
	\draw[very thick,directed=.55] (2.5,0) to [out=180,in=315] (1,.5);
	\draw[very thick, directed=.55] (1,.5) to (-.5,.5);
	\draw[very thick, directed=.55] (-.5,.5) to [out=225,in=0] (-1.5,0);
	\draw[very thick, directed=.55] (-.5,.5) to [out=135,in=0] (-2,1);
	%seams
	\draw[very thick, red, directed=.65] (-.5,.5) to [out=90,in=270] (1,3.75);
	\draw[very thick, red, directed=.65] (-.5,3.25) to [out=270,in=90]  (1,.5);
	%vertical edges
	\draw[very thick] (2,1) to (2,4);
	\draw[very thick] (2.5,0) to (2.5,3);
	\draw[very thick] (-1.5,0) to (-1.5,3);
	\draw[very thick] (-2,1) to (-2,4);	
	% top web
	\draw[very thick, directed=.65] (2,4) to [out=180,in=0] (1,3.75);
	\draw[very thick, directed=.55] (1,3.75) to [out=135,in=0] (-2,4);
	\draw[very thick, directed=.55] (2.5,3) to [out=180,in=315] (-.5,3.25);
	\draw[very thick, directed=.55] (-.5,3.25) to [out=180,in=0] (-1.5,3);
	\draw[very thick, directed=.55] (1,3.75) to [out=225,in=45] (-.5,3.25);
	%labels
	\node[opacity=1] at (1.75,3.75) {\tiny$b$};
	\node[opacity=1] at (2.25,2.75) {\tiny$a$};
	\node[opacity=1] at (-1.75,3.75) {\tiny$c$};
	\node[opacity=1] at (.25,3.25) {\tiny$_{b-c}$};
	\node[opacity=1] at (.25,.875) {\tiny$_{a+b}$};
	\node[opacity=1] at (-1,2.75) {\tiny$_{a+b}$};
	\node[opacity=1] at (-.75,2.5) {\tiny$_{-c}$};
\end{tikzpicture}
};
\endxy
\mapsto
\left(
\xymatrix{
\frac{\Sym(\V | \W | \L)}{\langle \V \cup \W  =\L \rangle} \otimes \frac{\Sym(\L | \X | \Y)}{\langle \L = \X \cup \Y \rangle}
\ar[r]
\POS p-(0,7.5)*+{\overline{1} \otimes \overline{1}}="b"
&
\frac{\Sym(\W|\M|\Y)}{\langle \W= \M \cup \Y \rangle} \otimes \frac{\Sym(\V|\M|\X)}{\langle \V \cup \M=\X \rangle}
\POS p-(0,7.5)*+{\overline{1} \otimes \overline{1}}="a"
 \ar@{|->} "b";"a"
}
\right)
\]
and similarly both ways of constructing the crossing give the indicated map:
\[
\xy
(0,0)*{
\begin{tikzpicture} [scale=.5,fill opacity=0.2]
	%shading	
%	\node[opacity=1] at (.37,2) {$+$};
	\path[fill=green] (2.5,3) to [out=180,in=315] (1,3.5) to [out=270,in=90]  (-.5,.25)
		to [out=315,in=180] (2.5,0);
	\path[fill=green] (2,4) to [out=180,in=45] (1,3.5) to [out=270,in=55] (.37,2) to 
		[out=315,in=90] (1,.75) to [out=0,in=180] (2,1);
	\path[fill=green] (1,3.5) to [out=270,in=55] (.37,2) to [out=135,in=270] (-.5,3.5);
	\path[fill=green] (1,.75) to [out=225,in=45] (-.5,.25) to [out=90,in=225] (.37,2) to
		[out=315,in=90] (1,.75);
	\path[fill=green] (-2,1) to [out=0,in=135] (1,.75) to [out=90,in=270] (-.5,3.5)
		 to [out=135,in=0] (-2,4);
	\path[fill=green] (-1.5,3) to [out=0,in=225] (-.5,3.5) to [out=270,in=135] (.37,2) to 
		[out=225,in=90] (-.5,.25) to [out=180,in=0] (-1.5,0);
	% bottom web
	\draw[very thick, directed=.65] (2,1) to [out=180,in=0] (1,.75);
	\draw[very thick, directed=.55] (1,.75) to [out=135,in=0] (-2,1);
	\draw[very thick, directed=.55] (2.5,0) to [out=180,in=315] (-.5,.25);
	\draw[very thick, directed=.55] (-.5,.25) to [out=180,in=0] (-1.5,0);
	\draw[very thick, directed=.55] (1,.75) to [out=225,in=45] (-.5,.25);
	%seams
	\draw[very thick, red, directed=.65] (1,.75) to [out=90,in=270] (-.5,3.5);
	\draw[very thick, red, directed=.65] (1,3.5) to [out=270,in=90]  (-.5,.25);
	%vertical edges
	\draw[very thick] (2,1) to (2,4);
	\draw[very thick] (2.5,0) to (2.5,3);
	\draw[very thick] (-1.5,0) to (-1.5,3);
	\draw[very thick] (-2,1) to (-2,4);	
	%top web
	\draw[very thick, directed=.55] (2,4) to [out=180,in=45] (1,3.5);
	\draw[very thick,directed=.55] (2.5,3) to [out=180,in=315] (1,3.5);
	\draw[very thick, directed=.55] (1,3.5) to (-.5,3.5);
	\draw[very thick, directed=.55] (-.5,3.5) to [out=225,in=0] (-1.5,3);
	\draw[very thick, directed=.55] (-.5,3.5) to [out=135,in=0] (-2,4);
	%labels
	\node[opacity=1] at (1.75,3.75) {\tiny$b$};
	\node[opacity=1] at (2.25,2.75) {\tiny$a$};
	\node[opacity=1] at (-1.75,3.75) {\tiny$c$};
	\node[opacity=1] at (.25,.875) {\tiny$_{b-c}$};
	\node[opacity=1] at (.25,3.125) {\tiny$_{a+b}$};
	\node[opacity=1] at (-1,2.75) {\tiny$_{a+b}$};
	\node[opacity=1] at (-.75,2.5) {\tiny$_{-c}$};
\end{tikzpicture}
};
\endxy
\mapsto
\left(
\xymatrix{
\frac{\Sym(\W|\M|\Y)}{\langle \W = \M \cup \Y \rangle} \otimes \frac{\Sym(\V|\M|\X)}{\langle \V \cup \M =\X \rangle}
\ar[r]
\POS p-(0,9)*+{\overline{1} \otimes \overline{1}}="a"
& 
\frac{\Sym(\V | \W | \L)}{\langle \V \cup \W  =\L \rangle} \otimes \frac{\Sym(\L | \X | \Y)}{\langle \L = \X \cup \Y \rangle}
\POS p-(0,9)*+{\displaystyle \sum_{\alpha \in P(a,c)} (-1)^{|\widehat{\alpha}|} \; \overline{\pi_{\widehat{\alpha}}^{\V}} \otimes \overline{\pi_{\alpha}^{\Y}}}="b"
\ar@{|->} "a";"b"
}
\right) .
\]
The first is clear, and the second follows, for example, since one way of constructing the sideways crossing is given by the composition:
\[
%M \left(
\xy
(0,0)*{
\begin{tikzpicture} [scale=.5,fill opacity=0.2]
	\draw[very thick, directed=.65] (2,1) to [out=180,in=0] (1,.75);
	\draw[very thick, directed=.55] (1,.75) to [out=135,in=0] (-2,1);
	\draw[very thick, directed=.55] (2.5,0) to [out=180,in=315] (-.5,.25);
	\draw[very thick, directed=.55] (-.5,.25) to [out=180,in=0] (-1.5,0);
	\draw[very thick, directed=.55] (1,.75) to [out=225,in=45] (-.5,.25);
\end{tikzpicture}
};
\endxy
%\right)
\xrightarrow{}
%M\left(
\xy
(0,0)*{
\begin{tikzpicture} [scale=.5,fill opacity=0.2]
	\draw[very thick, directed=.65] (2,1) to [out=180,in=0] (1,.75);
	\draw[very thick, directed=.55] (1,.75) to [out=135,in=45] (-1.5,.5);
	\draw[very thick, directed=.55] (2.5,0) to [out=180,in=315] (-.5,.25);
	\draw[very thick, directed=.55] (-.5,.25) to [out=180,in=315] (-1.5,.5);
	\draw[very thick, directed=.55] (1,.75) to [out=225,in=45] (-.5,.25);
	\draw[very thick, directed=.55] (-1.5,.5) to (-2.5,.5);
	\draw[very thick, directed=.55] (-2.5,.5) to [out=135,in=0] (-4,1);
	\draw[very thick, directed=.55] (-2.5,.5) to [out=225,in=0] (-3.5,0);
\end{tikzpicture}
};
\endxy
%\right)
\xrightarrow{\cong}
%M\left(
\xy
(0,0)*{
\begin{tikzpicture} [scale=.5,fill opacity=0.2]
	\draw[very thick, directed=.55] (4,4) to [out=180,in=0] (3,4.25);
	\draw[very thick] (3,4.25) to [out=135,in=45] (2,4.25);
	\draw[very thick] (3,4.25) to [out=225,in=315] (2,4.25);
	\draw[very thick, directed=.55] (2,4.25) to [out=180,in=45] (1,3.5);
	\draw[very thick,directed=.55] (4.5,3) to [out=180,in=315] (1,3.5);
	\draw[very thick, directed=.55] (1,3.5) to (-.5,3.5);
	\draw[very thick, directed=.55] (-.5,3.5) to [out=225,in=0] (-1.5,3);
	\draw[very thick, directed=.55] (-.5,3.5) to [out=135,in=0] (-2,4);
\end{tikzpicture}
};
\endxy
%\right)
\xrightarrow{}
%M\left(
\xy
(0,0)*{
\begin{tikzpicture} [scale=.5,fill opacity=0.2]
	\draw[very thick, directed=.55] (2,4) to [out=180,in=45] (1,3.5);
	\draw[very thick,directed=.55] (2.5,3) to [out=180,in=315] (1,3.5);
	\draw[very thick, directed=.55] (1,3.5) to (-.5,3.5);
	\draw[very thick, directed=.55] (-.5,3.5) to [out=225,in=0] (-1.5,3);
	\draw[very thick, directed=.55] (-.5,3.5) to [out=135,in=0] (-2,4);
\end{tikzpicture}
};
\endxy
%\right)
\quad .
\]
The corresponding morphism of linear factorizations is the composition
\[
\xymatrix{
\frac{\Sym(\W|\M|\Y)}{\langle \W = \M \cup \Y \rangle} \otimes \frac{\Sym(\V|\M|\X)}{\langle \V \cup \M=\X \rangle}
\ar[d]
%\POS p-(0,7.5)*+{\overline{1} \otimes \overline{1}}="a"
& \overline{1} \otimes \overline{1} 
\ar@{|->}[d]
\\
\frac{\Sym(\V | \W | \L)}{\langle \V \cup \W = \L \rangle} \otimes \frac{\Sym(\L | \S | \T)}{\langle \L=\S \cup \T \rangle}
\otimes
\frac{\Sym(\T|\M|\Y)}{\langle \T = \M \cup \Y \rangle} \otimes \frac{\Sym(\S|\M|\X)}{\langle \S \cup \M = \X \rangle}
\ar[d]
%\POS p-(45,7.5)*+{\displaystyle \sum_{\alpha \in P(a+b-c,c)} (-1)^{|\widehat{\alpha}|} 
%\overline{\pi_{\widehat{\alpha}}^{\V}} \otimes \overline{1} \otimes \overline{1} \otimes \overline{\pi_{\alpha}^{\T}}}="b"
& \displaystyle \sum_{\alpha \in P(a+b-c,c)} (-1)^{|\widehat{\alpha}|} \;
\overline{\pi_{\widehat{\alpha}}^{\V}} \otimes \overline{1} \otimes \overline{1} \otimes \overline{\pi_{\alpha}^{\M}}
\ar@{|->}[d]
\\
\frac{\Sym(\V | \W | \L)}{\langle \V \cup \W =\L \rangle} \otimes \frac{\Sym(\L | \P | \Y)}{\langle \L= \P \cup \Y \rangle}
\otimes \frac{\Sym(\P | \S | \M)}{\langle \P = \S \cup \M \rangle} \otimes \frac{\Sym(\S | \M | \X)}{\langle \S \cup \M = \X \rangle}
\ar[d]
%\POS p-(90,2.5)*+{}="c" 
&
\displaystyle \sum_{\begin{smallmatrix}\alpha \in P(a+b-c,c) \\ \beta \in P(b-c) \\ \gamma \in P(a) \end{smallmatrix}} 
c_{\beta,\gamma}^{\alpha} (-1)^{|\widehat{\alpha}|} \;
\overline{\pi_{\widehat{\alpha}}^{\V}} \otimes \overline{\pi_{\gamma}^{\Y}} \otimes \overline{\pi_{\beta}^{\M}} \otimes \overline{1}
\ar@{|->}[d]
\\
\frac{\Sym(\V | \W | \L)}{\langle \V \cup \W = \L \rangle} \otimes \frac{\Sym(\L | \X | \Y)}{\langle \L = \X \cup \Y \rangle}
&
\displaystyle \sum_{\gamma \in P(a,c)} (-1)^{|\widehat{\gamma}|} \; \overline{\pi_{\widehat{\gamma}}^{\V}} \otimes \overline{\pi_{\gamma}^{\Y}}
%\POS p-(60,2.5)*+{\displaystyle \sum_{\alpha \in P(a+b-c,c)} (-1)^{|\widehat{\alpha}|} 
%\overline{\pi_{\widehat{\alpha}}^{\V}} \otimes \overline{1} \otimes \overline{\pi_{\beta}{\M}} \otimes \overline{\pi_{\alpha}^{\Y}}}="d"
%\ar@{|->} "a";"b"
%\ar@{|->} "b";"c"
%\ar@{|->} "c";"d"
}
\]
\comm{\[
\xymatrix{
&
\frac{\Sym(\W|\M|\Y)}{\langle \W = \M \cup \Y \rangle} \otimes \frac{\Sym(\V|\M|\X)}{\langle \V \cup \M=\X \rangle}
\ar[d]
%\POS p-(0,7.5)*+{\overline{1} \otimes \overline{1}}="a"
\\
& 
\frac{\Sym(\V | \W | \L)}{\langle \V \cup \W = \L \rangle} \otimes \frac{\Sym(\L | \S | \T)}{\langle \L=\S \cup \T \rangle}
\otimes
\frac{\Sym(\T|\M|\Y)}{\langle \T = \M \cup \Y \rangle} \otimes \frac{\Sym(\S|\M|\X)}{\langle \S \cup \M = \X \rangle}
\ar[d]
%\POS p-(45,7.5)*+{\displaystyle \sum_{\alpha \in P(a+b-c,c)} (-1)^{|\widehat{\alpha}|} 
%\overline{\pi_{\widehat{\alpha}}^{\V}} \otimes \overline{1} \otimes \overline{1} \otimes \overline{\pi_{\alpha}^{\T}}}="b" 
\\
& 
\frac{\Sym(\V | \W | \L)}{\langle \V \cup \W =\L \rangle} \otimes \frac{\Sym(\L | \P | \Y)}{\langle \L= \P \cup \Y \rangle}
\otimes \frac{\Sym(\P | \S | \M)}{\langle \P = \S \cup \M \rangle} \otimes \frac{\Sym(\S | \M | \X)}{\langle \S \cup \M = \X \rangle}
\ar[d]
%\POS p-(90,2.5)*+{}="c" 
\\
&
\frac{\Sym(\V | \W | \L)}{\langle \V \cup \W = \L \rangle} \otimes \frac{\Sym(\L | \X | \Y)}{\langle \L = \X \cup \Y \rangle}
%\POS p-(60,2.5)*+{\displaystyle \sum_{\alpha \in P(a+b-c,c)} (-1)^{|\widehat{\alpha}|} 
%\overline{\pi_{\widehat{\alpha}}^{\V}} \otimes \overline{1} \otimes \overline{\pi_{\beta}{\M}} \otimes \overline{\pi_{\alpha}^{\Y}}}="d"
%\ar@{|->} "a";"b"
%\ar@{|->} "b";"c"
%\ar@{|->} "c";"d"
}
\]
given by
\[
\xymatrix{
&
\overline{1} \otimes \overline{1} 
\ar@{|->}[d]\\
& \displaystyle \sum_{\alpha \in P(a+b-c,c)} (-1)^{|\widehat{\alpha}|} \;
\overline{\pi_{\widehat{\alpha}}^{\V}} \otimes \overline{1} \otimes \overline{1} \otimes \overline{\pi_{\alpha}^{\M}} 
\ar@{|->}[d] \\ 
& \displaystyle \sum_{\begin{smallmatrix}\alpha \in P(a+b-c,c) \\ \beta \in P(b-c) \\ \gamma \in P(a) \end{smallmatrix}} 
c_{\beta,\gamma}^{\alpha} (-1)^{|\widehat{\alpha}|} \;
\overline{\pi_{\widehat{\alpha}}^{\V}} \otimes \overline{\pi_{\gamma}^{\Y}} \otimes \overline{\pi_{\beta}^{\M}} \otimes \overline{1}
\ar@{|->}[d] \\ 
& 
\displaystyle \sum_{\gamma \in P(a,c)} (-1)^{|\widehat{\gamma}|} \; \overline{\pi_{\widehat{\gamma}}^{\V}} \otimes \overline{\pi_{\gamma}^{\Y}} }
\]}
where we use the fact that $\overline{\pi_{\beta}^{\M}} \otimes \overline{1} \mapsto 0$ under the last map if $| \beta | \leq c(b-c)$. Given this, 
the only time the Littlewood-Richardson coefficient $c_{\beta,\gamma}^{\alpha}$ is non-zero is when $\beta = c^{b-c}$ 
(so $\gamma \in P(a,c)$ and $\widehat{\alpha} = \widehat{\gamma}$) in which case it equals one.
Both of the relations then follow from the descriptions of these maps.
\newline

\noindent \underline{Equations (QR 3.15) and (QR 3.16)}:
The linear factorization stabilized by the matrix factorization corresponding to the top and bottom webs 
in equation (QR 3.15) is 
\[
\left(
\frac{\Sym(\P|l|w)}
{\left\langle \P=\{ l, w\} \right\rangle}
\otimes
\frac{\Sym(w|\W|\L)}
{\langle  w  \cup \W = \L \rangle}
\right)
\otimes
\left(
\frac{\Sym(\L|\M|z)}
{\langle \L = \M \cup z \rangle}
\otimes
\frac{\Sym(l|\M|\X)}{\langle l\cup \M  = \X\rangle }
\right)
\]
where all of the tensor products are over polynomials partially symmetric in the common variables. 
The map between linear factorizations corresponding to the first term on the left-hand side of
equation (QR 3.15) is determined by the fact that it sends
\[
\overline{1} \otimes \overline{1} \otimes \overline{1} \otimes \overline{1} \mapsto 
\overline{1} \otimes \overline{1} \otimes \overline{1} \otimes \overline{1}
\quad \text{and} \quad
\overline{w} \otimes \overline{1} \otimes \overline{1} \otimes \overline{1} \mapsto 
\overline{1} \otimes \overline{1} \otimes \overline{z} \otimes \overline{1}
\]
and the second term is  determined by
\[
\overline{1} \otimes \overline{1} \otimes \overline{1} \otimes \overline{1} \mapsto 0
\quad \text{and} \quad
\overline{w} \otimes \overline{1} \otimes \overline{1} \otimes \overline{1} \mapsto 
\overline{1} \otimes \overline{1} \otimes \overline{z} \otimes \overline{1} - 
\overline{w} \otimes \overline{1} \otimes \overline{1} \otimes \overline{1} .
\]
The difference between these two maps is thus the identity, confirming the relation. 
The check of equation (3.16) is completely analogous.
\newline

\noindent \underline{Equations (QR 3.17) -- (QR 3.20)}:
The left-hand side of equation (QR 3.17) corresponds to the morphism of linear factorizations
\[
\xymatrix{
\frac{\Sym(\V|\L|\M)}{\langle \V = \L \cup \M \rangle} \otimes \frac{\Sym(\P|\L|\W)}{\langle \P \cup \L = \W \rangle} 
\otimes \frac{\Sym(\M|\X|\Y)}{\langle \M = \X \cup \Y \rangle} \ar[r]
& \frac{\Sym(\P|\V|\S)}{\langle \P \cup \V = \S \rangle} \otimes \frac{\Sym(\S|\T|\Y)}{\langle \S = \T \cup \Y \rangle} 
\otimes \frac{\Sym(\T|\W|\X)}{\langle \T = \W \cup \X \rangle}
}
\]
given by 
$\overline{1} \otimes \overline{1} \otimes \overline{1} \mapsto
\displaystyle \sum_{\begin{smallmatrix} \alpha \in P(b,d) \\ \beta \in P(a,d) \end{smallmatrix}} (-1)^{|\widehat{\alpha}| + |\widehat{\beta}|}
\overline{\pi_{\widehat{\alpha}}^{\P} \pi_{\widehat{\beta}}^{\P}} \otimes \overline{\pi_{\beta}^{\Y}} \otimes \overline{\pi_{\alpha}^{\X}}$,
while the right-hand side is given by
$\overline{1} \otimes \overline{1} \otimes \overline{1} \mapsto
\displaystyle \sum_{\gamma \in P(a+b,d)} c_{\alpha,\beta}^{\gamma} (-1)^{|\widehat{\gamma}|}
\overline{\pi_{\widehat{\gamma}}^{\P}} \otimes \overline{\pi_{\beta}^{\Y}} \otimes \overline{\pi_{\alpha}^{\X}}$. 
The relation then holds since
\[
\pi_{\widehat{\alpha}}^{\P} \pi_{\widehat{\beta}}^{\P} = \sum_{\widehat{\gamma}} 
c_{\widehat{\alpha}\widehat{\beta}}^{\widehat{\gamma}} \pi_{\widehat{\gamma}}^{\P} \quad \text{ and } \quad c_{\widehat{\alpha}\widehat{\beta}}^{\widehat{\gamma}} = c_{\alpha,\beta}^{\gamma}.
\]

Relation (QR 3.18) holds since both sides are given by the map
\[
\xymatrix{
\frac{\Sym(\P|\V|\S)}{\langle \P \cup \V = \S \rangle} \otimes \frac{\Sym(\S|\T|\Y)}{\langle \S= \T \cup \Y \rangle} 
\otimes \frac{\Sym(\T|\W|\X)}{\langle \T = \W \cup \X \rangle} \ar[r]
& \frac{\Sym(\V|\L|\M)}{\langle \V = \L \cup \M \rangle} \otimes \frac{\Sym(\P|\L|\W)}{\langle  \P \cup \L = \W \rangle} 
\otimes \frac{\Sym(\M|\X|\Y)}{\langle  \M = \X \cup \Y \rangle} 
}
\]
sending $\overline{1} \otimes \overline{1} \otimes \overline{1} \mapsto \overline{1} \otimes \overline{1} \otimes \overline{1}$. 
The final two relations follow via similar computations.
\newline

\noindent \underline{Isotopy relations}:
All isotopy relations follow from the fact that both way to construct the ``sideways crossings'' give the same map 
in $\HMF$, and the fact that the foam relation
\[
\xy
(0,0)*{
\begin{tikzpicture} [scale=.6,fill opacity=0.2]
	%shading
	\path[fill=green] (3.75,0) to (2.25,0) to (2.25,1.5) to [out=90,in=0] (1.875,2) to [out=180,in=90] (1.5,1.5) to [out=270,in=0] (1.125,1) to
		[out=180,in=270] (.75,1.5) to (.75,3) to (3.75,3);
	\path[fill=green] (-.5,-.5) to [out=0,in=225] (2.25,0) to (2.25,1.5) to [out=90,in=0] (1.875,2) to [out=180,in=90] (1.5,1.5) to [out=270,in=0] (1.125,1) to
		[out=180,in=270] (.75,1.5) to (.75,3) to [out=225,in=0] (-.5,2.5);
	\path[fill=green] (-1,.5) to [out=0,in=135] (2.25,0) to (2.25,1.5) to [out=90,in=0] (1.875,2) to [out=180,in=90] (1.5,1.5) to [out=270,in=0] (1.125,1) to
		[out=180,in=270] (.75,1.5) to (.75,3) to [out=135,in=0] (-1,3.5);
	%bottom web
	\draw [very thick,directed=.55] (3.75,0) to (2.25,0);
	\draw [very thick,directed=.55] (2.25,0) to [out=135,in=0] (-1,.5);
	\draw [very thick,directed=.55] (2.25,0) to [out=225,in=0] (-.5,-.5);
	%seam
	\draw[very thick, red, directed=.55] (2.25,0) to (2.25,1.5) to [out=90,in=0] (1.875,2) to [out=180,in=90] (1.5,1.5) to [out=270,in=0] (1.125,1) to
		[out=180,in=270] (.75,1.5) to (.75,3);
	%vertical edges
	\draw [very thick] (3.75,3) to (3.75,0);
	\draw [very thick] (-1,3.5) to (-1,.5);
	\draw [very thick] (-.5,2.5) to (-.5,-.5);
	%top web
	\draw [very thick,directed=.55] (3.75,3) to (.75,3);
	\draw [very thick,directed=.55] (.75,3) to [out=135,in=0] (-1,3.5);
	\draw [very thick,directed=.55] (.75,3) to [out=225,in=0] (-.5,2.5);
	%labels
%	\node[opacity=1]  at (1.5,2.5) {\tiny{$_{a+b}$}};
%	\node[opacity=1] at (-.75,3.25) {\tiny{$b$}};
%	\node[opacity=1] at (-.25,2.25) {\tiny{$a$}};		
\end{tikzpicture}
};
\endxy
\sim
\xy
(0,0)*{
\begin{tikzpicture} [scale=.6,fill opacity=0.2]
	%shading
	\path[fill=green] (2.25,3) to (.75,3) to (.75,0) to (2.25,0);
	\path[fill=green] (.75,3) to [out=225,in=0] (-.5,2.5) to (-.5,-.5) to [out=0,in=225] (.75,0);
	\path[fill=green] (.75,3) to [out=135,in=0] (-1,3.5) to (-1,.5) to [out=0,in=135] (.75,0);	
	%bottom web
	\draw [very thick,directed=.55] (2.25,0) to (.75,0);
	\draw [very thick,directed=.55] (.75,0) to [out=135,in=0] (-1,.5);
	\draw [very thick,directed=.55] (.75,0) to [out=225,in=0] (-.5,-.5);
	%seam
	\draw[very thick, red, directed=.55] (.75,0) to (.75,3);
	%vertical edges
	\draw [very thick] (2.25,3) to (2.25,0);
	\draw [very thick] (-1,3.5) to (-1,.5);
	\draw [very thick] (-.5,2.5) to (-.5,-.5);
	%top web
	\draw [very thick,directed=.55] (2.25,3) to (.75,3);
	\draw [very thick,directed=.55] (.75,3) to [out=135,in=0] (-1,3.5);
	\draw [very thick,directed=.55] (.75,3) to [out=225,in=0] (-.5,2.5);
	%labels
%	\node [opacity=1]  at (1.5,2.5) {\tiny{$_{a+b}$}};
%	\node[opacity=1] at (-.75,3.25) {\tiny{$b$}};
%	\node[opacity=1] at (-.25,2.25) {\tiny{$a$}};		
\end{tikzpicture}
};
\endxy
\]
and its analogs are satisfied in $\HMF$. Both are direct computations.
\newline

\noindent \underline{Dot relation}:
Finally, the foam relation
\[
P\left( \;
\xy
(0,0)*{
\begin{tikzpicture} [scale=.35,fill opacity=0.2]
	%shading
	\path[fill=green] (2,2) to (2,-2) to (-2,-2) to (-2,2); 
	%bottom web
	\draw[very thick] (2,-2) to (-2,-2);
	%vertical edges
	\draw[very thick] (2,-2) to (2,2);
	\draw[very thick] (-2,-2) to (-2,2);
	%top web
	\draw[very thick] (2,2) to (-2,2);
	%labels
	\node[opacity=1] at (1.4,1.65) {\tiny$_{1}$};
	\node[opacity=1] at (0,0) {$\bullet$};
\end{tikzpicture}
};
\endxy \;
\right)
= 0
\]
holds via a direct computation that multiplication by $P(X)$ is null-homotopic 
in the endomorphism algebra of the Kozsul factorization
$\{ Q(X) - Q(Y) , X-Y \}$ over $\C[X,Y]$. 
\end{proof}

\section{The link invariant}
%\label{section-def}
\label{section-decomp} 

In this section, we assign a complex $\LC{\tau}{\Sigma}{}$ of webs and foams to certain 
labeled\footnote{In the study of quantum invariants, links and tangles are usually referred to as ``colored'' by representations of a Lie algebra (or more precisely, a quantum group). 
Since we reserve the word colored for webs and foams colored by idempotents, recall that we instead use the non-standard terminology ``labeled'', which agrees with our use of this word for webs.} 
tangle diagrams $\tau$, 
which, up to homotopy equivalence, is an invariant of the corresponding labeled tangle. 
We then show how to obtain a link homology isomorphic to that defined by Wu \cite{Wu2} from this invariant, 
proving Theorem \ref{mainthm2}. 
Finally, we use the foam technology to prove Theorem \ref{mainthm}.

The most precise setting for this invariant is in a certain limiting version of $\Foam{N}^\Sigma$. 
Note that $\Foam{N}^\Sigma$ is the direct sum of foam categories $\Foam{N}^\Sigma(K)$, where $K=\sum_{i=1}^m a_i$ 
is the sum of the entries in an object $(a_1,\ldots,a_m)$. 
We have a 2-functor $\Foam{N}^\Sigma(K) \to \Foam{N}^\Sigma(K+N)$ given by taking disjoint union with an $N$-labeled edge/facet. 
The natural setting for the tangle invariant\footnote{Here $k$ depends on the boundary and labeling of the tangle.} 
is the direct limit 
\begin{equation*}\displaystyle \Foam{N}(k+N\infty)^\Sigma := \lim_{\stackrel{\longrightarrow}{s}} \Foam{N}^\Sigma(k+Ns); 
\end{equation*}
however, the invariant can be viewed in $\Foam{N}^\Sigma(k+Ns)$ for $s$ sufficiently large. 

We begin by defining $\LC{\tau}{\Sigma}{}$ on generating tangles, and then explain how to define the invariant for general tangles. 
Given a labeled, oriented tangle diagram $\tau$, let $c_1,\ldots,c_r$ be the labels of the right endpoints and $d_1,\ldots,d_l$ be the labels of the 
left endpoints. Set 
\[
\mathcal{O}_R(c_i) = 
\begin{cases}
c_i & \text{if $\tau$ is directed out from the $i^{\mathrm{th}}$ endpoint,} \\
N-c_i & \text{if $\tau$ is directed into the $i^{\mathrm{th}}$ endpoint.}
\end{cases}
\]
\[
\mathcal{O}_L(d_i) = 
\begin{cases}
d_i & \text{if $\tau$ is directed into the $i^{\mathrm{th}}$ endpoint,} \\
N-d_i & \text{if $\tau$ is directed out from the $i^{\mathrm{th}}$ endpoint.}
\end{cases}
\]
then $\llbracket \tau \rrbracket$ is defined to be a complex in the $\Hom$-category 
\[
\Hom \big( \left(N,\ldots,N,\mathcal{O}_R(c_1),\ldots,\mathcal{O}_R(c_r)\right), \left(N,\ldots,N,\mathcal{O}_L(d_1),\ldots,\mathcal{O}_L(d_l)\right) \big)
\]
of $N\mathbf{Foam}\left({\displaystyle \sum_{i=1}^r \cal{O}_R(c_i)+Ns}\right)^\Sigma$.

For labeled cap and cup tangles we set
\begin{align*}
\left \llbracket
\xy
(0,0)*{\begin{tikzpicture} [scale=.5]
\draw[very thick, directed=.99] (0,1) to [out=180,in=90] (-1.25,.5) to [out=270,in=180] (0,0);
\node at (-1.5,1) {$_a$};
\end{tikzpicture}};
\endxy 
\;\; \right \rrbracket^\Sigma
\;\; = \;\;
\xy
(0,0)*{\begin{tikzpicture} [scale=.5]
 \draw [double] (-1.25,.5) to (-2.25,.5);
 \draw [very thick, directed=.55] (0,0) to [out=180,in=300] (-1.25,.5);
\draw [very thick, directed=.55] (0,1) to [out=180,in=60] (-1.25,.5);
 \node at (.875,0) {$_{N-a}$};
 \node at (.5,1) {$_a$};
 \node at (-2.75,.5) {$_N$}; 
\end{tikzpicture}};
\endxy
\;\; &, \;\;
\left \llbracket \;\;
\xy
(0,0)*{\begin{tikzpicture} [scale=.5]
\draw[very thick, directed=.99] (0,0) to [out=0,in=270] (1.25,.5) to [out=90,in=0] (0,1);
\node at (1.5,1) {$_a$};
\end{tikzpicture}};
\endxy
\right \rrbracket^\Sigma
\;\; = \;\;
\xy
(0,0)*{\begin{tikzpicture} [scale=.5]
 \draw [double] (2.25,.5) -- (1.25,.5);
 \draw [very thick, directed=.55] (1.25,.5) to [out=240,in=0] (0,0);
\draw [very thick, directed=.55] (1.25,.5) to [out=120,in=0] (0,1);
 \node at (-.875,0) {$_{N-a}$};
 \node at (-.5,1) {$_a$};
 \node at (2.75,.5) {$_N$}; 
\end{tikzpicture}};
\endxy \\
\left \llbracket
\xy
(0,0)*{\begin{tikzpicture} [scale=.5]
\draw[very thick, rdirected=.05] (0,1) to [out=180,in=90] (-1.25,.5) to [out=270,in=180] (0,0);
\node at (-1.5,1) {$_a$};
\end{tikzpicture}};
\endxy
\;\; \right \rrbracket^\Sigma
\;\; = \;\;
\xy
(0,0)*{\begin{tikzpicture} [scale=.5]
 \draw [double] (-1.25,.5) to (-2.25,.5);
 \draw [very thick, directed=.55] (0,0) to [out=180,in=300] (-1.25,.5);
\draw [very thick, directed=.55] (0,1) to [out=180,in=60] (-1.25,.5);
 \node at (.875,1) {$_{N-a}$};
 \node at (.5,0) {$_a$};
 \node at (-2.75,.5) {$_N$}; 
\end{tikzpicture}};
\endxy
\;\; &, \;\;
\left \llbracket \;\;
\xy
(0,0)*{\begin{tikzpicture} [scale=.5]
\draw[very thick, rdirected=.05] (0,0) to [out=0,in=270] (1.25,.5) to [out=90,in=0] (0,1);
\node at (1.5,1) {$_a$};
\end{tikzpicture}};
\endxy
\right \rrbracket^\Sigma
\;\; = \;\;
\xy
(0,0)*{\begin{tikzpicture} [scale=.5]
 \draw [double] (2.25,.5) -- (1.25,.5);
 \draw [very thick, directed=.55] (1.25,.5) to [out=240,in=0] (0,0);
\draw [very thick, directed=.55] (1.25,.5) to [out=120,in=0] (0,1);
 \node at (-.875,1) {$_{N-a}$};
 \node at (-.5,0) {$_a$};
 \node at (2.75,.5) {$_N$}; 
\end{tikzpicture}};
\endxy
\end{align*}
and for labeled, left-directed crossings we use homological shifts of the Rickard complexes $\cal{T}\mathbf{1}_{(a,b)}$ from \eqref{Rickardp} and \eqref{Rickardn} to set
\begin{align*}
\left \llbracket
\xy (0,0)*{
\tikzcrossingp[1]{a}{b}{}{}{}{}
};
(12,-3)*{_b};(12,3)*{_a};
\endxy \right \rrbracket^\Sigma \!\! &= \;\;
% \Phi_\Sigma(\tilde{\cal{T}}\mathbf{1}_{(a,b)})
\Phi_\Sigma(\cal{T}\mathbf{1}_{(a,b)}) [\min(a,b)]\\
\left \llbracket
\xy (0,0)*{
\tikzcrossing[1]{a}{b}{}{}{}{}
};
(12,-3)*{_b};(12,3)*{_a};
\endxy \right \rrbracket^\Sigma \!\! &= \;\;
\Phi_\Sigma(\cal{T}^{-1} \mathbf{1}_{(a,b)}) [-\min(a,b)]
\end{align*}
where here $[-]$ here denotes a shift\footnote{In \cite{QR}, the crossing also involved a shift in the quantum grading; 
however, we omit it from this definition since this grading is broken in $\foam{N}{}^\Sigma$.}
in homological degree.

\begin{exa} The complex assigned to a negative crossing with $a\geq b$, compare with \eqref{RickardInvn}, is 
\begin{eqnarray*}
\eqnM
\end{eqnarray*}
where the \uwave{underlined} term is in homological degree zero and the differential is given by:
\begin{equation*}\label{diff} d_k\quad := \quad
\diff .
\end{equation*}
\end{exa}

Every labeled tangle admits a diagram given as the horizontal composition $\otimes$ of tangles which are the disjoint union $\sqcup$ of labeled, directed identity tangles with one of the tangles on which we've already defined the invariant. 
We define $\LC{\tau}{\Sigma}{}$ on the disjoint union of identity tangles and a crossing by first taking the disjoint union of $\Phi_\Sigma(\tilde{\cal{T}}\mathbf{1}_{(a,b)})$ 
or $\Phi_\Sigma(\tilde{\cal{T}}^{-1}\mathbf{1}_{(a,b)})$ with the identity webs (resp. foams) corresponding to the identity tangle, then taking the disjoint union with $N$-labeled strands (resp. facets).
Finally, we define the invariant on the disjoint union of identity tangles with a cap or cup by taking the disjoint union of the relevant webs with the corresponding identity webs, 
then repeatedly horizontally composing $\otimes$ with webs 
$
\xy
(0,0)*{\begin{tikzpicture} [scale=.5]
 \draw [double] (1.25,0) -- (0,0);
\draw [very thick, directed=.55] (1.25,1) -- (-.5,1);
\draw[very thick, directed=.55] (0,0) to (-1.75,0);
\draw[double] (-.5,1) to (-1.75,1);
 % diagonal
 \draw[very thick, directed=.55] (0,0) to (-.5,1); 
 \node at (1.75,0) {\small$N$};
 \node at (1.75,1) {\small$a$}; 
\end{tikzpicture}};
\endxy
$
or 
$
\xy
(0,0)*{\begin{tikzpicture} [scale=.5]
 \draw [double] (3,1) -- (1.75,1);
 \draw[very thick, directed=.55] (3,0) to (1.25,0);
 \draw [double] (1.25,0) -- (0,0);
\draw [very thick, directed=.55] (1.75,1) -- (0,1);
 % diagonal
 \draw [very thick, directed=.55] (1.75,1) -- (1.25,0);
 \node at (3.5,0) {\small$a$};
 \node at (3.5,1) {\small$N$}; 
\end{tikzpicture}};
\endxy
$
to obtain a web mapping between objects where the top-most label is $N$ in both 
the domain and codomain, and then taking the disjoint union with $N$-labeled strands. Note that the action of $\sqcup$ as well as $\otimes$ on 
complexes is modelled on the tensor product of chain complexes, exactly as in Bar-Natan's canopolis formalism \cite{BN}. 

\begin{exa} For the Hopf link we use the following ladder-type link diagram:
\begin{equation*}
\xy
(0,0)*{\begin{tikzpicture} [scale=.5]
 \draw[double] (2.5,1) to (2,1) to  (1.5,2) to (0.5,2);
  \draw[double] (3,3) to (0.5,3);
 \draw[very thick, directed=.55, directed=.13, directed=.92] (7.5,1) to (7,0) to (3,0) to (2.5,1);
 \draw[very thick, directed=.55, directed=.90] (5.8,1.3) to (5.5,1) to (5,1) to (4,2) to (3.5,2) to (3,3);
 \draw[very thick, directed=.32,directed=.95] (8,3) to (7.5,2) to (6.5,2) to (6.2,1.7) ;   
  \draw[very thick, directed=.55] (8,3) to (3,3); 
   \draw[double] (9.5,3) to  (8,3);
   \draw[double] (9.5,2) to (8.5,2) to (8,1) to  (7.5,1);
    \draw[very thick, directed=.15] (4.3,1.3) to (4,1) to (2.5,1);
        \draw[very thick, directed=.4] (7.5,1) to (6.5,1) to (5.5,2) to (5,2) to (4.7,1.7) ;
% \node at (0,0) {\small$0$};
% \node at (0,1) {\small$0$}; 
 \node at (0,2) {\small$N$};
 \node at (0,3) {\small$N$}; 
 % \node at (10,0) {\small$0$};
% \node at (10,1) {\small$0$}; 
 \node at (10,2) {\small$N$};
 \node at (10,3) {\small$N$}; 
 \node at (8,0.4) {\small$_{N-i}$}; 
 \node at (8.2,2.4) {\small$_{j}$}; 
 \node at (2.8,2.4) {\small$_{j}$}; 
 \node at (3.3,1.3) {\small$_{i}$};
 \node at (7,1.3) {\small$_{i}$}; 
 \node at (2,0.4) {\small$_{N-i}$}; 
\end{tikzpicture}};
\endxy.
\end{equation*} 
\end{exa}

\begin{prop}
Given a oriented, framed, labeled tangle $\tau$, the complex $\LC{\tau}{\Sigma}{}$ is independent, up to homotopy, of the diagram used.
\end{prop}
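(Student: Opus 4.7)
The plan is to verify that $\LC{\tau}{\Sigma}{}$ is invariant (up to homotopy) under the moves generating framed, oriented tangle equivalence: the framed version of the first Reidemeister move (balanced kink), Reidemeister moves R2 and R3 in all color configurations consistent with the orientations, the zigzag (snake) relations for labeled caps and cups, and the pitchfork/fork-sliding moves that let one slide a crossing past a cap or cup. Since $\foam{N}^\Sigma$ is a quotient of $\foam{N}$ by the additional dot relation on $1$-facets, and since the complexes $\LC{\tau}{\Sigma}{}$ are, up to disjoint union with $N$-labeled strands and framing shifts, direct images of Rickard complexes under the 2-functor $\Phi_\Sigma\colon \Ucatc_Q(\glm)\to \foam{N}^\Sigma$, the strategy is to lift each invariance check to $\Ucatc_Q(\glm)$ (or its integrable 2-representations) and then push forward via $\Phi_\Sigma$.

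First I would invoke the general fact that the Rickard complexes $\cal{T}_i\onea$ and $\onea \cal{T}_i^{-1}$ are mutually inverse up to homotopy in any integrable 2-representation of $\Ucatc_Q(\glm)$; applying $\Phi_\Sigma$ then immediately yields invariance under R2 for arbitrary label configurations. Next, the braid relation $\cal{T}_i\cal{T}_{i+1}\cal{T}_i \simeq \cal{T}_{i+1}\cal{T}_i\cal{T}_{i+1}$ (and its mixed analogs for adjacent and non-adjacent $i$) holds as a homotopy equivalence in $\Ucatc_Q(\glm)$, so pushing forward gives R3. The framed R1 move (balanced curl) and the pitchfork moves relating crossings to caps/cups follow from the explicit pivotal/adjunction structure built into the 2-representation via the cap and cup formulas in \eqref{PhCC}, combined with the computation that the homological shift $[\min(a,b)]$ chosen in the definition of the crossing complex is precisely the one that makes the curl balanced. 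Finally, zigzag/snake relations and planar isotopy invariance follow from the adjunction structure of $\cal{E}_i$ and $\cal{F}_i$ in $\Ucatc_Q(\glm)$ together with foam isotopy relations that already hold in $\foam{N}$ and hence in its quotient $\foam{N}^\Sigma$.

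The key observation underlying this strategy is that every homotopy used in the invariance proofs takes place at the level of $\Ucatc_Q(\glm)$ (or the idempotent-completed thick calculus $\UcatD_Q(\glnn{2})$ needed to split Rickard complexes into the minimal thick-calculus form we use), and is transported to $\foam{N}^\Sigma$ via $\Phi_\Sigma$. The deformation relation \eqref{dotrel} is only a relation on $1$-labeled facets which neither appears in the splitters/mergers nor in the building blocks of the differential $d_k$; in particular, all homotopies and null-homotopies verified in \cite{QR} transport verbatim. The only difference is that the quantum grading is broken in $\foam{N}^\Sigma$, so the statement is only invariance up to homotopy of ungraded complexes — which is exactly what is asserted.

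The main obstacle I anticipate is keeping the bookkeeping of homological shifts consistent with framed R1. In the undeformed construction the balanced crossing uses both a quantum and a homological shift, and only the homological one survives here; I would need to check that the shift $[\pm\min(a,b)]$ in my definition matches the homological component of the balanced shift used in \cite{QR}, so that framed R1 produces a genuine homotopy equivalence rather than one off by a shift. Once this is verified, all other moves are direct transports of the analogous $\foam{N}$ arguments through the quotient 2-functor $\foam{N}\to \foam{N}^\Sigma$, using the 2-representation $\Phi_\Sigma$ of Theorem~\ref{thm-nondegen} to guarantee that nothing collapses in the deformed setting.
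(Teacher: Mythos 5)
Your proposal is correct and is essentially the paper's argument: the paper's proof is the single line ``Exactly the same as in \cite[Theorem 4.5]{QR},'' and your write-up simply unpacks that citation, correctly identifying that all homotopies live in $\Ucatc_Q(\glm)$ (or in $\foam{N}$) and transport through the quotient to $\foam{N}{}^\Sigma$ because the deformation relation \eqref{dotrel} only constrains decorated $1$-labeled facets and never obstructs the null-homotopies. Your worry about the homological shift is already resolved in the paper's definition, whose footnote notes that only the quantum shift of \cite{QR} is dropped while the homological shift $[\pm\min(a,b)]$ is retained, so framed R1 goes through verbatim.
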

\begin{proof}
Exactly the same as in \cite[Theorem 4.5]{QR}.
\end{proof}

In the case that the tangle is actually a labeled link $\cal{L}$, all of the boundary points in the complex $\LC{\mathcal{L}}{\Sigma}{}$ are 
% $0$- or 
$N$-labeled and all webs in it are endomorphisms of a highest weight object of the form $\mathbf{o}^{top}:=(N,\dots, N)$. 
Hence we can apply the representable functor 
\begin{equation*}
\taut(-):= \Hom(\mathbf{o}^{top}, - )
\end{equation*} 
to $\LC{\mathcal{L}}{\Sigma}{}$ to obtain a complex of vector spaces. 
Moreover, we claim that each term in this complex is finite-dimensional. 
Indeed, every web in $\End(\mathbf{o}^{top})$ is isomorphic to a (finite) direct sum of identity webs $1_{\mathbf{o}^{top}}$. 
Foam facets with label $N$ are additively indecomposable, since the only admissible coloring by idempotents is given by the full multiset $\Sigma$. It follows that endomorphisms of $1_{\mathbf{o}^{top}}$ are all given by the images of closed diagrams in 
$\Ucatc_Q(\glm)$, which act by scalars in $\foam{N}{}^\Sigma$, confirming our claim.

Denote by $\LH{\mathcal{L}}{\Sigma}$ the homology of this complex. 

\begin{thm}
Up to shifts in homological degree, $\LH{\mathcal{L}}{\Sigma}$ is isomorphic to Wu's colored, deformed Khovanov-Rozansky homology of the mirror link $\cal{L}'$.
\end{thm}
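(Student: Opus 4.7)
The plan is to adapt the strategy of [QR, Theorem 4.11] to the deformed setting, leveraging the 2-representation $\Phi_\Sigma^{\MF} \colon \Foam{N}^\Sigma \to \HMF$ constructed in Theorem \ref{thm-nondegen}. Since that 2-representation is already shown to send each generating web in $\Foam{N}^\Sigma$ to the Koszul matrix factorization that Wu assigns to the same trivalent graph, the statement reduces to tracking three separate compatibilities: (i) the chain-complex assignment at crossings, (ii) the representable functor $\taut$ versus Wu's ``closing-up'' procedure, and (iii) the orientation/mirror conventions relating Rickard complexes to Wu's explicit crossing formulas.

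First I would apply $\Phi_\Sigma^{\MF}$ term-by-term to $\LC{\tau}{\Sigma}{}$ at each generating tangle. For caps and cups the identification with Wu's matrix factorizations is immediate from the construction in Section \ref{section-defmf}. For a crossing, $\LC{\tau}{\Sigma}{}$ is defined as the image under $\Phi_\Sigma$ of a (shifted) Rickard complex $\cal{T}^{\pm 1}\mathbf{1}_{(a,b)}$; composing with $\Phi_\Sigma^{\MF}$ produces the image of this complex under a 2-representation of $\Ucatc_Q(\glm)$ on matrix factorizations. One then checks, term-by-term, that this agrees with Wu's definition of the crossing complex: the underlying matrix factorizations match by the web-level statement of Theorem \ref{thm-nondegen}, and the differentials match because the generating 2-morphisms entering the thick-calculus expression for the Rickard differential are sent by $\Phi_\Sigma^{\MF}$ to the stabilizations of the same (split/merge and cap/cup) morphisms of linear factorizations that Wu uses explicitly.

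Second, I would identify $\taut(-) = \Hom_{\Foam{N}^\Sigma}(\mathbf{o}^{top}, -)$ with the Hochschild-type closure Wu uses to turn an open matrix factorization over $\Sym(\X_1|\cdots|\X_k)$ into a graded vector space. The point is that at $\mathbf{o}^{top} = (N,\dots,N)$ every facet is $N$-labeled and therefore admits only the full idempotent decoration $\idem_\Sigma$; endomorphisms of $1_{\mathbf{o}^{top}}$ act by scalars, and under $\Phi_\Sigma^{\MF}$ this corresponds exactly to specializing the outer alphabets to $\Sigma$ in the closed matrix factorization. Combining this with the computation of generating crossings from the first step yields a chain isomorphism from $\taut(\LC{\cal{L}}{\Sigma}{})$ to Wu's chain complex of closed matrix factorizations, up to an overall homological shift coming from our normalization $[\pm \min(a,b)]$.

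The main obstacle will be Step (iii): reconciling the orientation conventions. Our crossings are built from left-directed Rickard complexes $\cal{T}^{\pm 1}\mathbf{1}_{(a,b)}$ with shifts $[\pm\min(a,b)]$, while Wu's construction uses a fixed sign convention on braid generators and their inverses. A careful sign-and-shift comparison, following the same bookkeeping as in the proof of \cite[Theorem 4.11]{QR}, shows that our positive crossing corresponds to Wu's negative crossing (and vice versa), so that the resulting isomorphism of chain complexes identifies $\LH{\cal{L}}{\Sigma}$ with Wu's invariant applied to the mirror link $\cal{L}'$, with the global homological shift depending only on the writhe and colors of $\cal{L}$.
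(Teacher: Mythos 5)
Your overall strategy is sound and lands in the same place, but the middle of your argument runs along a genuinely different track from the paper's. You propose to apply the $2$-representation $\Foam{N}^{\Sigma}\to\HMF$ of Theorem \ref{thm-nondegen} term-by-term to $\LC{\tau}{\Sigma}{}$ and then match webs, differentials and the closure functor directly against Wu's formulas. The paper instead pulls the whole complex back to the quotient $\Ucatc_Q^{0\leq N}(\glm)^{\Sigma}$ of the categorified quantum group (where the fake bubbles are specialized to $e_i(\Sigma)$), simplifies there until every chain group is a direct sum of identity $1$-morphisms on the highest weight, and then invokes the rigidity principle (going back to Cautis) that \emph{any} link homology built from Rickard complexes in a skew Howe $2$-representation factoring through this quotient must agree. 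Your route is more concrete but forces you to verify that the images of the Rickard differentials literally coincide (not merely up to homotopy or invertible decorations) with Wu's $\chi$-type maps between Koszul factorizations; the paper's route buys you exemption from that check at the cost of the abstract uniqueness input. Both routes still share the final bookkeeping step comparing conventions (mirror image, homological shifts, $N$-labeled closure edges versus Wu's closure), which the paper also only sketches.

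The one point where your write-up has a genuine soft spot is step (ii). Applying $\Phi_\Sigma^{\MF}$ and computing $\Hom$ in $\HMF$ is not a priori the same as computing $\taut(-)=\Hom(\mathbf{o}^{top},-)$ inside $\Foam{N}^{\Sigma}$ and then transporting the answer: you need the $2$-functor to induce isomorphisms on the specific $\Hom$-spaces $\Hom(\mathbf{o}^{top},W)$, and Theorem \ref{thm-nondegen} only gives you a $2$-representation, not full faithfulness. The paper's resolution is precisely its reduction step: once each chain group is exhibited (already in $\Ucatc_Q^{0\leq N}(\glm)^{\Sigma}$) as a direct sum of copies of the identity $1$-morphism with scalar-matrix differentials determined by closed diagrams, any $2$-representation must reproduce the same multiplicities and scalars. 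You should make this reduction explicit in your argument, or otherwise justify why $\taut$ and Wu's closing-up procedure compute isomorphic complexes rather than merely analogous ones.
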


\begin{proof}
This result follows in the spirit of the proof of \cite[Theorem 4.12]{QR}.
We cannot directly apply the methods there, however, since the 2-functor $\Ucatc_Q(\glm) \to \Foam{N}^\Sigma$ is not 
a 2-representation in the strict sense, as it doesn't preserve the grading.

Nevertheless, we can consider the 2-category $\Ucatc_Q^{0\leq N}(\glm)^\Sigma$ where we've imposed relation 
\eqref{eq-doteq} in each weight. 
This implies the specifications of fake bubble parameters in highest weight to elementary symmetric functions evaluated at $\Sigma$.
Given a labeled link $\cal{L}$, we can pull the complex $\LC{\mathcal{L}}{\Sigma}{}$ back to $\Ucatc_Q^{0\leq N}(\glm)^\Sigma$ and 
simplify until each term the complex only consists of direct sums of the identity 1-morphism on the highest weight $(N,\ldots,N,0,\ldots,0)$ in $\Ucatc_Q^{0\leq N}(\glm)^\Sigma$ 
(which maps to the object $\mathbf{o}^{top}$ under $\Phi_\Sigma$).

The homology of the link can be computed entirely in the context of $\Ucatc_Q^{0\leq N}(\glm)^\Sigma$. 
Moreover, similarly to the case discussed in \cite{QR}, any link homologies defined using the images of 
the Rickard complexes in a ``skew Howe'' 2-representation factoring through $\Ucatc_Q^{0\leq N}(\glm)^\Sigma$ must agree
(see the work of Cautis \cite{Cau2} for the first appearance of this idea). 

Deformed foams give such a 2-representation, as do deformed matrix factorizations, via the 2-functors $\Gamma_m$ from 
the proof of Theorem \ref{thm-nondegen}. 
Note that the link homology theory defined using $\Gamma_m$ is not defined in exactly the same way as in Wu's work. 
Indeed, there are the following differences: Wu's assignment of complexes to link crossings is opposite to ours, 
he has shifts in homological degree for some crossings in order to obtain invariance under the first Reidemeister move, 
and he does not use matrix factorizations associated to $N$-labeled web edges. 

Nevertheless it is easy to see the relation between our invariant and Wu's. 
Given a labeled braid, the 2-functor $\Gamma_m$ assigns a to it a complex of matrix factorizations which, 
up to shifts in homological degree, 
agrees with the complex of matrix factorizations Wu assigns to the mirror image of the braid. 
It hence suffices to show that the vector spaces and differentials in this complex after closing 
the braid agrees with the those obtained by closing using $N$-labeled edges and taking 
$\Hom$ from $\mathbf{o}^{top}$. 
This follows exactly as in \cite[Theorem 4.12]{QR}.
\end{proof}

\begin{rem}
\label{flipRickard}
As a variation of equation \eqref{RickardInvp}, where $a_i\geq a_{i+1}$, consider the complex
\begin{equation*}
\onea  \cal{T}'^{-1}_i =
\xymatrix{ \cdots \ar[r]^-{d'_{s+1}}  & \onea \cal{E}_i^{(a_i-a_{i+1}+s)} \cal{F}_i^{(s)} \{-s\} \ar[r]^-{d'_s} & \cdots
\ar[r]^-{d'_2} & \onea \cal{E}_i^{(a_i-a_{i+1}+1)}  \cal{F}_i \{-1\} \ar[r]^-{d'_1}  & \uwave{ \onea \cal{E}_i^{(a_i-a_{i+1})} } }
\end{equation*}
with the \uwave{underlined} term (as usual) in homological degree zero and differentials given by compositions of 
splitters and thickness $1$ cap 2-morphisms.

It is easy to check that $\onea  \cal{T}'^{-1}_i$ is isomorphic to $\onea \cal{T}^{-1}_i $ via the chain map given on objects by 
\[
\pm 
\xy
(0,0)*{
\begin{tikzpicture}[scale=1]
\draw [ultra thick, green, ->] (0,0) to [out=90,in=270] (1,2);
\draw [ultra thick, green, ->] (0,2) to [out=270,in=90] (1,0);
%\node [green] at (1,-.5) {\footnotesize $-\lambda+k-1$}; 
\node [green] at (1,2.5) {\footnotesize $-\lambda+k$}; 
%\node [green] at (0,-.5) {\footnotesize $k-1$}; 
\node [green] at (0,2.5) {\footnotesize $k$}; 
\node at (1.7,1) {$\lambda$};
\end{tikzpicture}
};
\endxy
\]
for a suitable choice of signs. Analogously, the Rickard complexes \eqref{Rickardp} are isomorphic to complexes with objects $\cal{E}_i^{(s)} \cal{F}_i^{(a_i-a_{i+1}+ s)}$ 
and in general we may assume that the complexes $\llbracket - \rrbracket^\Sigma$ associated to crossings consist of webs of shape $\squareweb[1]{}{}{}{}{}{}{}$.
\end{rem}

%\section{Decomposing the deformed link invariant}

We now proceed with the decomposition of the invariant.
Consider an oriented, labeled tangle diagram $\tau$, or more specifically, 
an oriented, labeled link diagram $\cal{L}$. 
Our goal is to understand the dependence of $\LC{\tau}{\Sigma}{}$ and $\LH{\cal{L}}{\Sigma}= \mathrm{H}_*(\taut(\LC{\cal{L}}{\Sigma}{}))$ on $\Sigma$. 
This is done in four steps:

\begin{enumerate}
\item In Section \ref{section-sum} we show that $\LC{\tau}{\Sigma}{}$, regarded as a complex over $\hat{\foam{N}{}^\Sigma}$, decomposes into a direct sum of complexes 
$\LC{\tau}{\Sigma}{f}$ indexed by colorings $f$ of the tangle components by multisubsets of $\Sigma$.
\item In Section \ref{section-tensor} we show that the summands $\LC{\tau}{\Sigma}{f}$ from the first step correspond under the splitting functor $\phi$ from Section \ref{2hom} to a tensor product with 
one tensorand $\LC{\tau}{\Sigma}{\lambda\in f}$ for every different root $\lambda \in \Sigma$.
\item In Section \ref{section-unicoloredfoam} we show that foams colored with only one root $\lambda$ behave like $\slnn{N_\lambda}$ foams.
\item In Section \ref{section-assembly} we assemble the previous results for $\tau = \cal{L}$ and track them through relatives of the functor $\taut$ to prove Theorem \ref{mainthm}.
\end{enumerate}

\subsection{The direct sum decomposition of the invariant}
\label{section-sum}
We already know that if we work in $\hat{\foam{N}{}^\Sigma}$, all webs in the complex $\LC{\tau}{\Sigma}{}$ split into direct sums under coloring web edges with multisubsets of $\Sigma$. 
The goal of this section is to show in Lemma \ref{sumdecomp} that the colorings that contribute to $\LC{\tau}{\Sigma}{}$ are the ones that are consistent along tangle components. 
This follows from the orthogonality of idempotents coming from inconsistent colorings, see Corollary \ref{admissiblecrossinglabel}, after observing in Proposition \ref{decorationslidecrossing} that decorations ``slide through crossings''.

\begin{defi} Let $p$, $q$, $r$ and $s$ be symmetric polynomials of the appropriate number of variables, then we define endomorphisms of the chain complexes for negative crossings 
\begin{equation*}
\left \llbracket \xy (0,0)*{
\tikzcrossing[1]{j}{i}{r}{s}{p}{q}};
\endxy \right \rrbracket^\Sigma \in \End \left ( \left \llbracket \xy (0,0)*{
\tikzcrossing[1]{j}{i}{}{}{}{}
};
(12,-3)*{_b};(12,3)*{_a};
\endxy\right \rrbracket^\Sigma
 \right )   \text{ given on webs by } \eqnO 
\end{equation*}
where we have assumed $a \geq b$. For the cases of $a\leq b$ and for the positive crossings we make analogous definitions.
\end{defi}

\begin{prop}
\label{decorationslidecrossing}
Let $p$ and $q$ be symmetric polynomials in the appropriate number of variables. Then the following chain maps are homotopic:

\begin{equation*}
\left \llbracket \xy
(0,0)*{\tikzcrossing[1]{}{}{}{}{p}{}
};
\endxy\right \rrbracket^\Sigma
\sim\;\;
\left \llbracket\xy
(0,0)*{\tikzcrossing[1]{}{}{}{p}{}{}
};
\endxy\right \rrbracket^\Sigma
,\;\;
\left \llbracket\xy
(0,0)*{\tikzcrossing[1]{}{}{}{}{}{q}
};
\endxy\right \rrbracket^\Sigma
\sim\;\;
\left \llbracket\xy
(0,0)*{\tikzcrossing[1]{}{}{q}{}{}{}
};
\endxy\right \rrbracket^\Sigma
\end{equation*}
Analogous statements also hold for the positive crossing. 
\end{prop}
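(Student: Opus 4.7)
The plan is to reduce the claim to the categorified quantum group and then construct an explicit homotopy using thick calculus. First I would observe that both chain maps in question are images under the 2-functor $\Phi_\Sigma \colon \cal{U}_Q(\glm) \to \foam{N}{}^\Sigma$ of analogous chain maps between Rickard complexes decorated by symmetric polynomials on external (thick) strands. Since $\Phi_\Sigma$ sends chain homotopies to chain homotopies, it suffices to establish the corresponding homotopy in $\cal{U}_Q(\glm)$. Because the relevant decorations live only on the two strands involved in the crossing, the argument is local and reduces to a computation in the thick calculus of $\dot{\cal U}_Q(\glnn{2})$ described in \cite{KLMS}.

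Next, I would exploit the fact that the Rickard complex $\cal{T}^{-1}\mathbf{1}_{(a,b)}$ is invertible up to homotopy and implements the categorical braid action, so that the symmetric polynomial $p\in\Sym(\X_a)$ placed on the thickness-$a$ strand before the crossing corresponds under the braiding to the same $p$ placed on the thickness-$a$ strand after the crossing (the strand labels are preserved even though the spatial position of the strand is swapped). Since $\Sym(\X_a)$ is generated as an algebra by the elementary symmetric functions and since Schur polynomials decompose into thin-strand dot patterns via equation \eqref{eqn-explode}, it suffices to prove the homotopy when $p$ is a single dot on a thin strand entering the splitter which explodes the thick strand.

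For a single dot, I would construct the homotopy term-by-term in the Rickard complex. At each position $\cal{E}^{(s)}\cal{F}^{(a-b+s)}$ (using the form from Remark~\ref{flipRickard} when advantageous), one computes the graded commutator of the dot operator with the differential $d_k$, which is a composition of a splitter with a thin cup. Using the nilHecke relations that let a dot slide through a crossing at the cost of an identity term, together with the thick calculus relations governing splitters and mergers, this commutator can be written explicitly in the form $d\circ h + h\circ d$ for a 2-morphism $h$ built from splitters, mergers, and a single dot. The positive crossing case then follows from the same argument applied to the dual Rickard complex.

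The main obstacle is the combinatorial bookkeeping for general $(a,b)$: the complex has $\min(a,b)+1$ terms and the homotopy must be compatible across every boundary. A cleaner route is to first verify the homotopy for $a=b=1$, where the complex has length two and the homotopy is a short nilHecke calculation involving one sideways cup--cap correction. One then bootstraps to arbitrary thicknesses by realizing $\cal{E}^{(a)}\cal{E}^{(b)}$ as the image of the idempotent $\mathbf{e}_a\otimes\mathbf{e}_b$ inside $\cal{E}^a\cal{E}^b$: the dot-sliding homotopy for the thin complex is $S_a\times S_b$-equivariant for the nilHecke action, and conjugating by $\mathbf{e}_a\otimes\mathbf{e}_b$ produces the desired homotopy on the thick Rickard complex. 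Transporting this homotopy through $\Phi_\Sigma$ yields the required foam-level chain homotopy.
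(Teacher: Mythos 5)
Your opening reduction---pull everything back through $\Phi_\Sigma$ and use that null-homotopic chain maps form an ideal in order to reduce to algebra generators of $\Sym(\X)$---matches the paper's first step (the paper uses the complete symmetric functions $h_i$ rather than the $e_i$, but either generating set works). The difficulties begin with your further reduction to ``a single dot on a thin strand.'' A single dot on one thin strand of the exploded thick edge is not an endomorphism of the thick Rickard complex, and your mechanism for promoting the thin-strand homotopy to the thick one---conjugating by $\mathbf{e}_a\otimes\mathbf{e}_b$---presupposes that $\cal{T}^{-1}\mathbf{1}_{(a,b)}$ is obtained from a composite of thin Rickard complexes by idempotent truncation. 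That is a substantial result in its own right (Cautis's clasp technology), it is not established or invoked for this purpose in the paper, and even granting it you would still need to check that your dot-sliding homotopy commutes with the projection onto the relevant direct summand up to further homotopy. Separately, your appeal to the braid group action to assert that $p$ on the incoming strand ``corresponds'' to $p$ on the outgoing strand is circular: the proposition is precisely the statement that the Rickard complex intertwines the two decoration actions up to homotopy.

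The second gap is the assertion that the graded commutator of the decoration with $d_k$ ``can be written explicitly in the form $d\circ h+h\circ d$.'' This is where the actual content lies, and it does not come for free even for generators. The paper's candidate homotopy for $h_i$ is the map $\eta_i$ built from $(i-1)$-dotted $1$-labeled cup foams; computing $\eta_i d+d\eta_i$ requires the thick-calculus identity of \cite[Lemma 4.16]{KLMS} together with the bubble-resolution formula of \cite{QR}, and the answer is \emph{not} $h_i^l-h_i^r$ on the nose---it is $h_i^l$ plus a sum of cross terms involving $h_p^l$, $h_q^r$ and $(-1)^re_r$ with $p<i$. These error terms are only removed by an induction on $i$, using the already-established homotopies for $h_p$ with $p<i$ (legitimate precisely because null-homotopic endomorphisms form an ideal). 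Your proposal has no analogue of this inductive correction, so even where your setup makes sense the candidate homotopy as described would not close up.
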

\begin{proof}
From the foam description of these chain maps, it is easy to see that composing such chain maps is equivalent to multiplying decorations on the foam facets. 
Since null-homotopic chain maps form an ideal in the ring of endomorphisms of the crossing complex, it suffices to find homotopies in the cases where $p$ (or $q$) is a complete symmetric polynomial $h_i$. We'll only prove the first homotopy, as the other case is analogous. 
Denote $h_i$ acting on the left as $h_i^l$ and on the right as $h_i^r$. 
Recall that the differential for negative crossing complexes is given using $1$-labeled cap foams. 
We now prove by induction on $i\geq 1$ that the foams
\begin{equation*} \eta_i:=
\eqnQ
\end{equation*}
constructed using $(i-1)$-dotted%\footnote{Recall that a dot represents a decoration by the variable associated to a $1$-labeled foam facet.} 
 $1$-labeled cup foams, assemble to a chain homotopy from $h_i^l$ to $h_i^r$. 
We start the computation with an equation from \cite[Lemma 4.16]{KLMS}, which under the foamation functor $\Phi_\Sigma$ gives 
\begin{equation*}
(-1)^{b-a} \Phi_\Sigma \left (
  \xy
 (0,0)*{\includegraphics[scale=0.35]{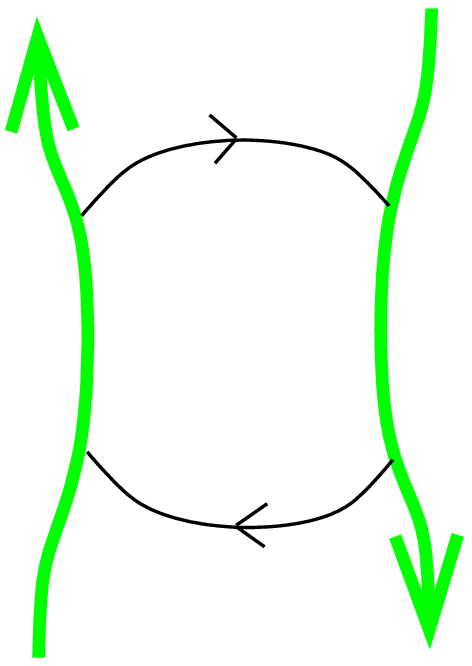}};
 (-7,-14)*{_{k}};(7,-14)*{_{a-b+k}};(-7,13)*{_{k}};(7,13)*{_{a-b+k}};
  (3,-7)*{\bullet}+(0,-2)*{_{i-1}}; (12,-7)*{_{a-b}};
  \endxy  + \; \xy
 (0,0)*{\includegraphics[scale=0.35]{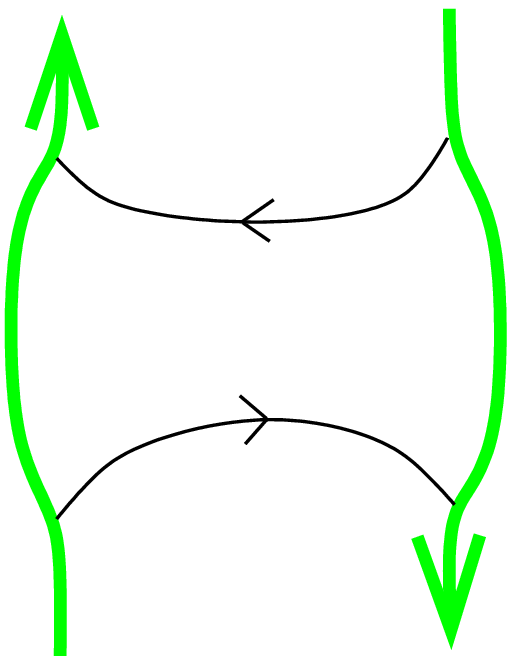}};
 (-7,-14)*{_{k}};(7,-14)*{_{a-b+k}};(-7,13)*{_{k}};(7,13)*{_{a-b+k}}; (12,-7)*{_{a-b}};
  (3,4)*{\bullet}+(0,2)*{_{i-1}};
  \endxy \right ) = \sum_{ \xy
  (0,-1)*{\scs p+q+r =};
  (0,-4)*{\scs i};
  \endxy}\hspace{0in}  \Phi_\Sigma \left ({ 
  \xy
 (-10,0)*{\includegraphics[scale=0.42]{figs/tlong-up.eps}};
 (-10.5,-14)*{_{k}};(-10,-2)*{\bigb{h_p}};
  (10,0)*{\includegraphics[angle=180, scale=0.42]{figs/tlong-up.eps}};
 (10.5,-14)*{_{a-b+k}};(10,-2)*{\bigb{h_q}};
  (0,-2)*{\cbub{\spadesuit +r}{}}; (15,-7)*{_{a-b}};
  \endxy \nn }\right ) 
\end{equation*}
which is a foam identity where the LHS is precisely $\eta_i d + d \eta_i$. We continue the computation, but since from the next step onwards all foams are identity foams with decorations, we only draw the underlying webs and write decorations next to the corresponding web edges. Using \cite[Equation (3.32)]{QR} to resolve the ``bubble'' in the previous step, we get:

\begin{align*}
 \eqnR .
\end{align*}

In the case where $i=1$, the homotopy $\sim$ at the beginning of the second line is an equality. 
This constitutes the start of the induction. For the induction step we use the homotopy of $h_p^l$ and $h_p^r$ for $p<i$ to proceed to the second line. 
This is possible because of the fact, which can easily be checked via the decoration migration relations on foams (see the right-hand side of equation \eqref{FoamRel2}), 
that $\eqnS$ is a chain map.
\end{proof}
\comm{ %% leave as exercise...
\begin{equation*}
\eqnT
\end{equation*}
\begin{equation*}
\eqnU
\end{equation*}}

\begin{cor} 
\label{admissiblecrossinglabel}
Let $A$, $B$, $C$ and $D$ be multisubsets of $\Sigma$ of the appropriate size and, by abuse of notation, we denote the associated idempotents with the same letter. Then 
\begin{equation*}
\left \llbracket \crossing[1]{}{}{A}{B}{C}{D}  \right\rrbracket^\Sigma
\end{equation*} 
is an idempotent chain map. Furthermore, if $A\neq D$ or $B \neq C$, then it is null-homotopic. Analogous statements hold for positive crossings.

\end{cor}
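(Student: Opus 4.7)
The plan is to reduce the statement to Proposition \ref{decorationslidecrossing} together with the orthogonality of the idempotents $\idem_A$ proved in Corollary \ref{cor-idem}. Recall that for any multisubset $A\subset \Sigma$ of the right size, the idempotent $\idem_A$ is a symmetric polynomial decoration, and is the identity $2$-morphism on the object $(W,\idem_A)$ in $\hat{\foam{N}{}^\Sigma}$. Hence decorating one of the four boundary strands of the crossing complex by $\idem_A$ is the same as inserting $\idem_A$ as a polynomial decoration on that strand, which is exactly the set-up of Proposition \ref{decorationslidecrossing}.

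First, I would use Proposition \ref{decorationslidecrossing} to migrate the idempotent decorations on the two bottom boundary strands upward through the crossing. In the negative crossing $\crossing[1]{}{}{A}{B}{C}{D}$ the strand starting at the bottom-left endpoint (decorated by $\idem_D$) is connected, up to homotopy of the Rickard complex, to the top-right endpoint (decorated by $\idem_A$); similarly, the bottom-right strand ($\idem_B$) corresponds to the top-left ($\idem_C$). Sliding the bottom decorations through crossings, using both parts of Proposition \ref{decorationslidecrossing} in turn, produces a chain map homotopic to the original but whose only decorations are $\idem_A \cdot \idem_D$ placed on the top-right boundary strand and $\idem_B \cdot \idem_C$ placed on the top-left boundary strand. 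By the orthogonality relation \eqref{orthogonal} of Corollary \ref{cor-idem}, if $A\neq D$ then $\idem_A \idem_D = 0$, and the whole chain map is null-homotopic; similarly if $B\neq C$. This establishes the second claim.

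For idempotency in the case $A=D$ and $B=C$, composing the chain map with itself stacks the idempotent decorations along each of the two boundary strands, so by $\idem_A^2 = \idem_A$ and $\idem_B^2 = \idem_B$ (Corollary \ref{cor-idem}) the composition is chain-homotopic to the original. The positive crossing case is completely analogous, using the analogous statement of Proposition \ref{decorationslidecrossing} noted for positive crossings.

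The main obstacle is bookkeeping: making precise which boundary endpoint is identified with which under the crossing, and checking that the migration given by Proposition \ref{decorationslidecrossing} can be applied simultaneously to all four boundary decorations (which amounts to iterating the proposition and observing that the homotopies compose appropriately). One also has to confirm that when the labels $i$ and $j$ on the two strands of the crossing are unequal, the migration of decorations through the alternative complex from Remark \ref{flipRickard} behaves identically, which is immediate since that complex is isomorphic to the Rickard complex.
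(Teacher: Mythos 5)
Your proof is correct and follows essentially the same route as the paper: idempotency from the fact that composing these decorated chain maps multiplies the decorations, and null-homotopy by using Proposition \ref{decorationslidecrossing} to slide $D$ onto $A$ and $C$ onto $B$ along their respective strands, then invoking orthogonality from Corollary \ref{cor-idem}. The only cosmetic difference is that the paper observes idempotency as an equality of chain maps (no sliding or homotopy needed for that part), whereas you phrase it up to homotopy, which is harmless.
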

\begin{proof}
We have already noted that composition of such chain maps corresponds to multiplication of decorations. Thus, the chain map is clearly idempotent. Now suppose that $A\neq D$ or $B \neq C$. Then, using Proposition \ref{decorationslidecrossing}, we have:
\begin{equation*}
 \left \llbracket\crossing[1]{}{}{A}{B}{C}{D} \right\rrbracket^\Sigma  \sim \;\; \left \llbracket\crossing[1]{}{}{A D}{B C}{}{}  \right\rrbracket^\Sigma \sim \;\;0
\end{equation*} 
since $A$ and $D$ or $B$ and $C$ are orthogonal idempotents.
\end{proof}

\begin{lem} \label{adddecompcross} There is a homotopy equivalence of complexes over $\hat{\foam{N}{}^\Sigma}$:
\begin{equation*}
\left \llbracket
\xy (0,0)*{
\tikzcrossing[1]{a}{b}{}{}{}{}
};
(12,-3)*{_b};(12,3)*{_a};
\endxy \right \rrbracket^\Sigma \!\!\sim \;\; \bigoplus_{A,B} \;\left \llbracket\crossingw[1]{a}{b}{A}{B}{B}{A} \right\rrbracket^\Sigma
\end{equation*} 
where the summands on the right-hand side denote the subcomplexes of the complex on the left-hand side obtained by coloring webs and foams by idempotents $A$ and $B$ at the indicated positions. In the direct sum $A$ and $B$ range over all multisubsets of $\Sigma$ of the correct size. The analogous statements hold for positive crossings.
\end{lem}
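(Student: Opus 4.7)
The plan is to decompose the identity chain map on the crossing complex using the idempotent decomposition from Corollary~\ref{cor-idem}, then discard the off-diagonal pieces using the null-homotopy result from Corollary~\ref{admissiblecrossinglabel}. Since $\hat{\foam{N}{}^\Sigma}$ contains the required idempotent splittings, the surviving diagonal pieces assemble into the stated direct sum decomposition.

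More precisely, I would first insert identity foams on the four boundary edges of every web appearing in $\llbracket\text{crossing}\rrbracket^\Sigma$, and then apply equation~\eqref{edgedecomp} from Corollary~\ref{cor-idem} to expand each identity foam as a sum of idempotent decorations $\idem_A$ over all multisubsets $A \subset \Sigma$ of the appropriate size. Since these insertions commute with the differentials of the crossing complex (the differentials are built from cap/cup foams whose boundary facets absorb the idempotent decorations), the result is a homotopy
\[
\id_{\llbracket\text{crossing}\rrbracket^\Sigma} \;\sim\; \sum_{A,B,C,D} \left \llbracket\crossing[1]{}{}{A}{B}{C}{D}\right\rrbracket^\Sigma
\]
in $\End(\llbracket\text{crossing}\rrbracket^\Sigma)$, where the sum runs over all 4-tuples of multisubsets $(A,B,C,D)$ of the prescribed sizes.

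Next, I would invoke Corollary~\ref{admissiblecrossinglabel} to discard all terms with $A \neq D$ or $B \neq C$, as these are null-homotopic. What remains is
\[
\id \;\sim\; \sum_{A,B} \left \llbracket\crossing[1]{}{}{A}{B}{B}{A}\right\rrbracket^\Sigma,
\]
and by the idempotence and orthogonality statement in Corollary~\ref{admissiblecrossinglabel} (the latter again following from the fact that composition of idempotent-decoration chain maps corresponds to multiplication of decorations, together with the orthogonality relation \eqref{orthogonal}), the remaining endomorphisms form a complete system of mutually orthogonal idempotents summing to the identity up to homotopy.

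Finally, since $\hat{\foam{N}{}^\Sigma}$ is obtained from $\Foam{N}^\Sigma$ by splitting precisely such idempotents coloring web facets, the images of these orthogonal idempotents provide honest subcomplexes of $\llbracket\text{crossing}\rrbracket^\Sigma$ in $\hat{\foam{N}{}^\Sigma}$, and their direct sum is homotopy equivalent to the original complex. These subcomplexes are exactly the $\llbracket\text{crossing with labels }A,B,B,A\rrbracket^\Sigma$ appearing on the right-hand side of the claim. The proof for positive crossings is identical, using the analogous versions of Proposition~\ref{decorationslidecrossing} and Corollary~\ref{admissiblecrossinglabel}. The only mildly subtle point is the bookkeeping verifying that the idempotent chain maps indeed commute with the Rickard differentials; this however follows immediately from decoration migration across the cap/cup differential components, which is how Proposition~\ref{decorationslidecrossing} was established in the first place.
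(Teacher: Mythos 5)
Your proposal is correct and follows essentially the same route as the paper: decompose along the boundary-edge idempotents (which split in $\hat{\foam{N}{}^\Sigma}$ and are respected by the differential, since the cap/cup components of the Rickard differential are supported away from the decorated boundary facets), then invoke Corollary \ref{admissiblecrossinglabel} to cancel the null-homotopic summands with $A\neq D$ or $B\neq C$. The paper phrases this by splitting the objects directly rather than by decomposing the identity chain map, but the content is identical.
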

\begin{proof}
The objects of the complex on the left-hand side, which are webs, split into direct sums according to the definition of $\hat{\foam{N}{}^\Sigma}$ when one colors all boundary edges by idempotents.  
The differential clearly respects this decomposition since it locally looks like an identity foam around the decoration by idempotents. 
Finally Corollary \ref{admissiblecrossinglabel} shows that summands are -
-homotopic if they do not come from a coloring that is consistent along the strands in the crossing, and working in the homotopy category of such complexes we immediately cancel null-homotopic summands. 
\end{proof}
 The following global version of this lemma follows directly:
 
\begin{lem}
\label{sumdecomp}
Let $\tau$ be a labeled, oriented tangle diagram. 
The complex $\LC{\tau}{\Sigma}{}$, regarded over $\hat{\foam{N}{}^\Sigma}$, splits into a direct sum of complexes $\LC{\tau}{\Sigma}{f}$, 
and there is one such piece for every coloring $f$ of tangle components by idempotents corresponding to multisubsets of $\Sigma$ of the appropriate size.
\end{lem}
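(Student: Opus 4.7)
The plan is to globalize the local decomposition result from Lemma \ref{adddecompcross} by applying it independently at each crossing and then using the canopolis structure to see how these local decompositions assemble into the claimed global direct sum. The key point is that the horizontal composition $\otimes$ and disjoint union $\sqcup$ used to build $\LC{\tau}{\Sigma}{}$ from local pieces commute with direct sums in each $\Hom$-category, so local splittings propagate to global ones.

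First I would pick a generic tangle diagram for $\tau$ in which crossings and caps/cups occur at distinct heights, so that $\LC{\tau}{\Sigma}{}$ is built as an iterated $\otimes$-composition of tensor products $\sqcup$ of complexes associated to generating tangles (crossings, caps, cups, and identity strands) with $N$-labeled identity strands. For each crossing occurring in this presentation, Lemma \ref{adddecompcross} provides a homotopy equivalence splitting its local complex as a direct sum over pairs of admissible idempotent colorings of the two incoming strands; applying $\otimes$ and $\sqcup$ with identity webs preserves such decompositions (since identity webs in $\hat{\foam{N}{}^\Sigma}$ themselves split as direct sums over all idempotent colorings of their edges by Corollary \ref{cor-idem} applied to the foamation of thick calculus). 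Thus $\LC{\tau}{\Sigma}{}$ decomposes as a direct sum indexed by systems of idempotent colorings of all strand-endpoints arising at each elementary piece.

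Next I would match up these local colorings across the heights at which the tangle is sliced. The horizontal composition $\otimes$ in the canopolis is modeled on the tensor product of chain complexes; hence the only non-zero summands in the tensored complex are those where the idempotent coloring of the top endpoints of one piece agrees with the coloring of the bottom endpoints of the next. By Corollary \ref{admissiblecrossinglabel}, summands coming from inconsistent colorings at crossings are null-homotopic and hence can be discarded in the homotopy category. For caps and cups, the foam relations (specifically non-admissibility, Proposition \ref{adm}) force the coloring on the non-$N$-labeled boundary strand to match on both endpoints, so again only consistent colorings contribute. Tracing along the tangle, consistency at each elementary piece is precisely the statement that the surviving colorings are constant along each connected component of $\tau$. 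This identifies the set of surviving summands with colorings $f$ of tangle components by multisubsets of $\Sigma$ of the correct size, and we define $\LC{\tau}{\Sigma}{f}$ to be the corresponding direct summand.

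The main obstacle is book-keeping: one has to verify that after simultaneously splitting all crossings and all edges of intermediate webs, the resulting direct sum indexing is genuinely the set of component-colorings rather than something coarser or finer. The cleanest way to handle this is to fix a cell-decomposition of the tangle (one arc per crossing-adjacency) and note that each arc gets a single idempotent label from our splitting; consistency at trivalent interactions with the auxiliary $N$-labeled strands forces idempotents to match across all arcs belonging to the same tangle component, via Proposition \ref{adm}. Once this matching is established, the direct sum decomposition is automatic from the additivity of $\otimes$ and $\sqcup$ over direct sums in $\hat{\foam{N}{}^\Sigma}$.
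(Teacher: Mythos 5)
Your proposal is correct and follows essentially the same route as the paper: the paper simply states that the global decomposition "follows directly" from the local crossing decomposition of Lemma \ref{adddecompcross}, and your argument is exactly the fleshed-out version of that globalization, using the canopolis structure, Corollary \ref{admissiblecrossinglabel} to discard inconsistent colorings at crossings, and non-admissibility (Proposition \ref{adm}) to enforce consistency along components.
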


\subsection{The tensor product decomposition of the summands}
\label{section-tensor}

In this section we show that the functor $\phi$ from Section \ref{2hom} can be used to split idempotent colored summands $\LC{\tau}{\Sigma}{f}$ 
of the chain complex associated to a tangle diagram -- as described at the end of the previous subsection -- 
into the tensor product complex $\bigotimes_\lambda \LC{\tau}{\Sigma}{\lambda \in f}$ of their $\lambda$-components. 
We now define these concepts:

\begin{defi} Let 
\begin{itemize}
\item $\LC{\tau}{\Sigma}{\lambda \in f}$, \emph{ the $\lambda$-component} of $\LC{\tau}{\Sigma}{f}$, 
be the sequence of colored webs and foams between them obtained by taking the $\lambda$-component of every web and foam appearing in $\LC{\tau}{\Sigma}{f}$. 
It is easy to check that $\LC{\tau}{\Sigma}{\lambda \in f}$ is itself a chain complex over $\hat{\foam{N}{}^\Sigma}$.

\item $\bigotimes_\lambda \LC{\tau}{\Sigma}{\lambda \in f}$ be the tensor product complex of $\LC{\tau}{\Sigma}{\lambda \in f}$ given on webs by taking disjoint union $\sqcup$. 
In particular, the webs in this chain complex are exactly the associated split webs $\bigsqcup_\lambda W_\lambda$ of webs $W$ in $\LC{\tau}{\Sigma}{f}$. 
The foams giving the components of the differential in $\bigotimes_\lambda \LC{\tau}{\Sigma}{\lambda \in f}$ are (up to a sign) the disjoint union of the 
$\lambda$-components of the differential foams in $\LC{\tau}{\Sigma}{f}$. 
The sign is the usual sign that is necessary to make the differential in the tensor product complex square to zero. 
\end{itemize}
\end{defi}

Applying the web splitting functor $\phi$ from Section \ref{2hom} to the chain complex $\LC{\tau}{\Sigma}{f}$ results in a chain complex  consisting of exactly the same split webs as 
$\bigotimes_\lambda \LC{\tau}{\Sigma}{\lambda \in f}$. 
Furthermore, there exists a natural choice of homological grading on $\LC{\tau}{\Sigma}{\lambda \in f}$ that makes the bijection between webs in 
$\phi(\LC{\tau}{\Sigma}{f})$ and $\bigotimes_\lambda \LC{\tau}{\Sigma}{\lambda \in f}$ grading preserving. 
This is explained for the local case of a single crossing in Remark \ref{tensorcxrem} and immediately generalizes to $\LC{\tau}{\Sigma}{f}$.  

The main task in this section is to prove Theorem \ref{splitdifferential}, which states that the isomorphism foams $T_W$ in the definition of 
$\phi$ can be chosen so that the differential of $\phi(\LC{\tau}{\Sigma}{f})$ equals the differential of $\bigotimes_\lambda \LC{\tau}{\Sigma}{\lambda \in f}$, 
and we have:
\begin{equation*}\phi(\LC{\tau}{\Sigma}{f}) = \bigotimes_\lambda \LC{\tau}{\Sigma}{\lambda \in f}.
\end{equation*} 

\begin{rem}
\label{tensorcxrem}
Consider the webs in the chain complex associated to a crossing, e.g. with cap-differentials for the sake of concreteness, 
where we have already placed idempotents on all boundary edges of the webs:
\begin{equation*}\eqnAA .
\end{equation*}
Without loss of generality, we assume that $|A|\geq |B|$. If such a web is not isomorphic to the zero web, it decomposes into a direct sum by coloring the interior edges of the web with various idempotents. The crossing complex starts with $W_{k_{max}} = W_{|B|}$ in homological degree zero -- which is isomorphic to the zero web if and only if $A\uplus B \not\subset \Sigma$, but which is indecomposable otherwise. Further, there exists a minimal $k_{min}=|B\setminus A|$ such that $W_{k_{min}}$ is non-zero and indecomposable:
\begin{equation*}\eqnAB .
\end{equation*}
Now consider the target $W_{k_{max}-1}$ of the differential on $W_{k_{max}}$:
 \begin{equation*}\eqnAC .
\end{equation*}
More generally, any non-zero web $W_k$ decomposes into a direct sum of webs which differ in labels and colorings from $W_{k_{max}}$ by a re-routing of a multisubset $C$ of $A\cap B$ around the square: 
\begin{equation*}\squarewebC .
\end{equation*}
Such an indecomposable web is non-zero if and only if $(A \uplus B)\setminus C \subset \Sigma$. 
Clearly $C=A\cap B$ satisfies this because we assume that $A,B\subset \Sigma$. 
Since this condition can be checked for every root individually, there is a minimal $C_{min}$ such that $W_{C_{min}}$ and every $W_C$ for $C_{min}\subset C \subset A\cap B$ is non-zero. 
We can think of the set of admissible $C$ as lying on the lattice $\Z^l$ with the $k^{th}$ coordinate indicating the multiplicity of the $k^{th}$ root in $C$. 
Then it is clear that the homological grading of a web is the sum of the coordinates and the support of the non-zero $W_C$ is an $l$-dimensional box. 
Components of the differential are caps colored by a single root $\lambda_k$ and hence map between summands in which the re-routing sets $C$ differ by $\lambda_k$, i.e. map between lattice points which differ by $1$ in the $k^{th}$ coordinate only. 
The differentials in this complex already appear as the ones coming from a tensor product of complexes -- one for each root $\lambda_k$ -- 
with homological grading the $k^{th}$ coordinate in the lattice and with differential corresponding to the $\lambda_k$ colored cap-differential.
\end{rem}

In the following Lemma we collect commutation relations needed in the proof of Theorem \ref{splitdifferential}. 
In the following graphics red facets are colored with a multisubset containing a single root $\lambda$, blue facets are colored with a multisubset not containing $\lambda$, 
and the coloring of the green facets is uniquely determined or arbitrary -- generically, they contain both $\lambda$ and other roots. 

\begin{lem} \label{commutation} Splitting off or merging a red facet commutes with arbitrary M-P foams, red-blue digon creation, digon removal, zip, and unzip foams, up to certain units. 
\end{lem}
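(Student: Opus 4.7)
The plan is to verify each commutation relation locally, generator by generator, by exploiting the invertibility results already established in Section~\ref{relations}. The foams that split off or merge a red facet are precisely the red-blue unzip and zip foams whose invertibility in $\foam{N}{}^\Sigma$ was established in Lemma~\ref{split1}, and, when the splitting is performed through a digon, the digon creation and removal foams of Lemma~\ref{split2}. Since these are genuine isomorphisms with explicit inverses involving the signed Schur-polynomial sums of equations \eqref{iso1}--\eqref{iso4}, the lemma amounts to the statement that they can be slid past any of the listed foams at the cost of a scalar unit in $\C$.

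First, for each foam type $F$ in the list (M-P, red-blue digon creation, digon removal, zip, unzip), I would form the composite of $F$ with a red-facet-splitting (or merging) foam on the appropriate boundary. Using the isomorphism relations \eqref{iso1}--\eqref{iso4} together with the decoration migration of Corollary~\ref{adm2} and the orthogonality from equation \eqref{nonadm}, which kills non-admissibly colored seams, I would rewrite the composite as a single admissibly decorated foam. Second, I would identify this decorated foam as the product of a symmetric-polynomial decoration and the expected ``commuted'' foam, in which the red-facet splitting has been pushed to the opposite side of $F$. By Corollary~\ref{idunit}, the symmetric-polynomial factor represents a unit precisely when its evaluation on the multiset coloring the incident red-blue facet is nonzero. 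As in the computation at the end of the proof of Lemma~\ref{split1}, this evaluation is always a product of differences of distinct roots of $\Sigma$, hence nonzero.

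For the zip, unzip, and digon cases the commutation is essentially formal, since the splitters and mergers involved satisfy associativity-type identities via \eqref{webiso1}, and the dot-sliding relations \eqref{dotmigration1} allow any additional decoration to be transported to whichever facet is convenient. The main obstacle will be the M-P case, in which the M-P foam has three facets meeting along a seam and the red-facet split may lie on any one of them. Here one must simultaneously invoke the three-facet merger relations \eqref{FoamRel1}, the decoration-migration \eqref{dotmigration1}, and the dot-explosion relation \eqref{eqn-explode}. I would handle this by first reducing to the thin-strand case via \eqref{eqn-explode}, where Corollary~\ref{adm2} applies directly at each elementary splitter and merger; after performing the root-splitting at the level of thin strands, one then reassembles the result into a single thick M-P foam on the other side. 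Because the lemma asserts only the existence of a unit scaling rather than identifying it, this strategy avoids the most delicate sign computations and keeps the bookkeeping tractable, with the existence of the unit again guaranteed by Corollary~\ref{idunit} applied to the appropriate Vandermonde-type discriminant.
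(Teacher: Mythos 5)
Your toolkit is the right one (the invertible foams of Lemmata \ref{split1}--\ref{split2}, decoration migration, and the unit criterion of Corollary \ref{idunit}), and the target shape of the relations, $X\circ d = u_1\circ d\circ u_2\circ X'$ with $u_i$ invertible decorations, matches what is actually proved. But your plan inverts where the difficulty lies, and this is a genuine gap rather than a stylistic difference. The M-P case is not the ``main obstacle'': splitting or merging a red facet commutes with an M-P foam \emph{on the nose}, with no units at all, because the composite is just an instance of the associativity relation (a version of (3.12) of \cite{QR} together with \eqref{FoamRel1}) or of the pitchfork relation ((3.19) of \cite{QR}) in the underlying undeformed foam 2-category; these are isotopy-type relations that hold independently of the idempotent coloring. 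Your proposed detour through \eqref{eqn-explode} -- exploding the thick facets into thin strands, splitting roots strand by strand via Corollary \ref{adm2}, and then ``reassembling'' a thick M-P foam on the other side -- is exactly the kind of coherence problem that invoking the pitchfork relation directly avoids; you give no argument that the reassembled foam is the M-P foam rather than the M-P foam times an unidentified decoration, and appealing to Corollary \ref{idunit} for a ``Vandermonde-type discriminant'' here is vacuous since there is nothing to invert.

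Conversely, the cases you dismiss as ``essentially formal'' -- red-blue digon creation/removal and zip/unzip -- are precisely where the units appear and where the real work is. Commuting a red splitter past, say, a digon creation on the far side requires inserting a red-blue blister via \eqref{iso4}, sliding the new seam across the red seam with \eqref{FoamRel1}, and then absorbing the blister into the digon via \eqref{iso3}, which is what produces the unit decorations $\sum_\alpha(-1)^{|\hat\alpha|}\pi_\alpha\pi_{\hat\alpha}$ and their inverses; the unzip case similarly needs \eqref{iso1}, \eqref{FoamRel1} and \eqref{iso2}. One also has to arrange (and this matters for the later use of the lemma in Theorem \ref{splitdifferential}) that the resulting units land on the green/blue facets and the split-off red facet stays undecorated, which your sketch does not address. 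Finally, a small but real misstatement: the units are not ``scalar units in $\C$'' but invertible elements of the idempotent-truncated decoration algebras $\idem_A H_a^\Sigma(\X)\otimes\idem_B H_b^\Sigma(\Y)$; your later appeal to Corollary \ref{idunit} shows you know this, so the framing sentence should be corrected.
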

The graphics in the following proof illustrate and make precise these statements.

\begin{proof} 
Throughout the proof of this lemma, the displayed graphics are to be interpreted as local foam pieces. First we consider the case of M-P foams, by which we mean the elementary foams between the two possible two-splitter (two-merger) webs. They are shown in green in the following graphics. 
\begin{equation*}
\eqnAE
\end{equation*}
The first commutation relation follows from a version of foam relation \cite[(3.12)]{QR} and repeated use of relation \eqref{FoamRel1}. The second commutation relation holds because it is an isotoped version of the pitchfork relation \cite[(3.19)]{QR}. 
There are analogous versions of these relations where the seam attaching the red facet to the rest of the foam is reoriented and inclined the other way, 
and another four relations hold for a red facet split off on the back-side of the green foam.  Clearly, red mergers and splitters then also commute with the inverse M-P foams. These 16 commutations relations describe all possible interactions of red splitters and mergers with a M-P foam between splitter-webs. The cases of M-P foams between merger webs is handled similarly.

While the commutation relations with M-P foams are independent of the coloring of foam facets with idempotents, 
this is in general no longer the case for digon creation, digon removal, zip, and unzip foams. 
Instead, we get commutation up to unit decorations, using the relations in Section \ref{relations}. 
If we denote the foams that split off or merge a red facet by $d$ and the foam across which we want to commute it by $X$ then the relations we get take the form:
\begin{equation*}X \circ d = u_1 \circ d \circ u_2 \circ X' 
\end{equation*} 
where $u_1$ and $u_2$ are identity foams with decorations that are invertible under composition $\circ$ in the $2$-morphism direction, 
and $X'$ is a foam that is equal to $X$ as a CW-complex, but might have different labels on facets\footnote{Facets which would have label $0$ have to be erased in $X'$.}. 
Practically this means that we can commute the red facet past the foams mentioned in the statement of the lemma at the expense of invertible decorations.
Furthermore, we'll see that we can keep the red facet clear of all such decorations.

First we look at the case of a digon creation. The following graphics represent the local piece around the digon creation. 
We suppress the precise description of the unit decorations, since they are not immediately relevant for the following discussion and can easily be reconstructed from the description here and the relations in Section \ref{relations}. 
We do, however, keep track of where the decorations are placed and of their type: we place a pair $\circ_1, \circ_2$ on faces with alphabets $\X$ and $\Y$ respectively for a decoration of the form 
$\sum_{\alpha \in P(-,-)} (-1)^{|\hat{\alpha}|} \pi_\alpha(\X)\pi_{\hat{\alpha}}(\Y)$ and $*_1,*_2$ for the corresponding inverse decoration.
\begin{equation*}
\eqnAF
\end{equation*}

The first commutation relation holds because it is a M-P foam with its inverse, see relation \eqref{FoamRel1}. 
For the second one we first introduce a red-blue blister via relation \eqref{iso4} below the seam of the red facet, next we slide this seam across the seam of the blister using relation \eqref{FoamRel1}, 
and finally we use equation \eqref{iso3} (with inverse units~$*$) to join the blister and the digon creation in the upper region of the foam\footnote{Here, we avoid the use of relation (3.13) in \cite{QR} which would put decorations on the split off red facet.}.
Analogous identities hold for sliding a facet past a digon creation on the other side. 

Next we consider the case of an unzip:
\begin{equation*}
\eqnAG
\end{equation*}
The first commutation relation again holds because it is a M-P foam with its inverse. 
For the second one, we first break the green strip in the lower half of the diagram on the left-hand side of the relation using relation \eqref{iso1}.
The seam bounding the upper green region can then be moved upward across the seam of the red facet using relation \eqref{FoamRel1}, 
and finally the whole upper green region can be removed via relation \eqref{iso2} at the expense of a unit $\circ$ acting on top. 
Similar commutation relations hold for digon removal and zip foams. 
\end{proof}

\begin{thm} 
\label{splitdifferential}
Let $\LC{\tau}{\Sigma}{f}$ be an idempotent colored summand of the chain complex associated to a tangle diagram, 
then there exists a choice of isomorphism foams $T_W$ used to define the functor $\phi$ (see Definition \ref{def-phifunct}) 
such that $\phi(\LC{\tau}{\Sigma}{f})=\bigotimes_\lambda \LC{\tau}{\Sigma}{\lambda \in f}$. 
\end{thm}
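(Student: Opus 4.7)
The plan is to reduce to a local analysis. The differentials in $\LC{\tau}{\Sigma}{f}$ are built from elementary foam pieces -- caps, cups, zips, unzips, and M-P foams -- coming from the Rickard crossing complexes, placed in parallel with identity foams on the rest of the tangle. Since the web splitting functor $\phi$ is built so as to respect disjoint union $\sqcup$ (which is the tensor product of chain complexes in the root direction), it suffices to show that for each elementary differential foam $d \colon W_1 \to W_2$ appearing in a crossing complex, the image $\phi(d) = T_{W_2} \circ (\id_L \otimes d \otimes \id_R) \circ B_{W_1}$ equals, up to an appropriate Koszul sign, the disjoint union of its $\lambda$-components $\bigsqcup_\lambda d_\lambda$.

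First I would specify the splitting foams $T_W$ for each web $W$ actually occurring in the crossing complexes $\llbracket \text{crossing} \rrbracket^\Sigma$. By Remark \ref{tensorcxrem}, these webs are indexed by subsets of the distinct roots and take a square-ladder form, so $T_W$ can be defined inductively by peeling off the $\lambda$-components root-by-root using the compositions of zip, unzip, digon-removal, and digon-creation isomorphisms from Lemmata \ref{split1} and \ref{split2}. Fixing a linear order on the distinct roots of $\Sigma$ makes this assignment canonical across the complex.

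The workhorse is then Lemma \ref{commutation}: splitting or merging a $\lambda$-colored facet commutes with M-P foams, zips, unzips, and digon creation/removal foams up to unit decorations supported entirely on non-$\lambda$-colored facets. I would apply this repeatedly to slide $B_{W_1}$ past $\id_L \otimes d \otimes \id_R$ and into $T_{W_2}$, leaving a product of invertible decorations on facets disjoint from the $\lambda$-component of $d$. These units can be absorbed into the choice of $T_{W_1}$ and $T_{W_2}$ by a final adjustment, because decorations supported on disjoint facets commute freely under composition. The result is that $\phi(d) = \pm \bigsqcup_\lambda d_\lambda$, which is precisely the prescription of the tensor product differential.

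The main obstacle will be the coherence of these choices: the foam $T_W$ must serve simultaneously as the target adjustment for every differential terminating at $W$ and as the source adjustment for every differential leaving $W$. I would resolve this by exploiting the lattice structure from Remark \ref{tensorcxrem}: the webs in the crossing complex form a product of chains indexed by roots, and the differentials along different coordinate directions carry decorations on disjoint facets, so the unit adjustments in perpendicular directions commute. The remaining task is sign bookkeeping, matching the signs inherent in our Rickard differential conventions with the Koszul signs of the tensor product; this is a finite, root-local check on each square face of the lattice, and the invertible decorations can be chosen with the appropriate signs to make all such squares commute. With coherence and signs in hand, the identification $\phi(\LC{\tau}{\Sigma}{f}) = \bigotimes_\lambda \LC{\tau}{\Sigma}{\lambda \in f}$ follows by reading off $\lambda$-components.
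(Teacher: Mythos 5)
Your overall strategy coincides with the paper's: build $T_W$ as a composite of the splitting isomorphisms from Lemmata \ref{split1} and \ref{split2}, commute the differential through these foams at the expense of invertible decorations, absorb the units into a redefinition of the $T_W$, and fix the residual signs by a standard sign-assignment argument; the lattice picture of Remark \ref{tensorcxrem} plays the same coherence role in both arguments.

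The one genuine gap is that Lemma \ref{commutation} does not carry the full weight you place on it. That lemma controls the interaction of a red splitter or merger with M-P foams, zips, unzips and digon creation/removal; it says nothing about the interaction of the splitting foams with the thin cap (or cup) foams that actually constitute the Rickard differentials. In the paper this requires two additional verifications. First, the initial stage of $T_W$ --- the foam that separates and reorders the two rungs of a crossing ladder by color --- is not built from the moves of Lemma \ref{commutation} at all; it is the image under $\Phi_\Sigma$ of a Reidemeister-II-type $2$-morphism in $\Ucatc_Q^{0\leq N}(\glm)^\Sigma$, and its commutation with the red and blue thin caps must be checked by an explicit thick-calculus computation, whose invertibility and sign depend on the error terms of the oppositely oriented R2 move being killed by orthogonal idempotents. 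Second, in the final global separation step one must isotope a $\lambda$-colored square past an edge of a different color, and one of the local moves occurring there is itself a cap in thick calculus; that a blue cap commutes with a red cap on the same square only up to a sign is precisely the source of the Koszul signs you defer to ``bookkeeping''. Neither point invalidates your plan, but both require arguments beyond Lemma \ref{commutation} and need to be supplied explicitly for the proof to close.
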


\begin{proof} Recall from Definition \ref{def-phi} and the proof of Proposition \ref{2homiso} that $\phi$ is the composition of functors $\phi_2$ and $\phi_1$. 
The latter acts on complexes by replacing colored webs $W$ by $L\otimes W\otimes R$ and foams $d$ by $\id_L\otimes d \otimes \id_R$. 
We prove this theorem by constructing splitter isomorphism foams:
\begin{equation*}T_W\colon \phi_1(W) = L\otimes W\otimes R \to \bigsqcup_\lambda W_\lambda=\phi(W)
\end{equation*} 
for each colored web $W$ in $\LC{\tau}{\Sigma}{f}$ that give an isomorphism of chain complexes $\phi_1(\LC{\tau}{\Sigma}{f}) \to \bigotimes_\lambda \LC{\tau}{\Sigma}{\lambda\in f}$. That is, we have to check that the $T_W$ assemble to a chain map with respect to the differential $d_1:=\id_L\otimes d \otimes \id_R$ on $\phi_1(\LC{\tau}{\Sigma}{f})$ 
and the differential $d_2$ on $\bigotimes_\lambda \LC{\tau}{\Sigma}{\lambda\in f}$. 
If $d_1\colon \phi_1(W_1) \to \phi_1(W_2)$, then we need
\begin{equation}
\label{eqn-chainmap}
T_{W_2} \circ d_1 = d_2 \circ T_{W_1}.
\end{equation}
Actually, it suffices to construct isomorphism foams $T''_W\colon \phi_1(W) \to \phi(W)$ such that for every web $W$ in $\LC{\tau}{\Sigma}{f}$ there is an identity foam with unit decoration $u_W\colon \phi(W) \to \phi(W)$ such that:
\begin{equation}
\label{eqn-chainmap2}
T''_{W_2} \circ d_1 = u_{W_2} \circ (\pm d_2) \circ u_{W_1}^{-1}  \circ T''_{W_1}.
\end{equation} 
Then setting $T'_{W}:= u_W^{-1}\circ T''_W$ gives isomorphism foams that satisfy
\begin{equation*}
T'_{W_2} \circ d_1 = (\pm d_2) \circ T'_{W_1} 
\end{equation*}
and with a suitable choice of signs $T_W:=\pm T'_W$ will satisfy equation \eqref{eqn-chainmap}. 
That such a sign-assignment always exists is well-known and can be proved along similar lines as the fact that Khovanov homology is independent of the numbering of the crossings in a link diagram.

It remains to construct web splitting isomorphism foams $T''_W$ that satisfy equation \eqref{eqn-chainmap2}. They are systematically built in three steps:
\begin{enumerate}
\item The resolutions of a crossing in the tangle diagram are ladder webs (see \cite{CKM} or \cite{QR} for this terminology) with two rungs. 
The first step splits the rungs in every crossing ladder web and sorts them into groups according to their root coloring.
\item The second step splits the uprights in every crossing ladder. The result is a semi-split web.
\item The third step is of a global nature; it completely separates the colored components, as in the proof of Proposition \ref{webisoprop}.
\end{enumerate}
Every step corresponds to a foam that splits the web further and $T''$ is then defined as their composition. 
In the following we show that the foams in every step satisfy an equation of type \eqref{eqn-chainmap2}. 
That is, the cap- (or cup-) differential can be moved through the splitting foam at the expense of signs and unit decorations which only depend on the identity foam on which they are placed. 
In this case we say that the differential \emph{commutes} with the splitting foam \emph{up to canonical units}. 
If every step satisfies this then so does the composite $T''$ since unit decorations slide through such isomorphism foams via relations \ref{dotmigration1}. 

In each of the three steps we only treat the case of colorings by two orthogonal idempotents, which are indicated by red and blue colorings. 
The same argument implies that we can split off one root at a time from the rest, and induction on the number of distinct roots in $\Sigma$ then proves the theorem. 
Furthermore, we only consider the case of cap-differentials, as the cup-differential case is completely analogous.

\textbf{Step 1:} For every crossing, we consider the corresonding ladder web. 
First we split the rungs of the ladder into components:
\begin{equation*}
\eqnAH .
\end{equation*}
We choose this foam to be the image under the foamation functor $\Phi_\Sigma$ of certain categorified quantum group 2-morphisms in the quotient $\Ucatc_Q^{0\leq N}(\glm)^\Sigma$. 
For this, we use thick calculus, but we omit the weights and thicknesses of strands. 
Here, we use colors blue and red to indicate decorations by idempotents corresponding to disjoint multisubsets of $\Sigma$, whereas green is the generic color which is used for mixed colorings. 
Let the reader be warned again that the following graphics show categorified quantum group 2-morphisms and \textbf{not} webs. 
The foam above is given by 
\begin{equation*}
\eqnAI .
\end{equation*}
Using the $\Ucat_Q(\glnn{2})$ relations, it is not hard to see that this 2-morphism is invertible via the vertically flipped 2-morphism with some unit decorations. 
The only non-trivial observation is that the oppositely oriented Reidemeister II type move can be undone at the expense of a sign, because all error terms are killed by orthogonal idempotents. 

In the following we investigate how this 2-morphism commutes with red and blue thickness $1$ cap 2-morphisms respectively. 
This computation immediately transfers to the corresponding foams via $\Phi_\Sigma$ 
\begin{eqnarray*}
\eqnAJ .
\end{eqnarray*}
Here and in the following, $r$ (respectively $b$) is the thickness of the right red (resp. left blue) component in the bottom green strands. An analogous computation shows:
\begin{equation*}
\eqnAK
\end{equation*}
The last equation holds because we can swap the positions $\circ_1$ and $\circ_2$ on strands of thickness $r$ and $b$ at the expense of multiplying by $(-1)^{r b}$.
This is immediate from:
\begin{equation*}
\sum_{\alpha \in P(r,b)} (-1)^{|\hat{\alpha}|} \pi_\alpha(\X)\pi_{\hat{\alpha}}(\Y) =\sum_{\hat{\alpha} \in P(b,r)} (-1)^{|\hat{\alpha}|} \pi_{\hat{\alpha}}(\Y)\pi_{\hat{\hat{\alpha}}}(\X) = (-1)^{r b}\sum_{\beta \in P(b,r)} (-1)^{|\hat{\beta|}} \pi_{\beta}(\Y)\pi_{\hat{\beta}}(\X).
\end{equation*}
and the analogous statement holds for the inverse decorations on positions $*_1$ and $*_2$ similarly.

We conclude that the foams from the first step commute with the differential up to canonical units.

\textbf{Step 2:} We further split the crossing resolutions into semi-split webs:

\begin{equation*}
\eqnAL .
\end{equation*}
A foam that peels off the outer strands can be constructed from the building blocks studied in Lemma \ref{commutation}. 
According to it, a differential with this target web commutes with the peeling foam up to canonical unit decorations. 
All these local foams glue together and can be further composed with unzips (if necessary) to have as target semi-split webs. These unzips are placed far away from crossing sites and thus don't change the commutation behavior.

\textbf{Step 3:} Finally, we construct a foam from the semi-split webs of step 2 to the completely split webs $\phi(W)$. 
This can be done as in the proof of Proposition \ref{webisoprop}, but we further assume that the ``squares'' in which the differentials are supported only interact with edges far away from other colored crossing sites 
during the homotopy. In other words, we assume that the vertices and edges from other colored squares never cross each other during the homotopy.
We thus check that the local move of isotoping a red square through a blue edge commutes with the red-cap differential up to canonical units.

Each of the isomorphisms
\begin{equation}
\label{squarethrough}
\eqnAN 
\end{equation}
except the third is a composite of red-blue zip, unzip, digon creation, digon removal, and M-P foams as in Lemma \ref{commutation}, and hence they commute with the differential up to canonical units. 
The third isomorphism can be realized as a blue cap in thick calculus, which commutes with a red cap on the same square, up to a sign. 
The red-cap differential then also commutes, up to sign and canonical units, with the inverse of the above isomorphism, and with pulling a blue facet across the square in the opposite direction:
\begin{equation*}
\eqnAO .
\end{equation*}
\end{proof}

\subsection{Identifying the tensorands}
\label{section-unicoloredfoam}
\begin{defi}
Let $\hat{\foam{N}{}^{\lambda\in \Sigma}}$ be the 2-subcategory of $\hat{\foam{N}{}^\Sigma}$ consisting of only those 1-morphisms and 2-morphisms colored by idempotents 
$\idem_\lambda$ corresponding to multisubsets of $\Sigma$ which only contain the root $\lambda$.
\end{defi}

\begin{lem} $\hat{\foam{N}{}^{\lambda\in \Sigma}}$ is generated as a 2-category by the same elementary foams as $\foam{N}{}$, 
but with idempotent decorations $\idem_\lambda$ on each web edge and foam facet. It satisfies the same relations as $\foam{N}{}$ and additionally:
\begin{equation}
\label{dotrellambda}
\eqnAP \quad .
\end{equation}

\end{lem}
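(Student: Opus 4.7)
The plan is to verify the three assertions of the lemma in turn, with the bulk of the work concentrated in the last. First, for the generation claim: by definition $\hat{\foam{N}{}^{\lambda\in\Sigma}}$ is the full 2-subcategory of $\hat{\foam{N}{}^\Sigma}$ whose 1-morphisms and 2-morphisms carry only decorations by idempotents of the form $\idem_{\{\lambda^k\}}$. Any such 1-morphism is, by construction of $\hat{\foam{N}{}^\Sigma}$, a formal direct sum of admissibly colored webs, and hence can be built by horizontal and disjoint composition from the elementary mergers and splitters of $\foam{N}$ with each edge decorated by the appropriate single-root idempotent. The admissibility condition $A\uplus B=C$ specializes, for single-root multisets, to the usual additivity of labels at trivalent vertices, so no additional constraint on decorated webs is imposed. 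The same argument applied to elementary foams (cups, caps, zips, unzips, digon creations and removals, saddles, M-P pieces) shows that their $\idem_\lambda$-decorated versions generate all 2-morphisms of $\hat{\foam{N}{}^{\lambda\in\Sigma}}$.

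Second, all defining relations of $\foam{N}$ persist in $\hat{\foam{N}{}^{\lambda\in\Sigma}}$: they hold in $\foam{N}^\Sigma$ by definition of that quotient 2-category, and being local relations on a single connected piece of foam, they remain valid when every facet involved carries a common, consistent $\idem_\lambda$-decoration.

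Third, and this is the main new content, I would derive relation \eqref{dotrellambda}. By Proposition \ref{BNCq}, the algebra of decorations of a $1$-labeled facet is $H_1^\Sigma=\C[\xi]/\langle P(\xi)\rangle$, with the dot corresponding to the generator $\xi$. The Chinese Remainder Theorem gives the isomorphism $H_1^\Sigma \cong \bigoplus_i \C[\xi]/\langle(\xi-\lambda_i)^{N_i}\rangle$, and the idempotent $\idem_\lambda$ projects onto the $\lambda$-summand $\C[\xi]/\langle(\xi-\lambda)^{N_\lambda}\rangle$ (compare Remark \ref{idconsistency}). Hence on a $\idem_\lambda$-decorated $1$-facet, dots satisfy $(\xi-\lambda)^{N_\lambda}=0$; expanding via the binomial theorem yields exactly the identity stated in \eqref{dotrellambda}.

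The only potential obstacle is confirming that no further local dot relation is imposed beyond this one, i.e.\ that the original $\foam{N}^\Sigma$ relation $P(\xi)=0$ becomes a consequence of $(\xi-\lambda)^{N_\lambda}=0$ once we restrict to the $\idem_\lambda$-summand. This follows from the factorization $P(\xi)=(\xi-\lambda)^{N_\lambda}g(\xi)$, together with the observation that $g(\xi)$ represents a unit in $\idem_\lambda H_1^\Sigma$ by Corollary \ref{idunit}, since $g(\lambda)\neq 0$. Consequently the relation $P(\xi)=0$ on a $\idem_\lambda$-decorated facet is equivalent to $(\xi-\lambda)^{N_\lambda}=0$, and after the substitution $\xi\mapsto\xi+\lambda$ the resulting calculus on single-root-colored foams coincides with that of $\foam{N_\lambda}$.
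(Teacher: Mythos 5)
Your proof is correct and follows essentially the same route as the paper: the relations of $\foam{N}$ are inherited through the quotient, and the new dot relation is obtained by applying the Chinese Remainder Theorem to $\C[\xi]/\langle P(\xi)\rangle$ so that $\idem_\lambda$ projects onto $\C[\xi]/\langle(\xi-\lambda)^{N_\lambda}\rangle$, whence $\idem_\lambda(\xi-\lambda)^{N_\lambda}=0$. Your closing observation that $P(\xi)=0$ is recovered from $(\xi-\lambda)^{N_\lambda}=0$ on the $\idem_\lambda$-summand (since the cofactor is a unit there) is a welcome extra detail the paper leaves implicit, but it does not change the argument.
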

\begin{proof}
All relations except equation \eqref{dotrellambda} are directly inherited from $\foam{N}{}$ via its quotient $\foam{N}{}^{\Sigma}$. The decorations $\idem_\lambda$ are idempotent and can be moved around freely. 
For relation \eqref{dotrellambda}, we write the action of a dot as $\xi$ and will show the equivalent formulation $\idem_\lambda (\xi-\lambda)^{N_\lambda}=0$. 
To see this, we consider the algebra of decorations of a $1$-labeled facet in $\Foam{N}^\Sigma$, which is given by $\C[\xi]/\langle P(\xi)\rangle$. 
Under the algebra isomorphism
\begin{eqnarray*}
\C[\xi]/\langle P(\xi)\rangle \to & \bigoplus_{k=1}^l \C[\xi]/\langle (\xi-\lambda_k)^{N_{\lambda_k}}\rangle \\
p(\xi) + \langle P(\xi) \rangle \mapsto & (p(\xi) + \langle (\xi-\lambda_1)^{N_{\lambda_1}} \rangle,\dots, p(\xi) + \langle (\xi-\lambda_l)^{N_{\lambda_l}} \rangle )
\end{eqnarray*}
$\idem_\lambda$ is sent to the vector having a single entry $1+\langle (\xi-\lambda)^{N_{\lambda}}\rangle$ and zero everywhere else, 
hence $\idem_\lambda (\xi-\lambda)^{N_\lambda}$ is sent to zero.
\end{proof}

\begin{prop}\label{prop-iso2cat} Let $N_\lambda$ be the multiplicity of $\lambda$ in $\Sigma$, then there is an isomorphism of 2-categories:
\begin{equation}
\foam{N_\lambda}{}^\bullet \cong \hat{\foam{N}{}^{\lambda\in \Sigma}} .
\end{equation}
\end{prop}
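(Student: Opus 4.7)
The plan is to construct a 2-functor $F\colon \foam{N_\lambda}^\bullet \to \hat{\foam{N}{}^{\lambda\in\Sigma}}$ which is the identity on objects, sends each web with a $k$-labeled edge to the same web with that edge colored by the idempotent $\idem_{\{\lambda^k\}}$, and sends each foam to the ``same'' foam with all facets colored by $\idem_{\{\lambda^k\}}$ — but with the crucial modification that every dot on a $1$-labeled facet (i.e.\ every generator $\xi$) is replaced by $\xi-\lambda$. More generally, on a $k$-labeled facet $F$ acts on a symmetric-polynomial decoration in an alphabet $\X$ by the shift $\X \mapsto \X-\lambda$, interpreted variable-by-variable. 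The inverse shift then gives a candidate 2-functor $G$ in the opposite direction.

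For well-definedness, I would verify that all defining relations of $\foam{N_\lambda}^\bullet$ are satisfied in $\hat{\foam{N}{}^{\lambda\in\Sigma}}$. The purely structural relations (isotopy, \eqref{FoamRel1}--\eqref{FoamRel3}, the M-P relations, Schur-polynomial evaluations, bubble and theta evaluations, etc.) are inherited directly from $\foam{N}{}$ via the quotient $\foam{N}{}^\Sigma$ and the inclusion into $\hat{\foam{N}{}^{\lambda\in\Sigma}}$; the idempotent decorations $\idem_{\{\lambda^k\}}$ play no role since every coloring encountered is automatically admissible (the multisubsets $\{\lambda^k\}$ always add up correctly at seams). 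The only relation that truly depends on the deformation is the dot nilpotency on a $1$-labeled facet: in $\foam{N_\lambda}^\bullet$ it reads $\xi^{N_\lambda}=0$, and under $F$ it becomes $\idem_\lambda(\xi-\lambda)^{N_\lambda}=0$, which is precisely relation \eqref{dotrellambda}. The same calculation, read backwards, shows that $G$ is well-defined, and then $F\circ G$ and $G\circ F$ are the identity by construction because the shifts $\xi\mapsto \xi\pm\lambda$ are mutually inverse and the idempotents act as identities on the relevant full subcategory.

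The main obstacle will be checking that $F$ descends correctly to the decoration algebras of higher-labeled facets. The algebra of decorations on a $k$-labeled facet in $\foam{N_\lambda}^\bullet$ is $H_k^{N_\lambda}$, whereas in $\hat{\foam{N}{}^{\lambda\in\Sigma}}$ the corresponding $\idem_{\{\lambda^k\}}$-colored facet carries the algebra $\idem_{\{\lambda^k\}} H_k^\Sigma$. Theorem \ref{unknotdecomp} (together with the explicit form of the decomposition idempotents via the Chinese Remainder Theorem used in its proof) already identifies these two algebras; what must be checked is that the shift $\X\mapsto\X-\lambda$ implements this isomorphism on the level of generating and defining relations. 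Concretely, I would use the generating-function identity $\sum_{i\geq 0} h_i(\X-\Sigma)t^i = \prod_{s\in\Sigma}(1-ts)\cdot \prod_{\xi\in \X}(1-t\xi)^{-1}$ to show that under $\X \mapsto \X-\lambda$ the generators $h_{N_\lambda - k + 1}(\X),\ldots, h_{N_\lambda}(\X)$ of the defining ideal of $H_k^{N_\lambda}$ map into the ideal generated by $h_{N-k+1}(\X-\Sigma),\ldots, h_N(\X-\Sigma)$ after multiplication by $\idem_{\{\lambda^k\}}$, using Corollary~\ref{idunit} to invert the factors $(t-s)$ for $s\neq \lambda$ inside the $\{\lambda^k\}$-summand.

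Once the decoration algebras are matched, the verification that $F$ respects relations built from these algebras (e.g.\ the Schur-polynomial form of \eqref{FoamRel2} and \eqref{FoamRel3}, as well as neck-cutting and bubble evaluations) reduces to a shift-equivariance check for the Littlewood-Richardson and Grassmannian data, which is standard. Combined with the earlier splitting results from Sections~\ref{relations} and \ref{2hom}, this yields that $F$ and $G$ are mutually inverse 2-functors, establishing the isomorphism of $2$-categories and completing the identification of $\lambda$-colored foams with undeformed $\slnn{N_\lambda}$-foams that will be needed in Section~\ref{section-assembly} to prove Theorem~\ref{mainthm}.
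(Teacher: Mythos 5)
Your proposal is correct and follows essentially the same route as the paper: you define the 2-functor by coloring everything with the $\lambda$-idempotents and shifting decorations by $\X\mapsto\X-\lambda$, observe that the structural relations are inherited and that the only deformation-sensitive relation is the dot nilpotency, which the shift carries precisely onto relation \eqref{dotrellambda}, and invert by the opposite shift. The paper's proof verifies the same points relation-by-relation (its check of the analogue of your decoration-algebra matching is done via the migration relation rather than generating functions), so no further comment is needed.
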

\begin{proof}
Let $\iota_\lambda\colon \foam{N_\lambda}{}^\bullet \to \hat{\foam{N}{}^{\lambda\in \Sigma}}$ be the 2-functor which is defined on:
\begin{itemize}
\item objects by sending a sequence $\mathbf{a}$ to itself,
\item 1-morphisms by sending webs to the same webs, but with additional coloring by multisets containing only $\lambda$ on the edges, and
\item 2-morphisms by sending a foam to the foam which is topologically identical but has a decoration by a $\lambda$-idempotent $\idem_\lambda$ added on every facet.
\begin{equation}
\smallfacet[1]{}{k}
\quad \mapsto \quad  ~ \smallfacet[1]{\idem_\lambda}{k}
\end{equation}
A decoration on a foam facet, interpreted as a symmetric polynomial in an alphabet $x_1,\dots, x_k$, is sent to the same symmetric polynomial, but in the alphabet $x_1-\lambda, \dots, x_k-\lambda$. 
Formally, it suffices to define:
\begin{equation}
\smalldotfacet[1]{}{1}
\quad \mapsto  \quad  ~ \smalldotfacet[1]{\idem_\lambda}{1} ~ - \lambda ~
\smallfacet[1]{\idem_\lambda}{1} \quad .
\end{equation}
\end{itemize}
We now check that $\iota_\lambda$ exactly maps the defining relations of $\foam{N_\lambda}{}^\bullet$, see \cite{QR}, to the set of relations that determine $\hat{\foam{N}{}^{\lambda\in \Sigma}}$, which was identified in Lemma \ref{dotrellambda}.

All relations that do not involve decorations are preserved by $\iota_\lambda$; these are (QR 3.8), (QR 3.12), (QR 3.15) - (QR 3.20). We examine the remaining relations:
\begin{itemize}
\item The following is a minimal version of relation (QR 3.9) 
\begin{equation*}
 \eqnAR
\end{equation*}
and the collection of all instances of this relation (for $1\leq s \leq a+b$) has the effect of identifying symmetric polynomials in the alphabet $\{x_1,\dots x_{a+b} \}$ on the $a+b$ facet with those in the alphabet $\{y_1, \dots, y_{a+b} \}$ which is the union 
of the alphabets on the other two facets. 
The 2-functor $\iota_\lambda$ maps these relations to relations that identify symmetric polynomials in $\{x_1-\lambda,\dots, x_{a+b} - \lambda \}$ with symmetric polynomials in $\{y_1-\lambda,\dots, y_{a+b} - \lambda \}$. 
They generate the same ideal, and hence are equivalent sets of relations.
\item The following is a minimal version of relation (QR 3.10)
\begin{equation*}
\eqnAS
\end{equation*}
and under $\iota_\lambda$ it is sent to
\begin{equation*}
\eqnAT
\end{equation*}
where the final equality holds
since all terms in the middle except the one with $k$ dots is zero.
\item Relations (QR 3.11)
\begin{equation*}
\eqnAU
\end{equation*}
are sent by $\iota_\lambda$ to relations of the same form, where $\pi_\alpha$ and $\pi_{\hat{\alpha}}$ are now interpreted as symmetric polynomials in the new alphabet which is shifted by $\lambda$. 
Denote the alphabets on the facets in the original relation on which the decorations are placed by $\X$ and $\Y$, then we can write the decoration on the right-hand side of the original relation as: 
\[
\sum_{\alpha \in P(a,b)} (-1)^{|\hat{\alpha}|} \pi_\alpha(\X)\pi_{\hat{\alpha}}(\Y) = \prod_{x\in \X}\prod_{y\in \Y}(y-x).
\]
Under $\iota_\lambda$ this is sent to 
\[\prod_{x\in \X}\prod_{y\in \Y}((y-\lambda)-(x-\lambda))= \prod_{x\in \X}\prod_{y\in \Y}(y-x) = \sum_{\alpha \in P(a,b)} (-1)^{|\hat{\alpha}|} \pi_\alpha(\X)\pi_{\hat{\alpha}}(\Y).\]
so $\iota_\lambda$ preserves the relation.

\item Relations (QR 3.13) and (QR 3.14) are also preserved by $\iota_\lambda$; the proofs are completely analogous to the case of (QR 3.11).  
\item The relation 
\begin{equation*}
\eqnAV
\end{equation*}
is mapped by $\iota_\lambda$ to 
\begin{equation*}
\eqnAW
\end{equation*}
which is precisely relation \eqref{dotrellambda}.
\end{itemize}

Furthermore, the functor $\iota_\lambda$ is clearly invertible (forget idempotents, shift decorations back) and similar arguments as above show that all relations in 
$\hat{\foam{N}{}^{\lambda\in \Sigma}}$ are sent by the inverse to relations of $\foam{N_\lambda}{}^\bullet$. 
\end{proof}

\subsection{Proof of the decomposition theorem}
\label{section-assembly}
For this section let $\cal{L}$ be an oriented, labeled link diagram.
Recall that the complex $\LC{\cal{L}}{\Sigma}{}$ over $\foam{N}{}^{\Sigma}$ is, up to homotopy equivalence, an invariant of the corresponding oriented, framed, labeled link and each web appearing in $\LC{\cal{L}}{\Sigma}{}$ has endomorphisms of $\mathbf{o}^{top}:=(N,\dots,N)$ as objects. 
We can view $\LC{\cal{L}}{\Sigma}{}$ as a complex in $\hat{\foam{N}{}^{\Sigma}}$ (since $\LC{\cal{L}}{\Sigma}{}$ embeds as a full 2-subcategory) 
where it splits into a direct sum of complexes $\LC{\cal{L}}{\Sigma}{f}$, one for each coloring $f$ of link components with a multisubset of $\Sigma$ of the correct size. 
The objects of a summand $\LC{\cal{L}}{\Sigma}{f}$ are again endomorphisms of $\mathbf{o}^{top}$ in $\hat{\foam{N}{}^{\Sigma}}$, and
there are natural isomorphisms of chain complexes of vector spaces:
\begin{equation}
\taut(\LC{\cal{L}}{\Sigma}{})=\underbrace{\Hom(1_{\mathbf{o}^{top}},\LC{\cal{L}}{\Sigma}{})}_{\text{over } \foam{N}{}^{\Sigma}} \cong 
\underbrace{\bigoplus_f \Hom(1_{\mathbf{o}^{top}}, \LC{\cal{L}}{\Sigma}{f})}_{\text{over } \hat{\foam{N}{}^{\Sigma}}} = \bigoplus_f  \taut(\LC{\cal{L}}{\Sigma}{f}).
\end{equation}
It follows that $\taut(-)$ respects the direct sum decomposition, and it remains to describe the summands $\taut(\LC{\cal{L}}{\Sigma}{f})$.

The only non-zero coloring of the identity web $1_{\mathbf{o}^{top}}$ is the one where every $N$-labeled strand is colored by the full multiset $\Sigma$. 
With respect to this coloring, we will use the object $\bigsqcup_\lambda \mathbf{o}^{top}_\lambda$, 
the (co-)domain of the associated split web $\bigsqcup_\lambda (1_{\mathbf{o}^{top}})_\lambda$, 
and the object $\mathbf{o}^{top}_\lambda$, the (co-)domain of the $\lambda$-component $(1_{\mathbf{o}^{top}})_\lambda$ of $1_{\mathbf{o}^{top}}$. 
For endomorphism webs on these objects and foams between, them we define the representable functors 
$\taut_{split}(-) := \Hom(\bigsqcup_\lambda (1_{\mathbf{o}^{top}})_\lambda, -)$ and 
$\taut_{\lambda}(-) := \Hom((1_{\mathbf{o}^{top}})_\lambda, -)$  respectively. 
Further, we now need the webs $L$ and $R$ for the object $\mathbf{o}^{top}$ with the only possible incidence condition, as given in Definition \ref{LRdef}.

Recall that $\LC{\cal{L}}{\Sigma}{f}$ is a complex of colored webs $W$ and foams $d$ between them. 
Then $\phi_1(\LC{\cal{L}}{\Sigma}{f})$ is the complex consisting of webs $L\otimes W \otimes R$ and foams $\id_L\otimes d \otimes \id_R$ between them and 
$\phi(\LC{\cal{L}}{\Sigma}{f}) =  \phi_2\phi_1(\LC{\cal{L}}{\Sigma}{f})$ is the complex consisting of webs $\bigsqcup_\lambda W_\lambda$ 
and foams $T_*\circ (\id_L\otimes d \otimes \id_R) \circ B_{*}$. We first show:

\begin{lem} There are isomorphisms of chain complexes of vector spaces:
\begin{equation}
\label{eqn-isocx}
\taut(\LC{\cal{L}}{\Sigma}{f}) \cong \taut_{split}(\phi_1(\LC{\cal{L}}{\Sigma}{f})) \cong \taut_{split}(\phi(\LC{\cal{L}}{\Sigma}{f})).
\end{equation}
\end{lem}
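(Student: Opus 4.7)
The proof decomposes into two independent steps, one for each isomorphism in equation \eqref{eqn-isocx}; both are ultimately bookkeeping exercises that unwind the definitions of $\phi_1$, $\phi$, and the unique admissible idempotent coloring of $\mathbf{o}^{top}$ by the full multiset $\Sigma$.

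For the first isomorphism, the key input is Lemma \ref{lem-LR}, which, applied to $\mathbf{o}^{top}$ with the only non-zero incidence condition, produces webs $L$ and $R$ exhibiting $\mathbf{o}^{top}$ and $\bigsqcup_\lambda \mathbf{o}^{top}_\lambda$ as weakly equivalent in $\hat{\foam{N}{}^\Sigma}$. Tensoring on the left by $L$ and on the right by $R$ thus induces an equivalence of $1$-morphism categories $\End(\mathbf{o}^{top}) \to \End(\bigsqcup_\lambda \mathbf{o}^{top}_\lambda)$ that is in particular fully faithful on $2$-morphism spaces. Combined with the distinguished isomorphism $B_{1_{\mathbf{o}^{top}}} \colon \bigsqcup_\lambda (1_{\mathbf{o}^{top}})_\lambda \to L \otimes R$ furnished by Proposition \ref{webisoprop}, the plan is to define the first isomorphism term by term by
\[
\taut(\LC{\cal{L}}{\Sigma}{f}) \ni F \;\mapsto\; (\id_L \otimes F \otimes \id_R) \circ B_{1_{\mathbf{o}^{top}}} \in \taut_{split}(\phi_1(\LC{\cal{L}}{\Sigma}{f})).
\]
Compatibility with differentials is then immediate: the differentials in $\phi_1(\LC{\cal{L}}{\Sigma}{f})$ are precisely $\id_L \otimes d \otimes \id_R$, and both Hom-complexes carry their differentials by post-composition.

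For the second isomorphism, I would invoke the isomorphism foams $T_W \colon L \otimes W \otimes R \to \bigsqcup_\lambda W_\lambda$ (with inverses $B_W$) fixed in Section \ref{2hom} in the definition of the splitting functor $\phi$. Post-composition with $T_W$ yields, term by term, a bijection
\[
\Hom\bigl(\bigsqcup_\lambda (1_{\mathbf{o}^{top}})_\lambda,\; L \otimes W \otimes R\bigr) \;\xrightarrow{\;T_W \circ -\;}\; \Hom\bigl(\bigsqcup_\lambda (1_{\mathbf{o}^{top}})_\lambda,\; \bigsqcup_\lambda W_\lambda\bigr),
\]
whose inverse is post-composition with $B_W$. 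The chain-map condition reduces to the identity $B_{W_1} \circ T_{W_1} = \id$: by Definition \ref{def-phifunct} the differentials in $\phi(\LC{\cal{L}}{\Sigma}{f})$ are $T_{W_2} \circ (\id_L \otimes d \otimes \id_R) \circ B_{W_1}$, so for any $G$ in the source,
\[
\bigl(T_{W_2} \circ (\id_L \otimes d \otimes \id_R) \circ B_{W_1}\bigr) \circ (T_{W_1} \circ G) \;=\; T_{W_2} \circ \bigl((\id_L \otimes d \otimes \id_R) \circ G\bigr),
\]
which is exactly the image of the source-side differential applied to $G$ under the bijection.

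Since both isomorphisms only invoke the abstract invertibility of $L$, $R$, $T_W$, and $B_W$, the main obstacle is largely notational -- keeping track of sources, targets, and the foam compositions involved -- rather than substantive. The mathematical content has already been established in Lemma \ref{lem-LR} and Proposition \ref{webisoprop}; what remains is the routine verification of compatibility with the chain complex structure on both sides.
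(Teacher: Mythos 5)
Your proposal is correct and follows essentially the same route as the paper: the first isomorphism is $\phi_1$ (whose invertibility on Hom-spaces comes from Proposition \ref{2homiso} via the weak equivalence of Lemma \ref{lem-LR}) combined with pre-composition by $B_{1_{\mathbf{o}^{top}}}$, and the second is post-composition with the foams $T_W$, with chain-map compatibility verified exactly as you do via $B_{W_1}\circ T_{W_1}=\id$. No gaps.
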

\begin{proof}
Proposition \ref{2homiso} provides the following isomorphisms between the objects of these chain complexes:
\begin{align*}
\taut(W) &= \Hom(1_{\mathbf{o}^{top}}, W) \\
& \overset{\phi_1}{\cong} \Hom(L\otimes 1_{\mathbf{o}^{top}} \otimes R, L\otimes W \otimes R) \\
%& \overset{\circ B_{1_{\mathbf{o}^{top}}}}{\cong} 
& \cong \Hom(\bigsqcup_\lambda (1_{\mathbf{o}^{top}})_\lambda,L\otimes W \otimes R) = \taut_{split}(\phi_1(W)) \\
%& \overset{ T_{L\otimes W \otimes R}\circ}{\cong} 
& \cong \Hom(\bigsqcup_\lambda (1_{\mathbf{o}^{top}})_\lambda,\bigsqcup_\lambda W_\lambda) = \taut_{split}(\phi(W)),
\end{align*}
where the last two isomorphisms are given by composition with $B_{1_{\mathbf{o}^{top}}}$ and $T_W$. 
Under these isomorphisms, the differentials transform as required for equation \eqref{eqn-isocx}:
\begin{flalign*}
\taut(d)= \; & (\Hom(1_{\mathbf{o}^{top}}, W) \xrightarrow{d\circ} \Hom(1_{\mathbf{o}^{top}}, W')) \\
\mapsto \;\; &(\Hom(L\otimes 1_{\mathbf{o}^{top}} \otimes R, L\otimes W \otimes R) \xrightarrow{(\id_L\otimes d \otimes \id_R) \circ}
\Hom(L\otimes 1_{\mathbf{o}^{top}}\otimes R, L\otimes W'\otimes R))\\
\mapsto \;\; &(\Hom(\bigsqcup_\lambda (1_{\mathbf{o}^{top}})_\lambda, L\otimes W \otimes R) \xrightarrow{(\id_L\otimes d \otimes \id_R) \circ} 
\Hom(\bigsqcup_\lambda (1_{\mathbf{o}^{top}})_\lambda, L\otimes W'\otimes R)) = \taut_{split}(\phi_1(d))\\
\mapsto \;\; &(\Hom(\bigsqcup_\lambda(1_{\mathbf{o}^{top}})_\lambda, \bigsqcup_\lambda W_\lambda) \xrightarrow{T_*\circ (\id_L\otimes d \otimes \id_R) \circ B_*\circ}
\Hom(\bigsqcup_\lambda (1_{\mathbf{o}^{top}})_\lambda, \bigsqcup_\lambda W'_\lambda)) = \taut_{split}(\phi(d)).
\end{flalign*}
\end{proof}
Theorem \ref{splitdifferential} shows that there is a consistent choice of isomorphism foams $T_*$ and $B_*$ in the definition of $\phi$, such that 
$\phi(\LC{\cal{L}}{\Sigma}{f})=\bigotimes_{\lambda} \LC{\cal{L}}{\Sigma}{\lambda\in f}$. Now we have:
\begin{equation}
\label{eq-tensordeco} \taut(\LC{\cal{L}}{\Sigma}{f})\cong \taut_{split}(\phi(\LC{\cal{L}}{\Sigma}{f})) = \taut_{split} (\bigotimes_{\lambda} \LC{\cal{L}}{\Sigma}{\lambda\in f})\cong 
\bigotimes_{\lambda} \taut_\lambda (\LC{\cal{L}}{\Sigma}{\lambda \in f}).
\end{equation}
The last isomorphism is clear from the definition of the two versions of $\taut$ and the tensor product structure given by disjoint union of webs and foams. 

It remains to identify the tensorands  $\taut_\lambda (\LC{\cal{L}}{\Sigma}{\lambda \in f})$. 
To this end, recall the notation $\cal{L}(a_1,\dots,a_k)$ introduced in the statement of Theorem \ref{mainthm}, which makes explicit that we consider 
$\cal{L}$ with the $i^{\mathrm{th}}$ component labeled by the fundamental $\slnn{N}$ representation $\bV^{a_i} \C^N$. 
Let $b_{i,j}$ be the multiplicity of the root $\lambda_j$ in the multisubset of $\Sigma$ that the coloring $f$ assigns to the $i^{\mathrm{th}}$ component of $\cal{L}$. 
Further, recall that $N_j$ denotes the multiplicity of $\lambda_j$ in $\Sigma$.

The complex $\LC{\cal{L}}{\Sigma}{\lambda_j \in f}$ is a complex over the 2-subcategory $\hat{\foam{N}{}^{\lambda_j\in \Sigma}}$. 
Under the isomorphism to the 2-category $\foam{N_j}{}^\bullet$, which was established in Proposition \ref{prop-iso2cat}, this complex corresponds to the undeformed 
$\slnn{N_j}$ complex $\LC{\cal{L}(b_{1,j},\dots, b_{k,j})}{\{0,\dots,0\}}{}$ of the re-labeled sublink $\cal{L}(b_{1,j},\dots, b_{k,j})$. 
As we have seen in Remark \ref{tensorcxrem}, this correspondence preserves the homological grading. 
Clearly, $\taut_{\lambda_j}(\LC{\cal{L}}{\Sigma}{\lambda_j \in f})$ is isomorphic to the image of $\LC{\cal{L}(b_{1,j},\dots, b_{k,j})}{\{0,\dots,0\}}{}$ under the appropriate representable functor, 
and the homology of this complex is $\LH{\cal{L}(b_{1,j},\dots, b_{k,j})}{\slnn{N_{j}}}$. 
Finally, by equation \eqref{eq-tensordeco}, $\taut(\LC{\cal{L}}{\Sigma}{f})$ is isomorphic to the tensor product of these complexes, 
and since we are working over $\C$, 
the K{\"u}nneth theorem gives that the homology of this tensor product complex is isomorphic to the tensor product of the respective homologies. 
This completes the proof of Theorem \ref{mainthm}.

\end{document}